\definecolor{bondiblue}{rgb}{0.1, 0.58, 0.71}
\newcommand{\addperiod}[1]{#1.}
\titleformat{\section}[block]{\scshape\Large\filcenter}{\thesection.}{1em}{}
\titleformat{\subsection}[runin]{\normalfont\large\bfseries}{\thesubsection.}{1em}{\addperiod}
\titleformat{\subsubsection}[runin]{\normalfont\bfseries}{\thesubsubsection.}{1em}{\addperiod}
\numberwithin{equation}{section}
\DeclareSymbolFontAlphabet{\mathbbl}{bbold}
\newcommand{\Prism}{{\mathlarger{\mathbbl{\Delta}}}}
\def\keywords{\xdef\@thefnmark{}\@footnotetext}
\def\keywords{\xdef\@thefnmark{}\@footnotetext}
\theoremstyle{plain}
\newtheorem{thm}{Theorem}[section]
\newtheorem{cor}[thm]{Corollary}
\newtheorem{lem}[thm]{Lemma}
\newtheorem{prop}[thm]{Proposition}
\theoremstyle{definition}
\newtheorem{defi}[thm]{Definition}
\newtheorem{rem}[thm]{Remark}
\theoremstyle{remark}
\newenvironment{enumarabicup}
{\begin{enumerate}[font=\upshape, labelindent=\parindent, label=(\arabic*)]}
{\end{enumerate}}
\DeclareMathOperator*{\colim}{\text{\scalebox{0.95}{$\textup{colim}$}}}
\DeclareMathAlphabet{\pazocal}{OMS}{zplm}{m}{n}
\DeclareMathAlphabet{\dutchcal}{U}{dutchcal}{m}{n}
\newcommand{\isomorphic}{\xrightarrow{\hspace{0.5mm} \sim \hspace{0.5mm}}}
\newcommand{\lisomorphic}{\xleftarrow{\hspace{0.5mm} \sim \hspace{0.5mm}}}
\newcommand{\pazm}{\pazocal{M}}
\newcommand{\pazo}{\pazocal{O}}
\newcommand{\pazs}{\pazocal{S}}
\def\CC{\mathbb{C}}
\def\NN{\mathbb{N}}
\def\QQ{\mathbb{Q}}
\def\ZZ{\mathbb{Z}}
\newcommand{\mbfa}{\mathbf{A}}
\newcommand{\mbfb}{\mathbf{B}}
\newcommand{\mbfd}{\mathbf{D}}
\newcommand{\mbfn}{\mathbf{N}}
\newcommand{\mbft}{\mathbf{T}}
\newcommand{\mbfv}{\mathbf{V}}
\newcommand{\smbfk}{\mathbf{k}}
\newcommand{\fraka}{\mathfrak{a}}
\newcommand{\frakm}{\mathfrak{m}}
\newcommand{\algebra}{\textrm{-algebra}}
\newcommand{\Ainf}{\mbfa_{\inf}}
\newcommand{\Binf}{\mbfb_{\inf}}
\newcommand{\Acrys}{\mbfa_{\textup{cris}}}
\newcommand{\OAcrys}{\pazo\mbfa_{\textup{cris}}}
\newcommand{\Atilde}{\tilde{\mbfa}}
\newcommand{\Btilde}{\tilde{\mbfb}}
\newcommand{\Dtilde}{\tilde{\mbfd}}
\newcommand{\AF}{\mbfa_F}
\newcommand{\DF}{\mbfd_F}
\newcommand{\NF}{\mbfn_F}
\newcommand{\ALpi}{\mbfa_{L,\varpi}}
\newcommand{\OALpi}{\pazo\mbfa_{L,\varpi}}
\newcommand{\OARpi}{\pazo\mbfa_{R,\varpi}}
\newcommand{\OBdR}{\pazo\mbfb_{\textup{dR}}}
\newcommand{\Bcrys}{\mbfb_{\textup{cris}}}
\newcommand{\OBcrys}{\pazo\mbfb_{\textup{cris}}}
\newcommand{\Adag}{\mbfa^{\dagger}}
\newcommand{\Bdag}{\mbfb^{\dagger}}
\newcommand{\Adagr}{\mbfa^{\dagger, r}}
\newcommand{\Bdagr}{\mbfb^{\dagger, r}}
\newcommand{\ALdag}{\mbfa_L^{\dagger}}
\newcommand{\BLdag}{\mbfb_L^{\dagger}}
\newcommand{\ALdagr}{\mbfa_L^{\dagger, r}}
\newcommand{\BLdagr}{\mbfb_L^{\dagger, r}}
\newcommand{\DLdag}{\mbfd_L^{\dagger}}
\newcommand{\DrigLdag}{\mbfd_{\textup{rig}, L}^{\dagger}}
\newcommand{\dldag}{D_L^{\dagger}}
\newcommand{\ALbrevedag}{\mbfa_{\breve{L}}^{\dagger}}
\newcommand{\BLbrevedag}{\mbfb_{\breve{L}}^{\dagger}}
\newcommand{\DLbrevedag}{\mbfd_{\breve{L}}^{\dagger}}
\newcommand{\DrigLbrevedag}{\mbfd_{\textup{rig}, \breve{L}}^{\dagger}}
\newcommand{\Atildedag}{\tilde{\mbfa}^{\dagger}}
\newcommand{\Btildedag}{\tilde{\mbfb}^{\dagger}}
\newcommand{\Atildedagr}{\tilde{\mbfa}^{\dagger, r}}
\newcommand{\Btildedagr}{\tilde{\mbfb}^{\dagger, r}}
\newcommand{\BLtildedag}{\tilde{\mbfb}_L^{\dagger}}
\newcommand{\BLtildedagr}{\tilde{\mbfb}_L^{\dagger, r}}
\newcommand{\BrigL}{\mbfb_{\textup{rig}, L}}
\newcommand{\BrigLbreve}{\mbfb_{\textup{rig}, \breve{L}}}
\newcommand{\BLbrevenhat}{\widehat{\mbfb}_{\breve{L}, n}}
\newcommand{\BLbrevekhat}{\widehat{\mbfb}_{\breve{L}, k}}
\newcommand{\BrigLdag}{\mbfb_{\textup{rig}, L}^{\dagger}}
\newcommand{\BrigLdagr}{\mbfb_{\textup{rig}, L}^{\dagger, r}}
\newcommand{\BrigLbrevedag}{\mbfb_{\textup{rig}, \breve{L}}^{\dagger}}
\newcommand{\BrigLtilde}{\tilde{\mbfb}_{\textup{rig}, L}}
\newcommand{\Brigtilde}{\tilde{\mbfb}_{\textup{rig}}}
\newcommand{\BrigLtildedag}{\tilde{\mbfb}_{\textup{rig}, L}^{\dagger}}
\newcommand{\Brigtildedag}{\tilde{\mbfb}_{\textup{rig}}^{\dagger}}
\newcommand{\BrigLtildedagr}{\tilde{\mbfb}_{\textup{rig}, L}^{\dagger, r}}
\newcommand{\Brigtildedagr}{\tilde{\mbfb}_{\textup{rig}}^{\dagger, r}}
\newcommand{\action}{\textrm{-action}}
\newcommand{\crys}{\textup{cris}}
\newcommand{\Dcrys}{\mbfd_{\textup{cris}}}
\newcommand{\ODcrys}{\pazo\mbfd_{\textup{cris}}}
\newcommand{\ODcrysL}{\pazo\mbfd_{\textup{cris}, L}}
\newcommand{\OVcrysL}{\pazo\mbfv_{\textup{cris}, L}}
\newcommand{\DcrysLbreve}{\mbfd_{\textup{cris}, \breve{L}}}
\newcommand{\dlog}{\hspace{0.2mm}d\textup{\hspace{0.3mm}log\hspace{0.3mm}}}
\newcommand{\equivariant}{\textrm{-equivariant}}
\newcommand{\etale}{\textup{\'et}}
\newcommand{\Eff}{\textup{Eff-}}
\newcommand{\Fil}{\textup{Fil}}
\newcommand{\Fr}{\textup{Frac}}
\newcommand{\Gal}{\textup{Gal}}
\newcommand{\GammaLbreve}{\Gamma_{\Lbreve}}
\newcommand{\Hom}{\textup{Hom}}
\newcommand{\jpluss}{j_{*}^+}
\newcommand{\Fbar}{\overline{F}}
\newcommand{\Abar}{\overline{A}}
\newcommand{\Kbar}{\overline{K}}
\newcommand{\kert}{\textup{Ker }}
\newcommand{\Lframe}{L^{\square}}
\newcommand{\OLframe}{O_{L^{\square}}}
\newcommand{\Lbreve}{\breve{L}}
\newcommand{\Lbreveinfty}{\breve{L}_{\infty}}
\newcommand{\Lbrevebar}{\overline{\breve{L}}}
\newcommand{\Lie}{\textup{Lie }}
\newcommand{\linear}{\textup{-linear}}
\newcommand{\lattice}{\textrm{-lattice}}
\newcommand{\module}{\textrm{-module}}
\newcommand{\modules}{\textrm{-modules}}
\newcommand{\mubar}{\overline{\mu}}
\newcommand{\MF}{\textup{MF}}
\newcommand{\ML}{\pazm_L}
\newcommand{\MLn}{\pazm_{L, n}}
\newcommand{\MLk}{\pazm_{L, k}}
\newcommand{\MLbreve}{\pazm_{\Lbreve}}
\newcommand{\Mdag}{M^{\dagger}}
\newcommand{\Mrigdag}{M_{\rig}^{\dagger}}
\newcommand{\Mrig}{M_{\rig}}
\newcommand{\Mod}{\textup{-Mod}}
\newcommand{\Finfty}{F_{\infty}}
\newcommand{\OFinfty}{O_{F_{\infty}}}
\newcommand{\Linfty}{L_{\infty}}
\newcommand{\OLinfty}{O_{L_{\infty}}}
\newcommand{\Lbar}{\overline{L}}
\newcommand{\OLbar}{O_{\overline{L}}}
\newcommand{\OLbreve}{O_{\breve{L}}}
\newcommand{\AL}{\mbfa_L}
\newcommand{\BL}{\mbfb_L}
\newcommand{\DL}{\mbfd_L}
\newcommand{\NL}{\mbfn_L}
\newcommand{\ALframe}{\mbfa_{L^{\square}}}
\newcommand{\ALbreve}{\mbfa_{\breve{L}}}
\newcommand{\BLbreve}{\mbfb_{\breve{L}}}
\newcommand{\DLbreve}{\mbfd_{\breve{L}}}
\newcommand{\NLbreve}{\mbfn_{\breve{L}}}
\newcommand{\NLbreveinfty}{\mbfn_{\breve{L}, \infty}}
\newcommand{\NrigL}{\mbfn_{\textup{rig}, L}}
\newcommand{\NrigLbreve}{\mbfn_{\textup{rig}, \breve{L}}}
\newcommand{\ONmPD}{\pazo N_m^{\PD}}
\newcommand{\SnPD}{S_n^{\PD}}
\newcommand{\OSnPD}{\pazo S_n^{\PD}}
\newcommand{\OSmPD}{\pazo S_m^{\PD}}
\newcommand{\SnhatPD}{\widehat{S}_n^{\PD}}
\newcommand{\padic}{p\textrm{-adic}}
\newcommand{\pqheight}{[p]_q\textrm{-height}}
\newcommand{\PD}{\textup{PD}}
\newcommand{\pinverse}{\big[\tfrac{1}{p}\big]}
\newcommand{\rank}{\textup{rk}}
\newcommand{\Rep}{\textup{Rep}}
\newcommand{\Rframe}{R^{\square}}
\newcommand{\representation}{\textrm{-representation}}
\newcommand{\rig}{\hspace{0.2mm}\textup{rig}}
\newcommand{\Spec}{\textup{Spec}\hspace{0.5mm}}
\newcommand{\Spf}{\textup{Spf }}
\newcommand{\submodule}{\textrm{-submodule}}
\newcommand{\TL}{\mbft_L}
\newcommand{\VL}{\mbfv_L}
\newcommand{\textmod}{\textup{ mod }}
\newcommand{\xitilde}{\tilde{\xi}}
\newcommand{\wa}{^{\textup{wa}}}
\title{\vspace{-8mm}\textsc{Crystalline representations and Wach modules in the imperfect residue field case}}
\author{\textsc{Abhinandan}}
\newcommand{\Addresses}{{
  \footnotesize

  \rule{2cm}{0.4pt}\vspace{2mm}

  \textsc{Abhinandan}\par\nopagebreak
  \textsc{IMJ-PRG, Sorbonne Universit\'e, 4 Place Jussieu, Paris 75252, France}\par\nopagebreak\vspace{-0.7mm}
  \textit{E-mail}: \footnotesize{abhinandan@imj-prg.fr}, \textit{Web}: \footnotesize{https://abhinandan.perso.math.cnrs.fr/}
}}
\date{}
\begin{document}

\pagenumbering{arabic}

\sloppy

\keywords{\textit{Keywords}: $\padic$ Hodge theory, crystalline representations, $(\varphi, \Gamma)\textrm{-modules}$}
\keywords{\textit{2020 Mathematics Subject Classification}: 14F20, 14F30, 14F40, 11S23.}

\maketitle
{
	\vspace{-8mm}
	\textsc{Abstract.} For an absolutely unramified extension $L/\QQ_p$ with imperfect residue field, we define and study Wach modules in the setting of $(\varphi, \Gamma)\modules$ for $L$.
	Our main result establishes a direct equivalence between the category of lattices inside crystalline representations of the absolute Galois group of $L$ and the category of integral Wach modules for $L$.
	Moreover, we provide a direct relation between a rational Wach module equipped with the Nygaard filtration and the filtered $\varphi\module$ of its associated crystalline representation.
}

\section{Introduction}

In classical $\padic$ Hodge theory, Fontaine introduced and developed the idea of studying a $\padic$ representation of the absolute Galois group of $\QQ_p$ (and its extensions) via semilinear algebraic objects attached to the representation.
More concretely, for an extension $F/\QQ_p$ with perfect residue field and absolute Galois group $G_F$, in \cite{fontaine-phigamma}, Fontaine showed that the category of $\ZZ_p\textrm{-representations}$ of $G_F$ is equivalent to the category of \'etale $(\varphi, \Gamma_F)\modules$, where $\Gamma_F$ is an open subgroup of $\ZZ_p^{\times}$ (see \S \ref{intro_subsec:arithmetic_case}).
On the other hand, to understand $\padic$ representations coming from geometry, Fontaine defined several classes of representations such as \textit{crystalline}, semistable, etc.\ in \cite{fontaine-annals}.
Putting the two point of views together, Fontaine asked the following natural question: is it possible to describe crystalline representations of $G_F$ in terms of $(\varphi, \Gamma_F)\modules$?
For an unramified extension $F/\QQ_p$, Fontaine studied this question in \cite{fontaine-phigamma}, and introduced the notion of finite crystalline-height representations (\textit{repr\'esentations de cr-hauteur finie}) of $G_F$, which was further developed by Wach \cite{wach-free, wach-torsion}, Colmez \cite{colmez-hauteur} and Berger \cite{berger-limites}.
More precisely, \cite{berger-limites} showed that the category of $G_F\textrm{-stable}$ $\ZZ_p\textrm{-lattices}$ of $\padic$ crystalline representations is equivalent to the category of \textit{Wach modules}, where a Wach module is a certain integral lattice inside the \'etale $(\varphi, \Gamma_F)\module$ associated to the representation (see \S \ref{intro_subsec:arithmetic_case}).

The two point of views of Fontaine admit natural generalisations to a relative base, i.e.\ formally étale algebras over a formal torus.
In particular, relative \'etale $(\varphi, \Gamma)\modules$ were studied by Andreatta \cite{andreatta-phigamma} and relative $\padic$ crystalline representations were studied by Faltings \cite{faltings-crystalline} and Brinon \cite{brinon-relatif}.
In \cite{abhinandan-relative-wach-i}, we introduced and studied the notion of relative Wach modules for an absolutely unramified (at $p$) relative base.
However, compared to the classical case, the results of \cite{abhinandan-relative-wach-i} are restrictive, i.e.\ we only show that relative Wach modules give rise to lattices inside relative crystalline representations; the converse is the following difficult \textit{open question: can one functorially associate a relative Wach module to a $\ZZ_p\textrm{-lattice}$ inside a relative crystalline representation?}

In this article, we resolve the \textit{open question} for the imperfect residue field case (see Theorem \ref{intro_thm:crystalline_wach_equivalence_imperfect}), and we use the result thus obtained, in a subsequent work \cite{abhinandan-relative-wach-ii}, to resolve the \textit{open question} in the relative case.
More concretely, for a complete discrete valuation field $L/\QQ_p$ with imperfect residue field, \cite{andreatta-phigamma} developed the theory of \'etale $(\varphi, \Gamma_L)\modules$, where $\Gamma_L$ is an open subgroup of $\ZZ_p(1)^d \rtimes \ZZ_p^{\times}$ with $d$ being the transcendence degree of $L/\QQ_p$, and \cite{brinon-imparfait} developed the theory of $\padic$ crystalline representations of $G_L$, the absolute Galois group of $L$.
However, for absolutely unramified $L/\QQ_p$, the theory of Wach modules for $L$ was missing from the picture.
So, in this article, we define Wach modules for $L$ and prove our \textit{first main result}:
\begin{thm}[{Corollary \ref{cor:crystalline_wach_equivalence_imperfect}}]\label{intro_thm:crystalline_wach_equivalence_imperfect}
	The category of $G_L\textrm{-stable}$ $\ZZ_p\textrm{-lattices}$ inside $\padic$ crystalline representations of $G_L$ is equivalent to the category of Wach modules for $L$.
\end{thm}

As mentioned above, the difficult part of Theorem \ref{intro_thm:crystalline_wach_equivalence_imperfect} is to functorially associate a Wach module to any $G_L\textrm{-stable}$ $\ZZ_p\lattice$ $T$ inside a $\padic$ crystalline representation of $G_L$.
To resolve this, let us note that using the classical theory of \cite{berger-limites} in the perfect residue field case, one can associate to $T$ a $\varphi\module$ $N$ over the base ring of Wach modules for $L$.
However, equipping $N$ with a natural action of $\Gamma_L$ is highly non-trivial, where the difficulty arises because $\Gamma_L$ is quite large compared to $\Gamma_F$ from the classical case.
The heart of this article constitutes a direct construction of the natural action of $\Gamma_L$ on $N$ (see \S \ref{intro_subsubsec:proof_sketch} for details).
Let us remark that the analogous theory of Breuil--Kisin modules in the imperfect residue field case was studied by Brinon and Trihan \cite{brinon-trihan}.
However, the theory of loc.\ cit.\ is different from the theory of Wach modules, in particular, the construction of the action of $\Gamma_L$ does not feature in \cite{brinon-trihan}.

Besides being natural generalisations of classical results to the relative case, the usefulness of relative Wach modules stems from its applications in the computation of $\padic$ vanishing cycles using syntomic complexes.
Indeed, to generalise the computation of $\padic$ vanishing cycles by Colmez and Nizio{\l} \cite{colmez-niziol} to the case of crystalline coefficients, in \cite{abhinandan-syntomic}, crucial inputs were the results on relative Wach modules from \cite{abhinandan-relative-wach-i}.
However, as mentioned above, the results of \cite{abhinandan-relative-wach-i}, and therefore, of \cite{abhinandan-syntomic} only work for a restrictive class of crystalline coefficients.
In order to generalise the results of \cite{colmez-niziol} to all crystalline coefficients, we need the more general result on relative Wach modules from \cite[Theorem 1.5]{abhinandan-relative-wach-ii}, for which Theorem \ref{intro_thm:crystalline_wach_equivalence_imperfect} is a crucial input.
Furthermore, in op.\ cit.\ we provide an interesting application of Theorem \ref{intro_thm:crystalline_wach_equivalence_imperfect}, in particular, we give a new criteria for checking the crystallinity of relative $\padic$ representations (see \cite[Theorem 1.7 \& Corollary 1.8]{abhinandan-relative-wach-ii}).

An additional motivation for considering Wach modules is to construct a \textit{deformation of the functor $\Dcrys$} from classical $\padic$ Hodge theory (see \cite[\S B.2.3]{fontaine-phigamma}).
This construction was carried out in the Fontaine-Laffaille range by Wach \cite[Theoreme 3]{wach-torsion}, and more generally, by Berger \cite[Th\'eor\`eme III.4.4]{berger-limites}.
In this article, our \textit{second main result} provides a generalisation of loc.\ cit.\ to the imperfect residue field case (see Theorem \ref{intro_thm:qdeformation_dcrys}).
Let us remark that the general idea of deformations of crystalline and de Rham cohomologies has led to exciting new developments in integral $\padic$ Hodge theory via the introduction and development of prismatic cohomology \cite{scholze-q-deformations, bhatt-morrow-scholze-1, bhatt-morrow-scholze-2, bhatt-scholze-prisms}.

Finally, note that recent developments in the theory of prismatic $F\textrm{-crystals}$ \cite{bhatt-scholze-crystals, du-liu-moon-shimizu, guo-reinecke} provide a new approach to the classification of lattices inside crystalline representations.
While the prismatic point of view is an exciting development, in the current paper, we employ techniques from the theory of $(\varphi, \Gamma)\modules$ to obtain our results.
This is due to the fact that, in our approach, the construction of Wach modules for $L$ and the proof of Theorem \ref{intro_thm:crystalline_wach_equivalence_imperfect} and Theorem \ref{intro_thm:qdeformation_dcrys}, are explicit and direct, which could be advantageous for ``arithmetic'' applications.
In \S \ref{subsubsec:relate_other_works}, we will provide more details on relations of our results in this article to other works.
In the rest of this section, we will describe the results mentioned above in more detail.
We begin by recalling the main classical result.

\subsection{The classical case}\label{intro_subsec:arithmetic_case}

Let $p$ be a fixed prime number and let $\kappa$ denote a perfect field of characteristic $p$; set $O_F := W(\kappa)$ to be the ring of $p\textrm{-typical}$ Witt vectors with coefficients in $\kappa$ and $F := \Fr(O_F)$.
Let $\overline{F}$ denote a fixed algebraic closure of $F$, let $\CC_p := \widehat{\overline{F}}$ denote the $\padic$ completion, and $G_F := \Gal(\overline{F}/F)$ the absolute Galois group of $F$.
Moreover, let $\Finfty := \cup_n F(\mu_{p^n})$ with $\Gamma_F := \Gal(F_{\infty}/F) \isomorphic \ZZ_p^{\times}$ and $H_F := \Gal(\overline{F}/\Finfty)$.
Furthermore, let $\Finfty^{\flat}$ denote the tilt of $\Finfty$ (see \S \ref{subsec:setup_nota}) and fix $\varepsilon := (1, \zeta_p, \zeta_{p^2}, \ldots)$ in $O_{\Finfty}^{\flat}$, and $\mu := [\varepsilon]-1$ and $[p]_q := \varphi(\mu)/\mu$ in $\Ainf(\OFinfty) := W(O_{\Finfty}^{\flat})$, the ring of $p\textrm{-typical}$ Witt vectors with coefficients in $O_{\Finfty}^{\flat}$.

In \cite{fontaine-phigamma}, Fontaine estalished a categorical equivalence between $\ZZ_p\textrm{-representation}$s of $G_F$ and étale $(\varphi, \Gamma_F)\textrm{-modules}$ over a certain period ring $\AF := O_F\llbracket \mu \rrbracket[1/\mu]^{\wedge} \subset W(\Finfty^{\flat})$, where ${}^{\wedge}$ denotes the $\padic$ completion, and $\AF$ is stable under the $(\varphi, \Gamma_F)\textrm{-action}$ on $W(\Finfty^{\flat})$.
For a fixed finite free $\ZZ_p\textrm{-representation}$ $T$ of $G_F$, the associated finite free \'etale $(\varphi, \Gamma_F)\module$ over $\AF$ is given as $\DF(T) := (\mbfa \otimes_{\ZZ_p} T)^{H_F}$, where $\mbfa \subset W(\CC_p^{\flat})$ is the maximal unramified extension of $\AF$ inside $W(\CC_p^{\flat})$.
In loc.\ cit., Fontaine conjectured that if $V := T[1/p]$ is crystalline then there exists a lattice inside $\DF(V) := \DF(T)[1/p]$ over which the action of $\Gamma_F$ admits a simpler form.
Denote by $\AF^+ := O_F\llbracket \mu \rrbracket \subset \AF$, which is stable under the $(\varphi, \Gamma_F)\action$, and note the following:
\begin{defi}\label{intro_defi:wach_mods}
	Let $a, b \in \ZZ$ with $b \geqslant a$.
	A \textit{Wach module} over $\AF^+$ with weights in the interval $[a, b]$ is a finite free $\AF^+\textrm{-module}$ $N$ equipped with a continuous and semilinear action of $\Gamma_F$ such that,
	\begin{enumarabicup}
		\item The action of $\Gamma_F$ on $N/\mu N$ is trivial.

		\item There is a Frobenius-semilinear operator $\varphi: N[1/\mu] \rightarrow N[1/\varphi(\mu)]$, commuting with the action of $\Gamma_F$, and such that $\varphi(\mu^b N) \subset \mu^b N$, the map $(1 \otimes \varphi) : \varphi^{\ast}(\mu^b N) := \AF^+ \otimes_{\varphi, \AF^+} \mu^b N \rightarrow \mu^b N$ is injective and its cokernel is killed by $[p]_q^{b-a}$.
	\end{enumarabicup}
\end{defi}

Denote the category of Wach modules over $\AF^+$ as $(\varphi, \Gamma_F)\Mod_{\AF^+}^{[p]_q}$, with morphisms between objects being $\AF^+\linear$, $\Gamma_F\equivariant$ and $\varphi\equivariant$ (after inverting $\mu$) morphisms.
Let $\Rep_{\ZZ_p}^{\crys}(G_F)$ denote the category of $\ZZ_p\textrm{-lattices}$ inside $\padic$ crystalline representations of $G_F$.
To any $T$ in $\Rep_{\ZZ_p}^{\crys}(G_F)$, using \cite{wach-free} and \cite{colmez-hauteur}, Berger functorially attaches a Wach module $\NF(T)$ over $\AF^+$ in \cite{berger-limites}.
The main result in the arithmetic case is as follows (see \cite{berger-limites}):
\begin{thm}\label{intro_thm:wach_crys_arith}
	The Wach module functor induces an equivalence of $\otimes\textrm{-categories}$:
	\begin{align*}
		\Rep_{\ZZ_p}^{\crys}(G_F) &\isomorphic (\varphi, \Gamma_F)\Mod_{\AF^+}^{[p]_q}\\
		T &\longmapsto \NF(T),
	\end{align*}
	with a quasi-inverse $\otimes\textrm{-functor}$ given as $N \mapsto \big(W(\CC_p^{\flat}) \otimes_{\AF^+} N\big)^{\varphi=1}$.
\end{thm}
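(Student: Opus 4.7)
The plan is to construct the Wach module functor explicitly using the theory of overconvergent $(\varphi, \Gamma_F)\modules$, exhibit a quasi-inverse via Fontaine's classical equivalence for étale $(\varphi, \Gamma_F)\modules$, and check compatibility of the two constructions. Given $T$ in $\Rep_{\ZZ_p}^{\crys}(G_F)$, set $V := T[1/p]$ and start from the étale $(\varphi, \Gamma_F)\module$ $\DF(V)$ over $\BF := \AF[1/p]$. By Cherbonnier-Colmez overconvergence, $\DF(V)$ descends to $\mbfd_F^{\dagger}(V)$ over an overconvergent ring $\mbfb_F^{\dagger}$; extending scalars to the Robba ring $\mbfb_{\rig, F}^{\dagger}$ and using that $V$ is crystalline, Berger's dictionary provides a $(\varphi, \Gamma_F)$-equivariant isomorphism $\mbfb_{\rig, F}^{\dagger}[1/t] \otimes_{\mbfb_{\rig, F}^{\dagger}} \mbfd_{\rig, F}^{\dagger}(V) \isomorphic \mbfb_{\rig, F}^{\dagger}[1/t] \otimes_F \Dcrys(V)$. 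Transporting the filtration on $\Dcrys(V)$ through this isomorphism and combining with the integral structure from $\DF(T)$ isolates the desired sub-$\AF^+$-lattice $\NF(T) \subset \DF(T)$; what needs to be checked is finite freeness over $\AF^+$, the $[p]_q$-height bound on Frobenius, and triviality of the $\Gamma_F\action$ modulo $\mu$.

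For the quasi-inverse, a Wach module $N$ gives, after inverting $\mu$, an étale $(\varphi, \Gamma_F)\module$ $N[1/\mu]$ over $\AF$; Fontaine's equivalence then produces $T(N) := \bigl(W(\CC_p^{\flat}) \otimes_{\AF^+} N\bigr)^{\varphi=1}$, a finite free $\ZZ_p\textrm{-representation}$ of $G_F$. To see that $V(N) := T(N)[1/p]$ is crystalline, I would identify $\Dcrys(V(N))$ with $(N/\mu N)[1/p]$ equipped with the natural filtration induced by the $[p]_q$-height condition, via the embedding $N \hookrightarrow \mbfb_{\rig, F}^+ \otimes_{\AF^+} N$ and reduction modulo $t$. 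A rank count then shows $\dim_F \Dcrys(V(N)) = \dim_{\QQ_p} V(N)$, which by the classical admissibility criterion forces crystallinity and recovers $N$ as a Wach module.

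The final step is to assemble the two constructions into quasi-inverse functors. On one side, the étaleness of Frobenius on $\NF(T)[1/\mu]$ combined with the $[p]_q$-height ensures $\AF \otimes_{\AF^+} \NF(T) \isomorphic \DF(T)$, so that $T(\NF(T)) \isomorphic T$; on the other side, the uniqueness built into the construction in Step~1 applied to $V = V(N)$ forces $\NF(V(N)) = N[1/p]$, with the integral identification $\NF(T(N)) \isomorphic N$ following from the intersection description of $\NF(T)$. Compatibility with tensor products is inherited from the corresponding property of $\DF$ and $\Dcrys$, using that $[p]_q$-heights add under tensor product.

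The main obstacle is the existence and uniqueness of $\NF(V)$ as a sub-$\mbfb_F^+$-module of $\mbfd_{\rig, F}^{\dagger}(V)[1/t]$ exhibiting all the required structure. Showing that this candidate is finite free over $\mbfb_F^+$, that $\Gamma_F$ acts trivially on the reduction modulo $\mu$, and that Frobenius satisfies the precise $[p]_q^{b-a}$-height bound dictated by the Hodge-Tate weights, requires a delicate decompletion argument passing from $\mbfb_{\rig, F}^{\dagger}$ back to $\mbfb_F^+$, together with a careful analysis of the Sen operator on $\mbfd_{\rig, F}^{\dagger}(V)$ to control the growth of $\gamma - 1$ near $\mu = 0$. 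Once this existence/uniqueness statement is in hand, the rest of the proof is essentially bookkeeping propagating through the two directions.
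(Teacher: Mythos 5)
This statement is not proved in the paper at all: it is the classical theorem of Wach, Colmez and Berger, quoted from \cite{wach-free, colmez-hauteur, berger-limites}, so there is no in-paper argument to measure you against; the nearest analogue is the imperfect-residue-field proof (Theorems \ref{thm:fh_crys_imperfect} and \ref{thm:crys_fh_imperfect}), which runs through a Kisin--Ren style construction over the open unit disk. Judged on its own, your outline names the right objects but defers exactly the two steps that constitute the theorem. For the direction crystalline $\Rightarrow$ Wach module, the assertion that ``transporting the filtration on $\Dcrys(V)$ through Berger's isomorphism and combining with the integral structure from $\DF(T)$ isolates the desired sub-$\AF^+$-lattice'' is not a construction: the existence of a finite-height lattice on which $\Gamma_F$ acts trivially modulo $\mu$ \emph{is} Colmez's finite-height theorem together with Wach's and Berger's refinements, and it does not fall out of the generic-fibre isomorphism over $\mbfb_{\textup{rig},F}^{\dagger}[1/t]$. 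A workable substitute is the route this paper takes in the imperfect case (\S \ref{subsec:kisin_module}--\S \ref{subsec:obtaining_wachmod}): build a module over $\mbfb_{\textup{rig},F}^{+}$ from the filtered $\varphi$-module by imposing the filtration conditions at the points $\zeta_{p^{n+1}}-1$, prove finite $[p]_q$-height and purity of slope $0$, descend to the bounded ring by slope theory, and intersect with $\DF(T)$; your ``delicate decompletion argument plus Sen operator analysis'' is precisely the content you would have to supply, not bookkeeping.

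The converse direction as written is circular. You propose to identify $\Dcrys(V(N))$ with $(N/\mu N)[1/p]$ by embedding $N$ into $\mbfb_{\textup{rig},F}^{+}\otimes_{\AF^+}N$ and comparing with $\Dcrys$, but the comparison isomorphism $\mbfb_{\textup{rig},F}^{\dagger}[1/t]\otimes \mbfd_{\textup{rig},F}^{\dagger}(V)\isomorphic \mbfb_{\textup{rig},F}^{\dagger}[1/t]\otimes_F \Dcrys(V)$ is only available once $V$ is known to be crystalline, which is what you are trying to prove. One must instead manufacture Galois-invariant crystalline periods directly from the Wach module structure --- Wach's original computation, or the argument of Theorem \ref{thm:fh_crys_imperfect} here, which works over a PD period ring, uses the operators $\log\gamma_i$ to produce horizontal/invariant elements, and only then runs the dimension count that forces admissibility. (Also, the identification is with $N/\mu N$, i.e.\ evaluation at $\mu=0$, not ``reduction modulo $t$'': $t$ vanishes at every $\zeta_{p^{n+1}}-1$, and killing $t$ would destroy the module.) Finally, the integral statement $\NF(T(N))\isomorphic N$ needs the uniqueness of the $\AF^+$-lattice inside $\DF(T)$ (Berger's Lemme II.1.3 type argument), which you gesture at but do not establish.
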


\subsection{The imperfect residue field case}\label{intro_subsec:imperfect_case}

Let $d \in \NN$ and let $X_1, X_2, \ldots, X_d$ be indeterminates and let $\OLframe := (O_F[X_1^{\pm 1}, \ldots, X_d^{\pm d}]_{(p)})^{\wedge}$, where ${}^{\wedge}$ denotes the $\padic$ completion.
It is a complete discrete valuation ring with uniformiser $p$, imperfect residue field $\kappa(X_1, \ldots, X_d)$ and fraction field $\Lframe := \OLframe[1/p]$.
Let $O_L$ denote a finite \'etale extension of $\OLframe$ such that it is a complete discrete valuation ring with uniformiser $p$, imperfect residue field a finite \'etale extension of $\kappa(X_1, \ldots, X_d)$ and fraction field $L := O_L[1/p]$.
Let $G_L$ denote the absolute Galois group of $L$ for a fixed algebraic closure $\Lbar/L$; let $\Gamma_L \isomorphic \ZZ_p(1)^d \rtimes \ZZ_p^{\times}$ denote the Galois group of $\Linfty$ over $L$ where $\Linfty$ is the fraction field of $\OLinfty$ obtained by adjoining to $O_L$ all $p\textrm{-power}$ roots of unity and all $p\textrm{-power}$ roots of $X_i$ for all $1 \leqslant i \leqslant d$ (see \S \ref{sec:period_rings_padic_reps}).
In this setting, we have the theory of crystalline representations of $G_L$ \cite{brinon-imparfait} and \'etale $(\varphi, \Gamma)\textrm{-modules}$ \cite{andreatta-phigamma}.
However, the theory of Wach modules for $L$, i.e.\ a description of the $\padic$ crystalline representations $G_L$ in terms of $(\varphi, \Gamma_L)\modules$, was missing from the picture.
The main goal of this article is to complete this picture, which we discuss next.

\subsubsection{Wach modules}

For $1 \leqslant i \leqslant d$, let us set $X_i^{\flat} := (X_i, X_i^{1/p}, \ldots)$ in $\OLinfty^{\flat}$ and take $[X_i^{\flat}]$ in $\Ainf(\OLinfty) = W(\OLinfty^{\flat})$ to be the Teichm\"uller representative of $X_i^{\flat}$.
Let $\AL^+$ denote the unique finite \'etale extension (along the finite \'etale map $\OLframe \rightarrow O_L$) of the $(p, \mu)\textrm{-adic}$ completion of the localisation $O_F\llbracket \mu \rrbracket\big[[X_1^{\flat}]^{\pm 1}, \ldots, [X_d^{\flat}]^{\pm 1}\big]_{(p, \mu)}$.
The ring $\AL^+$ is equipped with a Frobenius endomorphism $\varphi$ and a continuous action of $\Gamma_L$ (see \S \ref{subsec:setup_nota} and \S \ref{subsec:period_rings_imperfect}).

\begin{defi}\label{intro_defi:wach_mod_imperfect}
	Let $a, b \in \ZZ$ with $b \geqslant a$.
	A \textit{Wach module} over $\AL^+$ with weights in the interval $[a, b]$ is a finite free $\AL^+\textrm{-module}$ $N$ equipped with a continuous and semilinear action of $\Gamma_L$ satisfying the following assumptions:
	\begin{enumarabicup}
		\item The action of $\Gamma_L$ on $N/\mu N$ is trivial.

		\item There is a Frobenius-semilinear operator $\varphi: N[1/\mu] \rightarrow N[1/\varphi(\mu)]$, commuting with the action of $\Gamma_L$, and such that $\varphi(\mu^b N) \subset \mu^b N$, the map $(1 \otimes \varphi) : \varphi^{\ast}(\mu^b N) = \AL^+ \otimes_{\varphi, \AL^+} \mu^b N \rightarrow \mu^b N$ is injective and its cokernel is killed by $[p]_q^{b-a}$.
	\end{enumarabicup}
	Say that $N$ is \textit{effective} if one can take $b = 0$ and $a \leqslant 0$.
	Denote the category of Wach modules over $\AL^+$ as $(\varphi, \Gamma)\Mod_{\AL^+}^{[p]_q}$, with morphisms between objects being $\AL^+\linear$, $\Gamma_L\equivariant$ and $\varphi\equivariant$ (after inverting $\mu$) morphisms.
\end{defi}

Set $\AL := \AL^+[1/\mu]^{\wedge}$ as the $\padic$ completion, equipped with a Frobenius endomorphism $\varphi$ and a continuous action of $\Gamma_L$.
Let $T$ be a finite free $\ZZ_p\module$ equipped with a continuous action of $G_L$, and note that one can functorially attach to $T$ a finite free \'etale $(\varphi, \Gamma_L)\module$ $\DL(T)$ over $\AL$ of rank $=\rank_{\ZZ_p} T$, equipped with a Frobenius-semilinear operator $\varphi$ and a semilinear and continuous action of $\Gamma_{L}$.
In fact, the preceding functor induces an equivalence between finite free $\ZZ_p\textrm{-representations}$ of $G_L$ and finite free \'etale $(\varphi, \Gamma_L)\modules$ over $\AL$ (see \S \ref{subsec:phigamma_mod_imperfect}).

\begin{rem}
	The category of Wach modules over $\AL^+$ can be realized as a full subcategory of \'etale $(\varphi, \Gamma)\modules$ over $\AL$ (see Proposition \ref{prop:wach_etale_ff_imperfect}).
\end{rem}

\subsubsection{Main results}

Let $\Rep_{\ZZ_p}^{\crys}(G_L)$ denote the category of $\ZZ_p\textrm{-lattices}$ inside $\padic$ crystalline representations of $G_L$.
The main result of this article, i.e.\ Theorem \ref{intro_thm:crystalline_wach_equivalence_imperfect}, can be stated more precisely as follows:
\begin{thm}[Corollary \ref{cor:crystalline_wach_equivalence_imperfect}]\label{intro_thm:crystalline_wach_equivalence}
	The Wach module functor induces an equivalence of $\otimes\textrm{-categories}$
	\begin{align*}
		\Rep_{\ZZ_p}^{\crys}(G_L) &\isomorphic (\varphi, \Gamma)\Mod_{\AL^+}^{[p]_q}\\
		T &\longmapsto \NL(T),
	\end{align*}
	with a quasi-inverse given as $N \mapsto \TL(N) := \big(W\big(\CC_L^{\flat}\big) \otimes_{\AL^+} N\big)^{\varphi=1}$, where $\CC_L := \widehat{\Lbar}$.
\end{thm}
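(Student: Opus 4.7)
The plan is to follow the strategy that Berger employed in the arithmetic case (Théorème III.4.4 of \cite{berger-limites}), adapted to the imperfect residue field setting via the overconvergence machinery of Andreatta--Brinon. I would construct the functor $\NL$ and its quasi-inverse $\TL$ in parallel and then verify that they are mutually inverse.

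To construct $\NL$, I would start with $T \in \Rep_{\ZZ_p}^{\crys}(G_L)$ with $V := T[1/p]$. The étale $(\varphi, \Gamma_L)\module$ $\DL(V)$ over $\BL := \AL[1/p]$ descends to an overconvergent $(\varphi, \Gamma_L)\module$ $\DLdag(V)$ over $\BLdag$. Using the crystalline comparison $\OBcrys \otimes_{\QQ_p} V \isomorphic \OBcrys \otimes_L \ODcrysL(V)$, I would identify a natural candidate $\NL(V) \subset \DLdag(V)$ as a finite free $\AL^+[1/p]\module$, characterised by a $[p]_q\textrm{-height}$ bound coming from the bounded Hodge--Tate weights of $V$ and with $\NL(V)/\mu$ canonically isomorphic to $\ODcrysL(V)$. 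The integral object is then defined as $\NL(T) := \NL(V) \cap \DL(T)$, and its defining axioms (in particular the triviality of the $\Gamma_L\action$ modulo $\mu$ and the $[p]_q$-height condition on $\varphi$) follow from the construction.

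For the quasi-inverse, given a Wach module $N$, I set $T(N) := \bigl(W(\CC_L^{\flat}) \otimes_{\AL^+} N\bigr)^{\varphi=1}$; the $(\varphi, \Gamma_L)\module$ $\AL \otimes_{\AL^+} N[1/\mu]^{\wedge}$ is étale over $\AL$ (using the $[p]_q$-height condition), so under the Andreatta--Brinon equivalence it corresponds to a finite free $\ZZ_p\textrm{-representation}$ of $G_L$, which is $T(N)$. Crystallinity of $V(N) := T(N)[1/p]$ follows by producing a filtered $\varphi$-equivariant isomorphism $\OBcrys \otimes_{\QQ_p} V(N) \isomorphic \OBcrys \otimes_{\AL^+} N$, from which one reads off $\dim_L \ODcrysL(V(N)) = \dim_{\QQ_p} V(N)$. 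Finally, the natural maps $\NL(T(N)) \rightarrow N$ and $T(\NL(T)) \rightarrow T$ coming from the respective base-change inclusions are isomorphisms by rank considerations combined with $\varphi$-invariance, yielding the equivalence and its $\otimes\textrm{-compatibility}$.

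The main obstacle will be controlling the action of $\Gamma_L$ during the construction of $\NL(V)$ and verifying that the candidate submodule has trivial action modulo $\mu$. In contrast to the perfect residue field case, where $\Gamma_F$ is procyclic up to torsion and is governed by a single generator $\gamma$, here $\Gamma_L$ is a non-abelian $\padic$ Lie group realised as an extension of the cyclotomic group $\Gamma_F$ by the Kummer part $\Gamma_L' \cong \ZZ_p^d$ coming from the $p\textrm{-power}$ roots of the $X_i$. Berger's argument, which exploits the differential operator attached to $\gamma - 1$ and the analytic theory of $\BrigLdag\modules$, must therefore be generalised to a multi-variable setting that tracks jointly the cyclotomic and all Kummer directions, using the crystallinity of $V$ to obtain uniform estimates on the Taylor expansions simultaneously in every direction of $\Gamma_L$.
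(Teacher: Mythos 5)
The crux of this theorem is the direction crystalline $\Rightarrow$ finite $\pqheight$, and your proposal assumes rather than constructs the key object there. You say you would ``identify a natural candidate $\NL(V) \subset \DLdag(V)$'' characterised by a $[p]_q$-height bound and by $\NL(V)/\mu \isomorphic \ODcrysL(V)$, and that the Wach-module axioms (trivial $\Gamma_L$-action modulo $\mu$, the $[p]_q$-height condition on $\varphi$) ``follow from the construction'' --- but no construction is actually given, and this is precisely the content of the theorem: unlike the arithmetic case, where Berger could lean on Colmez's theorem that crystalline implies finite hauteur, no such existence result is available over $L$. The paper has to manufacture the lattice: it starts from the classical Wach module $\NLbreve(V)$ over $\Lbreve$, runs the Kisin--Ren construction to produce $\MLbreve(\DcrysLbreve(V))$ over $\BrigLbreve^+$ and identifies it with $\NrigLbreve(V)$, descends to $\BrigL^+$ by the explicit intersection $\ML(\ODcrysL(V)) = \big(\BrigL^+[\mu/t]\otimes_L \ODcrysL(V)\big)\cap \MLbreve(\DcrysLbreve(V))$ (whose freeness, finite $\pqheight$ and slope $0$ require the flatness and localisation-at-zeros-of-$t/\mu$ lemmas of \S \ref{subsec:period_rings_imperfect} and \S \ref{subsec:kisin_module}), and only then descends from slope-$0$ modules over $\BrigL^+$ to $\BL^+$ (Kedlaya's slope theory plus the analogue of Kisin's Lemma 1.3.13) before intersecting with $\DL(T)$. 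None of these steps appears in your outline, and without them the candidate $\NL(V)$ simply does not exist on the page.

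On the $\Gamma_L$-action, which you correctly flag as the main obstacle, your proposed remedy --- generalising Berger's differential-operator argument with ``uniform estimates on the Taylor expansions simultaneously in every direction of $\Gamma_L$'' --- is not what is needed and is not substantiated. The paper does not estimate in the cyclotomic direction at all: it transports the $G_L$-action through the isomorphism $\Bcrys(\OLbar)\otimes_{\QQ_p}V \isomorphic \Bcrys(\OLbar)\otimes_L \ODcrysL(V)$, whose $G_L$-structure is the Taylor series in $g([X_i^{\flat}])-[X_i^{\flat}]$ built from the quasi-nilpotent connection on $\ODcrysL(V)$ (Lemma \ref{lem:faltings_isomorphism_dcrys}, Lemma \ref{lem:odcrys_gammal_action}), proves stability of the large modules $\Binf(\OLinfty)\otimes_{\BLbreve^+}\NLbreve(V)$ and $\BrigLtilde^+\otimes_{\BrigLbreve^+}\NrigLbreve(V)$ (Proposition \ref{prop:nlinf_gammastab}), and then cuts down to $\NrigL(V)$ using intersection identities such as $(t/\mu)\BrigLtilde^+\cap \BrigL^+=(t/\mu)\BrigL^+$ (Lemma \ref{lem:brigl_tilde_t}); triviality modulo $\mu$ and the compatibility $\dldag \isomorphic \DLdag(V)$ needed to get $\AL\otimes_{\AL^+}\NL(T)\isomorphic \DL(T)$ are separate arguments you do not address. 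Your sketch of the converse direction (Wach implies crystalline, via a filtered comparison over the crystalline period ring) is in outline the paper's route, but the central direction of the equivalence has a genuine gap.
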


Our strategy for the proof of Theorem \ref{intro_thm:crystalline_wach_equivalence} will be described in \S \ref{intro_subsubsec:proof_sketch}.

\begin{rem}
	In Theorem \ref{intro_thm:crystalline_wach_equivalence}, we do not expect the functor $\NL$ to be exact (see \cite[Example 7.1]{chang-diamond} for an example in the arithmetic case).
	However, after inverting $p$, the Wach module functor induces an exact equivalence of $\otimes\textrm{-categories}$ $\Rep_{\QQ_p}^{\crys}(G_L) \isomorphic (\varphi, \Gamma)\Mod_{\BL^+}^{[p]_q}$, via $V \mapsto \NL(V)$, with an exact quasi-inverse $\otimes\textrm{-functor}$ given as $M \mapsto \VL(M) := \big(W(\CC_L^{\flat}) \otimes_{\AL^+} M\big)^{\varphi=1}$ (see Corollary \ref{cor:crystalline_wach_rat_equivalence_imperfect}).
\end{rem}

As indicated earlier, the proof of Theorem \ref{intro_thm:crystalline_wach_equivalence} is based on techniques in the theory of $(\varphi, \Gamma)\modules$.
One of the advantages of using this approach is that it enables us to establish several comparison results between objects appearing in the $\padic$ Hodge theory over $L$ (see Proposition \ref{prop:oalpd_comparison}, Proposition \ref{prop:dcrys_brigtilde_comp}, Corollary \ref{cor:nrig_dag_comp} and Corollary \ref{cor:qdeformation_dcrys}).
In order to keep the introduction light, we only mention one of the comparison results here and refer the reader to the main body of this article for the rest.

Let $N$ be a Wach module over $\AL^+$.
We equip $N$ with a Nygaard filtration defined as $\Fil^k N := \{x \in N \textrm{ such that } \varphi(x) \in [p]_q^k N\}$.
Then, we note that $(N/\mu N)[1/p]$ is a $\varphi\module$ over $L$, since $[p]_q = p \textmod \mu \AL^+$, and $N/\mu N$ is equipped with a filtration $\Fil^k(N/\mu N)$ given as the image of $\Fil^k N$ under the surjection $N \twoheadrightarrow N/\mu N$.
We equip $(N/\mu N)[1/p]$ with the induced filtration, in particular, it is a filtered $\varphi\module$ over $L$.
Moreover, let $V := \TL(N)[1/p]$ denote the associated crystalline representation of $G_L$ from Theorem \ref{intro_thm:crystalline_wach_equivalence}.
Then, we can functorially associate to $V$ a filtered $(\varphi, \partial)\module$ over $L$ denoted $\ODcrysL(V)$ (see \S \ref{subsec:crysrep_imperfect}), and show the following:
\begin{thm}[Corollary \ref{cor:qdeformation_dcrys}]\label{intro_thm:qdeformation_dcrys}
	Let $N$ be a Wach module over $\AL^+$ and $V = \TL(N)[1/p]$ the associated crystalline representation from Theorem \ref{intro_thm:crystalline_wach_equivalence}.
	Then, we have a natural isomorphism $(N/\mu N)[1/p] \isomorphic \ODcrysL(V)$ as filtered $\varphi\modules$ over $L$.
\end{thm}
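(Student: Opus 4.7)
The plan is to adapt Berger's arithmetic argument (Th\'eor\`eme III.4.4 of \cite{berger-limites}) to the imperfect residue field setting by replacing the period ring $\Bcrys$ with its geometric counterpart $\OBcrys$, which carries the connection encoding $\ODcrysL$. The starting point is the crystalline comparison $V \otimes_{\QQ_p} \OBcrys \isomorphic \ODcrysL(V) \otimes_L \OBcrys$ (respecting $\varphi$, the $G_L$-action, filtration, and connection), combined with the inclusion $N \hookrightarrow W(\CC_L^\flat) \otimes_{\ZZ_p} T$ extracted from Theorem \ref{intro_thm:crystalline_wach_equivalence}, which places $N$ inside the same ambient period ring where $\ODcrysL(V)$ lives.

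The first concrete step is to use the natural embedding $\AL^+[1/p] \hookrightarrow \OBcrys^+$ (under which $\mu/t$ becomes a unit) to construct a canonical $\varphi$- and $G_L$-equivariant map
\begin{equation*}
\OBcrys^+ \otimes_{\AL^+} N \lnra \OBcrys^+ \otimes_L \ODcrysL(V),
\end{equation*}
factoring through $\OBcrys^+ \otimes_{\QQ_p} V$. I would then show this becomes an isomorphism after inverting $t$, where it reduces to the comparison between \'etale $(\varphi, \Gamma_L)$-modules supplied by Theorem \ref{intro_thm:crystalline_wach_equivalence}, and use the finite $[p]_q$-height of $N$ to control the cokernel by an explicit power of $t/\mu$, upgrading this to an isomorphism of $\OBcrys^+$-modules. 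Reducing modulo $\mu$ and taking $G_L$-invariants --- using the identification $(\OBcrys^+/\mu)^{G_L}[1/p] = L$ from Brinon's theory \cite{brinon-imparfait} (essentially the Poincar\'e lemma for $\OBcrys^+$) --- yields $(N/\mu N)[1/p] \isomorphic \ODcrysL(V)$ as $\varphi$-modules over $L$.

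The main obstacle will be matching the filtrations. By definition, $\Fil^k(N/\mu N)[1/p]$ is the image of $\Fil^k N = \{x \in N : \varphi(x) \in [p]_q^k N\}$, while the Hodge filtration on $\ODcrysL(V)$ is induced from $\Fil^k(V \otimes \OBdR)$. The bridge is the relation $t = \mu \cdot u$ with $u \in (\OBcrys^+)^\times$ together with $[p]_q \equiv p \bmod \mu$, which render the two conditions directly comparable inside $V \otimes \OBcrys^+$. The inclusion $\Fil^k(N/\mu N)[1/p] \subseteq \Fil^k \ODcrysL(V)$ is then immediate from $\varphi$-equivariance (a $\varphi$-preimage lying in $[p]_q^k (N \otimes \OBcrys^+)$ yields, after dividing by $t^k$, an element of $\Fil^k(V \otimes \OBdR)$). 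The reverse inclusion is the delicate part; I would handle it by a rank/length count, showing that the Hodge--Tate jumps of both filtrations are prescribed by the common interval $[a, b]$ of $[p]_q$-weights of $N$, so that the two exhaustive filtrations on the same finite-dimensional $L$-vector space must coincide. A framing-based reduction to the arithmetic case, in the spirit of \cite{abhinandan-relative-wach-i}, should let one import Berger's filtration comparison argument, with the variables $X_i^{\flat}$ entering only as $\varphi$-horizontal coefficients modulo $\mu$.
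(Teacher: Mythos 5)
Your overall strategy (push $N$ and $\ODcrysL(V)$ into a common $\OBcrys^+$-lattice, reduce mod $\mu$, take invariants) is reasonable, but the two steps that carry all the weight are not actually supplied. First, the plus-level comparison: the natural map available goes $\OBcrys^+(\OLbar) \otimes_L \ODcrysL(V) \rightarrow \OBcrys^+(\OLbar) \otimes_{\QQ_p} V$, and it is an isomorphism only after inverting $t$; what you need is that the image of $\OBcrys^+(\OLbar) \otimes_{\AL^+} N$ (via $\mu^s(\mbfa^+ \otimes T) \subset \mbfa^+ \otimes_{\AL^+} N \subset \mbfa^+ \otimes T$, Lemma \ref{lem:wachmod_comp_imperfect}) lands inside this sublattice and fills it up. Your justification --- ``use the finite $\pqheight$ of $N$ to control the cokernel by an explicit power of $t/\mu$'' --- is circular: since $t/\mu$ is a unit in $\Bcrys^+(\OLinfty)$, being killed by a power of $t/\mu$ is the same as the assertion that the map is surjective, and no mechanism is given for producing that bound. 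This lattice comparison is precisely the technical core; in the paper it is Proposition \ref{prop:oalpd_comparison} (the comparison over $\OALpi^{\PD}$), whose proof requires the auxiliary PD-rings $\OSmPD$, convergence of the operators $\log \gamma_i$, quasi-nilpotence of the connection and a unit-determinant argument --- none of which is replaced by anything in your outline, and it does not follow formally from Theorem \ref{intro_thm:crystalline_wach_equivalence}.

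Second, the descent step: $(\OBcrys^+(\OLbar)/\mu)^{G_L}[1/p] = L$ is not ``essentially the Poincar\'e lemma'' (that statement concerns the de Rham complex of $\OBcrys$, not mod-$\mu$ invariants), and it is not obviously true: $\mu$ and $t$ differ by a unit in $\Acrys$, so you are computing invariants of $\OBcrys^+/t$, which involves $H^1(G_L, t\,\OBcrys^+)$-type obstructions that you have not addressed; even granting it, extracting a canonical $L$-rational isomorphism from an isomorphism over $(\OBcrys^+/\mu)^{G_L}$ needs an argument. The paper avoids this entirely by reducing modulo $\Fil^1 \OALpi^{\PD}$, whose quotient is the finite extension $L(\zeta_p)$, and then using finite Galois descent along $L(\zeta_p)/L$. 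Finally, for the filtrations, the primary argument you propose (a rank/length count from the fact that both filtrations have jumps in $[a,b]$) does not work: agreement of the jump interval does not force the filtrations, or even their graded dimensions, to coincide --- that is what must be proved. Your fallback (base change to $\Lbreve$ via Corollary \ref{cor:wachmod_scalarext}, import Berger's filtered isomorphism from \cite{berger-limites}, and descend subspaces along the faithfully flat $L \rightarrow \Lbreve$) could be made to work, but only after the $L$-rational isomorphism is constructed and shown compatible with Berger's after base change, which is exactly what is missing; the paper instead deduces the filtration compatibility directly from the filtered comparison of Proposition \ref{prop:oalpd_comparison} and $\Gal(L(\zeta_p)/L)$-invariance.
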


The proof of Theorem \ref{intro_thm:qdeformation_dcrys} is obtained by utilising the computations done in the proof of Theorem \ref{thm:fh_crys_imperfect}, more specifically, using Proposition \ref{prop:oalpd_comparison}.

\begin{rem}
	The statement of Theorem \ref{intro_thm:qdeformation_dcrys} is motivated by the results \cite[\S B.2.3]{fontaine-phigamma} and \cite[Th\'eor\`eme III.4.4]{berger-limites} in the perfect residue field case, but our proof is independent of those results.
	However, note that it is also possible to deduce that the isomorphism in Theorem \ref{intro_thm:qdeformation_dcrys} is compatible with filtrations, by using \cite[Th\'eor\`eme III.4.4]{berger-limites} as an input (see \cite[Remark 5.8]{abhinandan-relative-wach-ii}).
\end{rem}

\begin{rem}\label{intro_rem:wachmod_qconnection}
	Based on the expectation put forth in \cite[Remark 4.48]{abhinandan-relative-wach-i}, it is reasonable to expect that the $L\textrm{-vector}$ space $(N/\mu N)[1/p]$ may be equipped with a connection by defining a $q\textrm{-connection}$ on $N$ using the action of the geometric part of $\Gamma_L$, i.e.\ $\Gamma_L'$ (see \S \ref{sec:period_rings_padic_reps}), and inducing a connection via $N \xrightarrow{q \mapsto 1} N/\mu N$.
	Moreover, the isomorphism $(N/\mu N)[1/p] \isomorphic \ODcrysL(V)$ in Theorem \ref{intro_thm:qdeformation_dcrys} should be further compatible with connections.
	These expectations will be verified in \cite{abhinandan-relative-wach-ii}.
\end{rem}

\subsubsection{Strategy for the proof of Theorem \ref{intro_thm:crystalline_wach_equivalence}}\label{intro_subsubsec:proof_sketch}

To prove the theorem, starting with a $\ZZ_p\lattice$ $T$ inside a $\padic$ crystalline representation of $G_L$, we first use the result in the perfect residue field case (see Theorem \ref{intro_thm:wach_crys_arith}) and its compatibility with the results of \cite{kisin-modules, kisin-ren} (see \S \ref{subsec:kisin_module}) to construct a finite free module $\NrigL(V)$ (associated to $V = T[1/p]$), over the ring of functions of the open unit disk over $L$ (denoted $\BrigL^+$), such that $\NrigL(V)$ satisfies a Frobenius finite $\pqheight$ condition.
However, proving the existence of a non-trivial action of $\Gamma_L$ on $\NrigL(V)$ is a difficult question and it does not follow from the classical theory because $\Gamma_L \isomorphic \ZZ_p(1)^d \rtimes \ZZ_p^{\times}$, whereas we have $\Gamma_F \isomorphic \ZZ_p^{\times}$ in the classical case.
To resolve this issue, our innovation is to use the Galois action on $V$ and its crystallinity to explicitly show that $\NrigL(V)$ is equipped with an action of $\Gamma_L$ (see Proposition \ref{prop:nrigl_gammastab}).
Furthermore, we show that our construction is compatible with the theory of (overconvergent) \'etale $(\varphi, \Gamma_L)\modules$ from \cite{andreatta-phigamma, andreatta-brinon}, establishing the naturality of the action of $\Gamma_L$ on $\NrigL(V)$ (see \S \ref{subsec:compatibility_phigammmod}).
Next, we set $\NL(V) := \NrigL(V) \cap \DLdag(V) \subset \DrigLdag(V)$ as a module over $\BL^+ = \AL^+[1/p]$, where $\DLdag(V)$ is the overconvergent \'etale $(\varphi, \Gamma_L)\module$ associated to $V$ and $\DrigLdag(V)$ is the $(\varphi, \Gamma_L)\module$ over the Robba ring, of slope 0 and associated to $V$ (see \S \ref{subsec:phigamma_mod_imperfect} and Definition \ref{defi:nlv}).
Finally, we set $\NL(T) := \NL(V) \cap \DL(T) \subset \DL(V)$ as an $\AL^+\module$ and show that it satisfies the axioms of Definition \ref{intro_defi:wach_mod_imperfect} (see the proof of Theorem \ref{thm:crys_fh_imperfect} in \S \ref{subsec:obtaining_wachmod}).
In the opposite direction, starting with a Wach module $N$ over $\AL^+$, we use ideas developed in \cite{abhinandan-relative-wach-i} to show that $\TL(N)[1/p]$ is crystalline (see Theorem \ref{thm:fh_crys_imperfect}).

\subsubsection{Relation to other works}\label{subsubsec:relate_other_works}

Our first main result, Theorem \ref{intro_thm:crystalline_wach_equivalence}, is a direct generalisation of Theorem \ref{intro_thm:wach_crys_arith} from \cite{wach-free, colmez-hauteur, berger-limites}.
As indicated in \S \ref{intro_subsubsec:proof_sketch}, starting with a crystalline $\ZZ_p\textrm{-representation}$ $T$ of $G_L$, the construction of a finite $\pqheight$ module $\NL(T)$ uses classical Wach modules and its compatibility with the results of \cite{kisin-modules, kisin-ren}.
However, equipping $\NL(T)$ with a natural action of $\Gamma_L$ is highly non-trivial, in particular, it does not follow from previous works and constitutes the heart of this article.
For the converse, starting with a Wach module $N$ over $\AL^+$, we use ideas from \cite{abhinandan-relative-wach-i} to show that $\TL(N)[1/p]$ is crystalline.
Moreover, as mentioned earlier, the results on Wach modules in the current paper are different from the theory of Breuil--Kisin modules in the imperfect residue field case studied in \cite{brinon-trihan}.

Now, let us note that using the unpublished results of Tsuji in \cite{tsuji-crystalline} and the use of \cite{brinon-trihan} in \cite{du-liu-moon-shimizu}, it can be seen that the current paper is a crucial input to the construction of relative Wach modules in \cite{abhinandan-relative-wach-ii}.
Moreover, recent developments in the theory of prismatic $F\textrm{-crystals}$ \cite{bhatt-scholze-crystals, du-liu-moon-shimizu, guo-reinecke}, would suggest that there is a categorical equivalence between the category of Wach modules over $\AL^+$ and the category of prismatic $F\textrm{-crystals}$ on the absolute prismatic site $(\Spf O_L)_{\Prism}$.
At this point, let us remark that unlike the case of Breuil--Kisin modules from \cite{du-liu-moon-shimizu}, obtaining the aforementioned equivalence directly is a difficult question, in particular, it is highly non-trivial to directly show that the natural functor from prismatic $F\textrm{-crystals}$ to Wach modules is essentially surjective.
This point will be explored in another work \cite{abhinandan-prismatic-wach} and the current article is independent of the results in the prismatic theory.

As indicated previously, the motivation for interpreting a Wach module as a $q\textrm{-de Rham}$ complex and as $q\textrm{-deformation}$ of crystalline cohomology, i.e.\ $\ODcrys$, comes from \cite[\S B.2.3]{fontaine-phigamma} and \cite[Th\'eor\`eme III.4.4]{berger-limites}.
Our second main result, Theorem \ref{intro_thm:qdeformation_dcrys}, is an important step towards verifying such expectations.
In addition, we note that our proof of Theorem \ref{intro_thm:qdeformation_dcrys} is entirely independent to that of loc.\ cit., thus providing an alternative proof (as well as a generalisation) of the important classical result in loc.\ cit.
Furthermore, in Proposition \ref{prop:nrigl_gammastab} and Corollary \ref{cor:nrig_dag_comp} (see Remark \ref{rem:brigldag_odcrys_dldag_comp}), we generalise some results of \cite{berger-differentielles} and \cite{berger-limites} to obtain comparison results between Wach modules, overconvergent \'etale $(\varphi, \Gamma_L)\modules$ and filtered $(\varphi, \partial)\modules$ associated to $\padic$ crystalline representations.
In particular, for a $\padic$ crystalline representation $V$ of $G_L$, we prove a comparison isomorphism between the associated $(\varphi, \Gamma_L)\module$ over the Robba ring and the scalar extension of $\ODcrysL(V)$ to the Robba ring, where we use the connection on $\ODcrysL(V)$ to equip the scalar extension with an action of $\Gamma_L$ (see \S \ref{subsec:nriglv_galois_action} and Remark \ref{rem:brigldag_odcrys_dldag_comp}).

Finally, let us remark that using the theory of Breuil--Kisin modules in the imperfect residue field case from \cite{brinon-trihan}, in \cite{gao}, Gao studied lattices inside crystalline (more generally, semistable) representations using Breuil--Kisin $G_L\modules$.
However, the objects of loc.\ cit.\ are very different from Wach modules considered in this paper.
More specifically, Breuil--Kisin $G_L\modules$ are defined using the ``Kummer tower'' and admit an action of the big Galois group $G_L$.
In contrast, Wach modules are defined using the ``cyclotomic tower'', as in the theory of \'etale $(\varphi, \Gamma)\modules$, and admit an action of $\Gamma_L$, which is much smaller than $G_L$.
Moreover, \cite{gao} only proves a full faithfulness result, whereas Theorem \ref{intro_thm:crystalline_wach_equivalence} proves a categorical equivalence which was a difficult open question.

\subsection{Setup and notations}\label{subsec:setup_nota}

We will work under the convention that $0 \in \NN$, the set of natural numbers.
Let $p$ be a fixed prime number, $\kappa$ a perfect field of characteristic $p$, $O_F := W(\kappa)$ the ring of $p\textrm{-typical}$ Witt vectors with coefficients in $\kappa$ and $F := O_F[1/p]$, the fraction field of $W$.
In particular, $F$ is an unramified extension of $\QQ_p$ with ring of integers $O_F$.
Let $\overline{F}$ be a fixed algebraic closure of $F$ so that its residue field, denoted as $\overline{\kappa}$, is an algebraic closure of $\kappa$.
Furthermore, we denote by $G_F := \Gal(\overline{F}/F)$, the absolute Galois group of $F$.

We fix $d \in \NN$ and let $X_1, X_2, \ldots, X_d$ be some indeterminates.
Set $\Rframe$ to be $\padic$ completion of $O_F[X_1^{\pm 1}, \ldots, X_d^{\pm 1}]$ . 
Let $\varphi : \Rframe \rightarrow \Rframe$ denote a morphism extending the natural Frobenius on $O_F$ by setting $\varphi(X_i) = X_i^p$, for all $1 \leqslant i \leqslant d$.
The endomorphism $\varphi$ of $\Rframe$ is flat by \cite[Lemma 7.1.5]{brinon-relatif} and faithfully flat since $\varphi(\frakm) \subset \frakm$ for any maximal ideal $\frakm \subset \Rframe$.
Moreover, it is finite of degree $p^d$ using Nakayama Lemma and the fact that $\varphi$ modulo $p$ is evidently of degree $p^d$.
Let $\OLframe := (\Rframe_{(p)})^{\wedge}$, where ${}^{\wedge}$ denotes the $\padic$ completion.
It is a complete discrete valuation ring with uniformiser $p$, imperfect residue field $\kappa(X_1, \ldots, X_d)$ and fraction field $\Lframe := \OLframe[1/p]$.
The Frobenius on $\Rframe$ extends to a unique finite and faithfuly flat of degree $p^d$ Frobenius endomorphism $\varphi : \OLframe \rightarrow \OLframe$, lifting the absolute Frobenius on $\OLframe/p\OLframe$.

Let $O_L$ denote a finite \'etale extension of $\OLframe$ such that it is a domain.
Then $O_L$ is a complete discrete valuation ring with uniformiser $p$, imperfect residue field a finite \'etale extension of $\kappa(X_1, \ldots, X_d)$ and fraction field $L := O_L[1/p]$.
Fix an algebraic closure $\Lbar/L$ and let $G_L := \Gal(\Lbar/L)$ denote the absolute Galois group.
The Frobenius on $\OLframe$ extends to a unique finite and faithfuly flat of degree $p^d$ Frobenius endomorphism $\varphi : O_L \rightarrow O_L$ lifting the absolute Frobenius on $O_L/pO_L$ (see \cite[Proposition 2.1]{colmez-niziol}).
For $k \in \NN$, let $\Omega^k_{O_L}$ denote the $\padic$ completion of module of $k\textrm{-differentials}$ of $O_L$ relative to $\ZZ$.
Then, we have that $\Omega^1_{O_L} = \oplus_{i=1}^d O_L \dlog X_i$ and $\Omega^k_{O_L} = \wedge_{O_L}^k \Omega^1_{O_L}$.

Next, let $K$ be one of $\Finfty$, $\Linfty$, $\Fbar$ or $\Lbar$, where $\Finfty := F(\mu_{p^{\infty}})$ and $\Linfty := \cup_{i=1}^d L(\mu_{p^{\infty}}, X_i^{1/p^{\infty}})$, and set $O_K$ as the ring of integers of $K$.
Then, the tilt of $O_K$ is defined as $O_K^{\flat} := \lim_{\varphi} O_K/p$, and the tilt of $K$ is defined as $K^{\flat} := \Fr(O_K^{\flat})$ (see \cite[Chapitre V, \S 1.4]{fontaine-pdivisibles}).
Finally, let $A$ be a $\ZZ_p\algebra$ equipped with a Frobenius endomorphism $\varphi$ lifting the absolute Frobenius on $A/pA$, then for any $A\module$ $M$ we write $\varphi^*(M) := A \otimes_{\varphi, A} M$.

\subsection{Outline of the paper}

This article consists of three main sections.
In \S \ref{sec:period_rings_padic_reps} we collect relevant results on $\padic$ Hodge theory in the imperfect residue field case.
In \S \ref{subsec:period_rings_imperfect} we define several period rings, in particular, we recall crystalline period rings, $(\varphi, \Gamma)\module$ theory rings, overconvergent rings and Robba rings and prove several important technical results to be used in our main proofs in \S \ref{sec:crystalline_finite_height}.
In \S \ref{subsec:phigamma_mod_imperfect} we quickly recall the relation between $\padic$ representations and $(\varphi, \Gamma)\module$ theory over the period rings described in the previous section.
In \S \ref{subsec:crysrep_imperfect} we focus on crystalline representations and prove some results relating Galois action on a crystalline representation to its associated filtered $(\varphi, \partial)\module$.
The goal of \S \ref{sec:wach_modules} is to define Wach modules in the imperfect residue field case and study the associated $\ZZ_p\textrm{-representations}$ of $G_L$.
In \S \ref{subsec:wach_mod_props} we give the definition of Wach modules and relate it to \'etale $(\varphi, \Gamma)\modules$ (see Proposition \ref{prop:wach_etale_ff_imperfect}).
Then given a Wach module, we functorially attach to it a $\ZZ_p\representation$ of $G_L$ and in \S \ref{subsec:finite_pqheight_reps} we show that these are related to finite $\pqheight$ representations studied in \cite{abhinandan-relative-wach-i}.
Finally, in \S \ref{subsec:wachmod_crystalline} we show that the $\ZZ_p\representation$ of $G_L$, associated to a Wach module, is a lattice inside a $\padic$ crystalline representation of $G_L$ (see Theorem \ref{thm:fh_crys_imperfect}) and prove the filtered isomorphism claimed in Theorem \ref{intro_thm:qdeformation_dcrys}.
In \S \ref{sec:crystalline_finite_height} we prove our main result, i.e.\ Theorem \ref{intro_thm:crystalline_wach_equivalence}.
In \S \ref{subsec:classical_wachmod} we collect important properties of classical Wach modules, i.e.\  the perfect residue field case.
In \S \ref{subsec:kisin_module} we use ideas from \cite{kisin-modules, kisin-ren} to construct a finite $\pqheight$ module on the open unit disk over $L$.
On the module thus obtained, we use results of \S \ref{subsec:crysrep_imperfect} to construct an action of $\Gamma_L$ and study its properties in \S \ref{subsec:nriglv_galois_action}.
Then in \S \ref{subsec:compatibility_phigammmod} we check that our construction is compatible with the theory of \'etale $(\varphi, \Gamma_L)\modules$.
Finally, in \S \ref{subsec:obtaining_wachmod} we construct the promised Wach module and prove Theorem \ref{intro_thm:crystalline_wach_equivalence}.

\vspace{2mm}

\noindent \textbf{Acknowledgements.} 
I would like to sincerely thank Takeshi Tsuji for discussing many ideas during the course of this project, reading a previous version of the article carefully and suggesting several improvements.
I would also like to thank Nicola Mazzari and Alex Youcis for helpful discussions.
Finally, I would like to thank the referee for thoroughly reading the article and offering valuable comments and suggestions for improvements.
This research is supported by JSPS KAKENHI grant numbers 22F22711 and 22KF0094.

\section{Period rings and \texorpdfstring{$\padic$}{-} representations}\label{sec:period_rings_padic_reps}

We will use the setup and notations from \S \ref{subsec:setup_nota}.
Recall that $O_L$ is a finite \'etale algebra over $\OLframe$.
Set $\Linfty := \cup_{i=1}^d L(\mu_{p^{\infty}}, X_i^{1/p^{\infty}})$ and for $1 \leqslant i \leqslant d$, we fix $X_i^{\flat} := (X_i, X_i^{1/p}, X_i^{1/p^2}, \ldots)$ in $\OLinfty^{\flat}$.
Then, we have the following Galois groups (see \cite[\S 1.1]{hyodo} for details):
\begin{align*}
	G_L &:= \Gal(\Lbar/L), \hspace{1mm} H_L := \Gal(\Lbar/\Linfty), \hspace{1mm} \Gamma_L := G_L/H_L = \Gal(\Linfty/L) \isomorphic \ZZ_p(1)^d \rtimes \ZZ_p^{\times}, \\
	\Gamma'_L &:= \Gal(L_{\infty}/L(\mu_{p^{\infty}})) \isomorphic \ZZ_p(1)^d, \hspace{1mm} \Gal(L(\mu_{p^{\infty}})/L) = \Gamma_L/\Gamma'_L \isomorphic \ZZ_p^{\times}.
\end{align*}
Let $\OLbreve := (\cup_{i=1}^d O_L[X_i^{1/p^{\infty}}])^{\wedge}$, where ${}^{\wedge}$ denotes the $\padic$ completion.
The $O_L\algebra$ $\OLbreve$ is a complete discrete valuation ring with perfect residue field, uniformiser $p$ and fraction field $\Lbreve := \OLbreve[1/p]$.
The Witt vector Frobenius on $\OLbreve$ is given by the Frobenius on $O_L$ described in \S \ref{subsec:setup_nota} and setting $\varphi(X_i^{1/p^n}) = X_i^{1/p^{n-1}}$, for all $1 \leqslant i \leqslant d$ and $n \in \NN$.
Let $\Lbreveinfty := \Lbreve(\mu_{p^{\infty}})$ and let $\Lbrevebar \supset \Lbar$ denote a fixed algebraic closure of $\Lbreve$.
Then, we have the following Galois groups:
\begin{align*}
	G_{\Lbreve} &:= \Gal(\Lbrevebar/\Lbreve) \isomorphic \Gal(\Lbar/\cup_{i=1}^d L(X_i^{1/p^{\infty}})), \hspace{1mm} H_{\Lbreve} := \Gal(\overline{\Lbreve}/\Lbreveinfty) = \Gal(\Lbar/\Linfty), \\
	\GammaLbreve &:= G_{\Lbreve}/H_{\Lbreve} = \Gal(\Lbreveinfty/\Lbreve) \isomorphic \Gal(\Linfty/\cup_{i=1}^d L(X_i^{1/p^{\infty}})) \isomorphic \Gal(L(\mu_{p^{\infty}})/L) \isomorphic \ZZ_p^{\times}.
\end{align*}
From the description above note that $G_{\Lbreve}$ can be identified with a subgroup of $G_L$, $H_{\Lbreve} \isomorphic H_L$ and $\GammaLbreve$ can be identified with a quotient of $\Gamma_L$.

\subsection{Period rings}\label{subsec:period_rings_imperfect}

In this subsection, we will quickly recall and fix notations for the period rings to be used in the rest of this article.
For details on constructions of period rings, please refer to \cite{andreatta-phigamma}, \cite{brinon-imparfait} and \cite{ohkubo-monodromy}.

As we will recall many period rings in this subsection, let us first briefly mention the usefulness of some of those rings in the constructions carried out for our main results (for precise definitions, please refer to \S \ref{subsubsec:crystalline_rings} -- \S \ref{subsubsec:periodrings_Lbreve}).

\begin{rem}\label{rem:period_ring_usage_summary}
	The period rings introduced in \S \ref{subsubsec:crystalline_rings}, for example, $\Ainf(\OLbar)$, $\Acrys(\OLbar)$, $\Bcrys(\OLinfty)$, etc.\ will be used to define and study properties of crystalline representations of $G_L$ (see \S \ref{subsec:crysrep_imperfect}), to show that Wach modules in the imperfect residue field case are crystalline (see \S \ref{subsec:wachmod_crystalline}), and to study the action of $\Gamma_L$ on various scalar extensions of a Wach module associated to a crystalline representation (see \S \ref{subsec:nriglv_galois_action}).
	Note that Wach modules are certain $(\varphi, \Gamma_L)\modules$ and the rings introduced in \S \ref{subsubsec:phigammamod_rings_imperfect} provide the basic setup for defining these objects and studying their properties.
	In particular, we remark that an (integral) Wach module $\NL(T)$, associated to a crystalline $\ZZ_p\textrm{-representation}$ $T$ of $G_L$, lives over the ring $\AL^+$, and the \'etale $(\varphi, \Gamma_L)\module$ associated to $T$ lives over $\AL$ (see \S \ref{subsec:phigamma_mod_imperfect} and \S \ref{subsec:wach_mod_props}).
	Next, the overconvergent period rings from \S \ref{subsubsec:overconvergent_rings} will be used to define overconvergent \'etale $(\varphi, \Gamma)\modules$ over $\ALdag$ (see \S \ref{subsec:phigamma_mod_imperfect}), which will be a crucial input for the construction of the Wach module associated to a crystalline representation of $G_L$ (see \S \ref{subsec:obtaining_wachmod}), and will be used to check that our constructions are compatible with the theory of \'etale $(\varphi, \Gamma_L)\modules$ (see \S \ref{subsec:compatibility_phigammmod}).
	Furthermore, the analytic rings of \S \ref{subsubsec:analytic_rings} will be the most important technical input for our constructions.
	For example, as a first step in our construction of $\NL(T)$, we construct an intermediate $\varphi\module$ $\NrigL(V)$ (where $V=T[1/p]$), over the ring $\BrigL^+$ using some ideas of Kisin (see \S \ref{subsec:kisin_module}).
	Additionally, to equip $\NrigL(V)$ with an action of $\Gamma_L$, we use the rings $\BrigL^+$, $\BrigLtilde^+$, $\Bcrys(\OLinfty)$, etc.; this is the main technical innovation of this article (see Proposition \ref{prop:nrigl_gammastab}).
	Moreover, the rings such as $\BrigLdag$ and $\BrigLtildedag$ are used to study the compatibility of $\NrigL(V)$ with the theory of $(\varphi, \Gamma_L)\modules$ of Andreatta (see \S \ref{subsec:compatibility_phigammmod}).
	Finally, the corresponding period rings over $\Lbreve$ in \S \ref{subsubsec:periodrings_Lbreve} are helpful in recollecting the results on classical Wach modules which are crucial inputs to our constructions (see \S \ref{subsec:classical_wachmod} and \S \ref{subsec:kisin_module}).
\end{rem}

\subsubsection{Crystalline period rings}\label{subsubsec:crystalline_rings}

Let $\Ainf(\OLinfty) := W(\OLinfty^{\flat})$ and $\Ainf(\OLbar) := W(\OLbar^{\flat})$ admitting the Frobenius on Witt vectors and continuous $G_L\action$ (for the weak topology).
We fix $\mubar := \varepsilon-1$, where $\varepsilon := (1, \zeta_p, \zeta_{p^2}, \ldots)$ is in $\OFinfty^{\flat}$ with $\zeta_{p^n}$ being a primitive $p^n\textrm{-th}$ root of unity, for each $n \geqslant 1$.
Set $\mu := [\varepsilon] - 1$ and $\xi := \mu/\varphi^{-1}(\mu)$ in $\Ainf(\OFinfty)$.
For any $g$ in $G_L$, we have that $g(1+\mu) = (1+\mu)^{\chi(g)}$, where $\chi$ is the $\padic$ cyclotomic character.
Moreover, we have a $G_L\equivariant$ surjection $\theta : \Ainf(\OLbar) \rightarrow O_{\CC_L}$, where $\CC_L := \widehat{\overline{L}}$ and $O_{\CC_L}$ is its ring of integers; note that $\kert \theta = \xi \Ainf(\OLbar)$.
The map $\theta$ further induces a $\Gamma_L\equivariant$ surjection $\theta : \Ainf(\OLinfty) \rightarrow O_{\widehat{L}_{\infty}}$.

Recall that, for $1 \leqslant i \leqslant d$, we fixed $X_i^{\flat} = (X_i, X_i^{1/p}, X_i^{1/p^2}, \ldots)$ in $\OLinfty^{\flat}$ and we take $\{\gamma_0, \gamma_1, \ldots, \gamma_d\}$ to be topological generators of $\Gamma_L$ such that $\{\gamma_1, \ldots, \gamma_d\}$ are topological generators of $\Gamma'_L$ and $\gamma_0$ is a topological generator of $\Gamma_L/\Gamma'_L$ and $\gamma_j(X_i^{\flat}) = \varepsilon X_i^{\flat}$, if $i=j$, and $X_i^{\flat}$, otherwise.
Let us also fix Teichm\"uller lifts $[X_i^{\flat}]$ in $\Ainf(\OLinfty)$.
We set $\Acrys(\OLinfty) := \Ainf(\OLinfty)\langle \xi^k/k!, k \in \NN \rangle$.
Let $t := \log(1+\mu)$ which converges in $\Acrys(\OFinfty)$ and set $\Bcrys^+(\OLinfty) := \Acrys(\OLinfty)[1/p]$ and $\Bcrys(\OLinfty) := \Bcrys^+(\OLinfty)[1/t]$.
For any $g$ in $G_L$, we have that $g(t) = \chi(g) t$.
Furthermore, one can define period rings $\OAcrys(\OLinfty)$, $\OBcrys^+(\OLinfty)$ and $\OBcrys(\OLinfty)$.
These rings are equipped with a Frobenius endomorphism $\varphi$ and a continuous $\Gamma_L\action$, and the former two rings $\OAcrys(\OLinfty)$ and $\OBcrys^+(\OLinfty)$ are further equipped with an appropriate extension of the map $\theta$.
Rings with a subscript ``cris'' are equipped with a decreasing filtration and rings with a prefix ``$\pazo$'' are further equipped with an integrable connection satisfying Griffiths transversality with respect to the filtration (see \cite[\S 2.2]{abhinandan-relative-wach-i} for definitions over $R$ with similar notations).
One can define variations of these rings over $\Lbar$ which are further equipped with $G_L\action$.
Moreover, from \cite[Lemma 4.32]{morrow-tsuji} note that $\Acrys(\OLinfty) = \Acrys(\OLbar)^{H_L}$ and $\Bcrys^+(\OLinfty) = \Bcrys^+(\OLbar)^{H_L}$.

We have two $O_L\algebra$ structures on $\OAcrys(\OLinfty)$: a canonical structure coming from the definition of $\OAcrys(\OLinfty)$; a non-canonical $(\varphi, \Gamma_{\Lbreve})\equivariant$ structure $O_L \rightarrow \OAcrys(\OLinfty)$ given by the map $x \mapsto \sum_{\smbfk \in \NN^d} \prod_{i=1}^d \partial_i^{k_i}(x) \prod_{i=1}^d ([X_i^{\flat}]-X_i)^{[k_i]}$, where $\partial_i := \tfrac{\partial}{\partial X_i}$ is a differential operator defined over $O_L$, for $1 \leqslant i \leqslant d$.
In particular, under the preceding map, we have that $X_i \mapsto [X_i^{\flat}]$.

\subsubsection{Rings of \texorpdfstring{$(\varphi, \Gamma)\modules$}{-}}\label{subsubsec:phigammamod_rings_imperfect}

For detailed explanations of objects defined in this subsubsection, see \cite{andreatta-phigamma}.
Recall that $\OLframe$ is a complete discrete valuation ring with a uniformiser $p$ and an imperfect residue field, and $O_L$ is a finite \'etale $\OLframe\algebra$.
Let us set $\ALframe^+$ to be the $(p, \mu)\textrm{-adic}$ completion of the localisation $O_F\llbracket \mu \rrbracket\big[[X_1^{\flat}]^{\pm 1}, \ldots, [X_d^{\flat}]^{\pm 1}\big]_{(p, \mu)}$.
We have a natural embedding $\ALframe^+ \hookrightarrow \Ainf(\OLinfty)$ and $\ALframe^+$ is stable under the Witt vector Frobenius and $\Gamma_L\action$ on $\Ainf(\OLinfty)$; we equip $\ALframe^+$ with induced structures.
Moreover, we have an injective homomorphism of rings $\iota : \OLframe \rightarrow \ALframe^+$, via the map $X_i \mapsto [X_i^{\flat}]$, and it extends to an isomorphism of rings $\OLframe\llbracket \mu \rrbracket \isomorphic \ALframe^+$.
Equip $\OLframe\llbracket \mu \rrbracket$ with a finite and faithfully flat of degree $p^{d+1}$ Frobenius endomorphism using the Frobenius on $\OLframe$ and setting $\varphi(\mu) = (1+\mu)^p-1$.
Then, the injective homomorphism $\iota$ and the isomorphism $\OLframe\llbracket \mu \rrbracket \isomorphic \ALframe^+$ are Frobenius-equivariant.

Let $\AL^+$ denote the $(p, \mu)\textrm{-adic}$ completion of the unique extension of the embedding $\ALframe^+ \rightarrow \Ainf(\OLinfty)$ along the finite \'etale map $\OLframe \rightarrow O_L$ (see \cite[Proposition 2.1]{colmez-niziol}).
We have a natural embedding of $O_F\textrm{-algebras}$ $\AL^+ \hookrightarrow \Ainf(\OLinfty)$ and $\AL^+$ is stable under the induced Frobenius and $\Gamma_L\action$.
Moreover, the injective homomorphism of rings $\iota : \OLframe \rightarrow \ALframe^+ \subset \AL^+$ and the isomorphism $\OLframe\llbracket \mu \rrbracket \isomorphic \ALframe^+ \subset \AL^+$, respectively, extend to a unique injective homomorphism of rings $\iota: O_L \rightarrow \AL^+$ and an isomorphism $O_L\llbracket \mu \rrbracket \isomorphic \AL^+$.
Equip $O_L\llbracket \mu \rrbracket$ with a finite and faithfully flat of degree $p^{d+1}$ Frobenius endomorphism using the Frobenius on $O_L$ and setting $\varphi(\mu) = (1+\mu)^p-1$.
Then, the injective homomorphism $\iota$ and the isomorphism $O_L\llbracket \mu \rrbracket \isomorphic \AL^+$ are Frobenius-equivariant.
In particular, the Frobenius $\varphi : \AL^+ \rightarrow \AL^+$ is finite and faithfully flat of degree $p^{d+1}$.
Let $u_{\alpha} := (1+\mu)^{\alpha_0} [X_1^{\flat}]^{\alpha_1} \cdots [X_d^{\flat}]^{\alpha_d}$, where $\alpha := (\alpha_0, \alpha_1, \ldots, \alpha_d)$ is a $d\textrm{-tuple}$ with $\alpha_i$ in $\{0, 1, \ldots, p-1\}$, for $0 \leqslant i \leqslant d$.
Then, we have that $\varphi^*(\AL^+) := \AL^+ \otimes_{\varphi, \AL^+} \AL^+ \isomorphic \oplus_{\alpha} \varphi(\AL^+) u_{\alpha}$.

Recall that $\CC_L = \widehat{\Lbar}$ and set $\Atilde := W\big(\CC_L^{\flat}\big)$ and $\Btilde := \Atilde[1/p]$ admitting the Frobenius on Witt vectors and continuous $G_L\action$ (for the weak topology).
Set $\AL := \AL^+[1/\mu]^{\wedge}$, where ${}^{\wedge}$ denotes the $\padic$ completion; equip $\AL^+$ with the induced Frobenius endomorphism and continuous $\Gamma_L\action$.
Note that $\AL$ is a complete discrete valuation ring with maximal ideal $p\AL$, residue field $(O_L/p)((\mu))$ and fraction field $\BL := \AL[1/p]$.
Similar to above, $\varphi : \AL \rightarrow \AL$ is finite and faithfully flat of degree $p^{d+1}$ and we have that $\varphi^*(\AL) := \AL \otimes_{\varphi, \AL} \AL \isomorphic \oplus_{\alpha} \varphi(\AL) u_{\alpha} = (\oplus_{\alpha} \varphi(\AL^+) u_{\alpha}) \otimes_{\varphi(\AL^+)} \varphi(\AL) \lisomorphic \AL^+ \otimes_{\varphi, \AL^+} \AL$.
Furthermore, we have a natural Frobenius and $\Gamma_L\equivariant$ embedding $\AL \subset \Atilde^{H_L}$.
Let $\mbfa$ denote the $\padic$ completion of the maximal unramified extension of $\AL$ inside $\Atilde$ and set $\mbfb := \mbfa[1/p] \subset \Btilde$, i.e.\ $\mbfa$ is the ring of integers of $\mbfb$.
The rings $\mbfa$ and $\mbfb$ are stable under induced Frobenius and $G_L\action$, and we have $\AL = \mbfa^{H_L}$ and $\BL = \mbfb^{H_L}$ stable under the induced Frobenius and residual $\Gamma_L\action$.

\subsubsection{Overconvergent rings}\label{subsubsec:overconvergent_rings}

We begin by definining the ring of overconvergent coefficients stable under Frobenius and $G_L\action$ (see \cite{cherbonnier-colmez} and \cite{andreatta-brinon}).
Denote the natural valuation on $\OLbar^{\flat}$ by $\upsilon^{\flat}$ extending the valuation on $O_{\Fbar}^{\flat}$.
Let $r \in \QQ_{> 0}$ and set,
\begin{equation*}
	\Atildedagr := \big\{\textstyle \sum_{k \in \NN} p^k [x_k] \in \Atilde \textrm{ such that } \upsilon^{\flat}(x_k) + \tfrac{pr}{p-1}k \rightarrow +\infty \hspace{1mm} \textrm{as} \hspace{1mm} k \rightarrow +\infty \big\}.
\end{equation*}
The $G_L\action$ and Frobenius $\varphi$ on $\Atilde$ induce commuting actions of $G_L$ and $\varphi$ on $\Atildedagr$ such that $\varphi(\Atildedagr) = \tilde{\mbfa}^{\dagger, pr}$.
Define the ring of \textit{overconvergent coefficients} as $\Atildedag := \cup_{r \in \QQ_{>0}} \Atildedagr \subset \Atilde$ equipped with the induced Frobenius and $G_L\action$.
Moreover, inside $\Atilde$ we take $\ALdagr := \AL \cap \Atildedagr$ and $\Adagr := \mbfa \cap \Atildedagr$.
Define $\ALdag := \AL \cap \Atildedag = \cup_{r \in \QQ_{>0}} \ALdagr$ and $\Adag := \mbfa \cap \Atildedag = \cup_{r \in \QQ_{>0}} \Adagr$, equipped with the induced Frobenius endomorphism and $G_L\action$ from respective actions on $\Atilde$; we have $\ALdag = (\Adag)^{H_L}$.
Upon inverting $p$ in the definitions above one obtains $\QQ_p\textrm{-algebras}$ inside $\Btilde$, i.e.\ set $\Btildedagr := \Atildedagr[1/p]$, $\Btildedag := \Atildedag[1/p]$, $\Bdagr := \Adagr[1/p]$, $\Bdag := \Adag[1/p]$, equipped with the induced Frobenius and $G_L\action$.
Moreover, set $\BLtildedagr := (\Btildedagr)^{H_L}$, $\BLtildedag := (\Btildedag)^{H_L}$, $\BLdagr := (\Bdagr)^{H_L} = \ALdagr[1/p]$ and $\BLdag := (\Bdag)^{H_L} = \ALdag[1/p]$, equipped with the induced Frobenius and residual $\Gamma_L\action$.

\subsubsection{Analytic rings}\label{subsubsec:analytic_rings}

In this subsection, we will define the Robba ring over $L$ following \cite[\S 2]{kedlaya-slope} and \cite[\S 1]{ohkubo-differential}.
However, we will use the notations of \cite[\S 2]{berger-differentielles} in the perfect residue field case (see \cite[\S 1.10]{ohkubo-differential} for compatibility between different notations).
Define
\begin{equation*}
	\Brigtildedag := \cup_{r \geqslant 0} \cap_{s \geqslant r} \big(\Ainf(\OLbar)\langle \tfrac{p}{[\mubar]^r}, \tfrac{[\mubar]^s}{p} \rangle \pinverse\big).
\end{equation*}
The ring $\Brigtildedag$ can also be defined as $\cup_{r \in \QQ_{>0}} \Brigtildedagr$, where $\Brigtildedagr$ denotes the Fr\'echet completion of $\Btildedagr = \Atildedagr[1/p]$ for a certain family of valuations (see \cite[\S 2]{kedlaya-slope} and \cite[\S 1.6]{ohkubo-differential}).
The Frobenius and $G_L\action$ on $\Btildedagr$, respectively, induce Frobenius and $G_L\action$ on $\Brigtildedagr$, which extend to respective actions on $\Brigtildedag$.
In particular, we have a Frobenius and $G_L\equivariant$ inclusion $\Btildedag \subset \Brigtildedag$ (see \cite[\S 1.6 \& \S 1.10]{ohkubo-differential}).
Set 
\begin{equation*}
	\Brigtilde^+ := \cap_{n \in \NN} \varphi^n(\Bcrys^+(\OLbar))
\end{equation*}
equipped with an induced Frobenius endomorphism and $G_L\action$ from the respective actions on $\Bcrys^+(\OLbar)$.
The descriptions of rings in \cite[Lemme 2.5, Exemple 2.8 \& \S 2.3]{berger-differentielles} directly extend to our situation as the aforementioned results do not depend on structure of the residue field of base ring $O_L$.
Therefore, from loc.\ cit.\ it follows that $\Brigtilde^+ \subset \Brigtildedag$ compatible with Frobenius and $G_L\action$.
Moreover, we set $\BrigLtildedagr :=  (\Brigtildedagr)^{H_L}$, $\BrigLtildedag :=  (\Brigtildedag)^{H_L}$ and $\BrigLtilde^+ := (\Brigtilde^+)^{H_L} \subset \BrigLtildedag$, equipped with the induced Frobenius endomorphism and residual $\Gamma_L\action$.

\begin{rem}\label{rem:brigtilde_lbreve}
	Note that the definition $\Brigtildedag$ and $\Brigtilde^+$ as rings does not depend on $L$, in particular, one may define these rings using $\Ainf(O_{\overline{\Lbreve}})$ and equip them with a Frobenius endomorphism compatible with the Frobenius endomorphism defined above.
\end{rem}

\begin{lem}\label{lem:brigtilde_frobfixed}
	We have $(\Brigtildedag)^{\varphi=1} = (\Brigtilde^+)^{\varphi=1} = \QQ_p$.
\end{lem}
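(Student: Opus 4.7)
My plan is to separate the proof into two parts corresponding to the two equalities. The inclusions $\QQ_p \subset (\Brigtilde^+)^{\varphi=1} \subset (\Brigtildedag)^{\varphi=1}$ are immediate, so the content is to prove the reverse chain. By Remark \ref{rem:brigtilde_lbreve}, the rings $\Brigtildedag$ and $\Brigtilde^+$ together with their Frobenius endomorphism are intrinsic to the perfectoid tilt and are independent of the residue field structure of $L$; this lets me freely import classical results in the perfect residue field case (due to Colmez and Berger).

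First I would show $(\Brigtildedag)^{\varphi=1} \subset \Brigtilde^+$. Given such an $x$, pick $r > 0$ with $x \in \Brigtildedagr$. Iterating the identity $\varphi(\Brigtildedagr) = \tilde{\mbfb}_{\textup{rig}}^{\dagger, pr}$ from \S \ref{subsubsec:analytic_rings} together with the relation $x = \varphi^n(x)$ places $x$ in $\tilde{\mbfb}_{\textup{rig}}^{\dagger, p^n r}$ for every $n \geq 0$. The intersection of all such ``deep'' analytic rings coincides with $\Brigtilde^+$ (equivalently, with $\cap_n \varphi^n(\Bcrys^+(\OLbar))$), so $x \in \Brigtilde^+$.

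Next I would show $(\Brigtilde^+)^{\varphi=1} \subset \QQ_p$. Since $\Brigtilde^+ \subset \Bcrys^+(\OLbar) \subset \BdR^+(\OLbar)$ by definition, any $\varphi$-fixed element $x$ lies in $(\Bcrys(\OLbar))^{\varphi=1}$ and maps to zero in $\BdR/\BdR^+$. Fontaine's fundamental exact sequence
\begin{equation*}
    0 \lnra \QQ_p \lnra \Bcrys^{\varphi=1} \lnra \BdR/\BdR^+ \lnra 0
\end{equation*}
then forces $x \in \QQ_p$, completing the proof.

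The main technical point will be verifying the identification $\cap_{n \geq 0} \tilde{\mbfb}_{\textup{rig}}^{\dagger, p^n r} = \Brigtilde^+$: while standard in the classical setting, it relies on comparing the ``analytic-function'' and ``intersection of Frobenius shifts of $\Bcrys^+$'' descriptions of $\Brigtilde^+$. Remark \ref{rem:brigtilde_lbreve} guarantees this comparison transfers verbatim from the perfect residue field literature to our setup, so no genuinely new analytic estimate is needed.
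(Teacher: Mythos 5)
Your reduction to the perfect residue field case via Remark \ref{rem:brigtilde_lbreve} matches the paper, and your second step is fine: since $\Brigtilde^+ \subset \Bcrys^+(\OLbar) \subset \BdR^+$, a $\varphi$-fixed element lies in $\Bcrys(\OLbar)^{\varphi=1} \cap \BdR^+$, and the fundamental exact sequence gives $\QQ_p$; this is a legitimate alternative to the paper's citation of Colmez. The problem is the first inclusion $(\Brigtildedag)^{\varphi=1} \subset \Brigtilde^+$, which the paper does not reprove but simply quotes as \cite[Proposition I.4.1]{berger-limites}. Your iteration runs in the wrong direction: the rings $\Brigtildedagr$ \emph{increase} with $r$ (this is exactly why $\Brigtildedag = \cup_{r}\Brigtildedagr$ makes sense), and $\varphi$ maps $\Brigtildedagr$ isomorphically onto the \emph{larger} ring $\tilde{\mbfb}_{\textup{rig}}^{\dagger, pr}$. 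Hence the conclusion ``$x \in \tilde{\mbfb}_{\textup{rig}}^{\dagger, p^n r}$ for all $n$'' holds for every element of $\Brigtildedagr$ and carries no information, and the identification you flag as the main technical point, $\cap_{n} \tilde{\mbfb}_{\textup{rig}}^{\dagger, p^n r} = \Brigtilde^+$, is false: that nested intersection is just $\Brigtildedagr$ itself.

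The meaningful variant applies $\varphi^{-1}$: from $x = \varphi^{-n}(x)$ one gets $x \in \tilde{\mbfb}_{\textup{rig}}^{\dagger, r/p^n}$, hence $x \in \cap_{s>0}\tilde{\mbfb}_{\textup{rig}}^{\dagger, s}$. But even then the remaining identification is not a formal transfer from the literature: $\cap_{s>0}\tilde{\mbfb}_{\textup{rig}}^{\dagger, s}$ is the ring of functions on all the annuli $0 < |p| \leq |[\mubar]|^s$, and it is not (and is at any rate not obviously) equal to $\Brigtilde^+ = \cap_n \varphi^n(\Bcrys^+(\OLbar))$, which in addition controls behaviour at the locus $[\mubar]=0$. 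Ruling out singular behaviour there for a $\varphi$-fixed element requires a genuine boundedness/growth argument — for instance via the exact sequence $0 \to \Binf(\OLbar) \to \Btildedagr \oplus \Brigtilde^+ \to \Brigtildedagr \to 0$ (used by the paper in Lemma \ref{lem:brigl+_explicit}) together with $\varphi$-invariance to control the bounded part — and this is precisely the content of Berger's Proposition I.4.1. As written, your proof of the first equality therefore has a genuine gap, while the rest of the argument is sound.
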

\begin{proof}
	Using Remark \ref{rem:brigtilde_lbreve}, note that the Frobenius invariant elements can be computed using the corresponding results in the perfect residue field case.
	In particular, we have that $(\Brigtildedag)^{\varphi=1} = (\Brigtilde^+)^{\varphi=1} = \QQ_p$, where the first equality follows from \cite[Proposition I.4.1]{berger-limites} and the second equality follows from \cite[Proposition 9.15]{colmez-banach}.
\end{proof}

Recall that from \S \ref{subsubsec:phigammamod_rings_imperfect} we have a Frobenius-equivariant injective homomorphism of rings $\iota : O_L \rightarrow \AL^+$.
Then, from \cite[\S 1.6]{ohkubo-differential} the ring $\ALdagr$ admits the following description:
\begin{equation*}
	\ALdagr \isomorphic \big\{\textstyle \sum_{k \in \ZZ} \iota(a_k) \mu^k \textrm{ such that } a_k \in O_L \textrm{ and for any } p^{-1/r} \leqslant \rho < 1, \lim_{k \rightarrow -\infty} |a_k|\rho^k = 0\big\}.
\end{equation*}
Moreover, we have that $\BLdagr = \ALdagr[1/p]$ and we set
\begin{equation*}
	\BrigLdagr := \big\{\textstyle \sum_{k \in \ZZ} \iota(a_k) \mu^k \textrm{ such that } a_k \in L \textrm{ and for any } p^{-1/r} \leqslant \rho < 1, \lim_{k \rightarrow \pm\infty} |a_k|\rho^k = 0\big\}.
\end{equation*}
The ring $\BrigLdagr$ can also be defined as Fr\'echet completion of $\BLdagr$ for a family of valuations induced by the inclusion $\BLdagr \subset \Btildedagr$ (see \cite[\S 2]{kedlaya-slope} and \cite[\S 1.6]{ohkubo-differential}).
Define the \textit{Robba ring} over $L$ as $\BrigLdag := \cup_{r \geqslant 0} \BrigLdagr$.
The Frobenius and $G_L\action$ on $\BLdagr$ induce respective Frobenius and $G_L\action$ on $\BrigLdagr$, which extend to respective actions on $\BrigLdag$ (also see \cite[\S 4.3]{ohkubo-differential} where Ohkubo constructs the differential action of $\Lie \Gamma_L$; one may also obtain the action of $\Gamma_L$ by exponentiating the action of $\Lie \Gamma_L$).
From the preceding discussion, we have a Frobenius and $\Gamma_L\equivariant$ injection $\BLdag \subset \BrigLdag$ and the former ring $\BLdag$ is also known as the \textit{bounded Robba ring}.
Furthermore, note that $\BLdagr \subset \tilde{\mbfb}_L^{\dagger, r} = (\Btildedagr)^{H_L} \subset \BrigLtildedagr$, where the last term can also be described as the Fr\'echet completion of the middle term for a family of valuations induced by the inclusion $\tilde{\mbfb}_L^{\dagger, r} \subset \Btildedagr$ (see \cite[\S 2]{kedlaya-slope} and \cite[\S 1.6]{ohkubo-differential}).

To summarise, for $r \in \QQ_{>0}$, we have the following commutative diagram with injective arrows:
\begin{center}
	\begin{tikzcd}
		\Bdagr \arrow[r] & \Btildedagr \arrow[rd]\\
		\BLdagr \arrow[u] \arrow[r] \arrow[d] & \BLtildedagr \arrow[u] \arrow[r] \arrow[d] & \Brigtildedagr\\
		\BrigLdagr \arrow[r] & \BrigLtildedagr \arrow[ru],
	\end{tikzcd}
\end{center}
where in the second row, the two rings on the left are obtained from the rings in first row by taking $H_L\textrm{-invariants}$ and the rightmost ring in the second row is obtained as the Fr\'echet completion of the rightmost ring in the first row.
The bottom row is obtained as the Fr\'echet completion of the two rings on the left in the second row.
These inclusions are compatible with the respective Frobenii and $\Gamma_L\textrm{-actions}$ and these compatibilities are preserved after passing to the respective Fr\'echet completions.
In particular, we have a Frobenius and $\Gamma_L\equivariant$ embedding $\BrigLdag \subset \BrigLtildedag$.

\begin{defi}
	Define $\BrigL^+ := \BrigLdag \cap \BrigLtilde^+ \subset \BrigLtildedag$, equipped with the induced Frobenius endomorphism and $\Gamma_L\action$.
\end{defi}

\begin{lem}\label{lem:brigl+_explicit}
	The ring $\BrigL^+$ can be identified with the ring of convergent power series over the open unit disk in one variable over $L$, i.e.
	\begin{equation*}
		\BrigL^+ \isomorphic \big\{\textstyle \sum_{k \in \NN} \iota(a_k) \mu^k \textrm{ such that } a_k \in L \textrm{ and for any } 0 \leqslant \rho < 1, \lim_{k \rightarrow +\infty} |a_k|\rho^k = 0\big\},
	\end{equation*}
\end{lem}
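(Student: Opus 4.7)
The strategy is to establish both inclusions of the claimed identification. Denote the ring on the right-hand side by $E$.

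For the inclusion $E \subseteq \BrigL^+$, take $x = \sum_{k \in \NN} \iota(a_k)\mu^k \in E$. Directly comparing with the explicit description of $\BrigLdagr$ recalled just above, one sees $x \in \BrigLdagr$ for every $r > 0$: the two-sided convergence condition there reduces to the one-sided one since $a_k = 0$ for $k < 0$, and convergence on $[0, 1)$ is strictly stronger than convergence on $[p^{-1/r}, 1)$. Hence $x \in \BrigLdag$. For the containment $x \in \BrigLtilde^+ = (\Brigtilde^+)^{H_L}$, apply Remark \ref{rem:brigtilde_lbreve} to realise $\Brigtilde^+$ via $O_{\Lbrevebar}$ and extend scalars along $O_L \hookrightarrow \OLbreve$. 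Since $\Lbreve$ is an absolutely unramified complete discrete valuation field with perfect residue field, the classical theory identifies the analogous ring $\mbfb_{\textup{rig}, \Lbreve}^+ := \BrigLbrevedag \cap (\Brigtilde^+)^{H_{\Lbreve}}$ with convergent power series on the open unit disk in $\mu$ over $\Lbreve$; thus $x$ lies in it, and taking $H_L = H_{\Lbreve}$-invariants places $x$ in $\BrigLtilde^+$.

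For the reverse inclusion $\BrigL^+ \subseteq E$, let $x \in \BrigLdag \cap \BrigLtilde^+$, so $x \in \BrigLdagr$ for some $r > 0$ with Laurent expansion $x = \sum_{k \in \ZZ} \iota(a_k)\mu^k$ and $a_k \in L$. Via the natural inclusions $\BrigLdag \hookrightarrow \BrigLbrevedag$ and $\BrigLtilde^+ \hookrightarrow (\Brigtilde^+)^{H_{\Lbreve}}$, view $x$ as an element of $\mbfb_{\textup{rig}, \Lbreve}^+$. The classical perfect residue field description then identifies $\mbfb_{\textup{rig}, \Lbreve}^+$ with convergent power series on the open unit disk over $\Lbreve$ in $\mu$, which by uniqueness of the Laurent expansion and the fact that $a_k \in L \subseteq \Lbreve$ forces $a_k = 0$ for all $k < 0$ and yields $|a_k|\rho^k \to 0$ for every $0 \leq \rho < 1$, as required.

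The main obstacle is the rigorous invocation of the perfect residue field case over $\Lbreve$: while the arithmetic identification of $\mbfb_{\textup{rig}, F}^+$ with convergent power series on the open unit disk is standard (for instance via \cite[Proposition 8.15]{colmez-banach} and \cite[\S 2]{berger-limites}), one must check that it extends to $\Lbreve$ (which is absolutely unramified with perfect residue field, so the classical arguments apply verbatim) and that the embeddings $\BrigLdag \hookrightarrow \BrigLbrevedag$ and $\BrigLtilde^+ \hookrightarrow (\Brigtilde^+)^{H_{\Lbreve}}$ preserve both the $\mu$-adic Laurent expansions and the $\iota$-embedding of coefficients from $L$ into $\Lbreve$. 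These compatibilities are routine once the definitions are unravelled, after which the lemma follows from the perfect residue field statement.
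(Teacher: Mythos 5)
Your reduction to the perfect residue field case over $\Lbreve$ has a genuine gap at its central step. In the reverse inclusion you invoke, as a ``classical'' fact, that $\BrigLbrevedag \cap (\Brigtilde^+)^{H_{\Lbreve}}$ coincides with the ring of power series in $\mu$ with coefficients in $\Lbreve$ converging on the open unit disk. But in the classical sources that ring is \emph{defined} as the power-series ring; the equality with the intersection $\BrigLbrevedag \cap \Brigtilde^+$ is precisely the nontrivial content, and the references you point to do not supply it: \cite[Proposition 8.15]{colmez-banach} computes $(\Brigtilde^+)^{\varphi=1}$, and \cite{berger-limites} does not state the intersection formula in the form you need. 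Within the present paper the $\Lbreve$-analogue (Remark \ref{rem:briglbreve+}) is itself deduced from the very lemma you are proving, so it cannot be quoted here. Thus your argument relocates the whole difficulty into an uncited black box rather than resolving it; the descent part of your proof (compatibility of $\iota$ with the $\OLbreve$-structure, uniqueness of Laurent coefficients forcing $a_k = 0$ for $k<0$) is sound, but it only becomes a proof once the $\Lbreve$-statement is actually established, and the standard way to establish it is exactly the argument you have omitted.

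For comparison, the paper works directly over $L$: given $x \in \BrigL^+ \subset \BrigLdagr$, split its Laurent expansion as $x = x^+ + x^-$ with $x^+$ the nonnegative part (convergent on the open unit disk, hence in $\Brigtilde^+$) and $x^- \in \BLdag$; since $x, x^+ \in \Brigtilde^+$, the exact sequence $0 \rightarrow \Binf(\OLbar) \rightarrow \Btildedagr \oplus \Brigtilde^+ \rightarrow \Brigtildedagr \rightarrow 0$ of \cite[Lemme 2.18, Corollaire 2.28]{berger-differentielles} (valid here by Remark \ref{rem:brigtilde_lbreve}) gives $x^- \in \Binf(\OLbar) \cap \BLdag = \Binf(\OLinfty) \cap \BLdag = \BL^+$, using $\Ainf(\OLbar)^{H_L} = \Ainf(\OLinfty)$, so $x$ converges on the open unit disk. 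This is the same plus/minus decomposition and Mayer--Vietoris input that a proof of your $\Lbreve$-claim would require, so passing through $\Lbreve$ saves nothing; if you want to keep your structure, prove the $\Lbreve$-case by this argument first and then your descent step goes through. Your forward inclusion is fine as written (the paper treats it as obvious).
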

\begin{proof}
	Let $x$ be any element of $\BrigL^+ \subset \BrigLdag$.
	Using the explicit description of $\BrigLdagr$ and $\BLdagr$ for $r \in \QQ_{>0}$, we can write $x = x^+ + x^-$, with $x^+$ convergent on the open unit disk over $L$ and $x^-$ in $\BLdagr$, for some $r \in \QQ_{>0}$, in particular, we have that $x^+$ is in $\Brigtilde^+$.
	Moreover, using Remark \ref{rem:brigtilde_lbreve} and \cite[Lemma 2.18, Corollaire 2.28]{berger-differentielles}, we have an exact sequence $0 \rightarrow \Binf(\OLbar) \rightarrow \Btildedagr \oplus \Brigtilde^+ \rightarrow \Brigtildedagr \rightarrow 0$, where $\Binf(\OLbar) = \Ainf(\OLbar)[1/p]$.
	So, $x$ is in $\BrigL^+ \subset \Brigtilde^+$ if and only if $x^- = x-x^{+}$ is in $\Binf(\OLbar) \cap \BLdagr = \Binf(\OLinfty) \cap \BLdagr = \BL^+$, where we have used that $\Ainf(\OLbar)^{H_L} = \Ainf(\OLinfty)$ (see \cite[Proposition 7.2]{andreatta-phigamma}).
	Hence, $x$ converges on the open unit disk over $L$.
	The other inclusion is obvious, allowing us to conclude.
\end{proof}

\begin{rem}\label{rem:brigl_topology}
	The topology on $\BrigL^+$ can be described as follows:
	let $D(L, \rho)$ denote the closed disk of radius $0 < \rho < 1$ over $L$ and let $\pazo(D(L, \rho))$ denote the ring of analytic functions, i.e.\ power series converging on the closed disk $D(L, \rho)$.
	Then, $\pazo(D(L, \rho))$ is equipped with a topology induced by the supremum norm $\|f\|_{\rho} := \sup_{x \in D(L, \rho)}|f(x)| < +\infty$.
	We have that $\BrigL^+ = \lim_{\rho} \pazo(D(L, \rho)) \subset L\llbracket \mu \rrbracket$ and we equip it with the topology induced by the Fr\'echet limit of the topology on $\pazo(D(L, \rho))$ induced by the supremum norm, i.e.\ the topology on $\BrigL^+$ can be described by uniform convergence on $D(L, \rho)$ for $\rho \rightarrow 1^{-}$.
\end{rem}

\begin{lem}\label{lem:bl_brigl_ff}
	The natural map $\BL^+ \rightarrow \BrigL^+$ is faithfully flat.
\end{lem}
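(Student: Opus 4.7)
The plan is to exploit the Dedekind-domain structure of $\BL^+$ together with the explicit description of $\BrigL^+$ furnished by Lemma \ref{lem:brigl+_explicit}. Via the Frobenius-equivariant isomorphism $\AL^+ \isomorphic O_L\llbracket \mu \rrbracket$ of \S \ref{subsubsec:phigammamod_rings_imperfect}, I would identify $\BL^+ = \AL^+[1/p]$ with $O_L\llbracket \mu \rrbracket[1/p]$, sitting inside $\BrigL^+$ as the subring of power series whose coefficients have bounded $p$-adic valuation.

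For flatness, I would first observe that $O_L\llbracket \mu \rrbracket$ is a $2$-dimensional complete regular local ring with maximal ideal $(p, \mu)$, since $O_L$ is a complete discrete valuation ring with uniformiser $p$. Inverting $p$ preserves regularity and Noetherianity while dropping the Krull dimension by one, so $\BL^+$ is a $1$-dimensional Noetherian regular domain, i.e., a Dedekind domain. Over such a ring a module is flat if and only if it is torsion-free; since $\BrigL^+$ is a domain containing $\BL^+$, flatness is automatic.

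For faithful flatness, it suffices to check that $\frakm \BrigL^+ \neq \BrigL^+$ for every maximal ideal $\frakm \subset \BL^+$. Using Weierstrass preparation in the complete regular local ring $O_L\llbracket \mu \rrbracket$, every maximal ideal of $\BL^+$ is generated by an irreducible distinguished polynomial $P(\mu) \in O_L[\mu]$. By a Newton-polygon argument, any such $P$ has all its roots in $\Lbar$ of absolute value strictly less than $1$; viewed as an element of $\BrigL^+$ it therefore vanishes at some point of the open unit disk and so cannot be a unit. Hence $(P)\BrigL^+$ is proper, which completes the argument. The main conceptual point, and the only place where the imperfect residue field hypothesis needs examining, is the Dedekind property of $\BL^+$; this reduces to the regularity of $O_L\llbracket \mu \rrbracket$, which depends only on $O_L$ being a complete DVR and not on perfectness of its residue field.
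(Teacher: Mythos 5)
Your proof is correct and follows essentially the same route as the paper: flatness from torsion-freeness over the Weierstrass-type structure of $\BL^+ \isomorphic O_L\llbracket \mu \rrbracket[1/p]$ (the paper invokes that it is a principal ideal domain, you the slightly weaker Dedekind property, which suffices), and faithful flatness by checking $\frakm\BrigL^+ \neq \BrigL^+$ via the fact that each maximal ideal is generated by an irreducible distinguished polynomial, which vanishes at a point of the open unit disk and so is not a unit in $\BrigL^+$. No gaps to flag.
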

\begin{proof}
	Note that $\BL^+$ is a principal ideal domain and $\BrigL^+$ is a domain, so the map in the claim is flat.
	To show that it is faithfully flat, it is enough to show that for any maximal ideal $\frakm \subset \BL^+$, we have that $\frakm \BrigL^+ \neq \BrigL^+$.
	Note that if $\frakm \subset \BL^+$ is a maximal ideal, then $\frakm = (f)$, where $f$ is an irreducible distinguished polynomial in the sense of \cite[Chapter 5, \S 2]{lang}.
	Since any $f$ as above admits a zero over the open unit disk, therefore, it follows that $f$ is not a unit in $\BrigL^+$.
	Hence, $\frakm \BrigL^+ \neq \BrigL^+$.
\end{proof}

\begin{rem}\label{lem:brigl+_frob_ff}
	From \S \ref{subsec:setup_nota} recall that $\varphi : L \rightarrow L$ is finite of degree $p^d$ and we also have that $\varphi(\mu) = (1+\mu)^p-1$.
	Therefore, from the explicit description of $\BrigL^+$ in Lemma \ref{lem:brigl+_explicit}, it follows that the Frobenius endomorphism $\varphi : \BrigL^+ \rightarrow \BrigL^+$ is finite and faithfully flat of degree $p^{d+1}$.
\end{rem}

\subsubsection{Period rings for \texorpdfstring{$\Lbreve$}{-}}\label{subsubsec:periodrings_Lbreve}

Definitions above may be adopted almost verbatim to define corresponding period rings for $\Lbreve$, in particular, one recovers definitions of period rings in \cite{fontaine-phigamma}, \cite{cherbonnier-colmez} and \cite{berger-differentielles}, i.e.\ we have period rings  $\ALbreve^+$, $\ALbreve$, $\ALbrevedag$, $\BrigLbreve^+$ and $\BrigLbrevedag$ equipped with a Frobenius endomorphism $\varphi$ and $\GammaLbreve\action$.
Note that we have a natural identification $\OLbreve\llbracket \mu \rrbracket \isomorphic \ALbreve^+$, where the right hand side is equipped with a finite and faithfully flat of degree $p$ Frobenius endomorphism using the natural Frobenius on $\OLbreve$ and setting $\varphi(\mu) = (1+\mu)^p-1$ and a $\GammaLbreve\action$ given as $g(\mu) = (1+\mu)^{\chi(g)}-1$, for any $g$ in $\GammaLbreve$.
Moreover, the preceding isomorphism naturally extends to a Frobenius and $\GammaLbreve\equivariant$ isomorphism $\ALbreve \isomorphic \OLbreve\llbracket \mu \rrbracket[1/\mu]^{\wedge}$, where ${}^{\wedge}$ denotes the $\padic$ completion.

Similar to above, we further equip $O_L\llbracket \mu \rrbracket$ with an $O_L\linear$ action of $\GammaLbreve$, by setting $g(\mu) = (1+\mu)^{\chi(g)}-1$, for any $g$ in $\GammaLbreve$.
Then the isomorphism $O_L\llbracket \mu \rrbracket \isomorphic \AL^+$ from \ref{subsubsec:phigammamod_rings_imperfect}, is Frobenius and $\GammaLbreve\equivariant$.
Now, recall that the Frobenius-equivariant embedding $O_L \rightarrow \OLbreve$ is faithfully flat and it naturally extends to a Frobenius and $\GammaLbreve\equivariant$ faithfully flat embedding $O_L\llbracket \mu \rrbracket \rightarrow \OLbreve\llbracket \mu \rrbracket$.
So, using the preceding embedding and the Frobenius and $\GammaLbreve\equivariant$ isomorphisms: the inverse of $O_L\llbracket \mu \rrbracket \isomorphic \AL^+$ and the isomorphism $\OLbreve\llbracket \mu \rrbracket \isomorphic \ALbreve^+$, we obtain a Frobenius and $\GammaLbreve\equivariant$ faithfully flat embedding $\AL^+ \rightarrow \ALbreve^+$, sending $[X_i^{\flat}] \mapsto X_i$.
This further extends to a Frobenius and $\GammaLbreve\equivariant$ faithfully flat embedding $\AL \rightarrow \ALbreve$.

We can equip $\Ainf(\OLinfty)$ with a non-canonical $O_L\algebra$ structure by first defining an injection $\OLframe \rightarrow \Ainf(\OLinfty)$, via the map $X_i \mapsto [X_i^{\flat}]$, and then extending it uniquely along the finite \'etale map $\OLframe \rightarrow O_L$, to an injection $O_L \rightarrow \Ainf(\OLinfty)$ (see \cite[Proposition 2.1]{colmez-niziol}).
Note that the preceding maps are Frobenius-equivariant but not $\Gamma_L\equivariant$.
The $O_L\algebra$ structure on $\Ainf(\OLinfty)$ naturally extends to a Frobenius-equivariant $\OLbreve\algebra$ structure by sending $X_i^{1/p^n} \mapsto [(X_i^{1/p^n})^{\flat}]$, for all $1 \leqslant i \leqslant d$ and $n \in \NN$.
We can further extend this to a Frobenius and $\GammaLbreve\equivariant$ embedding $\ALbreve^+ = \OLbreve\llbracket \mu \rrbracket \rightarrow \Ainf(\OLinfty)$.

Using the embeddings described above and following the definitions of various period rings discussed so far, we obtain a commutative diagram with injective arrows where the top horizontal arrows are Frobenius and $\Gamma_L\equivariant$ and the rest are Frobenius and $\GammaLbreve\equivariant$:
\begin{center}
	\begin{tikzcd}
		\BrigLtilde^+ & \BrigL^+ \arrow[l] \arrow[d] \arrow[r] & \BrigLdag \arrow[d] \arrow[r] & \BrigLtildedag\\
		& \BrigLbreve^+ \arrow[lu] \arrow[r] & \BrigLbrevedag \arrow[ru].
	\end{tikzcd}
\end{center}

\begin{rem}\label{rem:briglbreve+}
	Similar to Lemma \ref{lem:brigl+_explicit} we have that,
	\begin{equation*}
		\BrigLbreve^+ \isomorphic \big\{\textstyle \sum_{k \in \NN} a_k \mu^k \textrm{ such that } a_k \in \Lbreve \textrm{ and for any } 0 \leqslant \rho < 1, \lim_{k \rightarrow +\infty} |a_k|\rho^k = 0\big\}.
	\end{equation*}
	The ring $\BrigLbreve^+$ is equipped with a Fr\'echet topology similar to Remark \ref{rem:brigl_topology}.
	Moreover, since $\varphi : \Lbreve \isomorphic \Lbreve$ and $\varphi(\mu) = (1+\mu)^p-1$, the Frobenius endomorphism on $\BrigLbreve^+$ is finite and faithfully flat of degree $p$.
\end{rem}

\begin{lem}\label{lem:brigl_lbreve_flat}
	The rings $\BrigL^+$ and $\BrigLbreve^+$ are B\'ezout domains and $\BrigL^+ \rightarrow \BrigLbreve^+$ is flat.
\end{lem}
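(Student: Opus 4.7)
The plan is to establish both claims via Lazard's classical theorem on rings of analytic functions on open disks, combined with a standard flatness-versus-torsion-freeness argument over B\'ezout domains.

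For the B\'ezout property, by Lemma \ref{lem:brigl+_explicit} and Remark \ref{rem:briglbreve+}, the rings $\BrigL^+$ and $\BrigLbreve^+$ can be identified respectively with the rings of rigid-analytic functions on the open unit disk over the complete discretely valued fields $L$ and $\Lbreve$ (note that although the residue field of $L$ is imperfect, $L$ is itself complete with respect to the discrete $\padic$ valuation, which is all that is needed). Lazard's theorem on zeros of analytic functions on the open disk over a complete discretely valued field then implies that any such ring is a B\'ezout domain, yielding the first claim for both rings simultaneously.

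For the flatness assertion, I would invoke the standard fact that over a B\'ezout domain, a module is flat if and only if it is torsion-free. The embedding $\BrigL^+ \hookrightarrow \BrigLbreve^+$ appearing in the commutative diagram at the end of \S \ref{subsubsec:periodrings_Lbreve} is injective, and $\BrigLbreve^+$ is an integral domain since it sits inside $\BrigLtildedag$ (which is a domain, being a subring of the fraction field of $\Ainf(O_{\Lbrevebar})[1/p]$). Hence $\BrigLbreve^+$ is a torsion-free $\BrigL^+\module$, and therefore flat.

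I do not anticipate a serious obstacle here. The only subtlety worth verifying is that Lazard's theorem indeed applies in the imperfect residue field setting; however, the proof only requires the coefficient field to be complete with respect to its discrete non-archimedean valuation, with no hypothesis on the residue field, so the classical argument transfers without modification.
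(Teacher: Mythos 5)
Your proposal is correct and follows essentially the same route as the paper: the B\'ezout property comes from Lazard's theory of analytic functions on the open unit disk (the paper cites \cite[Proposition 4.12]{berger-differentielles} and notes, as you do, that the argument never uses perfectness of the residue field), and your appeal to ``torsion-free $\Rightarrow$ flat over a B\'ezout (Pr\"ufer) domain'' is exactly what the paper proves by hand, writing $\BrigLbreve^+$ as a directed colimit of finitely generated torsion-free, hence finite projective (and in fact free), $\BrigL^+\textrm{-submodules}$.
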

\begin{proof}
	The first claim follows from \cite[Proposition 4.12]{berger-differentielles}.
	Note that loc.\ cit.\  assumes the residue field of the discrete valuation base field ($L$ and $\Lbreve$ in our case) to be perfect, however, the proof of loc.\ cit.\ only depends on \cite{lazard} and \cite{helmer} which are independent of this assumption.
	For the second claim, note that we can write $\BrigLbreve^+ = \colim_{i \in I} M_i$, where $I$ is the directed index set of finitely generated $\BrigL^+\textrm{-submodules}$ of $\BrigLbreve^+$.
	Since $\BrigLbreve^+$ is a domain, $M_i$ is torsion-free for each $i \in I$.
	Now, recall that finitely generated torsion-free modules over a B\'ezout domain are finite projective (see \cite[Chapter VII, Proposition 4.1]{cartan-eilenberg} noting that B\'ezout domains are a special case of Pr\"ufer domains), and therefore finite free by \cite[Proposition 2.5]{kedlaya-monodromy}.
	Moreover, directed colimit of finite free modules over a ring is flat (see \cite[\href{https://stacks.math.columbia.edu/tag/058G}{Tag 058G}]{stacks-project}).
	Hence, it follows that $\BrigL^+ \rightarrow \BrigLbreve^+$ is flat.
\end{proof}

\begin{lem}\label{lem:brigl_lbreve_intersect}
	The element $t/\mu = (\log(1+\mu))/\mu = \prod_{n \in \NN} (\varphi^n([p]_q)/p)$ converges in $\BrigL^+ \subset \BrigLbreve^+$.
	Moreover, $(t/\mu) \BrigLbreve^+ \cap \BrigL^+ = (t/\mu) \BrigL^+$.
\end{lem}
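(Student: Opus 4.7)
The plan is to treat the two assertions separately. For the convergence statement, I begin with the series expansion $t/\mu = \log(1+\mu)/\mu = \sum_{k \geq 0} (-1)^k \mu^k/(k+1)$, whose coefficients lie in $\QQ_p \subset L$. Membership of this series in $\BrigL^+$ then reduces, via the characterisation in Lemma \ref{lem:brigl+_explicit}, to the elementary estimate $|(-1)^k/(k+1)|\, \rho^k \leq (k+1)\rho^k \to 0$ for every $0 \leq \rho < 1$. For the infinite product representation, I telescope using $[p]_q = \varphi(\mu)/\mu$ to get $\prod_{n=0}^{N-1} \varphi^n([p]_q) = \varphi^N(\mu)/\mu$, so the convergence claim reduces to showing $\varphi^N(\mu)/p^N \to t$ in the Fréchet topology of $\BrigL^+$. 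Writing $\varphi^N(\mu) = (1+\mu)^{p^N} - 1 = \exp(p^N t) - 1$ gives $\varphi^N(\mu)/p^N - t = \sum_{k \geq 2} p^{(k-1)N} t^k/k!$, and the Fréchet seminorm $\|\cdot\|_\rho$ on $D(L, \rho)$ (Remark \ref{rem:brigl_topology}) of this tail is bounded by a geometric series in $p^{-(N - 1/(p-1))}\|t\|_\rho$, which tends to $0$ as $N \to \infty$ for each fixed $\rho < 1$.

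For the intersection statement, the inclusion $(t/\mu)\BrigL^+ \subseteq (t/\mu)\BrigLbreve^+ \cap \BrigL^+$ is obvious. For the reverse, suppose $y \in \BrigLbreve^+$ satisfies $x := (t/\mu)\, y \in \BrigL^+$. Using Lemma \ref{lem:brigl+_explicit} and Remark \ref{rem:briglbreve+}, I identify $\BrigL^+$ and $\BrigLbreve^+$ with their images in $L\llbracket \mu \rrbracket$ and $\Lbreve\llbracket \mu \rrbracket$ respectively, so that the inclusion $\BrigL^+ \hookrightarrow \BrigLbreve^+$ corresponds to the inclusion of these formal power-series rings. The key observation is that $t/\mu = 1 + O(\mu)$ has constant term $1$ and coefficients in $L$, hence is a unit in $L\llbracket \mu \rrbracket$ with formal inverse $(t/\mu)^{-1}$ also lying in $L\llbracket \mu \rrbracket \subseteq \Lbreve\llbracket \mu \rrbracket$. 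Multiplying the identity $x = (t/\mu)\, y$ by this inverse, formally in $\Lbreve\llbracket \mu \rrbracket$, forces all coefficients of $y$ to lie in $L$, i.e., $y \in L\llbracket \mu \rrbracket$. Since $y \in \BrigLbreve^+$ already converges on $D(L, \rho)$ for every $\rho < 1$, Lemma \ref{lem:brigl+_explicit} concludes that $y \in \BrigL^+$.

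Neither part presents a serious obstacle. The essential input is the explicit power-series description of the rings $\BrigL^+$ and $\BrigLbreve^+$, which promotes the problem from one of ring-theoretic intersection to a direct comparison of coefficients; the invertibility of $t/\mu$ in the ambient formal power-series ring is what makes the coefficient comparison immediate. An alternative route via faithful flatness of $\BrigL^+ \to \BrigLbreve^+$ would give the intersection claim abstractly, but Lemma \ref{lem:brigl_lbreve_flat} only establishes flatness, so the direct formal-power-series argument sketched above is the most economical path.
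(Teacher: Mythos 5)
Your proposal is correct and, for the main intersection claim, takes essentially the same route as the paper: both arguments work coefficientwise in $L\llbracket \mu \rrbracket \subset \Lbreve\llbracket \mu \rrbracket$ and exploit that $t/\mu$ has unit constant term, the paper solving for the coefficients of $y$ by an explicit induction and you equivalently multiplying by the formal inverse $(t/\mu)^{-1} \in L\llbracket \mu \rrbracket$, after which membership of $y$ in $\BrigL^+$ follows from the convergence condition it already satisfies in $\BrigLbreve^+$. The only difference is that you prove convergence of $t/\mu = \prod_{n \in \NN}(\varphi^n([p]_q)/p)$ directly via the telescoping identity and Fr\'echet estimates, where the paper simply cites Berger and Lazard; this is a harmless, self-contained substitute.
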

\begin{proof}
	The first claim follows from \cite[Exemple I.3.3]{berger-limites} and \cite[Remarque 4.12]{lazard}.
	For the second claim let $x = \sum_{k \in \NN} x_k \mu^k$ in $\BrigL^+$, with $x_k \in L$, and let $y = \sum_{k \in \NN} y_k \mu^k$ in $\BrigLbreve^+$, with $y_k \in \Lbreve$, such that $ty/\mu = x$.
	Write $t/\mu = \sum_{k \in \NN} a_k \mu^k$, with $a_k \in \QQ_p$.
	Then, we have that $(\sum_{k \in \NN} a_k \mu^k) (\sum_{k \in \NN} y_k \mu^k) = \sum_{k \in \NN} x_k \mu^k$.
	We will show that $y_k$ is in $L$, for all $k \in \NN$, using induction.
	Indeed, note that $a_0 y_0 = x_0$ in $L$, so $y_0 = x_0/a_0$ is in $L$.
	Let $n \in \NN$ and assume that $y_k$ is in $L$, for every $k \leqslant n$.
	Then, we have that $\sum_{k=0}^{n+1} a_k y_{n+1-k} = x_{n+1}$ in $L$ and by the induction assumption we get that $y_{n+1} = (x_{n+1} - \sum_{k=0}^{n} a_k y_{n+1-k})/a_0$ is in $L$.
	Hence, we conclude that $y$ is in $\BrigL^+$, implying that $(t/\mu) \BrigLbreve^+ \cap \BrigL^+ = (t/\mu) \BrigL^+$.
\end{proof}

\begin{lem}\label{lem:brigl_tilde_t}
	We have $(t/\mu) \BrigLtilde^+ \cap \BrigLbreve^+ = (t/\mu) \BrigLbreve^+$, therefore $(t/\mu) \BrigLtilde^+ \cap \BrigL^+ = (t/\mu) \BrigL^+$ from Lemma \ref{lem:brigl_lbreve_intersect}.
\end{lem}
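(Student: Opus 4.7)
The inclusion $(t/\mu)\BrigLbreve^+ \subseteq (t/\mu)\BrigLtilde^+ \cap \BrigLbreve^+$ is immediate from $t/\mu \in \BrigLbreve^+$. For the reverse inclusion I would take $f \in \BrigLbreve^+$ and $g \in \BrigLtilde^+$ with $f = (t/\mu) g$ and show that $g \in \BrigLbreve^+$; the ``therefore'' statement then follows by intersecting both sides with $\BrigL^+ \subseteq \BrigLbreve^+$ and invoking Lemma \ref{lem:brigl_lbreve_intersect}.

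My strategy exploits the product expansion $t/\mu = \prod_{n \geq 0} \varphi^n([p]_q)/p$ of Lemma \ref{lem:brigl_lbreve_intersect}. For each $N \geq 0$ the partial product
\begin{equation*}
	P_N := \prod_{n=0}^{N} \varphi^n([p]_q)/p = \frac{(1+\mu)^{p^{N+1}}-1}{p^{N+1}\mu} \in \QQ_p[\mu]
\end{equation*}
is a polynomial and $t/\mu = P_N \cdot Q_N$ with $Q_N := \prod_{n > N} \varphi^n([p]_q)/p \in \BrigL^+$. Thus $P_N$ divides $f$ in $\BrigLtilde^+$ with quotient $g\cdot Q_N$, and the first technical step is to upgrade this to divisibility in $\BrigLbreve^+$. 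Up to the unit $p^{N+1} \in \Lbreve^\times$, $P_N$ coincides with $\prod_{m=1}^{N+1} \Phi_{p^m}(1+\mu)$, where $\Phi_{p^m}$ denotes the $p^m$-th cyclotomic polynomial. Since $\Lbreve$ is unramified over $\QQ_p$ (so contains no non-trivial $p$-power root of unity), each $\Phi_{p^m}(1+\mu)$ is Eisenstein, hence irreducible, in $\OLbreve[\mu]$, and distinct such factors are pairwise coprime. Weierstrass division then identifies $\BrigLbreve^+/\Phi_{p^m}(1+\mu)\BrigLbreve^+ \isomorphic \Lbreve(\zeta_{p^m})$ via $\mu \mapsto \zeta_{p^m}-1$, so divisibility of $f$ by $\Phi_{p^m}(1+\mu)$ in the Bezout ring $\BrigLbreve^+$ (Lemma \ref{lem:brigl_lbreve_flat}) is equivalent to $f$ vanishing at $\mu = \zeta_{p^m}-1 \in \CC_L$, which holds because $t/\mu$ has a simple zero there while $g \in \BrigLtilde^+$ is holomorphic. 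Writing $h_N := f/P_N$, we conclude $h_N \in \BrigLbreve^+$.

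The main obstacle is then the passage to the limit. Since $Q_N \to 1$ in $\BrigL^+$ in the Fr\'echet topology (convergence of the infinite product already used in Lemma \ref{lem:brigl_lbreve_intersect}), we have $h_N = g \cdot Q_N \to g$ in $\BrigLtilde^+$. To conclude it suffices to check that $\BrigLbreve^+$ is closed in $\BrigLtilde^+$: the Fr\'echet topology on $\BrigLbreve^+$ described in Remark \ref{rem:briglbreve+} is induced by uniform convergence on closed disks $D(\Lbreve, \rho)$ for $\rho \to 1^-$, which is the restriction of the topology of $\BrigLtilde^+ \subset \BrigLtildedag$; since $\BrigLbreve^+$ is moreover Fr\'echet-complete, it is closed in $\BrigLtilde^+$, so the limit $g$ lies in $\BrigLbreve^+$, as required.
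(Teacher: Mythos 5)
Your factor-by-factor divisibility step is essentially the same mechanism as the paper's, but as written it is under-justified: you argue that $f$ vanishes at $\mu=\zeta_{p^m}-1$ ``because $t/\mu$ has a simple zero there while $g\in\BrigLtilde^+$ is holomorphic''. Elements of $\BrigLtilde^+$ are not power series in $\mu$, so there is no naive evaluation of $g$ at $\zeta_{p^m}-1$; to make this precise you need the ring map $\theta\circ\varphi^{-m}:\Brigtilde^+\rightarrow\CC_L$, the fact that it kills $t$ (while not killing $\varphi^{-m}(\mu)$), and the fact that its restriction to $\BrigLbreve^+$ agrees, up to the Frobenius twist $\phi_{\Lbreve}$ on coefficients, with the evaluation $\mu\mapsto\zeta_{p^m}-1$. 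This compatibility (together with injectivity of multiplication by $\varphi^{n}([p]_q)$ on $\Brigtilde^+$, which you also use to get $h_N=Q_Ng$) is exactly the content of the commutative diagram with exact rows, quoting Berger, that forms the first half of the paper's proof; so this part is fixable, but it is not free.

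The genuine gap is the limit step. From $h_N=Q_Ng\rightarrow g$ you conclude $g\in\BrigLbreve^+$ by asserting that $\BrigLbreve^+$ is closed in $\BrigLtilde^+$ because it is Fr\'echet-complete and its topology ``is the restriction'' of the ambient one. Completeness plus a continuous injection does not imply closedness (compare $\ell^1\subset\ell^2$); what you need is that the inclusion is a topological embedding, i.e.\ that the subspace topology induced by the Witt-vector valuations defining $\BrigLtilde^+\subset\BrigLtildedag$ coincides with the sup-norm topology of Remark \ref{rem:briglbreve+}. That comparison is precisely the assertion you leave unproved; it is not established in the paper or in the references it quotes (the paper only uses that $\BrigL^+$ is closed in $\BrigLbreve^+$, where both rings are power-series rings and the norms visibly match). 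The paper's proof is structured to avoid any limiting process in the ambient ring: for a fixed closed disk $D(\Lbreve,\rho)$ only finitely many factors $\varphi^n([p]_q)/p$ of $t/\mu$ fail to be units in $\pazo(D(\Lbreve,\rho))$, so after finitely many divisions inside $\BrigLbreve^+$ one gets $y=\upsilon^{-1}y_{m+1}$ with both factors in $\pazo(D(\Lbreve,\rho))$, and then one takes the limit over $\rho$ inside $\BrigLbreve^+=\lim_{\rho}\pazo(D(\Lbreve,\rho))$. If you replace your closure argument by this disk-by-disk localization (for a given $\rho$, your tail $Q_N$ is a unit of $\pazo(D(\Lbreve,\rho))$ for $N$ large), the argument goes through without the embedding claim.
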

\begin{proof}
	Let us first note that for each $n \in \NN_{\geqslant 1}$ we have the following diagram:
	\begin{center}
		\begin{tikzcd}[column sep=large]
			0 \arrow[r] & \BrigLbreve^+ \arrow[r,"{\varphi^{n-1}([p]_q)}"] \arrow[d] & \BrigLbreve^+ \arrow[r,"\mu \mapsto \zeta_{p^n}-1"] \arrow[d] & \Lbreve(\zeta_{p^n}) \arrow[d, "\phi_{\Lbreve}"] \arrow[r] & 0\\
			0 \arrow[r] & \Brigtilde^+ \arrow[r,"{\varphi^{n-1}([p]_q)}"] & \Brigtilde^+ \arrow[r,"\theta \circ \varphi^{-n}"] & \CC_L \arrow[r] & 0,
		\end{tikzcd}
	\end{center}
	where left and middle vertical arrows are natural inclusions, the right vertical arrow is $\phi_{\Lbreve} : \Lbreve(\zeta_{p^n}) \isomorphic \Lbreve(\zeta_{p^n}) \subset \CC_L$, given as $\sum_{k=0}^{e-1} a_k \zeta_{p^n}^k \mapsto \sum_{k=0}^{e-1} \varphi_{\Lbreve}^{-n}(a_k) \zeta_{p^n}^k$, with $e = [\Lbreve(\zeta_{p^n}) : \Lbreve]$ and $\varphi_{\Lbreve} : \Lbreve \isomorphic \Lbreve$ and $\theta : \Brigtilde^+ \subset \Bcrys^+(\OLbar) \rightarrow \CC_L$ from \S \ref{subsubsec:crystalline_rings}.
	The top row is obviously exact and the bottom row is exact by \cite[Proposition 2.11, Proposition 2.12 \& Remarque 2.14]{berger-differentielles}.
	All vertical maps are injective and hence we obtain that $\varphi^n([p]_q) \Brigtilde^+ \cap \BrigLbreve^+ = \varphi^n([p]_q) \BrigLbreve^+$, for all $n \in \NN$, in particular, $\varphi^n([p]_q) \BrigLtilde^+ \cap \BrigLbreve^+ = \varphi^n([p]_q) \BrigLbreve^+$.
	Now, let $x$ be in $(t/\mu) \BrigLtilde^+ \cap \BrigLbreve^+$ and write $x = ty/\mu$, for some $y$ in $\BrigLtilde^+$.
	We will show that $y$ is in $\BrigLbreve^+$ by showing that it converges over each closed disk $D(\Lbreve, \rho)$, for $0 < \rho < 1$.
	Fix some $0 < \rho < 1$ and from Lemma \ref{lem:brigl_lbreve_intersect}, we write $t/\mu = \prod_{n \in \NN} (\varphi^n([p]_q)/p) = \upsilon \prod_{n=0}^{m} (\varphi^n([p]_q)/p)$, for a unit $\upsilon \in \pazo(D(\Lbreve, \rho))^{\times}$ and $m \in \NN$ depending on $\rho$.
	Then, we have that $x = ([p]_q/p) y_1$, where $y_1 := \upsilon \prod_{n=1}^{m} (\varphi^n([p]_q)/p) y$ is in $\BrigLtilde^+ \cap (p/[p]_q) \BrigLbreve^+ = \BrigLbreve^+$.
	Repeating the preceding argument for $1 \leqslant k \leqslant m$, we obtain elements $y_{k+1} := \upsilon \prod_{n=k+1}^{m} (\varphi^n([p]_q)/p) y$ in $\BrigLtilde^+ \cap \varphi^n(p/[p]_q) \BrigLbreve^+ = \BrigLbreve^+$.
	In particular, we see that $y = \upsilon^{-1} y_{m+1}$ is in $\pazo(D(\Lbreve, \rho))$.
	Since, $\BrigLbreve^+ = \lim_{\rho} \pazo(D(\Lbreve, \rho))$, therefore, we conclude that $y$ must be in $\BrigLbreve^+$.
\end{proof}

\subsubsection{\texorpdfstring{$\varphi$}{-}-modules over certain period rings}

Let $\varphi\Mod_{\BrigLdag}$ denote the category of finite free modules over $\BrigLdag$ equipped with an isomorphism $1 \otimes \varphi : \varphi^*M \isomorphic M$ and morphisms between objects are $\BrigLdag\linear$ maps compatible with $1 \otimes \varphi$ on both sides; denote by $\varphi\Mod_{\BrigLdag}^0$ the full subcategory of objects that are pure of slope 0 in the sense of \cite[\S 6.3]{kedlaya-monodromy}.
Similarly, one can define the category $\varphi\Mod_{\BLdag}$ and denote by $\varphi\Mod_{\BLdag}^0$ the full subcategory of objects that are pure of slope 0 (as $\varphi\modules$ over a discretely valued field).

Let $\Eff\varphi\Mod_{\AL^+}^{[p]_q}$ denote the category of effective and finite $\pqheight$ $\AL^+\modules$, i.e.\ an object is this category is a finite free $\AL^+\module$ $N$ equipped with a Frobenius-semilinear endomorphism $\varphi : N \rightarrow N$ such that the map $1 \otimes \varphi : \varphi^*(N) \rightarrow N$ is injective and its cokernel is killed by a finite power of $[p]_q$; denote by $\Eff\varphi\Mod_{\AL^+}^{[p]_q} \otimes \QQ_p$ the associated isogeny category.
Similarly, define $\Eff\varphi\Mod_{\BrigL^+}^{[p]_q}$ to be the category of effective and finite $\pqheight$ $\BrigL^+\modules$ and $\Eff\varphi\Mod_{\BrigL^+}^{[p]_q, 0}$ to be the full subcategory of objects that are pure of slope 0, i.e.\ $M$ such $\BrigLdag \otimes_{\BrigL^+} M$ is pure of slope 0.

\begin{lem}\phantomsection\label{lem:finiteheightslope0_equiv}
	\begin{enumarabicup}
	\item There is a natural equivalence of categories $\varphi\Mod_{\BLdag}^0 \isomorphic \varphi\Mod_{\BrigLdag}^0$, induced by the functor $M \mapsto M \otimes_{\BLdag} \BrigLdag$.

	\item There is an exact equivalence of $\otimes\textrm{-categories}$ $\Eff\varphi\Mod_{\AL^+}^{[p]_q} \otimes \QQ_p \isomorphic \Eff\varphi\Mod_{\BrigL^+}^{[p]_q, 0}$, induced by the functor $N \mapsto N \otimes_{\AL^+} \BrigL^+$.
	\end{enumarabicup}
\end{lem}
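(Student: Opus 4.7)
For part (1), my plan is to invoke Kedlaya's slope filtration theorem in the imperfect residue field setting. The inclusion $\BLdag \hookrightarrow \BrigLdag$ is faithfully flat and Frobenius-compatible, so the base change functor is exact and conservative. Full faithfulness follows from $(\BrigLdag)^{\varphi=1} = \QQ_p$ -- a consequence of Lemma \ref{lem:brigtilde_frobfixed} via the embedding $\BrigLdag \subset \Brigtildedag$ -- applied to internal $\Hom$'s of slope-0 objects (which are again pure of slope 0). Essential surjectivity is Kedlaya's descent theorem for pure slope-0 modules; the proof in \cite{kedlaya-slope} is formal and applies verbatim in our setting, with the imperfect residue field variant already available via \cite{ohkubo-differential}.

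For part (2), I first verify that the functor is well-defined. Given $N$ of finite $\pqheight$ over $\AL^+$, the module $M := N \otimes_{\AL^+} \BrigL^+$ inherits finite $\pqheight$; purity of slope 0 follows because $[p]_q$ is a unit in $\AL$ (since modulo $p$ it equals $\mu^{p-1}$, which is a unit in $\AL$), so $N \otimes_{\AL^+} \AL$ is étale, whence $M \otimes_{\BrigL^+} \BrigLdag \cong N \otimes_{\AL^+} \BrigLdag$ is étale and pure of slope 0. Exactness uses that $\AL^+ \to \BL^+ := \AL^+[1/p]$ is a localisation and $\BL^+ \to \BrigL^+$ is faithfully flat by Lemma \ref{lem:bl_brigl_ff}. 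Full faithfulness on the isogeny category reduces, via the same faithful flatness, to an injectivity statement for $\varphi$-equivariant morphisms after base change. The tensor structure is manifestly preserved by base change, yielding a functor of $\otimes$-categories.

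For essential surjectivity, the main technical point, given $M \in \Eff\varphi\Mod_{\BrigL^+}^{[p]_q,0}$, part (1) provides a unique étale $\varphi$-module $\pazm^{\dagger}$ over $\BLdag$ with $M \otimes_{\BrigL^+} \BrigLdag \cong \pazm^{\dagger} \otimes_{\BLdag} \BrigLdag$, and $\pazm^{\dagger}$ in turn admits a unique $\AL$-lattice $D$ stable under $\varphi$. One then defines $N$ as an appropriate intersection of $D$ with $M$ inside a common extension ring such as $M \otimes_{\BrigL^+} \BrigLtildedag$, with the intersection identities of Lemma \ref{lem:brigl_lbreve_intersect} and Lemma \ref{lem:brigl_tilde_t} guaranteeing good behaviour. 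The main obstacle will be proving that $N$ is $\AL^+$-finite free of the correct rank with $N \otimes_{\AL^+} \BrigL^+ \cong M$ up to isogeny. I would handle this by exploiting the faithfully flat embedding $\BrigL^+ \to \BrigLbreve^+$ (Lemma \ref{lem:brigl_lbreve_flat}) to descend the finiteness and rank question to the case over $\Lbreve$, where the analogous equivalence is classical (cf.\ \cite{berger-limites}, \cite{kisin-modules}), and then descend back to $L$ using the commutative diagram of embeddings and the $\GammaLbreve$-equivariance established in \S\ref{subsubsec:periodrings_Lbreve}.
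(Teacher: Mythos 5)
Your part (1) and the routine parts of (2) are essentially the paper's route: (1) is Kedlaya's theorem \cite[Theorem 6.3.3]{kedlaya-slope} (whose proof is residue-field independent), and exactness in (2) comes from the faithful flatness of $\BL^+ \rightarrow \BrigL^+$ (Lemma \ref{lem:bl_brigl_ff}). Two repairable slips there: purity of slope $0$ of $N \otimes_{\AL^+} \BrigL^+$ should be checked via the $\varphi\textrm{-stable}$ lattice $N \otimes_{\AL^+} \ALdag$ rather than via $N \otimes_{\AL^+} \AL$, since $\AL \not\subset \BrigLdag$; the argument still works because $[p]_q$ is also invertible in $\ALdag$ (its inverse is $\mu\,\varphi(\mu^{-1})$). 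And ``full faithfulness reduces to an injectivity statement'' only gives faithfulness; fullness requires descending a $\varphi\equivariant$ morphism, which uses part (1) together with $\BL^+ = \BrigL^+ \cap \BLdag$ inside $\BrigLdag$.

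The genuine gap is in essential surjectivity of (2), which is the heart of the lemma. First, ``$\pazm^{\dagger}$ admits a unique $\AL\textrm{-lattice}$'' should read $\ALdag\textrm{-lattice}$, and uniqueness is false (and not needed) --- only existence, from purity of slope $0$. More seriously, defining $N$ as ``an appropriate intersection of $D$ with $M$'' in a big ring is not justified: without first aligning the two modules, such an intersection need not have the right size, and Lemmas \ref{lem:brigl_lbreve_intersect} and \ref{lem:brigl_tilde_t} (which concern the single element $t/\mu$ and the rings, not arbitrary lattices) do not control it. The paper's proof, following \cite[Lemma 1.3.13]{kisin-modules}, uses \cite[Proposition 6.5]{kedlaya-monodromy} to modify bases so that $\Mdag$ over $\BLdag$ and $\Mrig^+$ over $\BrigL^+$ are spanned by a common basis; only then is $M := \Mrig^+ \cap \Mdag$ a free $\BL^+\module$ with the two expected base changes (via $\BL^+ = \BrigL^+ \cap \BLdag$), and the integral $\AL^+\textrm{-lattice}$ is produced by intersecting with an $\ALdag\textrm{-lattice}$ and invoking Kisin's finiteness argument, whose inputs (Kedlaya, Lazard) are residue-field independent. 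Your fallback --- descend finiteness and rank to $\Lbreve$ along $\BrigL^+ \rightarrow \BrigLbreve^+$ and ``descend back using $\GammaLbreve\textrm{-equivariance}$'' --- does not work: the objects of these categories carry no Galois action, $\BrigL^+$ is not the ring of $\GammaLbreve\textrm{-invariants}$ of $\BrigLbreve^+$ (the $\GammaLbreve\textrm{-action}$ moves only $\mu$ and fixes $\OLbreve$), and Lemma \ref{lem:brigl_lbreve_flat} gives flatness, not faithful flatness, so there is no descent datum to exploit. The argument has to be run directly over $L$, as in the paper.
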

\begin{proof}
	The claim in (1) follows from \cite[Theorem 6.3.3]{kedlaya-slope}.
	The equivalence of $\otimes\textrm{-categories}$ in (2) follows from (1), \cite[Lemma 1.3.13]{kisin-modules} and \cite[Proposition 6.5]{kedlaya-monodromy}, and the exactness follows since $\BL^+ \rightarrow \BrigL^+$ is faithfully flat by Lemma \ref{lem:bl_brigl_ff}.
	Note that in \cite{kisin-modules}, Kisin assumes the residue field of the discrete valuation base field ($L$ in our case) to be perfect.
	However, the proof of \cite[Lemma 1.3.13]{kisin-modules} depends only on \cite[Proposition 6.5]{kedlaya-monodromy} and \cite[Theorem 6.3.3]{kedlaya-slope} which are independent of the structure of the residue field.
	In particular, the proof of \cite[Lemma 1.3.13]{kisin-modules} applies almost verbatim to our case.
	We recall the quasi-inverse functor from loc.\ cit.\ that will be useful later (see \S \ref{subsec:obtaining_wachmod}).

	Let $\Mrig^+$ be a finite height effective $\BrigL^+\module$ pure of slope 0, then $\Mrigdag := \BrigLdag \otimes_{\BrigL^+} \Mrig^+$ is pure of slope 0 and (1) implies that there exists a finite free $\BLdag\module$ $\Mdag$ pure of slope 0 such that $\BrigLdag \otimes_{\BLdag} \Mdag \isomorphic \Mrigdag \lisomorphic \BrigLdag \otimes_{\BrigL^+} \Mrig^+$.
	Choose a $\BLdag\textrm{-basis}$ of $\Mdag$ and a $\BrigL^+\textrm{-basis}$ of $\Mrig^+$.
	The composite of the isomorphisms above is given by a matrix with values in $\BrigLdag$.
	By \cite[Proposition 6.5]{kedlaya-monodromy}, after modifying the chosen bases, we may assume the matrix to be identity, in particular, $\Mdag$ and $\Mrig^+$ are spanned by a common basis.
	Let $M$ denote the $\BL^+\textrm{-span}$ of this basis.
	Since $\BL^+ = \BrigL^+ \cap \BLdag \subset \BrigLdag$, we obtain that $M = \Mrig^+ \cap \Mdag \subset \Mrigdag$, and $\BrigL^+ \otimes_{\BL^+} M \isomorphic \Mrig^+$ and $\BLdag \otimes_{\BL^+} M \isomorphic \Mdag$.
	Moreover, $\Mdag$ is pure of slope 0, so there exists an $\ALdag\textrm{-lattice}$ $M_0^{\dagger} \subset \Mdag$.
	Let $M_0' := M \cap M_0^{\dagger} \subset \Mdag$ and set $M_0 := (\ALdag \otimes_{\AL^+} M_0') \cap M_0'[1/p] \subset \Mdag$.
	Using \cite[Lemma 1.3.13]{kisin-modules} and the discussion above, $M_0 \subset M$ is a finite free $\varphi\textrm{-stable}$ $\AL^+\textrm{-submodule}$ such that cokernel of the injective map $1 \otimes \varphi : \varphi^*(M_0) \rightarrow M_0$ is killed by some finite power of $[p]_q$.
\end{proof}

\begin{rem}\label{rem:closedsubmod_free}
	Let $M$ be a finite free $\BrigL^+\module$ and $N \subset M$ a $\BrigL^+\textrm{-submodule}$.
	Then, $N$ is finite free if and only if it is finitely generated if and only if it is a closed submodule of $M$.
	Equivalences in the preceding statement essentially follow from \cite[Lemma 1.1.5]{kisin-modules}.
	Note that Kisin assumes the residue field of the discrete valuation base field ($L$ in our case) to be perfect.
	However, the proof of loc.\ cit.\ depends on the results of \cite[\S 7-\S 8]{lazard}, \cite[Lemma 2.4]{kedlaya-monodromy} and \cite[Proposition 4.12 \& Lemme 4.13]{berger-differentielles}, where the proof of the latter depends on \cite{lazard} and \cite{helmer}.
	Relevant results of \cite{lazard}, \cite{kedlaya-monodromy} and \cite{helmer} are independent of the structure of the residue field of $L$.
	Hence, we get the claim by using the proof of \cite[Lemma 1.1.5]{kisin-modules} almost verbatim.
\end{rem}

Next, we note some useful facts about $\varphi\modules$ over $\AL^+$.

\begin{lem}\label{lem:finitefree_2dimregularlocal}
	Let $O_K := O_F$, $O_L$ or $\OLbreve$ and let $A := O_K \llbracket \mu \rrbracket$ equipped with a Frobenius endorphism extending the natural Frobenius on $O_K$ by setting $\varphi(\mu) = (1+\mu)^p-1$.
	Let $N$ be a finitely generated $A\module$ equipped with a Frobenius-semilinear endomorphism such that $1 \otimes \varphi : \varphi^*(N)[1/[p]_q] \isomorphic N[1/[p]_q]$.
	Then, $N[1/p]$ is finite free over $A[1/p]$ .
\end{lem}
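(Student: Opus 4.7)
The key observation is that $A := O_K\llbracket\mu\rrbracket$ is a two-dimensional complete regular local ring to which Weierstrass preparation applies: any nonzero $a \in A$ factors as $p^m u h$ with $u \in A^\times$ and $h$ a distinguished polynomial in $\mu$. Consequently, every nonzero ideal of $A[1/p]$ is generated by a distinguished polynomial, so $A[1/p]$ is a principal ideal domain, and a finitely generated module over $A[1/p]$ is free if and only if it is torsion-free. It therefore suffices to show that $N[1/p]$ has no nonzero $A[1/p]$-torsion.

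Suppose, for contradiction, that the torsion submodule $T \subset N[1/p]$ is nonzero, with annihilator generated by a distinguished polynomial $f$ of positive $\mu$-degree $n$. From $\varphi(\mu) = \mu \cdot [p]_q$ and $\deg_\mu [p]_q = p-1$, a direct induction shows that $\varphi$ carries any distinguished polynomial of degree $m$ to one of degree $pm$, so $\varphi^k(f)$ is distinguished of degree $p^k n$. On the other hand, the hypothesis on $1 \otimes \varphi$, read in the sense of the finite $\pqheight$ condition used in Lemma \ref{lem:finiteheightslope0_equiv} (i.e.\ $1 \otimes \varphi : \varphi^*N \to N$ is injective with cokernel killed by a power of $[p]_q$, which forces the displayed isomorphism after inverting $[p]_q$), combined with the faithful flatness of $\varphi$ on $A$ recalled in \S\ref{subsec:setup_nota}, yields upon iteration injections $1 \otimes \varphi^k : (\varphi^k)^*N \hookrightarrow N$ for every $k \geq 1$.

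Since $\varphi : A \to A$ is flat, the pullback functor $(\varphi^k)^*$ preserves the $A[1/p]$-torsion submodule, giving an embedding $(\varphi^k)^* T \hookrightarrow T$ whose annihilator is computed by base change to be $(\varphi^k(f))$. Then $f$ annihilates this submodule, so $\varphi^k(f) \mid f$ in $A[1/p]$; comparing Weierstrass degrees forces $p^k n \leq n$ for every $k \geq 1$, which contradicts $n > 0$. The principal technical subtlety is verifying that the Frobenius pullback is compatible both with formation of the torsion submodule and with computation of annihilators via base change; both reduce to the faithful flatness of $\varphi$ on $A$ established in the setup.
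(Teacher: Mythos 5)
Your reduction is sound in its outer structure: $A[1/p]$ is indeed a principal ideal domain (the paper itself uses this for $\BL^+$ in the proof of Lemma \ref{lem:bl_brigl_ff}), $\varphi$ does carry a distinguished polynomial of degree $n$ to one of degree $pn$, and annihilators and torsion submodules of finitely generated modules do commute with the flat base change $\varphi$. The genuine gap is the sentence where you ``read'' the hypothesis as the finite $\pqheight$ condition. The lemma only assumes that $1 \otimes \varphi$ becomes an isomorphism \emph{after} inverting $[p]_q$; for a finitely generated $N$ this is equivalent to saying that the kernel \emph{and} cokernel of $1 \otimes \varphi : \varphi^*(N) \rightarrow N$ are killed by a power of $[p]_q$, and it does \emph{not} give injectivity of the unlocalized map. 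Your contradiction runs entirely through the injection $(\varphi^k)^*T \hookrightarrow T$: if the kernel of $1 \otimes \varphi$ meets $\varphi^*T$, you only learn that $f$ kills the image of $\varphi^*T$, whose annihilator may be strictly larger than $(\varphi(f))$, and no divisibility $\varphi(f) \mid f$ follows. This is not a removable convenience: take $N = A/\mu A$ with the Frobenius induced from $A$. Then $1 \otimes \varphi$ is the projection $A/\varphi(\mu)A \rightarrow A/\mu A$, whose kernel $\mu A/\varphi(\mu)A \isomorphic A/[p]_q A$ is $[p]_q$-torsion, so the displayed localized isomorphism holds; yet the torsion of $N[1/p]$ is all of $N[1/p] \isomorphic K$, with annihilator $(\mu)$, and $\varphi(\mu) = \mu [p]_q$ certainly does not divide $\mu$. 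Your argument gives nothing here (and this $N[1/p]$ is not free), so the injectivity you import is exactly where a proof of the literal statement breaks down; the problematic locus is torsion supported at $\mu = 0$, the one preimage of itself under $\mu \mapsto (1+\mu)^p - 1$ that is not a zero of $[p]_q$.

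For comparison, the paper argues along a different route and never touches the torsion submodule: it takes the smallest nonzero Fitting ideal $J$ of $N$, reduces (following \cite[Proposition 4.3]{bhatt-morrow-scholze-1}) to showing $(A/J)[1/p] = 0$, and runs a zero-set argument on $\Kbar$-valued points of the open unit disk using the equality $(A/J)[1/[p]_q] = (A/\varphi(J))[1/[p]_q]$ away from the roots of $[p]_q$, concluding with the irreducibility of $[p]_q = \varphi(\mu)/\mu$ in $K[\mu]$. Under your strengthened reading of the hypothesis (injective linearization with cokernel killed by a power of $[p]_q$, which is how the lemma is applied to Wach-type modules) your torsion-and-Weierstrass-degree argument is correct and in fact quite clean; but as written it proves a statement with a stronger hypothesis than the one stated, and the step producing $(\varphi^k)^*T \hookrightarrow T$ is the precise point that cannot be justified from the stated hypothesis alone.
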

\begin{proof}
	The proof is essentially the same as \cite[Proposition 4.3]{bhatt-morrow-scholze-1}.
	Let $J$ denote the smallest non-zero Fitting ideal of $N$ over $A$.
	Set $K := O_K[1/p]$ and $\Abar = A/J$.
	From loc.\ cit.\ the claim can be reduced to checking that $\Abar[1/p] = 0$.
	Note that the Frobenius endomorphism on $A$ and finite height condition on $N$ are different from loc.\ cit.
	Therefore, we need some modifications in the arguments of loc. cit.; we point out the differences in terms of their notations.
	Fix an algebraic closure $\Kbar$ of $K$ and consider the finite set $Z := \Spec(\Abar[1/p])(\Kbar)$ of $\Kbar\textrm{-valued}$ points of $\Abar[1/p]$.
	Let $Z' := \{x \in \frakm \textrm{ such that } (1+x)^p-1 \in Z\}$, where $\frakm \subset O_{\Kbar}$ is the maximal ideal.
	Then, from the equality $(A/J)[1/[p]_q] = (A/\varphi(J))[1/[p]_q]$, we get that $Z \cap U = Z' \cap U$, where $U := \frakm - \{\zeta_p-1, \ldots, \zeta_p^{p-1}-1\}$.
	Now, all the arguments from loc.\ cit.\ can be easily adapted to show that there exists some $r \in \NN$ such that we have an isomorphism $K[\mu]/(\mu^r) \isomorphic K[\mu]/(\varphi(\mu)^r)$.
	But, then we obtain that $(\varphi(\mu)/\mu)^r$ is a unit in $K[\mu]$, whereas $\varphi(\mu)/\mu \in K[\mu]$ is an irreducible polynomial.
	Hence, we must have that $r = 0$ and thus $(A/J)[1/p] = 0$, allowing us to conclude.
\end{proof}

\begin{rem}\label{rem:intersect_finitefree}
	Let $N$ be a finitely generated torsion-free $\AL^+\module$.
	Then $D = \AL \otimes_{\AL^+} N$ is a finite free $\AL\module$ and $N \subset D$ an $\AL^+\textrm{-submodule}$.
	Moreover, the $\AL^+\module$ $N' = N[1/p] \cap D$ is finite free.
	The claim essentially follows from \cite[Proposition B.1.2.4]{fontaine-phigamma}.
	Note that Fontaine assumes the residue field of the discrete valuation base field ($L$ in our case) to be perfect.
	However, the proof of \cite[Proposition B.1.2.4]{fontaine-phigamma} only depends on \cite[Chapter 5, Theorem 3.1]{lang} which is independent of the structure of the residue field of $L$.
	Therefore, one can adpat Fontaine's proof verbatim to show that $N'$ is finite free.
\end{rem}

Let $N$ be a finite free $\ALbreve^+\module$.
Say that $N$ is \textit{effective} and of \textit{finite $\pqheight$} if $N$ is equipped with a Frobenius-semilinear endomorphism $\varphi$ such that the natural map $1 \otimes \varphi : \varphi^*(N) \rightarrow N$ is injective and its cokernel is killed by some finite power of $[p]_q$.

Let $D_{\Lbreve}$ be a finite free \'etale $\varphi\module$ over $\ALbreve$.
Let $\pazs(D_{\Lbreve})$ denote the set of all finitely generated $\ALbreve^+\textrm{-submodules}$ $M \subset D_{\Lbreve}$ such that $M$ is stable under the induced $\varphi$ from $D_{\Lbreve}$, and the cokernel of the injective map $1 \otimes \varphi : \varphi^*(M) \rightarrow M$ is killed by some finite power of $[p]_q$.
In \cite[\S B.1.5.5]{fontaine-phigamma}, Fontaine functorially attached to $D_{\Lbreve}$ an $\ALbreve^+\textrm{-submodule}$ $\jpluss(D_{\Lbreve}) := \cup_{M \in \pazs(D_{\Lbreve})} M \subset D_{\Lbreve}$ (Fontaine uses the notation $j^q_*$ to denote the functor $\jpluss$; we change notations to avoid the obvious confusion).

\begin{lem}\label{lem:fontaine_jqs}
	The $\ALbreve^+\module$ $\jpluss(D_{\Lbreve})$ is free of rank $\leqslant \rank_{\ALbreve} D_{\Lbreve}$.	
	Moreover, if $N$ is an effective $\ALbreve^+\module$ of finite $\pqheight$, then the cokernel of the injective map $N \rightarrow \jpluss(\ALbreve \otimes_{\ALbreve^+} N)$ is killed by some finite power of $\mu$.
\end{lem}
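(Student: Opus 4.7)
The plan exploits the fact that $\OLbreve$ has perfect residue field by construction, which places $\ALbreve^+ \cong \OLbreve\llbracket \mu \rrbracket$ squarely in the classical setup of \cite[\S B.1.5]{fontaine-phigamma}. Set $r := \rank_{\ALbreve} D_{\Lbreve}$.

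For Part 1, I first verify that $\pazs(D_{\Lbreve})$ is directed under inclusion: given $M_1, M_2 \in \pazs$, the sum $M_1 + M_2$ is finitely generated and $\varphi$-stable, and its $(1 \otimes \varphi)$-cokernel receives a surjection from the direct sum of the individual cokernels and hence is killed by a sufficiently large power of $[p]_q$. For each $M \in \pazs$, Lemma \ref{lem:finitefree_2dimregularlocal} (applied with $A = \ALbreve^+$) implies that $M[1/p]$ is finite free over $\ALbreve^+[1/p]$, and the injection $\ALbreve[1/p] \otimes_{\ALbreve^+[1/p]} M[1/p] \hookrightarrow D_{\Lbreve}[1/p]$ bounds its rank by $r$. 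I then upgrade each such $M$ to a finite free $\ALbreve^+$-module by passing to the $p$-saturation $\tilde{M} := M[1/p] \cap (\ALbreve \otimes_{\ALbreve^+} M)$ inside $(\ALbreve \otimes_{\ALbreve^+} M)[1/p]$, following Remark \ref{rem:intersect_finitefree}, and select $M_0 \in \pazs$ of maximum rank $s \leq r$. Combining the filteredness of $\pazs$ with the uniform rank bound will show that $\jpluss(D_{\Lbreve})$ is finitely generated, with freeness of rank $s$ then being a consequence of torsion-freeness and a further application of Remark \ref{rem:intersect_finitefree}.

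For Part 2, let $D := \ALbreve \otimes_{\ALbreve^+} N$; since $N$ is finite free and of finite $\pqheight$, $N \in \pazs(D)$, so $N \subset \jpluss(D)$. Part 1 gives that $\jpluss(D)$ is finite free of rank at most $\rank_{\ALbreve} D = \rank_{\ALbreve^+} N$; since $\jpluss(D) \supset N$, the two ranks coincide, so the cokernel $Q := \jpluss(D)/N$ is $\ALbreve^+$-torsion. To show that $Q$ is annihilated by a power of $\mu$, I will use that $\ALbreve^+[1/\mu] \hookrightarrow \ALbreve$ is faithfully flat (Noetherian completion at the $p$-adic maximal ideal). Since both $N \otimes_{\ALbreve^+} \ALbreve$ and $\jpluss(D) \otimes_{\ALbreve^+} \ALbreve$ equal $D$, the module $Q[1/\mu]$ vanishes after $\otimes_{\ALbreve^+[1/\mu]} \ALbreve$, hence vanishes by faithful flatness. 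As $Q$ is finitely generated over the Noetherian ring $\ALbreve^+$ and satisfies $Q[1/\mu] = 0$, it is killed by a finite power of $\mu$.

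The main obstacle is the argument in Part 1 that the filtered union $\jpluss(D_{\Lbreve})$ is itself finitely generated: the $p$-saturation procedure may weaken the finite $\pqheight$ condition (the cokernel of $1 \otimes \varphi$ on $\tilde{M}$ might only be killed by $[p]_q^h\, p^N$ rather than by a pure power of $[p]_q$), so some care is needed in order to stay inside $\pazs$. A clean resolution requires exploiting the uniform rank bound together with the filteredness, possibly via a determinant or top exterior power argument on $\wedge^r D_{\Lbreve}$ to pin the ascending union down to a single finitely generated object.
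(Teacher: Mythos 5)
Your Part 2 argument has a genuine gap: the map $\ALbreve^+[1/\mu] \rightarrow \ALbreve$ is flat but \emph{not} faithfully flat. For instance $p-\mu$ generates a maximal ideal of $\ALbreve^+[1/\mu]$ (the quotient is $\Lbreve$), yet it becomes a unit in $\ALbreve$, since it is congruent to $-\mu$ modulo $p$ and $\ALbreve$ is $p$-adically complete; so that maximal ideal expands to the unit ideal. Concretely, $Q_0 := \ALbreve^+/(p-\mu) \cong \OLbreve$ (with $\mu$ acting as $p$) satisfies $Q_0 \otimes_{\ALbreve^+} \ALbreve = 0$ but is not killed by any power of $\mu$. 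Hence your inference ``$Q \otimes_{\ALbreve^+[1/\mu]} \ALbreve = 0$, therefore $Q[1/\mu]=0$'' is invalid: vanishing after tensoring with $\ALbreve$ only says $Q[1/\mu] = pQ[1/\mu]$, and Nakayama is unavailable because $p$ is not in the Jacobson radical of $\ALbreve^+[1/\mu]$. To rule out supports such as $(p-\mu)$ one must use the $\varphi$-structure: both $N$ and $\jpluss(D_{\Lbreve})$ are $\varphi$-stable of finite $\pqheight$, and it is exactly this that forces the torsion quotient to be supported on $(\mu)$. This is the content of \cite[Proposition B.1.5.6]{fontaine-phigamma}, which the paper simply cites, after first noting via \cite[Proposition B.1.3.3]{fontaine-phigamma} that a finite free $N$ of finite $\pqheight$ is $p$-\'etale, so that $D_{\Lbreve} := \ALbreve \otimes_{\ALbreve^+} N$ is \'etale and $N \in \pazs(D_{\Lbreve})$ (a point you also glide over, though it is easy since $[p]_q$ is a unit in $\ALbreve$).

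Part 1 is likewise not established. Directedness of $\pazs(D_{\Lbreve})$ plus a uniform rank bound does not yield finite generation of the filtered union: an increasing chain of finite free submodules of bounded rank can perfectly well have non-finitely generated union (the ranks stabilize while the modules keep growing by denominators), so the decisive step is a boundedness statement for the whole family $\pazs(D_{\Lbreve})$, obtained by exploiting the finite $\pqheight$ condition uniformly — this is precisely what Fontaine proves in \cite[\S B.1.5.5]{fontaine-phigamma}, and the paper's proof of the first assertion is just that citation (legitimate here because $\ALbreve^+ \cong \OLbreve\llbracket \mu \rrbracket$ with $\OLbreve$ a complete discrete valuation ring with perfect residue field is exactly the classical setting). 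You acknowledge this as ``the main obstacle'' and only gesture at a determinant argument; in addition, your $p$-saturation step does, as you suspect, leave $\pazs$ (the cokernel is then a priori only killed by $[p]_q^h p^N$), so the selection of a maximal-rank $M_0$ inside $\pazs$ needs care. In short, both halves of your proof rest on steps that are either false as stated (the faithful flatness) or not carried out (the boundedness), and both are repaired by the $\varphi$-theoretic arguments of Fontaine that the paper invokes.
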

\begin{proof}
	The first claim is shown in \cite[\S B.1.5.5]{fontaine-phigamma}.
	For the second claim note that $N$ is finite free over $\ALbreve^+$ and of finite $\pqheight$, therefore, by the equivalence shown in \cite[Proposition B.1.3.3]{fontaine-phigamma} we get that $N$ is $p\textrm{-\'etale}$ in the sense of \cite[\S B.1.3.1]{fontaine-phigamma}.
	In particular, we get that $D_{\Lbreve} = \ALbreve \otimes_{\ALbreve^+} N$ is an \'etale $\varphi\module$ and $N \in \pazs(D_{\Lbreve})$.
	Now, from \cite[Proposition B.1.5.6]{fontaine-phigamma}, it follows that the cokernel of the injective map $N \rightarrow \jpluss(D_{\Lbreve})$ is killed by some finite power of $\mu$.
\end{proof}

\subsection{\texorpdfstring{$\padic$}{-} representations and \texorpdfstring{$(\varphi, \Gamma)\modules$}{-}}\label{subsec:phigamma_mod_imperfect}

Let $T$ be a finite free $\ZZ_p\representation$ of $G_L$.
From the theory of $(\varphi, \Gamma_L)\modules$ (see \cite{fontaine-phigamma} and \cite{andreatta-phigamma}), one can functorially associate to $T$ a finite free \'etale $(\varphi, \Gamma_L)\module$ $\DL(T) := (\mbfa \otimes_{\ZZ_p} T)^{H_L}$ over $\AL$ of rank $= \rank_{\ZZ_p} T$, i.e.\ $\DL(T)$ is equipped with a continuous and semilinear action of $\Gamma_L$ and a Forbenius-semilinear endomorphism $\varphi$ commuting with $\Gamma$ and such that the natural map $1 \otimes \varphi : \varphi^*(\DL(T)) \rightarrow \DL(T)$ is an isomorphism.
Moreover, we have  $\Dtilde_L(T) := (\Atilde \otimes_{\ZZ_p} T)^{H_L} \isomorphic \Atilde^{H_L} \otimes_{\AL} \DL(T)$.
Furthermore, by the theory of overconvergence of $\padic$ and $\ZZ_p\textrm{-representations}$ (see \cite{cherbonnier-colmez} and \cite{andreatta-brinon}), one can functorially associate to $T$ a finite free \'etale $(\varphi, \Gamma_L)\module$ $\DLdag(T) := (\Adag \otimes_{\ZZ_p} T)^{H_L}$ over $\ALdag$ of rank $= \rank_{\ZZ_p} T$ and such that $\AL \otimes_{\ALdag} \DLdag(T) \isomorphic \DL(T)$.
We have natural isomorphisms
\begin{equation}\label{eq:phigamma_comp_iso_imperfect}
	\mbfa \otimes_{\AL} \DL(T) \isomorphic \mbfa \otimes_{\ZZ_p} T, \hspace{4mm} \Adag \otimes_{\ALdag} \DLdag(T) \isomorphic \Adag \otimes_{\ZZ_p} T,
\end{equation}
compatible with $(\varphi, \Gamma_L)\textrm{-actions}$.
More generally, the constructions described above are functorial and induce equivalence of categories
\begin{equation}\label{eq:rep_phigamma_imperfect}
	\Rep_{\ZZ_p}(G_L) \isomorphic (\varphi, \Gamma_L)\Mod_{\AL}^{\etale} \lisomorphic (\varphi, \Gamma_L)\Mod_{\ALdag}^{\etale}.
\end{equation}
Similar statements are also true for $\padic$ representations of $G_L$.
For a $\padic$ representation $V$ of $G_L$, set $\DrigLdag(V) := \BrigLdag \otimes_{\BLdag} \DLdag(V)$, which is the unique finite free $(\varphi, \Gamma_L)\module$ over $\BrigLdag$ of rank $=\dim_{\QQ_p} V$ and pure of slope 0 functorially attached to $V$ (see \cite{berger-differentielles}, \cite{kedlaya-slope} and \cite{ohkubo-differential}).
Moreover, the preceding functor induces an equivalence of categories between $\padic$ representations of $G_L$ and finite free $(\varphi, \Gamma_L)\modules$ over $\BrigLdag$ which are pure of slope 0 (see \cite[Lemma 4.5.7]{ohkubo-differential}), and we have a natural $(\varphi, G_L)\equivariant$ isomorphism,
\begin{equation}\label{eq:brigbatdag_iso}
	\Brigtildedag \otimes_{\BrigLdag} \DrigLdag(V) \isomorphic \Brigtildedag \otimes_{\QQ_p} V.
\end{equation}

\begin{rem}
	We have variations of the results mentioned above for $\padic$ (resp.\ $\ZZ_p\textrm{-representations}$) of $G_{\Lbreve}$ as well (see \cite{fontaine-phigamma}, \cite{cherbonnier-colmez} and \cite{berger-differentielles} for details).
\end{rem}

Finally, let $V$ be a $\padic$ representation of $G_L$ and $T \subset V$ a $G_L\textrm{-stable}$ $\ZZ_p\textrm{-lattice}$.
Since $G_{\Lbreve}$ is a subgroup of $G_L$, therefore, by restriction $V$ is a $\padic$ representation of $G_{\Lbreve}$ and $T \subset V$ a $G_{\Lbreve}\textrm{-stable}$ $\ZZ_p\textrm{-lattice}$.
Furthermore, we have a $\GammaLbreve\textrm{-equivariant}$ embedding $\AL \subset \ALbreve$ (via the map $[X_i^{\flat}] \mapsto X_i$) and thus we have isomorphisms of \'etale $(\varphi, \GammaLbreve)\modules$ $\DLbreve(T) \isomorphic \ALbreve \otimes_{\AL} \DL(T)$ and $\Dtilde_{\Lbreve}(T) := (\Atilde \otimes_{\ZZ_p} T)^{H_{\Lbreve}} \isomorphic \Atilde^{H_{\Lbreve}} \otimes_{\ALbreve} \DLbreve(T)$.
Similar statements are also true for $V$.

\subsection{Crystalline representations}\label{subsec:crysrep_imperfect}

Let $\Rep_{\QQ_p}^{\crys}(G_L)$ denote the category of $\padic$ crystalline representations of $G_L$ (see \cite[\S 3.3]{brinon-imparfait}) and let $\MF_L\wa(\varphi, \partial)$ denote the category of weakly admissible filtered $(\varphi, \partial)\modules$ over $L$ (see \cite[D\'efinition 4.21]{brinon-imparfait}).
Then, the following functor induces an exact equivalence of $\otimes\textrm{-categories}$:
\begin{align}\label{eq:odcrysl_func}
	\begin{split}
		\Rep_{\QQ_p}^{\crys}(G_L) &\isomorphic \MF_L\wa(\varphi, \partial)\\
		V &\longmapsto \ODcrysL(V) := (\OBcrys(\OLbar) \otimes_{\QQ_p} V)^{G_L},
	\end{split}
\end{align}
with an exact quasi-inverse given as $D \mapsto \OVcrysL(D) := (\Fil^0(\OBcrys(\OLbar) \otimes_L D))^{\partial=0, \varphi=1}$ (see \cite[Corollaire 4.37]{brinon-imparfait}).
In particular, if $V$ is a $\padic$ crystalline representation of $G_L$, then $\ODcrysL(V)$ is a rank $=\dim_{\QQ_p} V$, weakly admissible filtered $(\varphi, \partial)\module$ over $L$.
Moreover, as a representation of $G_{\Lbreve}$ one can attach to $V$ a rank $= \dim_{\QQ_p} V$, filtered $\varphi\module$ over $\Lbreve$, denoted as $\DcrysLbreve(V)$.
Now, note that since $V$ is crystalline for $G_L$, therefore, we have a $(\varphi, G_L)\equivariant$ isomorphism $\OBcrys(\OLbar) \otimes_L \ODcrysL(V) \isomorphic \OBcrys(\OLbar) \otimes_{\QQ_p} V$.
By base changing it along the $(\varphi, G_{\Lbreve})\equivariant$ surjection $\OBcrys(\OLbar) \twoheadrightarrow \Bcrys(\OLbar) = \Bcrys(O_{\Lbrevebar})$, sending $X_i \mapsto [X_i^{\flat}]$ for $1 \leqslant i \leqslant d$, we obtain the following $(\varphi, G_{\Lbreve})\equivariant$ isomorphism (also see the proof of \cite[Proposition 4.14]{brinon-trihan}):
\begin{equation*}
	\Bcrys(O_{\Lbrevebar}) \otimes_L \ODcrysL(V) \isomorphic \Bcrys(O_{\Lbrevebar}) \otimes_{\QQ_p} V,
\end{equation*}
where in the left hand term we consider the $(\varphi, G_{\Lbreve})\equivariant$ composition $L \rightarrow \OBcrys(\OLbar) \twoheadrightarrow \Bcrys(O_{\Lbrevebar})$, where the first map is the non-canonical $L\algebra$ structure on $\OBcrys(\OLbar)$ (see \S \ref{subsubsec:crystalline_rings}).
By taking $G_{\Lbreve}\textrm{-invariants}$ in the preceding isomorphism, we obtain an isomorphism of filtered $\varphi\modules$ over $\Lbreve$:
\begin{equation}\label{eq:dcrys_llbreve_comp}
	\Lbreve \otimes_L \ODcrysL(V) \isomorphic \DcrysLbreve(V).
\end{equation}
The representation $V$ is said to be positive if all its Hodge-Tate weights are $\leqslant 0$, and in this case we have that $\ODcrysL(V) = (\OBcrys^+(\OLbar) \otimes_{\QQ_p} V)^{G_L}$.

\begin{lem}\label{lem:faltings_isomorphism_dcrys}
	There exists a natural $\Bcrys(\OLbar)\textrm{-linear}$ and Frobenius-equivariant isomorphism,
	\begin{equation*}
		\Bcrys(\OLbar) \otimes_{\QQ_p} V \isomorphic (\OBcrys(\OLbar) \otimes_L \ODcrysL(V))^{\partial = 0} \isomorphic \Bcrys(\OLbar) \otimes_L \ODcrysL(V),
	\end{equation*}
	induced by the surjective map $\OBcrys(\OLbar) \twoheadrightarrow \Bcrys(\OLbar)$, sending $X_i \mapsto [X_i^{\flat}]$ for $1 \leqslant i \leqslant d$.
\end{lem}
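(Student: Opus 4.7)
The starting point is the Faltings-style crystalline comparison isomorphism implicit in the equivalence \eqref{eq:odcrysl_func}: for $V \in \Rep_{\QQ_p}^{\crys}(G_L)$ one has a natural $\OBcrys(\OLbar)$-linear, $G_L$-equivariant, Frobenius- and connection-compatible isomorphism
\[
\alpha_V : \OBcrys(\OLbar) \otimes_{\QQ_p} V \isomorphic \OBcrys(\OLbar) \otimes_L \ODcrysL(V),
\]
in which the left side carries the connection $\partial \otimes 1$ (with $V$ horizontal) and the right side the tensor product connection. Taking horizontal sections of the left-hand side and using the Poincar\'e lemma $\OBcrys(\OLbar)^{\partial=0} = \Bcrys(\OLbar)$ (which reflects the structure of $\OBcrys(\OLbar)$ as a PD-polynomial ring over $\Bcrys(\OLbar)$ in the variables $[X_i^\flat] - X_i$) gives $\Bcrys(\OLbar) \otimes_{\QQ_p} V$ because $V$ is finite-dimensional over $\QQ_p$; combined with $\alpha_V$ this produces the first isomorphism of the lemma, and under the identifications involved it is indeed the map induced by $X_i \mapsto [X_i^\flat]$ after restriction to horizontal sections.

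For the second isomorphism I would construct an explicit inverse to the map
\[
(\OBcrys(\OLbar) \otimes_L \ODcrysL(V))^{\partial=0} \longrightarrow \Bcrys(\OLbar) \otimes_L \ODcrysL(V)
\]
induced by $X_i \mapsto [X_i^\flat]$ via the Taylor expansion
\[
\tau(v) := \sum_{\smbfk \in \NN^d} \prod_{i=1}^d ([X_i^\flat] - X_i)^{[k_i]} \otimes \prod_{i=1}^d \partial_i^{k_i}(v),
\]
defined for $v \in \ODcrysL(V)$ and extended $\Bcrys(\OLbar)$-linearly. Convergence of $\tau(v)$ in $\OBcrys(\OLbar) \otimes_L \ODcrysL(V)$ is guaranteed by the PD-structure on $\OAcrys(\OLinfty)$ and the finite-dimensionality of $\ODcrysL(V)$ over $L$. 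A direct Leibniz-rule computation, using $\partial_i [X_j^\flat] = 0$ together with $\partial_i ([X_j^\flat] - X_j)^{[k_j]} = -\delta_{ij}([X_j^\flat] - X_j)^{[k_j-1]}$, then shows that $\tau(v)$ is horizontal and that applying $X_i \mapsto [X_i^\flat]$ to $\tau(v)$ recovers $1 \otimes v$; Frobenius-equivariance and $\Bcrys(\OLbar)$-linearity of both isomorphisms are transparent from the explicit formulas and from the corresponding properties of $\alpha_V$.

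The one conceptual subtlety to keep track of is the distinction between the two $L$-algebra structures on $\OBcrys(\OLbar)$ discussed in \S \ref{subsubsec:crystalline_rings}: the connection $\partial$ is defined relative to the canonical inclusion (so that the $\partial_i$ act as the usual derivations with respect to the $X_i$), whereas the $L$-algebra structure on $\Bcrys(\OLbar)$ appearing on the right-hand side of the lemma is the non-canonical one recalled just before the statement, compatible with $X_i \mapsto [X_i^\flat]$. Once this bookkeeping is set up, the proof reduces to a multi-variable version of the Poincar\'e lemma with coefficients in a connection module, essentially identical to the classical perfect residue field case.
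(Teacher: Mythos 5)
Your route is essentially the paper's: the non-canonical $L$-algebra structure furnishes the Taylor/PD section, and the crystalline comparison together with $\OBcrys(\OLbar)^{\partial=0} = \Bcrys(\OLbar)$ identifies the horizontal sections with $\Bcrys(\OLbar)\otimes_{\QQ_p}V$. There is, however, one gap in your treatment of the second isomorphism: you call $\tau$ an ``explicit inverse'' of the map induced by $X_i \mapsto [X_i^{\flat}]$, but you only verify the composite in one direction, namely that $\tau(v)$ is horizontal and that the projection (call it $\pi$) sends $\tau(v)$ back to $1 \otimes v$. This shows that $\pi$ restricted to horizontal sections is surjective with section $\tau$; it does not show injectivity. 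Injectivity amounts to the vanishing $(J\OBcrys(\OLbar)\otimes_L \ODcrysL(V))^{\partial=0}=0$, where $J\OBcrys(\OLbar)$ is the ($p$-adically closed) ideal generated by the $[X_i^{\flat}]-X_i$, i.e.\ the kernel of $\pi$: for a horizontal $x$, the difference $x - \tau(\pi(x))$ is horizontal and lies in $J\OBcrys(\OLbar)\otimes_L \ODcrysL(V)$, and one must show it is zero. This is exactly the point where the paper's proof uses crystallinity of $V$: transporting through $\OBcrys(\OLbar)\otimes_L\ODcrysL(V) \isomorphic \OBcrys(\OLbar)\otimes_{\QQ_p}V$ (with $V$ horizontal), the horizontal sections become $\Bcrys(\OLbar)\otimes_{\QQ_p}V$, and one concludes using $\Bcrys(\OLbar)\cap J\OBcrys(\OLbar) = 0$, which reflects the structure of $\OBcrys(\OLbar)$ over $\Bcrys(\OLbar)$.

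The gap is easily repaired, and your own first step already supplies the missing input: having identified $(\OBcrys(\OLbar)\otimes_L\ODcrysL(V))^{\partial=0}$ with $\Bcrys(\OLbar)\otimes_{\QQ_p}V$ via $\alpha_V$, both source and target of $\pi$ are free $\Bcrys(\OLbar)$-modules of rank $\dim_{\QQ_p}V$, and a surjective map between finite free modules of equal rank over a commutative ring is automatically injective. Alternatively, argue as the paper does: use $\pi$ and $\tau$ to produce the direct sum decomposition of $\OBcrys(\OLbar)\otimes_L\ODcrysL(V)$ into $J\OBcrys(\OLbar)\otimes_L\ODcrysL(V)$ and the image of $\tau$, and then prove the vanishing of horizontal sections of the first summand via the comparison isomorphism. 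Either way, make the injectivity explicit rather than asserting that $\tau$ inverts $\pi$; your remaining points (convergence of $\tau$ from the PD-structure and quasi-nilpotence, the bookkeeping of the two $L$-algebra structures, Frobenius-equivariance of the explicit formulas) agree with the paper's argument.
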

\begin{proof}
	Let $J\OBcrys(\OLbar) := \padic \textrm{ closure of the ideal } ([X_1^{\flat}]-X_1, \ldots, [X_d^{\flat}]-X_d) \subset \OBcrys(\OLbar)$.
	Then, we have a projection,
	\begin{equation}\label{eq:bcryslin_projection}
		\OBcrys(\OLbar) \otimes_L \ODcrys(V) \longrightarrow \Bcrys(\OLbar) \otimes_L \ODcrys(V),
	\end{equation}
	via the map $X_i \mapsto [X_i^{\flat}]$, with the kernel given as $J\OBcrys(\OLbar) \otimes_L \ODcrys(V)$.
	Moreover, recall that we have a connection $\partial : \ODcrysL(V) \rightarrow \ODcrysL(V) \otimes_{O_L} \Omega^1_{O_L/O_F}$, given as $\partial(x) = \sum_{i=1}^d \partial_i(x) dX_i$, for differential operators $\partial_i$ on $\ODcrysL(V)$. 
	Then, using the non-canonical $L\algebra$ structure on $\OBcrys(\OLbar)$ (see \S \ref{subsubsec:crystalline_rings}), we can give an $L\textrm{-linear}$ map,
	\begin{align}\label{eq:dcrys_section}
		\begin{split}
			\ODcrysL(V) &\longrightarrow \OBcrys(\OLbar) \otimes_L \ODcrysL(V)\\
			x &\longmapsto \textstyle \sum_{\smbfk \in \NN^d} \prod_{i=1}^d \partial_i^{k_i}(x) \prod_{i=1}^d ([X_i^{\flat}] - X_i)^{[k_i]},
		\end{split}
	\end{align}
	where we write $\prod_{i=1}^d \partial_i^{k_i}(x) = \partial_1^{k_1} \circ \cdots \circ \partial_d^{k_d}(x)$ for notational convenience.
	As the connection $\partial$ on $\ODcrysL(V)$ is $p\textrm{-adically}$ quasi-nilpotent, therefore, $\prod_{i=1}^d \partial_i^{k_i}(x)$ goes $p\textrm{-adically}$ to $0$ as $\sum_{i=1}^d k_i \rightarrow +\infty$, and thus, in \eqref{eq:dcrys_section}, the formula on the right converges in the target, for its natural topology (see Remark \ref{rem:odcrys_scalarext_top}).
	Moreover, note that the map in \eqref{eq:dcrys_section} extends $\Bcrys(\OLbar)\textrm{-linearly}$ to a map,
	\begin{align}\label{eq:bcryslin_section}
		\begin{split}
			\Bcrys(\OLbar) \otimes_L \ODcrysL(V) &\longrightarrow \OBcrys(\OLbar) \otimes_L \ODcrysL(V),\\
			a \otimes x &\longmapsto a \otimes \textstyle \sum_{\smbfk \in \NN^d} \prod_{i=1}^d \partial_i^{k_i}(x) \prod_{i=1}^d ([X_i^{\flat}] - X_i)^{[k_i]},
		\end{split}
	\end{align}
	and it provides a section to the projection in \eqref{eq:bcryslin_projection}.
	In particular, we obtain a $\Bcrys(\OLbar)\textrm{-linear}$ direct sum decomposition,
	\begin{equation*}
		\OBcrys(\OLbar) \otimes_L \ODcrysL(V) = (J\OBcrys(\OLbar) \otimes_L \ODcrysL(V)) \oplus (\Bcrys(\OLbar) \otimes_L \ODcrysL(V)).
	\end{equation*}
	Note that the image of the section in \eqref{eq:bcryslin_section} lies in $(\OBcrys(\OLbar) \otimes_L \ODcrysL(V))^{\partial=0}$.
	Moreover, since $V$ is crystalline, we have that $\OBcrys(\OLbar) \otimes_L \ODcrysL(V) \isomorphic \OBcrys(\OLbar) \otimes_L V$, and it is easy to see that $(J \OBcrys(\OLbar) \otimes_L \ODcrysL(V))^{\partial = 0} = 0$.
	Therefore, from the direct sum decomposition it follows that we have $(\OBcrys(\OLbar) \otimes_L \ODcrysL(V))^{\partial = 0} \isomorphic \Bcrys(\OLbar) \otimes_L \ODcrysL(V)$.
	Note that the maps in \eqref{eq:bcryslin_projection} and \eqref{eq:bcryslin_section} are evidently compatible with Frobenius, therefore, the isomorphism in the claim is compatible with Frobenius.
	This allows us to conclude.
\end{proof}

\begin{rem}\label{rem:odcrys_scalarext_top}
	Note that the $\varphi\module$ $\ODcrysL(V)$ is a finite dimensional $L\textrm{-vector space}$.
	Therefore, $\OBcrys(\OLbar) \otimes_L \ODcrysL(V)$ is a finite free module over $\OBcrys(\OLbar)$ equipped with the natural product topology induced from the $\padic$ topology on $\OBcrys(\OLbar)$.
	Similarly, in Lemma \ref{lem:odcrys_gammal_action}, we will see that $\BrigL^+ \otimes_L \ODcrysL(V)$ is a finite free module over $\BrigL^+$ and equipped with the natural product topology induced from the Fr\'echet topology on $\BrigL^+$ (see Remark \ref{rem:brigl_topology}).
\end{rem}

\begin{rem}\label{rem:glaction_odcrys}
	Using the $\Bcrys(\OLbar)\textrm{-linear}$ map in \eqref{eq:bcryslin_section}, we equip $\Bcrys(\OLbar) \otimes_L \ODcrysL(V)$ with a continuous $G_L\action$ by transport of structure.
	In particular, for any $g$ in $G_L$, its action on $a \otimes x$ in $\Bcrys(\OLbar) \otimes_L \ODcrysL(V)$ is given as $g(a \otimes x) = g(a) \otimes \sum_{\smbfk \in \NN^d} \prod_{i=1}^d \partial_i^{k_i}(x) \prod_{i=1}^d (g([X_i^{\flat}]) - [X_i^{\flat}])^{[k_i]}$.
\end{rem}

\begin{rem}\label{rem:b+dcrys_hlinv}
	Using the description in Remark \ref{rem:glaction_odcrys}, we have that $\Bcrys^+(\OLbar) \otimes_L \ODcrysL(V) \subset \Bcrys(\OLbar) \otimes_L \ODcrysL(V)$ is stable under the action of $G_L$ as well.
	Moreover, note that the action of $H_L$, induced from the action of $G_L$ described in Remark \ref{rem:glaction_odcrys}, is trivial on $\ODcrysL(V) \subset \Bcrys(\OLbar) \otimes_L \ODcrysL(V)$, and we have that $\Bcrys^+(\OLbar)^{H_L} = \Bcrys^+(\OLinfty)$ by \cite[Lemma 4.32]{morrow-tsuji}.
	Therefore, we get that,
	\begin{equation}\label{eq:b+dcrys_hlinv}
		(\Bcrys^+(\OLbar) \otimes_L \ODcrysL(V))^{H_L} = \Bcrys^+(\OLinfty) \otimes_L \ODcrysL(V).
	\end{equation}
	We equip $\Bcrys^+(\OLinfty) \otimes_L \ODcrysL(V)$ with the residual $\Gamma_L\action$.
\end{rem}

\begin{lem}\label{lem:odcrys_gammal_action}
	For $x \in \ODcrysL(V)$ and $g \in \Gamma_L$, the series $\sum_{\smbfk \in \NN^d} \prod_{i=1}^d \partial_i^{k_i}(x) \prod_{i=1}^d (g([X_i^{\flat}]) - [X_i^{\flat}])^{[k_i]}$ converges in $\BrigL^+ \otimes_L \ODcrysL(V)$.
	In particular, $\BrigL^+ \otimes_L \ODcrysL(V) \subset \Bcrys^+(\OLinfty) \otimes_L \ODcrysL(V)$ is stable under $\Gamma_L\action$.
\end{lem}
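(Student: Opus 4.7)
First I would rewrite the series in a form where convergence in $\BrigL^+$ becomes transparent. Setting $c_i(g) \in \ZZ_p$ via $g(X_i^\flat) = \varepsilon^{c_i(g)} X_i^\flat$, we obtain $g([X_i^\flat]) - [X_i^\flat] = u_i [X_i^\flat]$ with $u_i := (1+\mu)^{c_i(g)} - 1 \in \mu \AL^+ \subset \mu \BrigL^+$. Since $[\partial_i, X_i] = 1$ as operators on $\ODcrysL(V)$, the classical identity $X_i^{k_i}\partial_i^{k_i}/k_i! = \binom{\theta_i}{k_i}$ holds for $\theta_i := X_i\partial_i$, and under the non-canonical $L$-embedding $L \hookrightarrow \BrigL^+$ with $X_i \mapsto [X_i^\flat]$ (cf.\ \S\ref{subsubsec:periodrings_Lbreve}), the series in the statement rewrites as
\[
\sum_{\smbfk \in \NN^d} \prod_i \binom{\theta_i}{k_i}(x) \cdot \prod_i u_i^{k_i}.
\]

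Since $u_i \in \mu \BrigL^+$, the Fr\'echet seminorms of Remark \ref{rem:brigl_topology} give $|u_i|_\rho \leq \rho$ for each $\rho \in (0,1)$, so $|\prod_i u_i^{k_i}|_\rho \leq \rho^{|\smbfk|}$. Convergence of the rewritten series in $\BrigL^+ \otimes_L \ODcrysL(V)$ therefore reduces to showing that the coefficients $\prod_i \binom{\theta_i}{k_i}(x) \in \ODcrysL(V)$ are bounded uniformly in $\smbfk$.

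For this boundedness I would exploit the Frobenius-connection compatibility $\theta_i \varphi = p\varphi\theta_i$ on $\ODcrysL(V)$, which is a direct consequence of $\varphi(X_i) = X_i^p$ and iterates to give $\binom{\theta_i}{k} \circ \varphi = \varphi \circ \binom{p\theta_i}{k}$. Pick a $\varphi$- and $\theta_i$-stable $O_L$-lattice $M \subset \ODcrysL(V)$, for instance coming from the integral structure $(\OAcrys(\OLbar) \otimes_{\ZZ_p} T)^{G_L}$ for a $G_L$-stable lattice $T \subset V$. Since $\binom{pZ}{k}$ has $p$-integral coefficients as a polynomial in $Z$, we have $\binom{p\theta_i}{k}(M) \subset M$. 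After a Tate twist to arrange the Hodge-Tate weights of $V$ in $[0, h]$, the relation $p^h M \subset \varphi(M)$ holds, so each $x \in M$ has the form $p^{-h} \varphi(y)$ with $y \in M$, giving $\binom{\theta_i}{k}(x) = p^{-h}\varphi(\binom{p\theta_i}{k}(y)) \in p^{-h} M$ uniformly in $k$; iterating over $i = 1, \ldots, d$ yields the desired uniform bound on the full product.

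The terms of the rewritten series then have $\rho$-seminorm $\leq C\rho^{|\smbfk|}$ for some constant $C = C(x)$, tending to zero as $|\smbfk| \to \infty$ for every $\rho < 1$, which proves convergence in $\BrigL^+ \otimes_L \ODcrysL(V)$. Since the same sum also converges in $\Bcrys^+(\OLinfty) \otimes_L \ODcrysL(V)$ by Remark \ref{rem:b+dcrys_hlinv} and the inclusion $\BrigL^+ \subset \Bcrys^+(\OLinfty)$ is continuous, the common limit lies in $\BrigL^+ \otimes_L \ODcrysL(V)$; the ``In particular'' assertion follows from the formula of Remark \ref{rem:faltings_isomorphism_dcrys} and the $\Gamma_L$-stability of $\BrigL^+$. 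The hardest step will be the rigorous construction of the lattice $M$ with simultaneous $\varphi$- and $\theta_i$-stability and the verification of $p^h M \subset \varphi(M)$ in the imperfect residue field setting.
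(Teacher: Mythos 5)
Your reduction of the series to $\sum_{\mathbf{k}}\prod_i\binom{\theta_i}{k_i}(x)\cdot\prod_i u_i^{k_i}$ with $u_i=(1+\mu)^{c_i(g)}-1\in\mu\AL^+$ is fine and is essentially the same rewriting the paper uses, but the step that is supposed to control the coefficients is wrong: $\binom{pZ}{k}$ does \emph{not} have $p$-integral coefficients as a polynomial in $Z$. Its linear coefficient is $(-1)^{k-1}p/k$, which has negative valuation as soon as $v_p(k)\geq 2$ (concretely, for $p=2$, $k=4$ one gets $\binom{2Z}{4}=\tfrac{1}{6}(4Z^4-12Z^3+11Z^2-3Z)$, whose $Z^2$-coefficient $11/6$ is not $2$-integral). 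Hence the inclusion $\binom{p\theta_i}{k}(M)\subset M$ does not follow from $\theta_i(M)\subset M$, and with it the asserted uniform bound on $\prod_i\binom{\theta_i}{k_i}(x)$ collapses; indeed, already for a lattice on which $\theta_i$ acts nilpotently with $\theta_i^2=0$ one computes $\binom{p\theta_i}{k}=\tfrac{(-1)^{k-1}p}{k}\theta_i$, so the family $\binom{p\theta_i}{k}(y)$, $y\in M$, is genuinely unbounded along $k=p^m$. Iterating your Frobenius conjugation ($\binom{\theta_i}{k}\varphi^n=\varphi^n\binom{p^n\theta_i}{k}$) does not repair this, because making $\binom{p^n Z}{k}$ integral forces $n\gtrsim v_p(k!)$ and the price $p^{-nh}$ is then exponential in $k$. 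Note also that uniform boundedness is more than you need: since $|u_i|_\rho\leq\rho$, sub-exponential growth of the coefficients would already suffice, but your argument does not establish that either.

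The paper's proof avoids this trap by splitting the factorial differently. One writes $x=\sum_l a_l\varphi(e_l)$ with $a_l\in L$ and $e_l$ a basis of a $\partial$-stable lattice $M$ (supplied by quasi-nilpotence of the connection, with only the weak requirement $\varphi(M)\subset p^{-r}M$, not $p^hM\subset\varphi(M)$), and applies the Leibniz rule for the divided derivatives, so that $1/k!$ splits as $1/(k_1!\,k_2!)$. On the lattice factor the exact relation $\theta_i\circ\varphi=p\,\varphi\circ\theta_i$ produces a full factor $p^{k_2}$ against $1/k_2!$, and $v_p(p^{k_2}/k_2!)\geq 0$; on the scalar factor one is reduced to $\binom{\theta_i}{k_1}(a)$ for $a\in L$, where genuine binomial coefficients appear (the paper checks Laurent monomials $X_i^{\pm n}$) and integrality is honest. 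The decay then comes solely from $u_i^{k_i}$ (resp.\ $\mu^{[k_j]}$), giving convergence for every $\rho<1$. If you want to keep your $\binom{\theta_i}{k}$ formulation, you must perform this same scalar/lattice separation (using multiplicativity of $\sum_k\binom{\theta_i}{k}(\cdot)u^k$) rather than pushing the whole operator through $\varphi$; as written, the proof has a genuine gap at its central estimate, and the auxiliary construction of a simultaneously $\varphi$- and $\theta_i$-stable lattice with $p^hM\subset\varphi(M)$, which you flag as the hardest step, is in fact unnecessary.
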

\begin{proof}
	Let $\{\gamma_0, \gamma_1, \ldots, \gamma_d\}$ be topological generators of $\Gamma_L$ as in \S \ref{subsubsec:crystalline_rings}, in particular, $\gamma_j([X_i]^{\flat}) = (1+\mu)[X_i^{\flat}]$, if $i=j$, and $0$, otherwise.
	As $\BrigL^+ \otimes_L \ODcrysL(V) \subset \Bcrys^+(\OLinfty) \otimes_L \ODcrysL(V)$ is closed for the $\padic$ topology and the action of $\Gamma_L$ on the latter is continuous (see Remark \ref{rem:glaction_odcrys}), therefore, it is enough to show the claim for the chosen topological generators of $\Gamma_L$.
	For $\gamma_j$, we can simplify the sum in the claim and rewrite it as $\sum_{\smbfk \in \NN^d} \mu^{[k_j]} [X_j^{\flat}] \prod_{i=1}^d \partial_i^{k_i}(x)$.
	Recall that the connection $\partial$ on $\ODcrysL(V)$ is $p\textrm{-adically}$ quasi-nilpotent, i.e.\ there exists an $O_L\textrm{-lattice}$ $M \subset \ODcrys(V)$ stable under $\partial$, i.e.\ $\partial : M \rightarrow M \otimes \Omega^1_{O_L}$ such that $\partial$ is nilpotent modulo $p$.
	Let $\{e_1, \ldots, e_h\}$ denote an $O_L\textrm{-basis}$ of $M$.
	Then, we may check on the chosen basis that we have $\varphi(M) \subset p^{-r} M$, for some fixed $r \in \NN$.
	Moreover, recall that we have $L \otimes_{\varphi, L} \ODcrysL(V) \isomorphic \ODcrysL(V)$, so we may write $x = \sum_{j=1}^h a_j \varphi(e_j)$, for some $a_j \in L$.
	Since $\partial_i(\varphi(e_j)) = p \varphi(\partial_i(e_j))$, for all $1 \leqslant i \leqslant d$ and $1 \leqslant j \leqslant h$, therefore, we get that,
	\begin{equation*}
		\textstyle \sum_{\smbfk \in \NN^d} \mu^{[k_j]} [X_j^{\flat}] \prod_{i=1}^d \partial_i^{k_i}(\varphi(e_i)) = p^{-dr}\sum_{\smbfk \in \NN^d} \mu^{[k_j]} [X_i^{\flat}] \prod_{i=1}^d p^{k_i}p^r\varphi(\partial_i^{k_i}(e_i)),
	\end{equation*}
	converges $p\textrm{-adically}$, and thus converges for the natural topology on $\BrigL^+ \otimes_L \ODcrysL(V)$ (see Remark \ref{rem:odcrys_scalarext_top}).
	Therefore, by using the Leibniz rule, we are reduced to showing that the summation $\sum_{\smbfk \in \NN^d} \mu^{[k_j]} [X_j^{\flat}] \prod_{i=1}^d \partial_i^{k_i}(a)$ converges in $\BrigL^+$, for any $a$ in $L$.
	This follows easily since we have $\partial_i^k(X_i^n)/k! = 0$, for $n < k$, $\partial_i^k(X_i^n)/k! = \binom{n}{k} X_i^{n-k}$, for $n \geqslant k$, and $\partial_i^k(X_i^{-n})/k! = (-1)^k\binom{n+k-1}{k} X_i^{-(n+k)}$, for $n \in \NN$.
	Hence, the lemma is proved.
\end{proof}

\begin{lem}\label{lem:brigodcris_modmu}
	The action of $\Gamma_L$ on $\BrigL^+ \otimes_L \ODcrysL(V)$ is trivial modulo $\mu$.
\end{lem}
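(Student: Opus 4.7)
The plan is to exploit the explicit formula for the $\Gamma_L$-action on $\Bcrys(\OLbar) \otimes_L \ODcrysL(V)$ recorded in Remark \ref{rem:faltings_isomorphism_dcrys}, restricted to the subring $\BrigL^+ \otimes_L \ODcrysL(V)$, and to check that modulo $\mu$ each non-trivial correction term is already $\mu$-divisible, while the leading term agrees with $a \otimes x$ up to a $\mu$-divisible discrepancy in the first factor. Writing $D := \ODcrysL(V)$ and fixing $g \in \Gamma_L$, $a \in \BrigL^+$, $x \in D$, the identity to analyse is
\begin{equation*}
  g(a \otimes x) \;=\; g(a) \otimes \sum_{\smbfk \in \NN^d} \prod_{i=1}^d \partial_i^{k_i}(x) \prod_{i=1}^d \bigl(g([X_i^\flat]) - [X_i^\flat]\bigr)^{[k_i]}.
\end{equation*}

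First I would show that $g(a) \equiv a \pmod{\mu \BrigL^+}$ for every $a \in \BrigL^+$. The isomorphism $O_L\llbracket \mu \rrbracket \isomorphic \AL^+$ from \S \ref{subsubsec:phigammamod_rings_imperfect} identifies $\AL^+/\mu = O_L$, and since $\Gamma_L = \Gal(\Linfty/L)$ fixes $L$, the induced $\Gamma_L$-action on this quotient is trivial. Realising $\BrigL^+ \subset L\llbracket \mu \rrbracket$ via Lemma \ref{lem:brigl+_explicit} (with reduction modulo $\mu$ given by picking out the constant term), this triviality propagates to $\BrigL^+/\mu \BrigL^+ = L$.

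Next I would show that for every $\smbfk \neq \mathbf 0$ the factor $\prod_{i=1}^d (g([X_i^\flat]) - [X_i^\flat])^{[k_i]}$ lies in $\mu \BrigL^+$. Using that $g \in \Gamma_L$ acts on $X_i^{1/p^n}$ by multiplication by a $p^n$-th root of unity, one has $g([X_i^\flat]) = (1+\mu)^{a_i(g)}[X_i^\flat]$ for some $a_i(g) \in \ZZ_p$, so $g([X_i^\flat]) - [X_i^\flat] = \mu\, u_i\, [X_i^\flat]$ with $u_i := ((1+\mu)^{a_i(g)}-1)/\mu \in \ZZ_p\llbracket \mu \rrbracket$. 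Taking divided powers gives $(g([X_i^\flat]) - [X_i^\flat])^{[k_i]} = \mu^{[k_i]} u_i^{k_i} [X_i^\flat]^{k_i}$; since $1/k_i! \in L \subset \BrigL^+$, this lies in $\mu^{k_i}\BrigL^+ \subset \mu \BrigL^+$ as soon as $k_i \geq 1$.

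To conclude, the series converges in $\BrigL^+ \otimes_L D$ by Lemma \ref{lem:odcrys_gammal_action}, and the $\smbfk = \mathbf 0$ summand equals $g(a) \otimes x$, which is $\equiv a \otimes x \pmod \mu$ by the first step, while all other summands vanish modulo $\mu$ by the second step. The main (mild) obstacle is the interchange of the infinite sum with reduction modulo $\mu$; this is handled by observing that $\mu(\BrigL^+ \otimes_L D)$ is the kernel of the continuous evaluation map $\BrigL^+ \otimes_L D \twoheadrightarrow D$, $\mu \mapsto 0$, for the Fréchet topology of Remark \ref{rem:brigl_topology} (here finiteness of $\dim_L D$ makes the tensor product well-behaved topologically), so termwise reduction is legitimate and the claim follows.
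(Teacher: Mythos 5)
Your proposal is correct and follows essentially the same route as the paper: it uses the explicit series formula for the $\Gamma_L$-action, the facts that $(g-1)\BrigL^+ \subset \mu \BrigL^+$ and $g([X_i^{\flat}])-[X_i^{\flat}] \in \mu\,\AL^+$, and then observes that every summand with $\smbfk \neq \mathbf{0}$ is divisible by $\mu$. The only (cosmetic) difference is how the infinite sum is reconciled with reduction modulo $\mu$: the paper re-runs the convergence estimate of Lemma \ref{lem:odcrys_gammal_action} to see that the tail converges in $\mu \BrigL^+ \otimes_L \ODcrysL(V)$, whereas you invoke closedness of $\mu(\BrigL^+ \otimes_L \ODcrysL(V))$ as the kernel of the continuous evaluation at $\mu = 0$, which is an equally valid way to conclude.
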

\begin{proof}
	Note that $g(\mu) = (1+\mu)^{\chi(g)}-1$, for any $g$ in $\Gamma_L$ and $\chi$ the $\padic$ cyclotomic character.
	Using Lemma \ref{lem:odcrys_gammal_action} and Remark \ref{rem:glaction_odcrys}, for $a \otimes x$ in $\BrigL^+[\mu/t] \otimes_L \ODcrysL(V)$ and $g \in \Gamma_L$, we have that $g(a \otimes x) = g(a) \otimes \sum_{\smbfk \in \NN^d} \prod_{i=1}^x \partial_i^{k_i}(x) \prod_{i=1}^d (g([X_i^{\flat}])-[X_i^{\flat}])^{[k_i]}$.
	Note that 
	\begin{equation}\label{eq:brigodcris_modmu}
		(g-1)(a \otimes x) = ((g-1)a) \otimes x + g(a) \otimes ((g-1)x),
	\end{equation}
	where $g(x)$ is given by the series in the statement of Lemma \ref{lem:odcrys_gammal_action}.
	So, we have that $(g-1)x = \sum_{\smbfk \in \NN_+^d} \prod_{i=1}^x \partial_i^{k_i}(x) \prod_{i=1}^d ((g-1)[X_i^{\flat}])^{[k_i]}$, where $\NN_+^d = \NN^d \setminus \{(0, 0, \ldots, 0)\}$.
	Using the explicit description of $\BrigL^+$ in Lemma \ref{lem:brigl+_explicit}, note that $(g-1)\BrigL^+ \subset \mu \BrigL^+$, and from the proof of Lemma \ref{lem:odcrys_gammal_action} note that $(g-1)[X_i^{\flat}]$ is in $\mu \BL^+$.
	Therefore, an argument similar to the proof of Lemma \ref{lem:odcrys_gammal_action} shows that $(g-1)x$ converges in $\mu \BrigL^+ \otimes_L \ODcrysL(V)$.
	So, from \eqref{eq:brigodcris_modmu} it follows that $(g-1)(a \otimes x)$ is in $\mu \BrigL^+ \otimes_L \ODcrysL(V)$.
	This allows us to conclude.
\end{proof}

\section{Wach modules}\label{sec:wach_modules}

In this section, we will define and study Wach modules in the imperfect residue field case and finite $\pqheight$ representations of $G_L$ and relate them to crystalline representations.
Our definition is a direct and natural generalisation of Wach modules in the perfect residue field case (see \cite[D\'efinition III.4.1]{berger-limites}).

\subsection{Wach modules over \texorpdfstring{$\AL^+$}{-}}\label{subsec:wach_mod_props}

In the period ring $\Ainf(\OFinfty)$, let us fix $q := [\varepsilon]$, $\mu := q-1 = [\varepsilon]-1$ and $[p]_q := \tilde{\xi} := \varphi(\mu)/\mu$.

\begin{defi}\label{defi:wach_mod_imperfect}
	Let $a, b \in \ZZ$ with $b \geqslant a$.
	A \textit{Wach module} over $\AL^+$ with weights in the interval $[a, b]$ is a finite free $\AL^+\module$ $N$ equipped with a continuous and semilinear action of $\Gamma_{L}$ and satisfying the following assumptions:
	\begin{enumarabicup}
	\item The action of $\Gamma_L$ on $N/\mu N$ is trivial.

	\item There is a Frobenius-semilinear operator $\varphi: N[1/\mu] \rightarrow N[1/\varphi(\mu)]$, commuting with the action of $\Gamma_L$, and such that $\varphi(\mu^b N) \subset \mu^b N$ and the cokernel of the induced injective map $(1 \otimes \varphi) : \varphi^{\ast}(\mu^b N) \rightarrow \mu^b N$ is killed by $[p]_q^{b-a}$.
	\end{enumarabicup}
	Define the $[p]_q\textit{-height}$ of $N$ to be the largest value of $-a$, for $a \in \ZZ$ as above.
	Say that $N$ is \textit{effective} if one can take $b = 0$ and $a \leqslant 0$.
	A Wach module over $\BL^+$ is a finitely generated module $M$ equipped with a Frobenius-semilinear operator $\varphi: M[1/\mu] \rightarrow M[1/\varphi(\mu)]$, commuting with the action of $\Gamma_L$, and such that there exists a $\varphi\textrm{-stable}$ (after inverting $\mu$) and $\Gamma_L\textrm{-stable}$ $\AL^+\submodule$ $N \subset M$, with $N$ a Wach module over $\AL^+$ (equipped with the induced $(\varphi, \Gamma_L)\action$) and $N[1/p] = M$.
\end{defi}
Denote the category of Wach modules over $\AL^+$ as $(\varphi, \Gamma)\Mod_{\AL^+}^{[p]_q}$ with morphisms between objects being $\AL^+\linear$, $\Gamma_L\equivariant$ and $\varphi\equivariant$ morphisms (after inverting $\mu$).

\begin{defi}\label{defi:nygaard_fil}
	Let $N$ be a Wach module over $\AL^+$.
	Define a decreasing filtration on $N$ called the \textit{Nygaard filtration}, for $k \in \ZZ$, as
	\begin{equation*}
		\Fil^k N := \{ x \in N \textrm{ such that } \varphi(x) \in [p]_q^k N\}.
	\end{equation*}
	From the definition, it is clear that $N$ is effective if and only if $\Fil^0 N = N$.
	Similarly, we can define a Nygaard filtration on $M := N[1/p]$ and it satisfies $\Fil^k M = (\Fil^k N)[1/p]$.
\end{defi}

Extending scalars along $\AL^+ \rightarrow \AL$ induces a functor $(\varphi, \Gamma)\Mod_{\AL^+}^{[p]_q} \rightarrow (\varphi, \Gamma)\Mod_{\AL}^{\etale}$, and we make the following claim:
\begin{prop}\label{prop:wach_etale_ff_imperfect}
	The following natural functor is fully faithful:
	\begin{align*}
		(\varphi, \Gamma)\Mod_{\AL^+}^{[p]_q} &\longrightarrow (\varphi, \Gamma)\Mod_{\AL}^{\etale}\\
		N &\longmapsto \AL \otimes_{\AL^+} N.
	\end{align*}
\end{prop}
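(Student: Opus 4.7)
Faithfulness is immediate: since $\AL^+ \hookrightarrow \AL$ is injective and flat and each $N_i$ is finite free over $\AL^+$, the map $N_i \hookrightarrow \AL \otimes_{\AL^+} N_i$ is injective, so any morphism of Wach modules is determined by its extension of scalars. For fullness, the strategy is to base change to $\ALbreve$, apply the analogous fullness in the perfect residue field case, and descend via an explicit intersection.

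Let $F \colon \AL \otimes_{\AL^+} N_1 \to \AL \otimes_{\AL^+} N_2$ be a morphism of \'etale $(\varphi, \Gamma_L)\modules$, and set $F_{\Lbreve} := \ALbreve \otimes_{\AL} F$ via the Frobenius- and $\GammaLbreve\textrm{-equivariant}$ faithfully flat embedding $\AL \hookrightarrow \ALbreve$ of \S \ref{subsubsec:periodrings_Lbreve}. Realizing $\GammaLbreve$ as a subgroup of $\Gamma_L$ via the canonical splitting of $\Gamma_L \cong \Gamma_L' \rtimes \GammaLbreve$, the map $F_{\Lbreve}$ is $\ALbreve\textrm{-linear}$, $\varphi\textrm{-equivariant}$, and $\GammaLbreve\textrm{-equivariant}$. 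Each $\ALbreve^+ \otimes_{\AL^+} N_i$ is finite free over $\ALbreve^+$ by faithful flatness, has trivial $\GammaLbreve\textrm{-action}$ modulo $\mu$ (as $\GammaLbreve$ is trivial on $\ALbreve^+/\mu = \OLbreve$ and, by restriction of the Wach module hypothesis, also on $N_i/\mu N_i$), and inherits the finite $\pqheight$ Frobenius. Since $\OLbreve$ is absolutely unramified with perfect residue field, $\ALbreve^+ \otimes_{\AL^+} N_i$ is a Wach module in the classical sense of Definition~\ref{intro_defi:wach_mods}. The full faithfulness of the analogous functor in that setting---a formal consequence of Theorem~\ref{intro_thm:wach_crys_arith} combined with the standard fully faithful embedding $\Rep_{\ZZ_p}^{\crys}(G_{\Lbreve}) \hookrightarrow \Rep_{\ZZ_p}(G_{\Lbreve}) \isomorphic (\varphi, \GammaLbreve)\Mod_{\ALbreve}^{\etale}$---then yields $F_{\Lbreve}(\ALbreve^+ \otimes_{\AL^+} N_1) \subset \ALbreve^+ \otimes_{\AL^+} N_2$.

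To descend, observe that for $x \in N_1$ one has $F(x) \in (\AL \otimes_{\AL^+} N_2) \cap (\ALbreve^+ \otimes_{\AL^+} N_2)$ inside $\ALbreve \otimes_{\AL^+} N_2$. Fixing an $\AL^+\textrm{-basis}$ of $N_2$, which is simultaneously a basis of each of the three tensor products over its base ring, this intersection equals $(\AL \cap \ALbreve^+) \otimes_{\AL^+} N_2$. Using the explicit Laurent-series descriptions of $\AL$ as $p$-adically convergent two-sided series $\sum_{k \in \ZZ} a_k \mu^k$ with $a_k \in O_L$ and $a_k \to 0$ as $k \to -\infty$, and of $\ALbreve^+ \cong \OLbreve\llbracket \mu \rrbracket$ inside $\ALbreve$, any element of $\AL \cap \ALbreve^+$ must have coefficients in $O_L \cap \OLbreve = O_L$ with no negative powers of $\mu$, hence lies in $\AL^+$. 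Therefore $F(x) \in N_2$, proving fullness. The main technical point is arranging the $\GammaLbreve\textrm{-action}$ on the base changes so that the classical case applies cleanly---this hinges on the splitting $\Gamma_L \cong \Gamma_L' \rtimes \GammaLbreve$ and careful tracking of the non-canonical $\OLbreve\textrm{-algebra}$ structure on $\ALbreve^+$ recalled in \S \ref{subsubsec:periodrings_Lbreve}; once set up, the clean ring-theoretic identity $\AL \cap \ALbreve^+ = \AL^+$ is the geometric heart of the descent step.
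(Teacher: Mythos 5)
Your argument is correct and shares the paper's overall skeleton (faithfulness from injectivity of $\AL^+ \rightarrow \AL$; base change along the $\GammaLbreve\equivariant$ embedding $\AL \rightarrow \ALbreve$; control of the image of the Wach lattice over $\ALbreve^+$; descent via the identity $\AL \cap \ALbreve^+ = \AL^+$ computed coefficient-wise on a basis of $N_2$), but the middle step is genuinely different. Where you deduce $F_{\Lbreve}(\ALbreve^+ \otimes_{\AL^+} N_1) \subset \ALbreve^+ \otimes_{\AL^+} N_2$ by invoking the classical equivalence of Theorem \ref{intro_thm:wach_crys_arith}, the paper stays entirely inside the finite-height formalism: it applies Fontaine's functor $\jpluss$ to $D_{\Lbreve}$ and $D'_{\Lbreve}$ (Lemma \ref{lem:fontaine_jqs}) to see that $f_{\Lbreve}$ induces a $\GammaLbreve\equivariant$ map $N_{\Lbreve}[1/\mu] \rightarrow N'_{\Lbreve}[1/\mu]$, and then Lemma \ref{lem:wach_muinverse_ff} (the triviality of the action mod $\mu$ versus the $\chi^{-k}$-twist on $\mu^{-k}N'/\mu^{-k+1}N'$) forces $f_{\Lbreve}(N_{\Lbreve}) \subset N'_{\Lbreve}$. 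The paper's route is more elementary and self-contained, using only Fontaine's $p$-\'etale/finite-height theory and a weight argument, whereas yours imports the full strength of Berger's crystalline--Wach equivalence; since that theorem is available (and used elsewhere in the paper), your route is legitimate, though heavier. One point you should make explicit for the "formal consequence" to be airtight: you need the classical compatibilities $\ALbreve \otimes_{\ALbreve^+} \NLbreve(T) \isomorphic \DLbreve(T)$ and the uniqueness of the Wach lattice inside $\DLbreve(T)$, so that the composite of the equivalence with $\Rep_{\ZZ_p}(G_{\Lbreve}) \isomorphic (\varphi, \GammaLbreve)\Mod_{\ALbreve}^{\etale}$ is identified with scalar extension and the resulting Wach-module morphism is literally the restriction of $F_{\Lbreve}$; these are standard facts from the classical theory (recorded in \eqref{eq:nlbreve_phigamma_comp}), so this is a gloss rather than a gap. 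Your descent step, phrased as $\AL \cap \ALbreve^+ = \AL^+$ via the unique Laurent-type expansions, is exactly the content of the paper's concluding intersection $N = N_{\Lbreve} \cap D_L$, stated slightly more explicitly.
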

\begin{proof}
	We need to show that for Wach modules $N$ and $N'$, we have a natural bijection,
	\begin{equation}\label{eqref:homset_bijection_imperfect}
		\Hom_{(\varphi, \Gamma)\Mod_{\AL^+}^{[p]_q}}(N, N') \isomorphic \Hom_{(\varphi, \Gamma)\Mod_{\AL}^{\etale}}(\AL \otimes_{\AL^+} N, \AL \otimes_{\AL^+} N').
	\end{equation}
	Note that $\AL^+ \rightarrow \AL = \AL^+[1/\mu]^{\wedge}$ is injective, in particular, the map in \eqref{eqref:homset_bijection_imperfect} is injective.
	To check that \eqref{eqref:homset_bijection_imperfect} is surjective, let $D_L := \AL \otimes_{\AL^+} N$, $D'_L := \AL \otimes_{\AL^+} N'$, and take an $\AL\linear$ and $(\varphi, \Gamma_L)\equivariant$ map $f : D_L \rightarrow D'_L$.
	Then, by base changing $f$ along the embedding $\AL \rightarrow \ALbreve$ (see \S \ref{subsubsec:periodrings_Lbreve}), we obtain an $\ALbreve\linear$ and $(\varphi, \GammaLbreve)\equivariant$ map $f_{\Lbreve} : D_{\Lbreve} \rightarrow D'_{\Lbreve}$.
	Using the definition and notation preceding Lemma \ref{lem:fontaine_jqs}, we further obtain an $\ALbreve^+\linear$ and $(\varphi, \GammaLbreve)\equivariant$ map $f_{\Lbreve} : \jpluss(D_{\Lbreve}) \rightarrow \jpluss(D'_{\Lbreve})$, where we abuse notations by writing $f_{\Lbreve}$ instead of $\jpluss(f_{\Lbreve})$.
	From Lemma \ref{lem:fontaine_jqs}, note that for some $s \in \NN$ and $N_{\Lbreve} := \ALbreve^+ \otimes_{\AL^+} N$, we have an inclusion $\mu^s N_{\Lbreve} \subset \jpluss(D_{\Lbreve})$ and the cokernel is killed by some finite power of $\mu$.
	Hence, $N_{\Lbreve}[1/\mu] \isomorphic \jpluss(D_{\Lbreve})[1/\mu]$.
	Similarly, one can also show that $N'_{\Lbreve}[1/\mu] \isomorphic \jpluss(D'_{\Lbreve})[1/\mu]$.

	Now, from the map $f_{\Lbreve} : \jpluss(D_{\Lbreve}) \rightarrow \jpluss(D'_{\Lbreve})$, we obtain an induced $\GammaLbreve\equivariant$ map $f_{\Lbreve} : N_{\Lbreve}[1/\mu] = \jpluss(D'_{\Lbreve})[1/\mu] \rightarrow \jpluss(D'_{\Lbreve})[1/\mu] = N'_{\Lbreve}[1/\mu]$, and from Lemma \ref{lem:wach_muinverse_ff} we get that $f_{\Lbreve}(N_{\Lbreve}) \subset N'_{\Lbreve}$.
	It is easy to see that $N = N_{\Lbreve} \cap D_L \subset D_{\Lbreve}$ and $N' = N'_{\Lbreve} \cap D'_L \subset D'_{\Lbreve}$.
	So, we conclude that $f(N) = f_{\Lbreve}(N_{\Lbreve}) \cap f(D_L) \subset N'_{\Lbreve} \cap D'_L = N'$.
	This proves the surjectivity of \eqref{eqref:homset_bijection_imperfect}.
\end{proof}

\begin{lem}\label{lem:wach_muinverse_ff}
	Let $N$ and $N'$ be Wach modules over $\ALbreve^+$ and let $f : N[1/\mu] \rightarrow N'[1/\mu]$ be an $\ALbreve^+\linear$ and $\GammaLbreve\equivariant$ map.
	Then $f(N) \subset N'$.
\end{lem}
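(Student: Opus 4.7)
The plan is to proceed by contradiction. Suppose $f(N) \not\subset N'$ and let $k \geq 1$ be the minimal integer with $f(N) \subset \mu^{-k}N'$ (such $k$ exists since $N$ is finitely generated and $N'[1/\mu]/N'$ is $\mu$-power torsion). I will derive a contradiction by showing that the composition
\[
\bar{g} \colon N \xrightarrow{\ f\ } \mu^{-k}N' \twoheadrightarrow Q := \mu^{-k}N'/\mu^{-(k-1)}N'
\]
is zero, which contradicts minimality of $k$. Since $Q$ is killed by $\mu$, the $\ALbreve^+$-linear map $\bar{g}$ factors through $N/\mu N$.

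The key step is to compute the $\GammaLbreve$-action on $Q$. Multiplication by $\mu^k$ gives an $\OLbreve$-linear isomorphism $Q \isomorphic N'/\mu N'$, so $Q$ is a free $\OLbreve$-module. For $\gamma \in \GammaLbreve$ and $y \in N'$, using $\gamma(\mu) = (1+\mu)^{\chi(\gamma)}-1 \equiv \chi(\gamma)\mu \pmod{\mu^2}$, we have $\gamma(\mu)/\mu \equiv \chi(\gamma) \pmod{\mu}$, and the triviality of the $\GammaLbreve$-action on $N'/\mu N'$ gives $\gamma(y) \equiv y \pmod{\mu N'}$. Expanding $\gamma(\mu^{-k}y) = (\gamma(\mu)/\mu)^{-k}\mu^{-k}\gamma(y)$ and reducing modulo $\mu^{-(k-1)}N'$, one obtains that $\gamma$ acts on $Q$ by scalar multiplication by $\chi(\gamma)^{-k} \in \ZZ_p \subset \OLbreve$.

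Combining $\GammaLbreve$-equivariance of $\bar{g}$ with the triviality of $\GammaLbreve$ on $N/\mu N$: for every $\bar{x} \in N/\mu N$ and every $\gamma \in \GammaLbreve$,
\[
\bar{g}(\bar{x}) = \bar{g}(\gamma\bar{x}) = \gamma\bar{g}(\bar{x}) = \chi(\gamma)^{-k}\bar{g}(\bar{x}),
\]
hence $(1-\chi(\gamma)^{-k})\bar{g}(\bar{x}) = 0$ in $Q$. Choosing $\gamma \in \GammaLbreve$ with $\chi(\gamma) = 1+p$ (or $\chi(\gamma)=5$ when $p=2$), one has $\chi(\gamma)^{-k} \neq 1$ in $\ZZ_p$ for any $k \geq 1$, since $1+p$ (resp.\ $5$) generates a torsion-free procyclic subgroup of $\ZZ_p^\times$. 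Thus $1-\chi(\gamma)^{-k}$ is a nonzero element of the domain $\OLbreve$, and freeness of $Q$ over $\OLbreve$ implies $\bar{g}(\bar{x}) = 0$. This means $f(N) \subset \mu^{-(k-1)}N'$, contradicting minimality of $k$.

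The argument is essentially formal once the computation of the $\GammaLbreve$-character on $Q$ is in hand; the only mild subtlety is ensuring the existence of a $\gamma \in \GammaLbreve$ with $\chi(\gamma)^k \neq 1$ for every $k \geq 1$, which I handle uniformly by exhibiting an explicit element of infinite order in $1+p\ZZ_p$ (resp.\ $1+4\ZZ_2$). Note that the finite $[p]_q$-height hypothesis on $N$ and $N'$ is not used; only the triviality of the $\GammaLbreve$-action modulo $\mu$ and the freeness of $N, N'$ over $\ALbreve^+$ are needed.
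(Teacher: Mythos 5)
Your proof is correct and follows essentially the same route as the paper's: reduce modulo $\mu$, observe that $\GammaLbreve$ acts on $\mu^{-k}N'/\mu^{-k+1}N' \isomorphic N'/\mu N'(-k)$ through $\chi^{-k}$ while it acts trivially on $N/\mu N$, and conclude from equivariance that the map must vanish there, forcing $k=0$. Your write-up merely makes explicit the points the paper leaves implicit (the minimal-$k$ contradiction setup, the choice of $\gamma$ with $\chi(\gamma)$ of infinite order, and the freeness of $N'/\mu N'$ over the domain $\OLbreve$ to kill the invariants), which is fine.
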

\begin{proof}
	The proof is similar to the proof of \cite[Lemma 5.32]{abhinandan-relative-wach-i}.
	Assume that $f(N) \subset \mu^{-k} N'$, for some $k \in \NN$, and consider the reduction of $f$ modulo $\mu$, which is again $\GammaLbreve\equivariant$.
	By definition we have that $\GammaLbreve$ acts trivially over $N/\mu N$, whereas $\mu^{-k}N'/\mu^{-k+1}N' \isomorphic N'/\mu N' (-k)$, i.e.\ the action of $\GammaLbreve$ on $\mu^{-k}N'/\mu^{-k+1}N'$ is given by $\chi^{-k}$, where $\chi$ is the $\padic$ cyclotomic character, in particular, $(\mu^{-k}N'/\mu^{-k+1}N')^{\GammaLbreve} = 0$.
	Since $f$ is $\GammaLbreve\equivariant$, therfore, we must have that $k = 0$, i.e.\ $f(N) \subset N'$.
\end{proof}

Analogous to above, one can define categories $(\varphi, \Gamma)\Mod_{\BL^+}^{[p]_q}$ and $(\varphi, \Gamma)\Mod_{\BL}^{\etale}$ and a functor from the former to latter by extending scalars along $\BL^+ \rightarrow \BL$.
Then passing to associated isogeny categories in Proposition \ref{prop:wach_etale_ff_imperfect}, we obtain the following:
\begin{cor}\label{cor:wach_etale_ff_imperfect}
	The natural functor $(\varphi, \Gamma)\Mod_{\BL^+}^{[p]_q} \rightarrow (\varphi, \Gamma)\Mod_{\BL}^{\etale}$ is fully faithful.
\end{cor}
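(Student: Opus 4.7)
The plan is to deduce the corollary from Proposition \ref{prop:wach_etale_ff_imperfect} by a routine passage-to-isogeny argument. By the definition of Wach modules over $\BL^+$ (Definition \ref{defi:wach_mod_imperfect}), for any objects $M, M'$ in $(\varphi, \Gamma)\Mod_{\BL^+}^{[p]_q}$ we may choose $\AL^+\textrm{-lattices}$ $N \subset M$ and $N' \subset M'$ that are themselves Wach modules over $\AL^+$ satisfying $N[1/p] = M$ and $N'[1/p] = M'$. Under these identifications $\BL \otimes_{\BL^+} M \isomorphic (\AL \otimes_{\AL^+} N)[1/p]$, and similarly for $M'$, so both sides of the functor in question arise, up to inverting $p$, from the objects of $(\varphi, \Gamma)\Mod_{\AL^+}^{[p]_q}$ and $(\varphi, \Gamma)\Mod_{\AL}^{\etale}$ controlled by the preceding proposition.

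For fullness, given a $\BL\linear$ and $(\varphi, \Gamma_L)\equivariant$ map $\bar{f} : \BL \otimes_{\BL^+} M \rightarrow \BL \otimes_{\BL^+} M'$, finite generation of $N$ over $\AL^+$ ensures that $p^n \bar{f}$ carries $\AL \otimes_{\AL^+} N$ into $\AL \otimes_{\AL^+} N'$ for all sufficiently large $n \in \NN$. Applying the fullness assertion of Proposition \ref{prop:wach_etale_ff_imperfect} to $p^n \bar{f}$ produces a unique morphism $g : N \rightarrow N'$ of Wach modules over $\AL^+$ inducing it. Inverting $p$ and dividing by $p^n$ then yields the required preimage morphism $f = p^{-n} g[1/p] : M \rightarrow M'$ whose base change to $\BL$ recovers $\bar{f}$.

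Faithfulness is equally direct: if $f : M \rightarrow M'$ becomes zero after base change to $\BL$, then for a chosen Wach lattice $N \subset M$ and a sufficiently large $n \geq 0$ such that $p^n f(N) \subset N'$, the rescaled restriction $p^n f|_N : N \rightarrow N'$ vanishes after tensoring up to $\AL$. The faithfulness half of Proposition \ref{prop:wach_etale_ff_imperfect} then forces $p^n f|_N = 0$, and since $N$ is $p\textrm{-torsion-free}$ we conclude $f|_N = 0$, whence $f = 0$ on $M = N[1/p]$.

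There is no genuine obstacle here; the only care needed is to verify that one can always absorb denominators into powers of $p$ so as to pass between a morphism of $\BL^+\modules$ and a morphism of $\AL^+\textrm{-lattices}$, which is automatic from the finite generation of Wach modules. This is exactly the content of saying that the corollary is a formal consequence of Proposition \ref{prop:wach_etale_ff_imperfect} via the isogeny category.
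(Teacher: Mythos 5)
Your argument is correct and matches the paper, which obtains the corollary precisely by passing to isogeny categories in Proposition \ref{prop:wach_etale_ff_imperfect}; your write-up merely spells out the standard denominator-clearing bookkeeping (choosing Wach lattices $N \subset M$, rescaling by $p^n$, and using $p$-torsion-freeness) that the paper leaves implicit. No gaps.
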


Composing the functor in Proposition \ref{prop:wach_etale_ff_imperfect} with the equivalence in \eqref{eq:rep_phigamma_imperfect}, we obtain a fully faithful functor,
\begin{align}\label{eq:wach_reps_imperfect}
	\begin{split}
		\TL : (\varphi, \Gamma)\Mod_{\AL^+}^{[p]_q} &\longrightarrow \Rep_{\ZZ_p}(G_L)\\
		N &\longmapsto \big(\mbfa \otimes_{\AL^+} N\big)^{\varphi = 1} \isomorphic \big(W(\CC_L^{\flat}) \otimes_{\AL^+} N\big)^{\varphi = 1}.
	\end{split}
\end{align}

\begin{lem}\label{lem:wachmod_comp_imperfect}
	Let $N$ be Wach module of $\pqheight$ $s$ and let $T := \TL(N)$.
	Then, we have a $G_L\equivariant$ isomorphism,
	\begin{equation}\label{eq:wachmod_comp_imperfect}
		\mbfa^+[1/\mu] \otimes_{\AL^+} N \isomorphic \mbfa^+[1/\mu] \otimes_{\ZZ_p} T.
	\end{equation}
	Moreover, if $N$ is effective, then we have $G_L\equivariant$ inclusions $\mu^s (\mbfa^+ \otimes_{\ZZ_p} T) \subset \mbfa^+ \otimes_{\AL^+} N \subset \mbfa^+ \otimes_{\ZZ_p} T$.
\end{lem}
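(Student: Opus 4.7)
The plan is to reduce the lemma to the classical perfect residue field case by base change along the faithfully flat Frobenius-equivariant embedding $\AL^+ \hookrightarrow \ALbreve^+$ from \S \ref{subsubsec:periodrings_Lbreve}. Setting $N_{\Lbreve} := \ALbreve^+ \otimes_{\AL^+} N$ yields a Wach module over $\ALbreve^+$ of the same $\pqheight$ $s$, and the identity $\mbfa \otimes_{\ALbreve^+} N_{\Lbreve} = \mbfa \otimes_{\AL^+} N$ (as $\varphi$-modules) identifies $T$ restricted to $G_{\Lbreve}$ with the $\ZZ_p$-representation $(\mbfa \otimes_{\ALbreve^+} N_{\Lbreve})^{\varphi=1}$ attached to $N_{\Lbreve}$ by the Wach functor in the perfect residue field setting.

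Once this reduction is in place, the first assertion will be deduced as the restriction to $\mbfa^+[1/\mu]$-coefficients of the natural $G_L$-equivariant isomorphism $\mbfa \otimes_{\AL^+} N \isomorphic \mbfa \otimes_{\ZZ_p} T$. The latter follows from \eqref{eq:phigamma_comp_iso_imperfect} applied to $\AL \otimes_{\AL^+} N$, which is an \'etale $(\varphi, \Gamma_L)\module$ because $[p]_q = \varphi(\mu)/\mu$ is a unit in $\AL$, combined with the fact that the $\ZZ_p$-representation attached to $\AL \otimes_{\AL^+} N$ is precisely $T = \TL(N)$ by definition of the functor $\TL$. Under this identification, both $\mbfa^+ \otimes_{\AL^+} N$ and $\mbfa^+ \otimes_{\ZZ_p} T$ sit inside the common ambient module $\mbfa \otimes_{\ZZ_p} T$ as $G_L$-stable sub-$\mbfa^+$-modules.

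For the refined effective case, Berger's classical result (see \cite[Proposition II.2.1]{berger-limites} or the recollection to be given in \S \ref{subsec:classical_wachmod}) provides the $G_{\Lbreve}$-equivariant inclusions
\begin{equation*}
\mu^s (\mbfa^+ \otimes_{\ZZ_p} T) \subset \mbfa^+ \otimes_{\ALbreve^+} N_{\Lbreve} \subset \mbfa^+ \otimes_{\ZZ_p} T.
\end{equation*}
Transitivity of base change, $\mbfa^+ \otimes_{\ALbreve^+} N_{\Lbreve} = \mbfa^+ \otimes_{\AL^+} N$, transfers these inclusions directly to the $\AL^+$-setting, and the $G_L$-equivariance upgrades automatically because both sides are $G_L$-stable inside $\mbfa \otimes_{\ZZ_p} T$.

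The main obstacle is the coherent identification of $T$ and the relevant $\mbfa^+$-submodules across the base change $\AL^+ \rightsquigarrow \ALbreve^+$; this is smoothed over by the intrinsic formula $T = (\mbfa \otimes_{\AL^+} N)^{\varphi=1}$, which depends only on the underlying $\varphi$-module structure and is therefore manifestly compatible with scalar extension from $\AL^+$ to $\ALbreve^+$.
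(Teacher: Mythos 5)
Your derivation of the first assertion has a genuine gap. You propose to obtain $\mbfa^+[1/\mu] \otimes_{\AL^+} N \isomorphic \mbfa^+[1/\mu] \otimes_{\ZZ_p} T$ by ``restricting'' the isomorphism $\mbfa \otimes_{\AL^+} N \isomorphic \mbfa \otimes_{\ZZ_p} T$ to $\mbfa^+[1/\mu]$-coefficients. This step is not valid: both sides are $\mbfa^+[1/\mu]$-submodules of $\mbfa \otimes_{\ZZ_p} T$ which generate the same module after extension to $\mbfa$, and that alone does not force them to coincide --- the extension $\mbfa^+[1/\mu] \rightarrow \mbfa$ is a completion whose faithful flatness is neither shown nor used anywhere in the paper, and there is not even a canonical map between the two submodules until one knows a containment. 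The actual content of the first assertion is the $\mu$-power sandwich $\mu^s(\mbfa^+ \otimes_{\ZZ_p} T) \subset \mbfa^+ \otimes_{\AL^+} N \subset \mbfa^+ \otimes_{\ZZ_p} T$: in the paper the isomorphism over $\mbfa^+[1/\mu]$ is deduced \emph{from} these inclusions (after first reducing to effective $N$ by replacing $N$ with $\mu^r N(-r)$), not the other way around. Your plan also never treats the non-effective case of the first assertion, for which this Tate-twist reduction is needed. The repair is straightforward --- establish the inclusions first, invert $\mu$, and twist --- but as written the logical order is backwards.

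For the inclusions themselves, your route is genuinely different from the paper's and is viable with some care. The paper applies \cite[Proposition 6.15]{morrow-tsuji} directly to $\Ainf(\OLbar) \otimes_{\AL^+} N$, a statement about finite $\pqheight$ $\varphi$-modules over $\Ainf(\OLbar)$ requiring no passage through $\Lbreve$, and then intersects with $\mbfa$ inside $\Atilde \otimes_{\ZZ_p} T$ using $\mbfa^+ = \Ainf(\OLbar) \cap \mbfa$. You instead base change along $\AL^+ \rightarrow \ALbreve^+$ and quote the classical theory; two points then need attention. First, the classical input (Lemma \ref{lem:wach_props}(2)) is stated over $\Ainf(\OLbar)$ and, in this paper, for the Wach module of a lattice in a positive crystalline representation, so you must either invoke Wach's theorem together with uniqueness of classical Wach modules to identify $N_{\Lbreve}$ with $\NLbreve(T|_{G_{\Lbreve}})$, or cite a version valid for an arbitrary effective Wach module over $\ALbreve^+$; your reference \cite[Proposition II.2.1]{berger-limites} concerns the inclusion $\DcrysLbreve(V) \subset \NrigLbreve(V)$ and does not give what you need, and in particular does not give the inclusions with $\mbfa^+$-coefficients directly --- you still need the intersection step with $\mbfa$ inside $\Atilde \otimes_{\ZZ_p} T$, exactly as in the paper's final step. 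Second, your observation that $G_L$-equivariance of the inclusions is automatic once both sides are $G_L$-stable inside $\mbfa \otimes_{\ZZ_p} T$ is correct and matches the paper's mechanism.
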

\begin{proof}
	For $r \in \NN$ large enough, the Wach module $\mu^r N (-r)$ is always effective and we have that $\TL(\mu^rN(-r)) = \TL(N)(-r)$ (the twist $(-r)$ denotes a Tate twist on which $\Gamma_L$ acts via $\chi^{-r}$, where $\chi$ is the $\padic$ cyclotomic character).
	Therefore, it is enough to show both the claims for effective Wach modules.
	So assume that $N$ is effective.
	Since $N$ is finite free over $\AL^+$, therefore, by using Definition \ref{defi:wach_mod_imperfect} (2) and the tensor product Frobenius, we obtain a Frobenius-semilinear isomorphism $\varphi : \Ainf(\OLbar)[1/\xi] \otimes_{\AL^+} N \isomorphic \Ainf(\OLbar)[1/\xitilde] \otimes_{\AL^+} N$.
	Then, from \cite[Proposition 6.15]{morrow-tsuji} we get the following $G_L\equivariant$ inclusions:
	\begin{equation*}
		\mu^s (\Ainf(\OLbar) \otimes_{\ZZ_p} T) \subset \Ainf(\OLbar) \otimes_{\AL^+} N \subset \Ainf(\OLbar) \otimes_{\ZZ_p} T \subset \Atilde \otimes_{\AL^+} N.
	\end{equation*}
	Moreover, from \eqref{eq:phigamma_comp_iso_imperfect}, we have that $\mbfa \otimes_{\AL^+} N \isomorphic \mbfa \otimes_{\ZZ_p} T$.
	Therefore, by taking the intersection of $\mbfa _{\otimes} T$ with the chain of inclusions above, inside $\Atilde \otimes_{\AL^+} N \isomorphic \Atilde \otimes_{\ZZ_p} T$, we obtain the following $G_L\equivariant$ inclusions:
	\begin{equation*}
		\mu^s (\Ainf(\OLbar) \cap \mbfa) \otimes_{\ZZ_p} T \subset (\Ainf(\OLbar) \cap \mbfa) \otimes_{\AL^+} N \subset (\Ainf(\OLbar) \cap \mbfa) \otimes_{\ZZ_p} T.
	\end{equation*}
	Since $\mbfa^+ = \Ainf(\OLbar) \cap \mbfa$, therefore, we get that the natural map in \eqref{eq:wachmod_comp_imperfect} is bijective and $\mu^s (\mbfa^+ \otimes_{\ZZ_p} T) \subset \mbfa^+ \otimes_{\AL^+} N \subset \mbfa^+ \otimes_{\ZZ_p} T$ (for $N$ effective), as desired.
\end{proof}

\subsection{Finite \texorpdfstring{$[p]_q\textrm{-height}$}{-} representations}\label{subsec:finite_pqheight_reps}

In this subsection, we will generalise the definition of finite $\pqheight$ representations from \cite[Definition 4.9]{abhinandan-relative-wach-i} in the imperfect residue field case.
Let $T$ be a finite free $\ZZ_p\representation$ of $G_L$, $V := T[1/p]$ and set $\DL^+(T) := (\mbfa^+ \otimes_{\ZZ_p} T)^{H_L}$ to be the $(\varphi, \Gamma_L)\module$ over $\AL^+$ associated to $T$ and let $\DL^+(V) := \DL^+(T)[1/p]$ be the $(\varphi, \Gamma_L)\module$ over $\BL^+$ associated to $V$.

\begin{defi}\label{defi:finite_pqheight_imperfect}
	A \textit{finite $\pqheight$} $\ZZ_p\representation$ of $G_L$ is a finite free $\ZZ_p\module$ $T$ admitting a linear and continuous action of $G_L$ such that there exists a finite free $\AL^+\submodule$ $\NL(T) \subset \DL(T)$ satisfying the following:
	\begin{enumarabicup}
	\item $\NL(T)$ is a Wach module in the sense of Definition \ref{defi:wach_mod_imperfect}.

	\item We have a natural $(\varphi, \Gamma_L)\equivariant$ isomorphism $\AL \otimes_{\AL^+} \NL(T) \isomorphic \DL(T)$.
	\end{enumarabicup}
	Set the $\pqheight$ of $T$ to be the $\pqheight$ of $\NL(T)$. Say $T$ is \textit{positive} if $\NL(T)$ is effective.

	A finite $\pqheight$ $\padic$ representation of $G_L$ is a finite dimensional $\QQ_p\textrm{-vector space}$ admitting a linear and continuous action of $G_L$ such that there exists a $G_L\textrm{-stable}$ $\ZZ_p\textrm{-lattice}$ $T \subset V$ with $T$ of finite $\pqheight$.
	We set $\NL(V) = \NL(T)[1/p] \subset \DL(T)[1/p] = \DL(V)$ as a $\BL^+\submodule$ of $\DL(V)$, and satisfying properties analogous to (1) and (2) above.
	Set the $\pqheight$ of $V$ to be the $\pqheight$ of $T$.
	Say $V$ is \textrm{positive} if $\NL(V)$ is effective.
\end{defi}

\begin{rem}
	For $T$ a finite $\pqheight$ $\ZZ_p\representation$ of $G_L$ and $r \in \NN$.
	We set $\NL(T(r)) := \mu^{-r} \NL(T)(r)$, in particular, property of being finite $\pqheight$ is invariant under Tate twists.
\end{rem}

\begin{lem}\label{lem:finiteheight_props_imperfect}
	Let $T$ be a finite $\pqheight$ $\ZZ_p\representation$ of $G_L$.
	\begin{enumarabicup}
	\item If $T$ is positive, then $\mu^s \DL^+(T) \subset \NL(T) \subset \DL^+(T)$.

	\item The $\AL^+\module$ $\NL(T)$ is unique, i.e.\ if there exists an $\AL^+\submodule$ $N \subset \DL(T)$ satisfying the conditions (1) and (2) in Definition \ref{defi:finite_pqheight_imperfect}, then we must have that $N = \NL(T) \subset \DL(T)$.
	\end{enumarabicup}
\end{lem}
\begin{proof}
	Note that $\AL \otimes_{\AL^+} \NL(T) \isomorphic \DL(T)$ and this scalar extension is fully faithful by Proposition \ref{prop:wach_etale_ff_imperfect}, therefore, we obtain that $\TL(\NL(T)) \isomorphic T$ as representations of $G_L$ (here $\TL$ is the functor defined in \eqref{eq:wach_reps_imperfect}).
	This also implies that Lemma \ref{lem:wachmod_comp_imperfect} holds for $\NL(T)$, so by taking $H_L\textrm{-invariants}$ of the chain of inclusions in the final statement of Lemma \ref{lem:wachmod_comp_imperfect}, we obtain that $\mu^s \DL^+(T) \subset \NL(T) \subset \DL^+(T)$ which proves (1).
	The claim in (2) follows from Proposition \ref{prop:wach_etale_ff_imperfect}, or using an argument similar to \cite[Proposition 4.13]{abhinandan-relative-wach-i}.
\end{proof}

\begin{rem}\label{rem:finiteheight_props_imperfect}
	Let $V$ be a finite $\pqheight$ $\padic$ representation of $G_L$ and $T \subset V$ a finite $\pqheight$ $G_L\textrm{-stable}$ $\ZZ_p\textrm{-lattice}$.
	Then, we have that $\NL(V) = \NL(T)[1/p]$ and from Lemma \ref{lem:finiteheight_props_imperfect} we get that if $V$ is positive then $\mu^s \DL^+(V) \subset \NL(V) \subset \DL^+(V)$.
	Moreover, from Corollary \ref{cor:wach_etale_ff_imperfect} (or \cite[Proposition 4.13]{abhinandan-relative-wach-i}) it follows that $\NL(V)$ is unique, i.e.\ if there exists a $\BL^+$ module $M \subset \DL(V)$ satisfying the conditions analogous to (1) and (2) in Definition \ref{defi:finite_pqheight_imperfect}, then we must have that $M = \NL(V) \subset \DL(V)$.
	In particular, it follows that $\NL(V)$ is independent of the choice of the lattice $T \subset V$.
	Alternatively, note that since we have $\NL(V(r)) = \mu^{-r} \NL(V)(r)$, without loss of generality we may assume that $V$ is positive and $T' \subset V$ another finite $\pqheight$ $G_L\textrm{-stable}$ $\ZZ_p\textrm{-lattice}$.
	Then, we have that $\mu^s \DL^+(V) \subset \NL(T')[1/p] \subset \DL^+(V)$, and using the argument in the proof of \cite[Proposition 4.13]{abhinandan-relative-wach-i} almost verbatim gives $\NL(V) = \NL(T)[1/p] \isomorphic \NL(T')[1/p]$ compatible with the respective $(\varphi, \Gamma_L)\textrm{-actions}$.
\end{rem}

\begin{rem}\label{rem:finitepqheight_wach_imperfect}
	From the definition of finite $\pqheight$ representations, Lemma \ref{lem:finiteheight_props_imperfect} and the fully faithful functor in \eqref{eq:wach_reps_imperfect}, it follows that the data of a finite $\pqheight$ representation is equivalent to the data of a Wach module.
\end{rem}

\subsection{Wach modules are crystalline}\label{subsec:wachmod_crystalline}

The goal of this subsection is to prove Theorem \ref{thm:fh_crys_imperfect} and Corollary \ref{cor:qdeformation_dcrys}.
To prove our results, we need certain period rings similar to \cite[\S 4.3.1]{abhinandan-relative-wach-i}, which we denote as $\ALpi^{\PD}$ and $\OALpi^{\PD}$ below.
We define these as follows: 
let $\varpi = \zeta_p-1$ and set $\ALpi^+ := \AL^+[\varphi^{-1}(\mu)] \subset \Ainf(\OLinfty)$.
Restricting the map $\theta$ on $\Ainf(\OLinfty)$ (see \S \ref{subsubsec:crystalline_rings}) to $\ALpi^+$, we get a surjection $\theta: \ALpi^+ \twoheadrightarrow O_L[\varpi]$.
Define $\ALpi^{\PD}$ to be the $\padic$ completion of the divided power envelope of the map $\theta$ with respect to $\kert \theta$.
Moreover, consider the surjective map $\theta_L : O_L \otimes_{\ZZ} \ALpi^+ \twoheadrightarrow O_L[\varpi]$, given as $x \otimes y \mapsto x\theta(y)$.
Define $\OALpi^{\PD}$ to be the $\padic$ completion of the divided power envelope of the map $\theta_L$ with respect to $\kert \theta_L$.
Similar to \cite[\S 4.3.1]{abhinandan-relative-wach-i}, one can show that $\ALpi^{\PD} \subset \Acrys(\OLinfty)$ and $\OALpi^{\PD} \subset \OAcrys(\OLinfty)$, stable under the Frobenius and $\Gamma_L\action$ on latter.
We equip $\ALpi^{\PD}$ and $\OALpi^{\PD}$ with induced structures, in particular, a filtration (same as the filtration by divided powers of $\kert \theta$ and $\kert \theta_L$ respectively, see \cite[Remark 4.23]{abhinandan-relative-wach-i}) and a connection $\partial_A$ on $\OALpi^{\PD}$ satisfying Griffiths transversality and such that $(\OALpi^{\PD})^{\partial_A=0} = \ALpi^{\PD}$.
Furthermore, note that we have natural inclusions $\BL^+ \subset \ALpi^{\PD}[1/p] \subset \OALpi^{\PD}[1/p] \subset \OBcrys^+(\OLbar) \subset \OBdR^+(\OLbar)$, where the last term is the big de Rham period ring which is an integral domain (see \cite[Proposition 2.9 \& Remarque 2.10]{brinon-imparfait}).
So, it follows that $\ALpi^{\PD}[1/p]$ and $\OALpi^{\PD}[1/p]$ are torsion-free modules over the principal ideal domain $\BL^+$, hence flat.
A similar reasoning shows that $\Fil^k \OALpi^{\PD}[1/p]$ is a flat $\BL^+\module$, for any $k \in \NN$.

\begin{thm}\label{thm:fh_crys_imperfect}
	Let $N$ be a Wach module over $\AL^+$.
	Then, $V := \TL(N)[1/p]$ is a $\padic$ crystalline representation of $G_L$.
\end{thm}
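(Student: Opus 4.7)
The plan is to produce a $G_L$-equivariant comparison isomorphism
\[
	\OBcrys(\OLbar) \otimes_L D \isomorphic \OBcrys(\OLbar) \otimes_{\QQ_p} V,
\]
where $D := (N/\mu N)[1/p]$ and $V = \TL(N)[1/p]$; taking $G_L$-invariants will then force $\dim_L \ODcrysL(V) \geq \dim_L D = \rank_{\AL^+} N = \dim_{\QQ_p} V$, so that $V$ is crystalline and $\ODcrysL(V) = D$ (which simultaneously yields the filtered $\varphi$-module isomorphism promised in Corollary \ref{cor:qdeformation_dcrys}). By replacing $N$ with $\mu^r N(-r)$ and using that being crystalline is preserved by Tate twists, we may and do assume $N$ is effective.

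The heart of the argument is to build a $\varphi$- and $\Gamma_L$-equivariant $\OALpi^{\PD}$-linear map
\[
	\iota_N : \OALpi^{\PD} \otimes_L D \longrightarrow \OALpi^{\PD}[1/\mu] \otimes_{\AL^+} N[1/p],
\]
defined on a lift $e \in N$ of $\bar e \in D$ by a Taylor-type expansion of the form $\iota_N(1 \otimes \bar e) = \sum_{\smbfk \in \NN^d} \prod_{i=1}^d ([X_i^{\flat}]-X_i)^{[k_i]} \cdot \partial^{\smbfk}(e)$, where $\partial^{\smbfk}(e) \in N[1/p]$ is built from iterating log-type operators $\log(\gamma_i)/\log(\chi(\gamma_i))$ associated to the topological generators $\gamma_1,\ldots,\gamma_d$ of $\Gamma'_L$. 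These operators converge after base change to $\OALpi^{\PD}$ because $(\gamma_i-1)$ lands in $\mu N$ (by triviality of the $\Gamma_L$-action on $N/\mu N$) and the $\pqheight$ bound gives the required denominator control. The source uses the non-canonical $\Gamma_{\Lbreve}$-equivariant $L$-algebra structure on $\OALpi^{\PD}$ from \S \ref{subsubsec:crystalline_rings}, while the target uses the canonical embedding $\AL^+ \subset \OALpi^{\PD}$; the Taylor correction precisely measures the discrepancy between the two. One checks that $\iota_N$ is independent of the chosen lift and commutes with $\varphi$. For the $\Gamma_L$-equivariance, the cyclotomic generator $\gamma_0$ acts on both sides through $\mu \mapsto (1+\mu)^{\chi(\gamma_0)}-1$ and the action on $N$ is trivial modulo $\mu$, while the geometric generators $\gamma_j$ act by $[X_j^{\flat}] \mapsto (1+\mu)[X_j^{\flat}]$, and the PD Taylor expansion is invariant under precisely this twist.

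To conclude, I would show that $\iota_N$ becomes an isomorphism after inverting $t$ (equivalently, after inverting $\mu$ and passing to a PD completion), which can be verified either by explicitly inverting the associated change-of-basis matrix along a lifted $D$-basis, or by base-changing further along $\OALpi^{\PD} \to \OAcrys(\OLbar)$ and invoking Lemma \ref{lem:wachmod_comp_imperfect} to identify the target with $\OAcrys(\OLbar)[1/\mu] \otimes_{\ZZ_p} T$ as $(\varphi, G_L)$-modules. Tensoring up to $\OBcrys(\OLbar) = \OAcrys(\OLbar)[1/t]$ then yields the desired $(\varphi, G_L)$-equivariant isomorphism $\OBcrys(\OLbar) \otimes_L D \isomorphic \OBcrys(\OLbar) \otimes_{\QQ_p} V$, and taking $G_L$-invariants together with the dimension inequality above completes the proof.

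The main obstacle is the construction and $\Gamma_L$-equivariance of $\iota_N$: the geometric subgroup $\Gamma'_L$ does not act on $\AL^+$ through any \emph{a priori} given differential operator on $N$, so one must extract from the $\Gamma'_L$-action on $N$ a converging system of log-derivations in the PD completion, and then match the twist $\gamma_j([X_j^{\flat}]) = (1+\mu)[X_j^{\flat}]$ against the Taylor series in $[X_j^{\flat}] - X_j$. This is the analog in the present imperfect residue field setting of the constructions of \cite[\S 4.3--4.4]{abhinandan-relative-wach-i}, and has to be reworked using the crystalline period rings $\OALpi^{\PD}$ and $\OAcrys(\OLinfty)$ developed in \S \ref{subsec:period_rings_imperfect} and \S \ref{subsec:crysrep_imperfect}.
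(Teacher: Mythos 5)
Your overall architecture is the paper's: reduce to the effective case by a Tate twist, build a $\Gamma_L$-equivariant comparison over the PD ring $\OALpi^{\PD}$ using log-operators coming from the $\Gamma_L$-action, base change to $\OBcrys(\OLbar)$, identify the target via Lemma \ref{lem:wachmod_comp_imperfect}, and conclude by taking $G_L$-invariants and counting dimensions. The gap lies in the construction of $\iota_N$, which is exactly the technical heart. Since $\Gamma_L$ acts trivially on $D=(N/\mu N)[1/p]$, an $\OALpi^{\PD}$-linear, $\Gamma_L$-equivariant map $\OALpi^{\PD}\otimes_L D\rightarrow \OALpi^{\PD}[1/\mu]\otimes_{\AL^+}N[1/p]$ must send $1\otimes\bar e$ to a $\Gamma_L$-\emph{invariant} element, in particular invariant under the cyclotomic generator $\gamma_0$. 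Your series only carries correction factors $([X_i^{\flat}]-X_i)^{[k_i]}$ in the $d$ geometric directions, and the justification offered for $\gamma_0$ (``acts through $\mu\mapsto(1+\mu)^{\chi(\gamma_0)}-1$ and the action on $N$ is trivial mod $\mu$'') does not yield invariance: triviality mod $\mu$ only gives $(\gamma_0-1)e\in\mu N$, and nothing in your series cancels this. This is precisely why the paper's horizontal-element formula \eqref{eq:horizontal_elems} runs over $\smbfk\in\NN^{d+1}$ and includes the extra factors $t^{[k_0]}/p^{mk_0}$ paired with $\partial_0=-\log(\gamma_0)/t$, flattening the arithmetic direction as well. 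Moreover, your normalization $\log(\gamma_i)/\log(\chi(\gamma_i))$ is ill-defined for the geometric generators: $\gamma_1,\ldots,\gamma_d$ generate $\Gamma'_L=\Gal(\Linfty/L(\mu_{p^{\infty}}))$, so $\chi(\gamma_i)=1$ and $\log\chi(\gamma_i)=0$; the correct operators divide by $t$ (and a unit), namely $\partial_i=\log(\gamma_i)/(tV_i)$ as in the proof of Proposition \ref{prop:oalpd_comparison}, and their convergence on $\OSmPD\otimes_{\AL^+}N$ is itself a step that has to be proved, not assumed.

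A second, related gap is well-definedness: defining $\iota_N$ on $D=(N/\mu N)[1/p]$ through a choice of lift requires showing the resulting horizontal element depends only on $e$ modulo $\mu$; two lifts differ by an element of $\mu N[1/p]$, and the Taylor projector applied to it is in general a nonzero invariant element, so ``independence of the lift'' amounts to a uniqueness statement for invariant lifts that you would need to establish. The paper sidesteps both issues by taking $D_L:=(\OALpi^{\PD}\otimes_{\AL^+}N[1/p])^{\Gamma_L}$ as the descent datum, proving finite-dimensionality and the comparison isomorphism for $D_L$ first (Proposition \ref{prop:oalpd_comparison}, via the auxiliary rings $\OSmPD$, the horizontal-element series applied to a basis, an invertibility argument for the resulting matrix, and a Frobenius-twisting step), and only identifying $D_L\isomorphic (N/\mu N)[1/p]$ afterwards in Corollary \ref{cor:qdeformation_dcrys} through the $\Fil^1\OALpi^{\PD}$ exact-sequence argument. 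Your observation that starting from $N/\mu N$ makes the dimension count trivial is correct, but it shifts all the difficulty onto producing the equivariant map, and as written that construction does not work; once the series is repaired (include the $\gamma_0$-terms, fix the normalization) and either lift-independence is proved or one passes to invariants as in the paper, your concluding steps (base change, Lemma \ref{lem:wachmod_comp_imperfect}, $G_L$-invariants and dimension count) do go through as described.
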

\begin{proof}
	For $r \in \NN$ large enough, the Wach module $\mu^r N (-r)$ is always effective and we have that $\TL(\mu^rN(-r)) = \TL(N)(-r)$ (the twist $(-r)$ denotes a Tate twist on which $\Gamma_L$ acts via $\chi^{-r}$, where $\chi$ is the $\padic$ cyclotomic character).
	Therefore, it is enough to show the claim for effective Wach modules.
	So assume that $N$ is effective.
	Note that $N$ is free over $\AL^+$ and $\TL(N)$ is a finite $\pqheight$ $\ZZ_p\representation$ of $G_L$ in the sense of Definition \ref{defi:finite_pqheight_imperfect} (see Remark \ref{rem:finitepqheight_wach_imperfect}).
	So, the results of \cite[\S 4.3-\S 4.5]{abhinandan-relative-wach-i} can be adapted to the case of $O_L$ almost verbatim as all objects appearing in loc.\ cit.\ admit a natural variation for $O_L$.
	In particular, as we explain below, the proofs of \cite[Theorem 4.25, Proposition 4.28]{abhinandan-relative-wach-i} can be adapted to get that $V = \TL(N)[1/p]$ is a crystalline representation of $G_L$.

	Set $D_L := (\OALpi^{\PD} \otimes_{\AL^+} N[1/p])^{\Gamma_L} \subset \ODcrysL(V)$.
	Then, from Proposition \ref{prop:oalpd_comparison} it follows that $D_L$ is a finite $L\textrm{-vector space}$ of dimension $= \rank_{\AL^+} N$ equipped with a tensor product Frobenius and a connection induced from the connection on $\OALpi^{\PD}$ satisfying Griffiths transversality with respect to the tensor-product filtration defined as $\Fil^k D_L := \big(\sum_{i+j=k} \Fil^i \OALpi^{\PD} \otimes_{\AL^+} \Fil^j N[1/p]\big)^{\Gamma_L}$, where $N[1/p]$ is equipped with Nygaard filtration of Definition \ref{defi:nygaard_fil} (see after Lemma \ref{lem:osmpd_comp} for well-definedness of the tensor-product filtration).
	Moreover, from Proposition \ref{prop:oalpd_comparison} we have a natural isomorphism $\OALpi^{\PD} \otimes_{O_L} D_L \isomorphic \OALpi^{\PD} \otimes_{\AL^+} N[1/p]$.
	Now, consider the following diagram:
	\begin{equation}\label{eq:fh_crys_imperfect}
		\begin{tikzcd}
			\OBcrys(\OLbar) \otimes_L D_L \arrow[r, "\eqref{eq:oalpd_comparison}", "\sim"'] \arrow[d, "\eqref{eq:ml_in_dcrys}"'] & \OBcrys(\OLbar) \otimes_{\AL^+} N[1/p] \arrow[d, "\eqref{eq:wachmod_comp_imperfect}"', "\wr"]\\
			\OBcrys(\OLbar) \otimes_L \ODcrysL(V) \arrow[r] & \OBcrys(\OLbar) \otimes_{\QQ_p} V,
		\end{tikzcd}
	\end{equation}
	where the left vertical arrow is the extension of the inclusion $D_L \subset \ODcrysL(V)$, from \eqref{eq:ml_in_dcrys}, along the natural map $L \rightarrow \OBcrys(\OLbar)$, the top horizontal arrow is the extension of the isomorphism, in Proposition \ref{prop:oalpd_comparison}, along the natural map $\OALpi^{\PD}[1/p] \rightarrow \OBcrys(\OLbar)$, the right vertical arrow is the extension of the isomorphism \eqref{eq:wachmod_comp_imperfect}, in Lemma \ref{lem:wachmod_comp_imperfect}, along the natural map $\mbfa^+[1/\mu] \rightarrow \OBcrys(\OLbar)$ and the bottom horizontal arrow is the natural injective map (see \cite[Proposition 3.22]{brinon-imparfait}).
	Commutativity and compatibility of the diagram with $(\varphi, G_L)\action$ and connection follows from \eqref{eq:ml_in_dcrys}.
	Bijectivity of the top horizontal arrow and the right vertical arrow imply that the left vertical arrow and the bottom horizontal arrow are bijective as well.
	Hence, we obtain that $V$ is a crystalline representation of $G_L$.
\end{proof}

\begin{rem}\label{rem:dl_dcrys_comp}
	In diagram \eqref{eq:fh_crys_imperfect}, by taking the $G_L\textrm{-fixed part}$ of the left vertical arrow, we get that,
	\begin{equation}\label{eq:dl_dcrys_comp}
		D_L \isomorphic \ODcrysL(V),
	\end{equation}
	compatible with Frobenius and connection.
	Moreover, since the bottom horizontal arrow of the diagram \eqref{eq:fh_crys_imperfect} is compatible with filtrations (see \cite[Proposition 3.35]{brinon-imparfait}), an argument similar to the proof of \cite[Proposition 4.49]{abhinandan-relative-wach-i} shows that the isomorphism in \eqref{eq:dl_dcrys_comp} is compatible with filtrations, where we consider the Hodge filtration on $\ODcrysL(V)$.
\end{rem}

The following result was used in the proof of Theorem \ref{thm:fh_crys_imperfect}:
\begin{prop}\label{prop:oalpd_comparison}
	Let $N$ be an effective Wach module over $\AL^+$.
	Then, $D_L = \big(\OALpi^{\PD} \otimes_{\AL^+} N[1/p]\big)^{\Gamma_L}$ is a finite $L\textrm{-vector space}$ of dimension $= \rank_{\AL^+} N$ equipped with a Frobenius, a filtration and a connection satisfying Griffiths transversality with respect to the filtration.
	Moreover, we have a natural comparison isomorphism
	\begin{equation}\label{eq:oalpd_comparison}
		\OALpi^{\PD} \otimes_{O_L} D_L \isomorphic \OALpi^{\PD} \otimes_{\AL^+} N[1/p],
	\end{equation}
	compatible with the respective Frobenii, filtrations, connections and $\Gamma_L\textrm{-actions}$.
\end{prop}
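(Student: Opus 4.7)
The plan is to mirror the strategy of \cite[Theorem 4.25 and Proposition 4.28]{abhinandan-relative-wach-i}, adapted to the present imperfect-residue-field setting. The key is to exhibit an explicit $\Gamma_L$-equivariant trivialization of $\OALpi^{\PD} \otimes_{\AL^+} N[1/p]$ whose flat sections recover $D_L$. Since the $\Gamma_L$-action on $N/\mu N$ is trivial, for topological generators $\gamma_0, \gamma_1, \ldots, \gamma_d$ of $\Gamma_L$ as in \S\ref{subsubsec:crystalline_rings}, each operator $\gamma_j - 1$ is divisible by $\mu$ on $N[1/p]$. Taking appropriate logarithms (normalising by $\log \chi(\gamma_0)$ for the cyclotomic direction, and by $\mu$ for the geometric directions $j \geq 1$), one obtains commuting operators $\nabla_0, \nabla_1, \ldots, \nabla_d$ on $N[1/p]$ that define a $\varphi$-compatible quasi-nilpotent connection.

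Next, using the non-canonical $(\varphi, \GammaLbreve)$-equivariant embedding $O_L \hookrightarrow \OALpi^{\PD}$ (given by $X_i \mapsto [X_i^{\flat}]$), define an $O_L$-linear map
\begin{equation*}
\sigma : N[1/p] \longrightarrow \OALpi^{\PD} \otimes_{\AL^+} N[1/p], \quad x \longmapsto \sum_{k_0, \ldots, k_d \geq 0} \mu^{[k_0]} \prod_{i=1}^d ([X_i^{\flat}] - X_i)^{[k_i]} \otimes \nabla_0^{k_0} \nabla_1^{k_1} \cdots \nabla_d^{k_d}(x).
\end{equation*}
Convergence in the $p$-adic topology of $\OALpi^{\PD}$ follows from quasi-nilpotence of $\nabla$ together with the fact that $\mu$ and $[X_i^{\flat}] - X_i$ lie in the PD-ideal defining $\OALpi^{\PD}$. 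A direct computation using the identities for $\gamma_j([X_i^{\flat}]) - [X_i^{\flat}]$ and $\gamma_0(\mu) = (1+\mu)^{\chi(\gamma_0)} - 1$, combined with the Taylor-series form of $\sigma$, shows that the image of $\sigma$ is $\Gamma_L$-invariant. Hence $\sigma$ factors through $D_L$ and extends $\OALpi^{\PD}$-linearly to the desired comparison map
\begin{equation*}
\Sigma : \OALpi^{\PD} \otimes_{O_L} D_L \longrightarrow \OALpi^{\PD} \otimes_{\AL^+} N[1/p].
\end{equation*}

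To show $\Sigma$ is an isomorphism, I would reduce modulo the closed PD-ideal generated by $\mu$ and by the elements $[X_i^{\flat}] - X_i$: on the right, this quotient is $(N/\mu N)[1/p]$, and $\sigma$ reduces to the identity. Lifting by successive approximation along the PD-filtration (together with $p$-adic completeness of $\OALpi^{\PD}$) yields surjectivity; injectivity follows from $\partial_A$-flatness of both sides and from $(\OALpi^{\PD})^{\partial_A = 0} = \ALpi^{\PD}$. Taking $\Gamma_L$-invariants of $\Sigma$, and using $(\OALpi^{\PD})^{\Gamma_L} = L$ (verified as in \cite[Proposition 4.28]{abhinandan-relative-wach-i}), gives $\dim_L D_L = \rank_{\AL^+} N$. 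Structural data is then transported from the right-hand side: the tensor-product Frobenius is $\Gamma_L$-equivariant and restricts to $D_L$; the operator $\partial_A \otimes \mathrm{id}$ preserves $\Gamma_L$-invariants and defines the connection on $D_L$; and the filtration $\Fil^k D_L = \sum_{i+j=k}(\Fil^i \OALpi^{\PD} \otimes \Fil^j N[1/p])^{\Gamma_L}$ inherits Griffiths transversality from that of $\OALpi^{\PD}$.

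The main obstacle is the construction of $\sigma$ and the verification of its $\Gamma_L$-invariance. The geometric generators $\gamma_1, \ldots, \gamma_d$ must be reconciled with the non-canonical $O_L$-algebra structure on $\OALpi^{\PD}$ via a multivariable Taylor-type sum, and convergence of this sum requires careful control over the interaction between the divided powers of $[X_i^{\flat}] - X_i$ and the quasi-nilpotent operators $\nabla_i$. Once this local input is in place, the remaining steps are formal adaptations of the arguments of \cite{abhinandan-relative-wach-i} with the base ring $R$ replaced by $O_L$.
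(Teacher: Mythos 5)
Your overall plan (a Taylor-series trivialisation whose flat sections recover $D_L$, then transport of structures) is the same in spirit as the paper's, but it asserts convergence exactly where the paper has to work hardest, and this is a genuine gap. You define $\nabla_0,\ldots,\nabla_d$ directly on $N[1/p]$ and claim the series $\sum \mu^{[k_0]}\prod_i([X_i^{\flat}]-X_i)^{[k_i]}\otimes\nabla^{\smbfk}(x)$ converges in $\OALpi^{\PD}\otimes_{\AL^+}N[1/p]$ "by quasi-nilpotence". But $\log\gamma_i$ only converges $(p,\mu)$-adically as an operator on $N$, your normalisations introduce denominators ($\log\chi(\gamma_0)\sim p$, resp.\ $\mu$) whose iterates are not controlled, and quasi-nilpotence of the resulting connection is itself something that must be extracted from the finite $\pqheight$ condition — none of this is established over $\OALpi^{\PD}$ in the proposal. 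The paper's proof does not attempt this direct route: it first introduces the auxiliary rings $\SnPD$ and $\OSnPD$, whose divided powers $\mu^k/(k!p^{nk})$ are precisely what makes $\log\gamma_i$ converge as operators on $\OSmPD\otimes_{\AL^+}N$ for $m\geq 1$ (and $m\geq 2$ if $p=2$), proves the trivialisation there (Lemma \ref{lem:osmpd_comp}, with horizontal sections \eqref{eq:horizontal_elems} and a determinant-is-a-unit argument, $p^N\det\alpha\in 1+J^{[1]}\OSmPD$), and only then descends to $\OALpi^{\PD}$ via $\varphi^m:\OSmPD\rightarrow\OALpi^{\PD}$, using the finite $\pqheight$ of $N$ to untwist, i.e.\ the isomorphism $\OALpi^{\PD}\otimes_{\AL^+,\varphi^m}N[1/p]\isomorphic\OALpi^{\PD}\otimes_{\AL^+}N[1/p]$ in diagram \eqref{eq:oalpipd_commdiag}. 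This Frobenius-descent step, and with it the second essential use of the height condition, is entirely absent from your argument.

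Two secondary points are also weaker than needed. Injectivity of the comparison is not a formal consequence of "$\partial_A$-flatness of both sides": in the paper it is proved by embedding both sides into $\OBcrys(\OLbar)\otimes_{\QQ_p}V$ via Lemma \ref{lem:wachmod_comp_imperfect} and the injection $\OBcrys(\OLbar)\otimes_L\ODcrysL(V)\rightarrow\OBcrys(\OLbar)\otimes_{\QQ_p}V$, and finite-dimensionality of the invariants is obtained beforehand from $\varphi^m(D_m)\subset\ODcrysL(V)$ (see \eqref{eq:ml_in_dcrys}) rather than read off at the end. Likewise, "successive approximation along the PD-filtration" is delicate because $\OALpi^{\PD}$ is not complete for that filtration (the paper explicitly avoids the PD-completed rings $\SnhatPD$); the surjectivity argument should instead follow the unit-determinant computation over $\OSmPD$.
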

\begin{proof}
	We will adapt the proof of \cite[Proposition 4.28]{abhinandan-relative-wach-i}.
	The main idea, as explained below, is to work over a new period ring $\OSnPD$ (defined below), prove an isomorphism analogous to \eqref{eq:oalpd_comparison} (see Lemma \ref{lem:osmpd_comp}), and then, extend the latter isomorphism over to $\OALpi^{\PD}$ using the Frobenius.
	So, following \cite[\S 4.4.1]{abhinandan-relative-wach-i}, for $n \in \NN$, let us define a $p\textrm{-adically}$ complete ring $\SnPD := \mbfa_L^+\langle\tfrac{\mu}{p^n}, \tfrac{\mu^2}{2!p^{2n}}, \ldots, \tfrac{\mu^k}{k!p^{kn}}, \ldots\big\rangle$.
	The $p\textrm{-adically}$ completed divided power ring $\SnPD$ is equipped with a continuous action of $\Gamma_L$ and we have a Frobenius homomorphism $\varphi : \SnPD \rightarrow S_{n-1}^{\PD}$, in particular, $\varphi^n(\SnPD) \subset S_0^{\PD} \subset \ALpi^{\PD}$, where the latter inclusion is obvious.
	The reader should note that in \cite[\S 4.4.1]{abhinandan-relative-wach-i} we consider a further completion of $\SnPD$ with respect to certain filtration by PD-ideals, denoted $\SnhatPD$ in loc.\ cit.
	However, such a completion is not strictly necessary and all proofs of loc.\ cit.\ can be carried out without it.
	In particular, many good properties of $\SnhatPD$ restrict to good properties on $\SnPD$ as well (for example, $(\varphi, \Gamma_L)\action$ above).

	Now, consider the $O_F\textrm{-linear}$ homomorphism of rings $\iota : O_L \rightarrow \SnPD$, sending $X_j \mapsto [X_j^{\flat}]$, for $1 \leqslant j \leqslant d$.
	Using $\iota$ define an $O_F\textrm{-linear}$ morphism of rings $f : O_L \otimes_{O_F} \SnPD \rightarrow \SnPD$, via $a \otimes b \mapsto \iota(a) b$.
	Let $\pazo \SnPD$ denote the $\padic$ completion of the divided power envelope of $O_L \otimes_{O_F} \SnPD$ with respect to $\kert f$.
	The divided power ring $\OSnPD$ is equipped with a continuous action of $\Gamma_L$, an integrable connection and we have a Frobenius $\varphi : \OSnPD \rightarrow \pazo S_{n-1}^{\PD}$, in particular, $\varphi^n(\OSnPD) \subset \OALpi^{\PD}$.
	Moreover, we have that $O_L = (\OSnPD)^{\Gamma_L}$, and with $V_j := \frac{X_j \otimes 1}{1 \otimes [X_j^{\flat}]}$, for $1 \leqslant j \leqslant d$, we have $p\textrm{-adically}$ closed divided power ideals
	\begin{equation*}
		J^{[i]}\OSnPD := \Big\langle \tfrac{\mu^{[k_0]}}{p^{nk_0}} \prod_{j=1}^d (1-V_j)^{[k_j]}, \hspace{1mm} \smbfk = (k_0, k_1, \ldots, k_d) \in \NN^{d+1} \hspace{1mm} \textrm{such that} \hspace{1mm} \sum_{j=0}^d k_j \geqslant i\Big\rangle.
	\end{equation*}

	Next, let us equip $\OSnPD \otimes_{\AL^+} N$ with the tensor product Frobenius and an integrable connection induced from the connection on $\OSnPD$.
	Then, $D_n := \big(\OSnPD \otimes_{\AL^+} N[1/p]\big)^{\Gamma_L}$ is an $L\textrm{-vector space}$ equipped with an integrable connection and we have a Frobenius morphism $\varphi : D_n \rightarrow D_{n-1}$.
	In particular, $\varphi^n(D_n) \subset D_L = \big(\OALpi^{\PD} \otimes_{\AL^+} N[1/p]\big)^{\Gamma_L} \subset \big(\OAcrys(\OLbar) \otimes_{\AL^+} N[1/p]\big)^{G_L}$, where the last inclusion follows since $\OALpi^{\PD} \subset \OAcrys(\OLinfty) = \OAcrys(\OLbar)^{H_L}$ (see \cite[Corollary 4.34]{morrow-tsuji}).
	Let $T = \TL(N)$ be the associated finite free $\ZZ_p\representation$ of $G_L$ and $V = T[1/p]$.
	Then, we have that,
	\begin{align}\label{eq:ml_in_dcrys}
		\begin{split}
			D_L \subset \big(\OBcrys^+(\OLbar) \otimes_{\BL^+} N[1/p]\big)^{G_L} \subset &\big(\OBcrys(\OLbar) \otimes_{\BL^+} N[1/p]\big)^{G_L}\\
			&\isomorphic \big(\OBcrys(\OLbar) \otimes_{\QQ_p} V\big)^{G_L} = \ODcrysL(V),
		\end{split}
	\end{align}
	where the isomorphism follows by taking $G_L\textrm{-fixed}$ elements of the extension along $\mbfa^+[1/\mu] \rightarrow \OBcrys(\OLbar)$ of the isomorphism in Lemma \ref{lem:wachmod_comp_imperfect}.
	Recall that $\varphi^n(D_n) \subset D_L$, or equivalently, the $L\linear$ map $1 \otimes \varphi^n : L \otimes_{\varphi^n, L} D_n \rightarrow D_L$ is injective, therefore, we get that $L \otimes_{\varphi^n, L} D_n$ is a finite dimensional $L\textrm{-vector space}$.
	Moreover, $\varphi$ is finite free over $L$, so it follows that $D_n$ is a finite dimensional $L\textrm{-vector space}$ equipped with an integrable connection.
	Furthermore, for $n \geqslant 1$ similar to the proof of \cite[Lemmas 4.32 \& 4.36]{abhinandan-relative-wach-i}, one can show that $\log \gamma_i = \sum_{k \in \NN} (-1)^k \frac{(\gamma_i-1)^{k+1}}{k+1}$ converge as a series of operators on $\OSmPD \otimes_{\AL^+} N$, where $\{\gamma_0, \gamma_1, \ldots, \gamma_d\}$ are topological generators of $\Gamma_L$ (see \S \ref{subsec:period_rings_imperfect}).

\begin{lem}\label{lem:osmpd_comp}
	Let $m \geqslant 1$ (let $m \geqslant 2$ if $p=2$), then we have a $\Gamma_L\equivariant$ isomorphism via the natural map $a \otimes b \otimes x \mapsto ab \otimes x$:
	\begin{equation}\label{eq:osmpd_dm_n}
		\OSmPD \otimes_{O_L} D_m \isomorphic \OSmPD \otimes_{\AL^+} N[1/p].
	\end{equation}
\end{lem}
\begin{proof}
	Compatibility of \eqref{eq:osmpd_dm_n} with the $\Gamma_L\action$ is obvious from the definitions, so we only need to check that it is bijective.
	We will first show that \eqref{eq:osmpd_dm_n} is injective.
	Note that we have an injective ring homomorphism $\OSmPD[1/p] \xrightarrow{\hspace{1mm} \varphi^m \hspace{1mm}} \OALpi^{\PD}[1/p] \rightarrow \OBcrys(\OLbar)$.
	Since $D_m$ is a finite dimensional $L\textrm{-vector space}$, therefore, we get that the following map is injective:
	\begin{equation}\label{eq:osmpd_obcrys_dm}
		\OSmPD \otimes_{O_L} D_m = \OSmPD[1/p] \otimes_L D_m \longrightarrow \OBcrys(\OLbar) \otimes_{\varphi^m, L} D_m.
	\end{equation}
	Now, recall that $V = T[1/p]$ and consider the following composition:
	\begin{equation}\label{eq:dm_obcrys_dcrys}
		\OBcrys(\OLbar) \otimes_{\varphi^m, L} D_m \xrightarrow{\hspace{1mm} 1 \otimes \varphi^m \hspace{1mm}} \OBcrys(\OLbar) \otimes_L D_L \longrightarrow \OBcrys(\OLbar) \otimes_L \ODcrysL(V),
	\end{equation}
	where the first map is injective because $1 \otimes \varphi^m : L \otimes_{\varphi^m, L} D_m \rightarrow D_L$ is injective, and the injectivity of the second map in \eqref{eq:dm_obcrys_dcrys} follows from \eqref{eq:ml_in_dcrys}, in particular, \eqref{eq:dm_obcrys_dcrys} is injective.
	Furthermore, similiar to \eqref{eq:osmpd_obcrys_dm}, note that $N[1/p]$ is a finite free $\BL^+\module$, so it follows that the map $\OSmPD \otimes_{\AL^+} N[1/p] = \OSmPD[1/p] \otimes_{\BL^+} N[1/p] \rightarrow \OBcrys(\OLbar) \otimes_{\varphi^m, \BL^+} N[1/p]$ is injective as well.
	Also, recall that we have an isomorphism $1 \otimes \varphi : \BL^+ \otimes_{\varphi, \BL^+} N[1/p, 1/[p]_q] \isomorphic N[1/p, 1/[p]_q]$.
	So, $\OBcrys(\OLbar) \otimes_{\varphi^m, \BL^+} N[1/p] \isomorphic \OBcrys(\OLbar) \otimes_{\BL^+} N[1/p]$, since $[p]_q$ is invertible in $\OBcrys(\OLbar)$.
	Combining the preceding two observations, we get that the following composition is injective:
	\begin{equation}\label{eq:osmpd_obcrys_n}
		\OSmPD \otimes_{\AL^+} N[1/p] \longrightarrow \OBcrys(\OLbar) \otimes_{\varphi^m, \BL^+} N[1/p] \xrightarrow[\sim]{\hspace{1mm} 1 \otimes \varphi^m \hspace{1mm}} \OBcrys(\OLbar) \otimes_{\BL^+} N[1/p].
	\end{equation}
	Now consider the following diagram
	\begin{center}
		\begin{tikzcd}
			\OSmPD \otimes_{O_L} D_m \arrow[r, "\eqref{eq:osmpd_obcrys_dm}"] \arrow[d, "\eqref{eq:osmpd_dm_n}"'] & \OBcrys(\OLbar) \otimes_{\varphi^m, L} D_m \arrow[r, "\eqref{eq:dm_obcrys_dcrys}"] & \OBcrys(\OLbar) \otimes_R \ODcrysL(V) \arrow[d]\\
			\OSmPD \otimes_{\AL^+} N[1/p] \arrow[r, "\eqref{eq:osmpd_obcrys_n}"] & \OBcrys(\OLbar) \otimes_{\BL^+} N[1/p] \arrow[r] & \OBcrys(\OLbar) \otimes_{\QQ_p} V,
		\end{tikzcd}
	\end{center}
	where the right vertical arrow is the natural injective map (see \cite[Proposition 3.22]{brinon-imparfait}) and the bottom right horizontal map is the extension of the isomorphism in Lemma \ref{lem:wachmod_comp_imperfect} along the natural map $\mbfa^+[1/\mu] \rightarrow \OBcrys(\OLbar)$.
	The diagram commutes by definition and it follows that the left vertical arrow, i.e.\ \eqref{eq:osmpd_dm_n} is injective.

	Now let us check the surjectivity of the map \eqref{eq:osmpd_dm_n}.
	Define the following operators on $\ONmPD := \OSmPD \otimes_{\AL^+} N[1/p]$,
	\begin{equation*}
		\partial_i := \left\{
			\begin{array}{ll}
				-(\log \gamma_0)/t, & \textrm{for} \hspace{1mm} i = 0,\\
				(\log \gamma_i)/(tV_i), & \textrm{for} \hspace{1mm} 1 \leqslant i \leqslant d,
			\end{array}
		\right.
	\end{equation*}
	where $V_i = \frac{X_i \otimes 1}{1 \otimes [X_i^{\flat}]}$, for $1 \leqslant i \leqslant d$ (see \cite[\S 4.4.2]{abhinandan-relative-wach-i}).
	Using the fact that for any $g$ in $\Gamma_L$ and $x$ in $\OSmPD \otimes_{\AL^+} N$, we have $(g-1)(ax) = (g-1)a \cdot x + g(a)(g-1)x$, and from the equality $\log(\gamma_i) = \lim_{n \rightarrow +\infty} (\gamma_i^{p^n}-1)/p^n$, it is easy to see that $\partial_i$ satisfy the Leibniz rule, for all $0 \leqslant i \leqslant d$.
	In particular, the operator $\partial : \ONmPD \rightarrow \ONmPD \otimes_{\OSmPD} \Omega^1_{\OSmPD/O_L}$, given as $x \mapsto \partial_0(x) dt + \sum_{i=1}^d \partial_i(x) d [X_i^{\flat}]$ defines a connection on $\ONmPD$.
	The connection $\partial$ is integrable since the operators $\partial_i$ commute with each other (see \cite[Lemma 4.38]{abhinandan-relative-wach-i}) and using the finite $\pqheight$ property of $N$ it is easy to show that $\partial$ is $p\textrm{-adically}$ quasi-nilpotent as well (see \cite[Lemma 4.39]{abhinandan-relative-wach-i}).

	For $x \in N[1/p]$, similar to the proof of \cite[Lemma 4.41 \& Lemma 4.43]{abhinandan-relative-wach-i}, it follows that the following sum converges in $D_m = (\ONmPD)^{\Gamma_L} = (\ONmPD)^{\partial=0}$:
	\begin{equation}\label{eq:horizontal_elems}
		y = \sum_{\smbfk \in \NN^{d+1}} \partial_0^{k_0} \circ \partial_1^{k_1} \circ \cdots \circ \partial_d^{k_d} (x) \tfrac{t^{[k_0]}}{p^{mk_0}} (1-V_1)^{[k_1]} \cdots (1-V_d)^{[k_d]}.
	\end{equation}
	By choosing a basis of $N$ and using the formula in \eqref{eq:horizontal_elems} on the basis elements, we can define a linear transformation $\alpha$ on the finite free $\OSmPD[1/p]\module$ $\ONmPD$.
	Now, similar to the proof of \cite[Lemma 4.43]{abhinandan-relative-wach-i} it can easily be deduced that for some large enough $N \in \NN$, we can write $p^N \det \alpha \in 1 + J^{[1]}\OSmPD$, i.e.\ $\det \alpha$ is a unit in $\OSmPD[1/p]$ and $\alpha$ defines an automorphism of $\ONmPD$.
	Finally, as the formula in \eqref{eq:horizontal_elems} converges in $D_m$, it follows that the map $\OSmPD \otimes_{O_L} D_m \rightarrow \OSmPD \otimes_{\AL^+} N[1/p]$ is surjective.
	Hence, \eqref{eq:osmpd_dm_n} is bijective.
\end{proof}

	Note that $D_L$ is an $L\textrm{-vector space}$ equipped with the tensor product Frobenius, a filtration given as $\Fil^k D_L = \big(\sum_{i+j=k} \Fil^i \OALpi^{\PD} \otimes_{\AL^+} \Fil^j N[1/p]\big)^{\Gamma_L}$, where $N[1/p]$ is equipped with the Nygaard filtration of Definition \ref{defi:nygaard_fil}.
	The preceding filtration is well-defined, i.e.\ $\Fil^k D_L$ is a sub vector space of $D_L$, for each $k \in \NN$.
	Indeed, it is enough to check that $\Fil^i \OALpi^{\PD}[1/p] \otimes_{\AL^+} \Fil^j N[1/p]$ is an $\OALpi^{\PD}[1/p]\textrm{-submodule}$ of $\OALpi^{\PD}[1/p] \otimes_{\BL^+} N[1/p]$, for each $i, j \in \NN$.
	This easily follows from the fact that the $\OALpi^{\PD}[1/p]\textrm{-linear}$ composition $\Fil^i \OALpi^{\PD}[1/p] \otimes_{\BL^+} \Fil^j N[1/p] \rightarrow \OALpi^{\PD}[1/p] \otimes_{\BL^+} \Fil^j N[1/p] \rightarrow \OALpi^{\PD}[1/p] \otimes_{\BL^+} N[1/p]$ is injective, where the first arrow is obtained by tensoring the $\BL^+\textrm{-linear}$ inclusion $\Fil^i \OALpi^{\PD}[1/p] \rightarrow \OALpi^{\PD}[1/p]$ with the $\BL^+\module$ $\Fil^j N[1/p]$ which is flat (because it is a finite torsion-free module over the principal ideal domain $\BL^+$), and the second arrow is obtained by tensoring the $\BL^+\textrm{-linear}$ inclusion $\Fil^j N[1/p] \rightarrow N[1/p]$ with the flat $\BL^+\algebra$ $\OALpi^{\PD}[1/p]$ (see the discussion at the start of \S \ref{subsec:wachmod_crystalline}).
	Moreover, $D_L$ is equipped with an integrable connection induced from the connection on $\OALpi^{\PD}$ satisfying Griffiths transversality with respect to the filtration since the same is true for the connection on $\OALpi^{\PD}$.
	Now, consider the following diagram:
	\begin{equation}\label{eq:oalpipd_commdiag}
		\begin{tikzcd}
			\OALpi^{\PD} \otimes_{O_L, \varphi^m} D_m \arrow[r, "1 \otimes \varphi^m"] \arrow[d, "\eqref{eq:osmpd_dm_n}"', "\wr"] & \OALpi^{\PD} \otimes_{O_L} D_L \arrow[d, "\eqref{eq:oalpd_comparison}"]\\
			\OALpi^{\PD} \otimes_{\AL^+, \varphi^m} N[1/p] \arrow[r, "\sim"] & \OALpi^{\PD} \otimes_{\AL^+} N[1/p],
		\end{tikzcd}
	\end{equation}
	where the left vertical arrow is the extension of the isomorphism \eqref{eq:osmpd_dm_n} in Lemma \ref{lem:osmpd_comp} along $\varphi^m : \OSmPD \rightarrow \OALpi^{\PD}$ and the bottom horizontal isomorphism follows from an argument similar to \cite[Lemma 4.46]{abhinandan-relative-wach-i}.
	By the description of the arrows it follows that the diagram in \eqref{eq:oalpipd_commdiag} is commutative and $(\varphi, \Gamma_L)\equivariant$.
	Taking $\Gamma_L\textrm{-invariants}$ for the composition of the left vertical and the bottom horizontal isomorphisms gives an $L\textrm{-linear}$ isomorphism $O_L \otimes_{O_L, \varphi^m} D_m \isomorphic D_L$.
	So it follows that the top horizontal arrow in the diagram \eqref{eq:oalpipd_commdiag} is bijective as well.
	The preceding observation together with the bijectivity of the left vertical and the bottom horizontal arrows imply that the right vertical arrow is bijective as well, in particular, the comparison in \eqref{eq:oalpd_comparison} is an isomorphism compatible with the respective Frobenii, connections and $\Gamma_L\textrm{-actions}$.
	Compatibilty of \eqref{eq:oalpd_comparison} with filtrations follows from an argument similar to \cite[Corollary 4.54]{abhinandan-relative-wach-i} (using the filtration compatible isomorphism \eqref{eq:dl_dcrys_comp} in Remark \ref{rem:dl_dcrys_comp}).
	This concludes our proof.
\end{proof}

There exists another relation between the Wach module $N$ and $\ODcrysL(V)$.
Let us equip $N$ with a Nygaard filtration as in Definition \ref{defi:nygaard_fil}.
Then we note that $(N/\mu N)[1/p]$ is a $\varphi\module$ over $L$, since $[p]_q = p \textmod \mu N$ and $N/\mu N$ is equipped with a filtration $\Fil^k(N/\mu N)$ given as the image of $\Fil^k N$ under the surjection $N \twoheadrightarrow N/\mu N$.
We equip $(N/\mu N)[1/p]$ with the induced filtration, in particular, it is a filtered $\varphi\module$ over $L$.

\begin{cor}\label{cor:qdeformation_dcrys}
	Let $N$ be a Wach module over $\AL^+$ and $V = \TL(N)[1/p]$ the associated crystalline representation from Theorem \ref{thm:fh_crys_imperfect}.
	Then, we have that $(N/\mu N)[1/p] \isomorphic \ODcrysL(V)$ as filtered $\varphi\modules$ over $L$.
\end{cor}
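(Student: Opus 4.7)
The proof exploits the computations from the proof of Theorem \ref{thm:fh_crys_imperfect}. By that theorem together with the identification \eqref{eq:dl_dcrys_comp}, we already have a natural isomorphism of filtered $\varphi$-modules $D_L \isomorphic \ODcrysL(V)$, where $D_L := (\OALpi^{\PD} \otimes_{\AL^+} N[1/p])^{\Gamma_L}$. Hence it remains to construct a natural isomorphism $(N/\mu N)[1/p] \isomorphic D_L$ of filtered $\varphi$-modules over $L$.

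The central step of my plan is to reduce the comparison isomorphism of Lemma \ref{lem:osmpd_comp}, namely $\OSmPD \otimes_{O_L} D_m \isomorphic \OSmPD \otimes_{\AL^+} N[1/p]$, modulo the PD-ideal $J^{[1]}\OSmPD$. A direct computation with the explicit description of $J^{[1]}$ shows that $\OSmPD/J^{[1]}\OSmPD \isomorphic O_L$: indeed, $\mu$ and each $1 - V_j$ lie in $J^{[1]}$, so the quotient identifies the canonical and Teichmüller $O_L$-structures on $\OSmPD$ and kills $\mu$. Consequently the composition $\AL^+ \to \OSmPD \to O_L$ coincides with the natural surjection $\AL^+ \twoheadrightarrow \AL^+/\mu = O_L$. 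Reducing Lemma \ref{lem:osmpd_comp}'s isomorphism modulo $J^{[1]}$ then yields an $L$-linear isomorphism $D_m \isomorphic O_L \otimes_{\AL^+} N[1/p] = (N/\mu N)[1/p]$.

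Combining this with the $L$-linear isomorphism $L \otimes_{L, \varphi^m} D_m \isomorphic D_L$ induced by $\varphi^m$ and established in the proof of Proposition \ref{prop:oalpd_comparison}, and using that $\varphi$ is an automorphism of $(N/\mu N)[1/p]$ (a consequence of $[p]_q \equiv p \pmod{\mu \AL^+}$ combined with the finite $[p]_q$-height of $N$), we obtain the desired isomorphism $(N/\mu N)[1/p] \isomorphic D_L$ as $\varphi$-modules over $L$.

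The most delicate step will be verifying compatibility with filtrations. Lemma \ref{lem:osmpd_comp}'s isomorphism is compatible with the tensor-product filtrations on both sides, which was established in the proof of Proposition \ref{prop:oalpd_comparison} following arguments analogous to \cite[Corollary 4.54]{abhinandan-relative-wach-i}. Reducing modulo $J^{[1]}$, the tensor-product filtration $\sum_{i+j=k} \Fil^i \OSmPD \otimes \Fil^j D_m$ on the LHS collapses to $\Fil^k D_m$ (only the $i=0$ summand contributes), while on the RHS the tensor-product filtration $\sum_{i+j=k} \Fil^i \OSmPD \otimes \Fil^j N[1/p]$ (with the Nygaard filtration on $N[1/p]$) collapses to the image of $\Fil^k N[1/p]$ in $(N/\mu N)[1/p]$, which is precisely $\Fil^k (N/\mu N)[1/p]$ by definition. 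Transferring this compatibility through the Frobenius twist yields the claimed filtered isomorphism.
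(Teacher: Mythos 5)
Your general strategy --- extracting the corollary from the comparison isomorphisms proved for Theorem \ref{thm:fh_crys_imperfect} --- matches the paper's, but the specific reduction you choose creates a genuine gap in the filtered part of the statement. The paper does not reduce the level-$m$ isomorphism \eqref{eq:osmpd_dm_n} modulo $J^{[1]}\OSmPD$; it reduces the $\OALpi^{\PD}$-level isomorphism \eqref{eq:oalpd_comparison} modulo $\Fil^1\OALpi^{\PD}$, which produces $L(\zeta_p)\otimes_L (N/\mu N)[1/p] \isomorphic L(\zeta_p)\otimes_L D_L$, and then takes $\Gal(L(\zeta_p)/L)$-invariants; the point of that choice is precisely that no Frobenius twist enters the final comparison. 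In your argument the passage from $D_m$ to $D_L$ goes through the linearization $1\otimes\varphi^m : L\otimes_{\varphi^m, L} D_m \isomorphic D_L$, and you then assert that the filtration statement can be ``transferred through the Frobenius twist''. That step is unjustified: Frobenius does not respect these filtrations (already on the Wach module side one only has $\varphi(\Fil^k N)\subset [p]_q^k N$, not $\varphi(\Fil^k N)\subset \Fil^k N$), and $\Fil^k D_L$ is not in general the $L$-span of $\varphi^m(\Fil^k D_m)$. Moreover, the premise you invoke --- that filtration compatibility of \eqref{eq:osmpd_dm_n} ``was established in the proof of Proposition \ref{prop:oalpd_comparison}'' --- is not what the paper proves: no filtration on $D_m$ is even defined there, and the compatibility established (via the analogue of \cite[Corollary 4.54]{abhinandan-relative-wach-i}) concerns only \eqref{eq:oalpd_comparison}. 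So, as written, the filtered isomorphism is not established.

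Two secondary points. First, $\varphi$ is not an endomorphism of $D_m$ (it maps $D_m$ to $D_{m-1}$), so even the claim that your composite is an isomorphism of $\varphi\modules$ requires an argument --- for instance via the commutative diagram \eqref{eq:oalpipd_commdiag} --- rather than just the invertibility of $\varphi$ on $(N/\mu N)[1/p]$. Second, the identification $\OSmPD/J^{[1]}\OSmPD \isomorphic O_L$ (at least after inverting $p$), and the claim that the composite $\AL^+ \rightarrow \OSmPD \rightarrow O_L$ is reduction modulo $\mu$, are plausible but need verification; the paper's route only needs the easier fact $(\Fil^1\OALpi^{\PD}\otimes_{\AL^+} M)\cap M=\mu M$. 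The repair is to argue as the paper does: reduce \eqref{eq:oalpd_comparison} modulo $\Fil^1\OALpi^{\PD}$ to get an isomorphism over $L(\zeta_p)$ compatible with the images of the filtrations (using the filtration compatibility of \eqref{eq:oalpd_comparison} itself), and then descend to $L$ by taking $\Gal(L(\zeta_p)/L)$-invariants, both for the underlying modules and for each $\Fil^k$.
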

\begin{proof}
	For $r \in \NN$ large enough, the Wach module $\mu^r N (-r)$ is always effective and we have that $\TL(\mu^rN(-r)) = \TL(N)(-r)$ (the twist $(-r)$ denotes a Tate twist on which $\Gamma_L$ acts via $\chi^{-r}$, where $\chi$ is the $\padic$ cyclotomic character).
	Therefore, it is enough to show the claim for effective Wach modules.
	So assume that $N$ is effective and set $M := N[1/p]$ equipped with the induced Frobenius, $\Gamma_L\action$ and Nygaard filtration.
	Note that the $L\textrm{-vector space}$ $M/\mu M$ is equipped with a Frobenius-semilinear operator $\varphi$ induced from $M$ such that $1 \otimes \varphi : \varphi^*(M/\mu M) \isomorphic M/\mu M$, since $[p]_q = p \textrm{ mod } \mu$.
	The filtration $\Fil^k (M/\mu M)$ on $M/\mu M$ is the image of $\Fil^k M$ under the surjective map $M \twoheadrightarrow M/\mu M$.
	From the discussion before Theorem \ref{thm:fh_crys_imperfect}, recall that we have a period ring $\OARpi^{\PD} \subset \OAcrys(\OLinfty)$ equipped with a natural Frobenius, filtration, connection and $\Gamma_L\action$.
	Moreover, from Theorem \ref{thm:fh_crys_imperfect}, we have that $D_L = (\OALpi^{\PD} \otimes_{\AL^+} \NL(V))^{\Gamma_L}$ is equipped with a natural Frobenius, filtration and connection, such that $D_L \isomorphic \ODcrys(V)$ compatible with the respective Frobenii, filtrations and connections (see \eqref{eq:dl_dcrys_comp} in Remark \ref{rem:dl_dcrys_comp}).
	Now, consider the following diagram with exact rows:
	\begin{center}
		\begin{tikzcd}[column sep=7mm]
			0 \arrow[r] & \mu M \arrow[r] \arrow[d] & M \arrow[r] \arrow[d] & M/\mu M \arrow[r] \arrow[d] & 0\\
			0 \arrow[r] & (\Fil^1 \OALpi^{\PD}) \otimes_{\AL^+} M \arrow[r] & \OALpi^{\PD} \otimes_{\AL^+} M \arrow[r] & L(\zeta_p) \otimes_L M/\mu M \arrow[r] & 0\\
			0 \arrow[r] & (\Fil^1 \OALpi^{\PD}) \otimes_{O_L} D_L \arrow[r] \arrow[u, "\wr"] & \OALpi^{\PD} \otimes_{O_L} D_L \arrow[r] \arrow[u, "\wr", "\eqref{eq:oalpd_comparison}"'] & L(\zeta_p) \otimes_L D_L \arrow[r] \arrow[u, "\wr"] & 0.
		\end{tikzcd}
	\end{center}
	Note that $(\Fil^1 \OALpi^{\PD} \otimes_{\AL^+} M) \cap M = (\Fil^1 \OALpi^{\PD} \cap \AL^+) \otimes_{\AL^+} M = \mu M$, so the vertical maps from the first to the second row are natural inclusions and the second row is exact.
	For the vertical arrows from the third to the second row, we note that the middle vertical arrow is the isomorphism \eqref{eq:oalpd_comparison} in Proposition \ref{prop:oalpd_comparison}, from which it easily follows that the left vertical arrow is also an isomorphism, and therefore, we get that the right vertical arrow is an isomorphism as well.
	Taking the $\Gal(L(\zeta_p)/L)\textrm{-invariants}$ of the right vertical arrow gives $M/\mu M \lisomorphic D_L \isomorphic \ODcrysL(V)$, where the last isomorphism is compatible with the respective Frobenii, filtrations and connections (see \eqref{eq:dl_dcrys_comp} in Remark \ref{rem:dl_dcrys_comp}).

	Note that the isomorphism $D_L \isomorphic M/\mu M$ is compatible with the respective Frobenii and we need to check the compatibility between the respective filtrations.
	In the diagram above, the middle term of the second row is equipped with the tensor product filtration, so the image of $\Fil^k(\OALpi^{\PD} \otimes_{\AL^+} M)$ under the surjective map from the second to the third term is given as $L(\zeta_p) \otimes_L \Fil^k(M/\mu M)$.
	Similarly, the middle term of the third row is equipped with the tensor product filtration, so the image of $\Fil^k(\OALpi^{\PD} \otimes_{O_L} D_L)$ under the surjective map from the second to the third term is given as $L(\zeta_p) \otimes_L \Fil^k D_L$.
	Since the isomorphism \eqref{eq:oalpd_comparison} in Proposition \ref{prop:oalpd_comparison} is compatible with filtrations, we get that $L(\zeta_p) \otimes_L \Fil^k D_L \isomorphic L(\zeta_p) \otimes_L \Fil^k(M/\mu M)$.
	Taking $\Gal(L(\zeta_p)/L)\textrm{-invariants}$ in the preceding isomorphism gives $\Fil^k D_L \isomorphic \Fil^k(M/\mu M)$.
	This concludes our proof.
\end{proof}

\section{Crystalline implies finite height}\label{sec:crystalline_finite_height}

The goal of this section is to prove the following claim:
\begin{thm}\label{thm:crys_fh_imperfect}
	Let $T$ be a finite free $\ZZ_p\representation$ of $G_L$ such that $V := T[1/p]$ is a $\padic$ crystalline representation of $G_L$.
	Then, there exists a unique Wach module $\NL(T)$ over $\AL^+$ satisfying Definition \ref{defi:finite_pqheight_imperfect}.
	In other words, $T$ is of finite $\pqheight$.
\end{thm}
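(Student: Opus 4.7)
The plan follows the outline given after Theorem \ref{intro_thm:crystalline_wach_equivalence}. Since Wach modules are compatible with Tate twists (Remark \ref{rem:finiteheight_props_imperfect}), by twisting with a suitable power of the cyclotomic character we may reduce to the case where $V$ is positive, i.e.\ all Hodge--Tate weights of $V$ are $\leq 0$. Uniqueness will then follow from Lemma \ref{lem:finiteheight_props_imperfect}(2), so I focus on existence. The overall strategy is to first construct a finite $\pqheight$ $\varphi\module$ $\NrigL(V)$ over $\BrigL^+$ using Kisin-type techniques, equip it with a continuous semilinear $\Gamma_L\action$ commuting with $\varphi$, and then descend to obtain the integral Wach module $\NL(T) \subset \DL(T)$ over $\AL^+$.

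For the construction over $\BrigL^+$, the plan is to adapt the methods of \cite{kisin-modules, kisin-ren} to our setting: starting from the filtered $\varphi\module$ $\ODcrysL(V)$, produce a finite free $\BrigL^+\module$ $\NrigL(V)$ of rank $\dim_{\QQ_p} V$ sitting inside $\BrigL^+[(t/\mu)^{-1}] \otimes_L \ODcrysL(V)$, stable under $\varphi$ after inverting $\mu$ and of finite $\pqheight$. This is exactly the framework of Lemma \ref{lem:finiteheightslope0_equiv}. Faithfully flat base change along $\BrigL^+ \rightarrow \BrigLbreve^+$ (Lemma \ref{lem:brigl_lbreve_flat}) together with the compatibility \eqref{eq:dcrys_llbreve_comp} and Theorem \ref{intro_thm:wach_crys_arith} applied over $\Lbreve$ should identify $\BrigLbreve^+ \otimes_{\BrigL^+} \NrigL(V)$ with the base change of the classical rational Wach module $\NLbreve(V)$ attached to $V$ viewed as a $G_{\Lbreve}\representation$. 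Consequently, $\NrigL(V)$ is pure of slope $0$ as a $\varphi\module$ and $\BrigLdag \otimes_{\BrigL^+} \NrigL(V) \isomorphic \DrigLdag(V)$.

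The main technical obstacle, as emphasized in the introduction, is equipping $\NrigL(V)$ with a $\Gamma_L\action$. The crucial inputs are from \S \ref{subsec:crysrep_imperfect}: Lemma \ref{lem:odcrys_gammal_action} endows $\BrigL^+ \otimes_L \ODcrysL(V)$ with a natural continuous $\Gamma_L\action$ defined by the Taylor series formula, and Lemma \ref{lem:brigodcris_modmu} shows this action is trivial modulo $\mu$. The plan is to realize $\NrigL(V)$ as a $\BrigL^+\submodule$ of $\BrigL^+[(t/\mu)^{-1}] \otimes_L \ODcrysL(V)$ and verify $\Gamma_L\textrm{-stability}$ by a two-step descent: over $\BrigLbreve^+$ stability holds because $\NrigL(V)$ becomes the classical Wach module $\NLbreve(V)$, whose $\GammaLbreve\action$ is well understood; over $\BrigLdag$ stability holds because $\BrigLdag \otimes_{\BrigL^+} \NrigL(V) = \DrigLdag(V)$ is the étale $(\varphi, \Gamma_L)\module$ functorially attached to $V$. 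The intersection identities in Lemmas \ref{lem:brigl_lbreve_intersect} and \ref{lem:brigl_tilde_t} are precisely what ensures that stability over these two larger rings descends to stability over $\BrigL^+$.

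To conclude, once $\NrigL(V)$ is equipped with commuting $\varphi$ and $\Gamma_L$ actions, Lemma \ref{lem:finiteheightslope0_equiv}(2) furnishes a $\varphi\textrm{-stable}$ finite free $\AL^+\submodule$ $\NL(V) \subset \NrigL(V)$ of finite $\pqheight$ with $\AL \otimes_{\AL^+} \NL(V) \isomorphic \DL(V)$, and $\NL(V)$ inherits the $\Gamma_L\action$ from $\NrigL(V)$ by uniqueness. I then define $\NL(T) := \NL(V) \cap \DL(T)$ inside $\DL(V) = \DL(T)[1/p]$, which is a $(\varphi, \Gamma_L)\textrm{-stable}$ $\AL^+\submodule$ by construction. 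Finite freeness of $\NL(T)$ over $\AL^+$ follows from Remark \ref{rem:intersect_finitefree} combined with Lemma \ref{lem:finitefree_2dimregularlocal}, the $\pqheight$ bound descends from that of $\NL(V)$, the identity $\AL \otimes_{\AL^+} \NL(T) \isomorphic \DL(T)$ holds by construction, and triviality of the $\Gamma_L\action$ on $\NL(T)/\mu$ follows from Lemma \ref{lem:brigodcris_modmu} transported through the filtered isomorphism of Corollary \ref{cor:qdeformation_dcrys}. Uniqueness of $\NL(T)$ is then immediate from Lemma \ref{lem:finiteheight_props_imperfect}(2).
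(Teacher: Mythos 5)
Your overall architecture matches the paper's (twist to the positive case, a Kisin--Ren type construction of $\NrigL(V)$ inside $\BrigL^+[\mu/t] \otimes_L \ODcrysL(V)$, then intersection/descent to $\NL(V)$ and $\NL(T) := \NL(V) \cap \DL(T)$), but there is a genuine gap at the technical heart: the $\Gamma_L\textrm{-stability}$ of $\NrigL(V)$. Your ``two-step descent'' does not work as stated. First, $\BrigLbreve^+$ and the classical Wach module $\NLbreve(V)$ only carry an action of $\GammaLbreve$, i.e.\ of the cyclotomic quotient of $\Gamma_L$; the ring $\BrigLbreve^+$ is not even stable under the geometric part $\Gamma_L'$ inside $\BrigLtilde^+$ (since $\gamma_j([X_j^{\flat}]) = (1+\mu)[X_j^{\flat}]$ but $\gamma_j$ moves the elements $[(X_j^{1/p^n})^{\flat}]$ out of $\OLbreve\llbracket \mu \rrbracket$). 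So ``stability over $\BrigLbreve^+$ because the $\GammaLbreve\textrm{-action}$ is well understood'' says nothing about $\Gamma_L'$, which is precisely the hard part. Second, invoking $\BrigLdag \otimes_{\BrigL^+} \NrigL(V) \isomorphic \DrigLdag(V)$ presupposes that the standard $\Gamma_L\textrm{-action}$ on $\DrigLdag(V)$ is compatible, under this identification, with the Taylor-series action of Lemma \ref{lem:odcrys_gammal_action} on $\BrigL^+[\mu/t] \otimes_L \ODcrysL(V)$; in the paper this compatibility is exactly Proposition \ref{prop:dcrys_brigtilde_comp} and Corollary \ref{cor:nrig_dag_comp}, whose proofs run through the crystalline comparison diagram \eqref{eq:galois_action} and are established only after (and using) the stability results of \S \ref{subsec:nriglv_galois_action} --- so citing it here begs the question. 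Moreover, the intersection lemmas you appeal to (Lemmas \ref{lem:brigl_lbreve_intersect} and \ref{lem:brigl_tilde_t}) involve $\BrigL^+$, $\BrigLbreve^+$ and $\BrigLtilde^+$, not $\BrigLdag$, so they do not support a descent from the pair $(\BrigLbreve^+, \BrigLdag)$.

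What the paper actually does, and what is missing from your proposal, is Proposition \ref{prop:nlinf_gammastab}: one proves that $\NLbreveinfty(V) = \Binf(\OLinfty) \otimes_{\BLbreve^+} \NLbreve(V)$ is stable under the full residual $\Gamma_L\textrm{-action}$ on $\Dtilde_L^+(V)$, by squeezing it between $\mu^s \Dtilde_L^+(V)$ and $\Dtilde_L^+(V)$, comparing modulo $\mu^s$ with the module $M = (I^{(s)}\Bcrys^+(\OLbar) \otimes_{\QQ_p} V) \cap (\Bcrys^+(\OLinfty) \otimes_{\BLbreve^+} \NLbreve(V))$ using the structure of $\Acrys$ versus $\Ainf$ (Lemma \ref{lem:ira_gen}), and transporting the $G_L\textrm{-action}$ through the explicit crystalline comparison of Lemma \ref{lem:faltings_isomorphism_dcrys} and diagram \eqref{eq:galois_action}. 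Only then does one get $\Gamma_L\textrm{-stability}$ of $\BrigLtilde^+ \otimes_{\BrigL^+} \NrigL(V)$ (Corollary \ref{cor:nlrig_gammastab}) and conclude by intersecting with $\BrigL^+[\mu/t] \otimes_L \ODcrysL(V)$ via Lemma \ref{lem:brigl_tilde_t} (Proposition \ref{prop:nrigl_gammastab}). A smaller point: deducing triviality of $\Gamma_L$ on $\NL(T)/\mu$ from Corollary \ref{cor:qdeformation_dcrys} is circular, since that corollary applies to an already-constructed Wach module; the paper instead proves it directly from Lemma \ref{lem:brigodcris_modmu} via Corollary \ref{cor:nrigl_gamma_modmu} and Lemma \ref{lem:nlv_gamma_modmu} and intersection arguments.
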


Before carrying out the proof of Theorem \ref{thm:crys_fh_imperfect}, we note the following corollaries:
let $\Rep_{\ZZ_p}^{\crys}(G_L)$ denote the category of $\ZZ_p\textrm{-lattices}$ inside $\padic$ crystalline representations of $G_L$.
Then, by combining Theorem \ref{thm:fh_crys_imperfect} and Theorem \ref{thm:crys_fh_imperfect} and \cite[Proposition 4.14]{abhinandan-relative-wach-i} (for compatibility with tensor products below), we obtain the following:
\begin{cor}\label{cor:crystalline_wach_equivalence_imperfect}
	The Wach module functor induces an equivalence of $\otimes\textrm{-categories}$,
	\begin{align*}
		\Rep_{\ZZ_p}^{\crys}(G_L) &\isomorphic (\varphi, \Gamma)\Mod_{\AL^+}^{[p]_q}\\
		T &\longmapsto \NL(T),
	\end{align*}
	with a quasi-inverse $\otimes\textrm{-functor}$ given as $N \mapsto \TL(N) := \big(W(\CC_L^{\flat}) \otimes_{\AL^+} N\big)^{\varphi=1}$.
\end{cor}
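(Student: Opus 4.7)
The plan is to assemble the equivalence from the two existence theorems already in hand: Theorem \ref{thm:fh_crys_imperfect} furnishes the quasi-inverse functor $\TL$, while Theorem \ref{thm:crys_fh_imperfect} furnishes the direct functor $\NL$. The remaining work is to identify them as mutually quasi-inverse and to verify $\otimes$-compatibility, all of which is essentially formal consequence of earlier results.

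I will first define $\NL$ on objects via $T \mapsto \NL(T)$, the unique Wach module produced by Theorem \ref{thm:crys_fh_imperfect}, and on morphisms by the following recipe: given $f : T \rightarrow T'$ of crystalline lattices, the equivalence \eqref{eq:rep_phigamma_imperfect} supplies an $\AL$-linear $(\varphi,\Gamma_L)$-equivariant map $\DL(T) \rightarrow \DL(T')$, and the full faithfulness established in Proposition \ref{prop:wach_etale_ff_imperfect} forces this map to restrict to an $\AL^+$-linear $(\varphi,\Gamma_L)$-equivariant map $\NL(T) \rightarrow \NL(T')$. In the opposite direction, $\TL$ is defined on objects by $N \mapsto \big(W(\CC_L^{\flat}) \otimes_{\AL^+} N\big)^{\varphi=1}$, which takes values in $\Rep_{\ZZ_p}(G_L)$ by \eqref{eq:wach_reps_imperfect} and in fact in $\Rep_{\ZZ_p}^{\crys}(G_L)$ by Theorem \ref{thm:fh_crys_imperfect}; functoriality is immediate from the construction.

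Next I will check that $\NL$ and $\TL$ are mutually quasi-inverse. For $T \in \Rep_{\ZZ_p}^{\crys}(G_L)$, the defining property $\AL \otimes_{\AL^+} \NL(T) \isomorphic \DL(T)$ of Definition \ref{defi:finite_pqheight_imperfect}, combined with the equivalence \eqref{eq:rep_phigamma_imperfect} and the description of $\TL$ in \eqref{eq:wach_reps_imperfect}, yields a natural isomorphism $\TL(\NL(T)) \isomorphic T$. Conversely, for a Wach module $N$, set $T := \TL(N)$, which lies in $\Rep_{\ZZ_p}^{\crys}(G_L)$ by Theorem \ref{thm:fh_crys_imperfect}. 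By the construction of $\TL$ through \eqref{eq:rep_phigamma_imperfect}, one has $\AL \otimes_{\AL^+} N \isomorphic \DL(T)$, so $N$ itself satisfies the two conditions of Definition \ref{defi:finite_pqheight_imperfect} for $T$; the uniqueness clause in Lemma \ref{lem:finiteheight_props_imperfect}(2) then produces a canonical identification $N \isomorphic \NL(T)$, giving the other composition.

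Finally, for the $\otimes$-compatibility, given Wach modules $N_1, N_2$ over $\AL^+$ the tensor product $N_1 \otimes_{\AL^+} N_2$ is again a Wach module (the $(\varphi, \Gamma_L)$-structure and the finite $[p]_q$-height behave tensorially, with heights adding), and the natural comparison $\TL(N_1) \otimes_{\ZZ_p} \TL(N_2) \rightarrow \TL(N_1 \otimes_{\AL^+} N_2)$ is an isomorphism of crystalline lattices; this runs exactly as in the relative setting of \cite[Proposition 4.14]{abhinandan-relative-wach-i}, whose argument transports to the imperfect residue field case without modification. I do not anticipate a serious obstacle here, since the technical content has been absorbed into Theorems \ref{thm:fh_crys_imperfect} and \ref{thm:crys_fh_imperfect}; the only mild subtlety is ensuring that the morphism-level definition of $\NL$ is unambiguous, which is handled by the full faithfulness of Proposition \ref{prop:wach_etale_ff_imperfect}.
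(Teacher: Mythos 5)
Your proposal is correct and follows essentially the same route as the paper: the corollary is obtained by combining Theorem \ref{thm:fh_crys_imperfect} and Theorem \ref{thm:crys_fh_imperfect}, using the uniqueness of the Wach module (Lemma \ref{lem:finiteheight_props_imperfect}) and the full faithfulness of Proposition \ref{prop:wach_etale_ff_imperfect} to see that the two functors are mutually quasi-inverse, and invoking \cite[Proposition 4.14]{abhinandan-relative-wach-i} for the $\otimes$-compatibility. Your write-up merely spells out the formal bookkeeping that the paper leaves implicit.
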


Passing to associated isogeny categories, we obtain the following:
\begin{cor}\label{cor:crystalline_wach_rat_equivalence_imperfect}
	The Wach module functor induces an exact equivalence of $\otimes\textrm{-categories}$ $\Rep_{\QQ_p}^{\crys}(G_L) \isomorphic (\varphi, \Gamma)\Mod_{\BL^+}^{[p]_q}$, via $V \mapsto \NL(V)$, with an exact quasi-inverse $\otimes\textrm{-functor}$ given as $M \mapsto \VL(M) := \big(W(\CC_L^{\flat}) \otimes_{\AL^+} M\big)^{\varphi=1}$.
\end{cor}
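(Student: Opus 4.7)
The plan is to assemble the corollary from the three ingredients cited in its statement: the Wach-implies-crystalline direction Theorem \ref{thm:fh_crys_imperfect}, the crystalline-implies-Wach direction Theorem \ref{thm:crys_fh_imperfect}, and the tensor-product compatibility from \cite[Proposition 4.14]{abhinandan-relative-wach-i}. The claim packages these into an equivalence of $\otimes$-categories, so the task is to verify that each functor lands in the correct category, that the two compositions are naturally isomorphic to the identity, and that the whole picture is compatible with $\otimes$.

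First I would check that both assignments define functors between the stated categories. Given $T \in \Rep_{\ZZ_p}^{\crys}(G_L)$, Theorem \ref{thm:crys_fh_imperfect} produces a Wach module $\NL(T)$ over $\AL^+$, and functoriality in $T$ is forced by the uniqueness part of Definition \ref{defi:finite_pqheight_imperfect} together with Proposition \ref{prop:wach_etale_ff_imperfect}: any $G_L$-equivariant map $T \to T'$ induces an $\AL$-linear $(\varphi,\Gamma_L)$-equivariant map $\DL(T) \to \DL(T')$, which by Proposition \ref{prop:wach_etale_ff_imperfect} restricts uniquely to an $\AL^+$-linear $(\varphi,\Gamma_L)$-equivariant map $\NL(T) \to \NL(T')$. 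Conversely, given $N \in (\varphi,\Gamma)\Mod_{\AL^+}^{[p]_q}$, the representation $\TL(N)$ is a finite free $\ZZ_p$-representation of $G_L$ by \eqref{eq:wach_reps_imperfect}, and Theorem \ref{thm:fh_crys_imperfect} ensures $\TL(N)[1/p]$ is crystalline, so $\TL(N)$ belongs to $\Rep_{\ZZ_p}^{\crys}(G_L)$.

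Next I would verify that the two compositions are the identity. For $N \mapsto \TL(N) \mapsto \NL(\TL(N))$, the module $N$ is itself a Wach module inside $\DL(\TL(N))$ satisfying Definition \ref{defi:finite_pqheight_imperfect}, since $\AL \otimes_{\AL^+} N \isomorphic \DL(\TL(N))$ by the construction recalled in the proof of Proposition \ref{prop:wach_etale_ff_imperfect} combined with the equivalence \eqref{eq:rep_phigamma_imperfect}; uniqueness of $\NL(\TL(N))$ from Lemma \ref{lem:finiteheight_props_imperfect}(2) then yields a canonical isomorphism $\NL(\TL(N)) \isomorphic N$. For $T \mapsto \NL(T) \mapsto \TL(\NL(T))$, apply $\TL$ to the isomorphism $\AL \otimes_{\AL^+} \NL(T) \isomorphic \DL(T)$ and use the equivalence \eqref{eq:rep_phigamma_imperfect} between $\Rep_{\ZZ_p}(G_L)$ and étale $(\varphi,\Gamma_L)$-modules over $\AL$; this gives $\TL(\NL(T)) \isomorphic T$ naturally in $T$.

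Finally, compatibility with tensor products: given $T_1, T_2$ crystalline, the module $\NL(T_1) \otimes_{\AL^+} \NL(T_2)$ equipped with the tensor product Frobenius and diagonal $\Gamma_L$-action is manifestly a Wach module, and its image in $\DL(T_1 \otimes_{\ZZ_p} T_2)$ under the natural map realises it as a Wach submodule in the sense of Definition \ref{defi:finite_pqheight_imperfect}; uniqueness then provides a canonical isomorphism $\NL(T_1) \otimes_{\AL^+} \NL(T_2) \isomorphic \NL(T_1 \otimes_{\ZZ_p} T_2)$. This is exactly the statement invoked from \cite[Proposition 4.14]{abhinandan-relative-wach-i}, whose proof adapts verbatim. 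The sole point requiring care is the argument just sketched for the first composition, where one must check that the association $T \mapsto \NL(T)$ built in Theorem \ref{thm:crys_fh_imperfect} is inverse to $\TL$ on the nose rather than up to some twist; this is controlled by the uniqueness clause of Lemma \ref{lem:finiteheight_props_imperfect}(2) combined with the full faithfulness of Proposition \ref{prop:wach_etale_ff_imperfect}, so no new technical work is required beyond organising the previously established results.
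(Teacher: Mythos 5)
Your proposal establishes (in fact, essentially re-establishes) the \emph{equivalence} of $\otimes$-categories, but it never addresses the word ``exact'' in the statement, and that is precisely the non-formal content of this corollary. The equivalence itself does follow by passing to isogeny categories from Corollary \ref{cor:crystalline_wach_equivalence_imperfect} (which is what your three ingredients prove), so that part of your argument is fine, if redundant. But exactness of $V \mapsto \NL(V)$ does not come for free from the equivalence: an equivalence of categories need not be exact unless one knows the relevant (co)kernels are computed compatibly, and here the integral functor $T \mapsto \NL(T)$ is genuinely \emph{not} exact --- the paper points this out explicitly in the remark following Theorem \ref{intro_thm:crystalline_wach_equivalence}, citing a counterexample of Chang--Diamond in the arithmetic case. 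So exactness after inverting $p$ requires an argument, and your proposal contains none.

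The paper's proof devotes itself almost entirely to this point. Exactness of the quasi-inverse $\VL$ is the easy half (it follows from Proposition \ref{prop:wach_etale_ff_imperfect} and the exact equivalence \eqref{eq:rep_phigamma_imperfect}). For exactness of $\NL$, one uses that $\BL^+ \rightarrow \BLbreve^+$ is faithfully flat together with $\BLbreve^+ \otimes_{\BL^+} \NL(V) \isomorphic \NLbreve(V)$ (Corollary \ref{cor:wachmod_scalarext}) to reduce to exactness of $V \mapsto \NLbreve(V)$ over the perfect-residue-field base $\Lbreve$; after a Tate twist to reduce to the effective case and a further faithfully flat base change $\BLbreve^+ \rightarrow \BrigLbreve^+$, this becomes exactness of $V \mapsto \NrigLbreve(V)$, which is deduced from the Kisin--Ren description $\NrigLbreve(V) \isomorphic \MLbreve(\DcrysLbreve(V))$ (Lemma \ref{lem:nlbreve_wach}) and the exactness of $\DcrysLbreve$ on crystalline representations. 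To repair your proposal you would need to supply this chain (or an equivalent one); the uniqueness and full-faithfulness inputs you invoke do not by themselves yield it.
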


In the rest of this section, we will carry out the proof of Theorem \ref{thm:crys_fh_imperfect} and Corollary \ref{cor:crystalline_wach_equivalence_imperfect} by constructing $\NL(T)$ and show Corollary \ref{cor:crystalline_wach_rat_equivalence_imperfect} as a consquence.
In \S \ref{subsec:classical_wachmod}, we collect important properties of classical Wach modules, i.e.\ the perfect residue field case.
In \S \ref{subsec:kisin_module}, we use ideas from \cite{kisin-modules, kisin-ren} to show that classical Wach modules are compatible with Kisin-Ren modules, and we further show that in our setting, a finite $\pqheight$ module on the open unit disk over $\Lbreve$ descends to a finite $\pqheight$ module on the open unit disk over $L$, similar to \cite{brinon-trihan}.
On the module thus obtained, we use results of \S \ref{subsec:crysrep_imperfect} to construct an action of $\Gamma_L$ and study its properties in \S \ref{subsec:nriglv_galois_action}.
Then, in \S \ref{subsec:compatibility_phigammmod}, we check that our construction is compatible with the theory of \'etale $(\varphi, \Gamma_L)\modules$.
Finally, in \S \ref{subsec:obtaining_wachmod}, we construct the promised Wach module $\NL(T)$ and prove Theorem \ref{thm:crys_fh_imperfect} and Corollary \ref{cor:crystalline_wach_rat_equivalence_imperfect}.

For a $\padic$ representation of $G_L$, note that the property of being crystalline and of finite $\pqheight$ is invariant under twisting the representation by $\chi^r$, for $r \in \NN$.
So, from now onwards we will assume that $V$ is a $\padic$ positive crystalline representation of $G_L$, i.e. all its Hodge-Tate weights are $\leqslant 0$ and we have $T \subset V$ a $G_L\textrm{-stable}$ $\ZZ_p\textrm{-lattice}$.

\subsection{Classical Wach modules}\label{subsec:classical_wachmod}

Recall that $G_{\Lbreve}$ is a subgroup of $G_L$, so from \cite[Proposition 4.14]{brinon-trihan}, it follows that $V$ is a $\padic$ positive crystalline representation of $G_{\Lbreve}$ and $T \subset V$ a $G_{\Lbreve}\textrm{-stable}$ $\ZZ_p\textrm{-lattice}$.
Note that $\Lbreve$ is an unramified extension of $\QQ_p$ with perfect residue field, therefore, the $G_{\Lbreve}\representation$ $V$ is of finite $\pqheight$ (see \cite{colmez-hauteur} and \cite{berger-limites}).
Let the $\pqheight$ of $V$ be $s \in \NN$.
One associates to $V$ a finite free $(\varphi, \GammaLbreve)\module$ over $\BLbreve^+$ of rank $=\dim_{\QQ_p} V$, called the Wach module $\NLbreve(V)$, and to $T$ a finite free $(\varphi, \GammaLbreve)\module$ over $\ALbreve^+$ of rank $=\dim_{\QQ_p} V$, called the Wach module $\NLbreve(T)$ (see \cite{wach-free, wach-torsion, berger-limites} and \cite[\S 4.1]{abhinandan-relative-wach-i} for a recollection).
Let $\Dtilde_L^+(T) := (\Ainf(\OLbar) \otimes_{\ZZ_p} T)^{H_L}$ be the $(\varphi, \Gamma_L)\module$ over $\Ainf(\OLinfty) = \Ainf(\OLbar)^{H_L}$ (see \cite[Proposition 7.2]{andreatta-phigamma}) associated to $T$ and let $\Dtilde_L^+(V) := \Dtilde_L^+(T)[1/p]$ over $\Binf(\OLinfty) = \Binf(\OLbar)^{H_L}$ associated to $V$.

\begin{lem}[{\cite[Lemme II.1.3, Th\'eor\`eme III.3.1]{berger-limites}}]\phantomsection\label{lem:wach_props}
	With notations as above, we have the following:
	\begin{enumarabicup}
	\item $\NLbreve(T) = \NLbreve(V) \cap \DLbreve(T) \subset \DLbreve(V)$.

	\item We have that $\mu^s \Ainf(\OLbar) \otimes_{\ZZ_p} T \subset \Ainf(\OLbar) \otimes_{\ALbreve^+} \NLbreve(T) \subset \Ainf(\OLbar) \otimes_{\ZZ_p} T$, and taking $H_L\textrm{-invariants}$ gives $\mu^s \Dtilde_L^+(T) \subset \Ainf(\OLinfty) \otimes_{\ALbreve^+} \NLbreve(T) \subset \Dtilde_L^+(T)$.
		Similar claims are also true for $V$.
	\end{enumarabicup}
\end{lem}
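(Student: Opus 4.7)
The plan is to observe that this lemma is a direct application of Berger's results from \cite{berger-limites}, since $\Lbreve$ is an unramified extension of $\QQ_p$ with perfect residue field and $V$, viewed as a representation of $G_{\Lbreve}$, is $\padic$ crystalline by \cite[Proposition 4.14]{brinon-trihan}. So the task reduces to unpacking how Berger's classical arguments give the two stated facts; then I would only indicate the main ideas rather than grind through details.

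For part (1), I would recall that in \cite{berger-limites} the rational Wach module $\NLbreve(V) \subset \DLbreve(V)$ is characterized as the unique $(\varphi, \GammaLbreve)\textrm{-stable}$ $\BLbreve^+\textrm{-lattice}$ satisfying the finite $\pqheight$ condition together with triviality of the $\GammaLbreve\textrm{-action}$ modulo $\mu$. Setting $N' := \NLbreve(V) \cap \DLbreve(T)$ inside $\DLbreve(V)$, the intersection is automatically $(\varphi, \GammaLbreve)\equivariant$. The key point is to verify that $N'$ is a finite free $\ALbreve^+\module$ of rank $=\dim_{\QQ_p} V$ with $N'[1/p] = \NLbreve(V)$ and $\ALbreve \otimes_{\ALbreve^+} N' \isomorphic \DLbreve(T)$; this uses that $\ALbreve^+ \isomorphic \OLbreve\llbracket \mu \rrbracket$ is a regular ring of dimension two (together with an argument analogous to Remark \ref{rem:intersect_finitefree} in order to conclude finite freeness from finite generation and torsion freeness). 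Uniqueness of the integral Wach module attached to $T$ then forces $N' = \NLbreve(T)$.

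For part (2), the argument is parallel to (and in fact the template for) the proof of Lemma \ref{lem:wachmod_comp_imperfect}. Up to a Tate twist we may assume $\NLbreve(T)$ is effective. Since $\NLbreve(T)$ is finite free over $\ALbreve^+$ and of finite $\pqheight$, the Frobenius induces an isomorphism $\varphi : \Ainf(\OLbar)[1/\xi] \otimes_{\ALbreve^+} \NLbreve(T) \isomorphic \Ainf(\OLbar)[1/\xitilde] \otimes_{\ALbreve^+} \NLbreve(T)$. Applying \cite[Proposition 6.15]{morrow-tsuji}, which controls the passage between such finite height data and a Breuil--Kisin--Fargues type lattice inside $\Ainf(\OLbar) \otimes_{\ZZ_p} T$, one obtains $G_L\equivariant$ inclusions $\mu^s \Ainf(\OLbar) \otimes_{\ZZ_p} T \subset \Ainf(\OLbar) \otimes_{\ALbreve^+} \NLbreve(T) \subset \Ainf(\OLbar) \otimes_{\ZZ_p} T$, using the tautological identification $\mbfa \otimes_{\ALbreve^+} \NLbreve(T) \isomorphic \mbfa \otimes_{\ZZ_p} T$ coming from the \'etale $(\varphi, \GammaLbreve)\module$ comparison and intersecting inside $\Atilde \otimes_{\ALbreve^+} \NLbreve(T) \isomorphic \Atilde \otimes_{\ZZ_p} T$. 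Taking $H_L\textrm{-invariants}$ and invoking $\Ainf(\OLbar)^{H_L} = \Ainf(\OLinfty)$ from \cite[Proposition 7.2]{andreatta-phigamma} yields the second chain of inclusions for $\Dtilde_L^+(T)$.

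The only non-trivial ingredient in the whole plan is (2), which rests on the sharp comparison of \cite[Proposition 6.15]{morrow-tsuji} between finite $\pqheight$ modules over $\ALbreve^+$ and Breuil--Kisin--Fargues modules over $\Ainf(\OLbar)$; once this is granted, the $H_L\textrm{-invariant}$ step is exact since all terms lie in a $\mu$-bounded range. Part (1) is essentially a tautology given the characterization and uniqueness of the integral Wach module established in \cite{berger-limites}.
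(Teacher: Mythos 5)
Your proposal is correct, but it is worth noting that the paper offers no argument at all for this lemma: it is stated purely as a recollection of the perfect-residue-field theory and cited to \cite{berger-limites}, where (1) is essentially the construction of the integral Wach module ($\NLbreve(T)$ is obtained from $\NLbreve(V)$ precisely by intersecting with $\DLbreve(T)$ inside $\DLbreve(V)$, with freeness coming from a Fontaine-type argument as in Remark \ref{rem:intersect_finitefree}) and (2) is part of Wach--Berger's finite-height comparison. Your reconstruction of (1) via the uniqueness of the Wach module attached to $T$ is sound, though the rational uniqueness statement you invoke should be phrased as in Berger (uniqueness among free, $\Gamma_{\Lbreve}$-stable lattices $N \subset \mbfd^+_{\Lbreve}(V)$ with $\Gamma_{\Lbreve}$ trivial mod $\mu$ and $\mu^r \mbfd^+_{\Lbreve}(V) \subset N$), rather than via the $\varphi$-height condition alone; this is a wording issue, not a gap. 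For (2) you take a genuinely different (and self-contained) route: instead of quoting Berger's inclusions you rerun the argument the paper itself uses for the imperfect-residue-field analogue in Lemma \ref{lem:wachmod_comp_imperfect} --- reduce to the effective case by a Tate twist, use the finite $\pqheight$ property to get $\varphi : \Ainf(\OLbar)[1/\xi] \otimes_{\ALbreve^+} \NLbreve(T) \isomorphic \Ainf(\OLbar)[1/\xitilde] \otimes_{\ALbreve^+} \NLbreve(T)$, and apply \cite[Proposition 6.15]{morrow-tsuji} together with the \'etale comparison; the passage to $H_L$-invariants then only needs $\Ainf(\OLbar)^{H_L} = \Ainf(\OLinfty)$ and the freeness of $\NLbreve(T)$ over the $H_L$-fixed ring $\ALbreve^+$ (your ``$\mu$-bounded range'' remark is not the real reason, but the step is harmless). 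The trade-off is clear: the paper's citation is shortest, while your version makes the lemma independent of tracking the exact statements in \cite{berger-limites} at the cost of invoking the Morrow--Tsuji machinery, and it has the virtue of being literally the same template the paper later uses in the imperfect case.
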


By properties of Wach modules, we have the following functorial isomorphisms of \'etale $(\varphi, \Gamma_L)\modules$, where the second isomorphism in the first row follows from \cite[Th\'eor\`eme III.3.1]{berger-limites}:
\begin{align}\label{eq:nlbreve_phigamma_comp}
	\begin{split}
		\ALbreve \otimes_{\ALbreve^+} \NLbreve(T) &\isomorphic \DLbreve(T) \hspace{2mm} \textrm{and} \hspace{2mm} \ALbrevedag \otimes_{\ALbreve^+} \NLbreve(T) \isomorphic \DLbrevedag(T),\\
		\BLbreve \otimes_{\BLbreve^+} \NLbreve(V) &\isomorphic \DLbreve(V) \hspace{2mm} \textrm{and} \hspace{2mm} \BLbrevedag \otimes_{\BLbreve^+} \NLbreve(V) \isomorphic \DLbrevedag(V),\\
		&\BrigLbrevedag \otimes_{\BLbreve^+} \NLbreve(V) \isomorphic \DrigLbrevedag(V).
	\end{split}
\end{align}

Let us set $\NrigLbreve(V) := \BrigLbreve^+ \otimes_{\BLbreve^+} \NLbreve(V)$ equipped with the induced tensor-product Frobenius-semilinear operator $\varphi$ and $\GammaLbreve\action$.
From \cite[Proposition II.2.1]{berger-limites}, recall that we have a natural inclusion $\DcrysLbreve(V) \subset \NrigLbreve(V)$, which extends $\BrigLbreve^+\textrm{-linearly}$ to a Frobenius and $\GammaLbreve\equivariant$ inclusion $\BrigLbreve^+ \otimes_{\Lbreve} \DcrysLbreve(V) \subset \NrigLbreve(V)$ such that its cokernel is killed by $(t/\mu)^s \in \BrigLbreve^+$ (see \cite[Propositions II.3.1 \& III.2.1]{berger-limites}).
In particular, we obtain a $(\varphi, \GammaLbreve)\equivariant$ isomorphism,
\begin{equation}\label{eq:wach_crys_rigcomp}
	\BrigLbreve^+[\mu/t] \otimes_{\Lbreve} \DcrysLbreve(V) \isomorphic \BrigLbreve^+[\mu/t] \otimes_{\BLbreve^+} \NLbreve(V).
\end{equation}
Moreover, note that from loc.\ cit., we have a natural $\Lbreve\linear$ isomorphism of filtered $\varphi\modules$ $\DcrysLbreve(V) \isomorphic \NrigLbreve(V)/\mu\NrigLbreve(V) = \NLbreve(V)/\mu\NLbreve(V)$ such that the largest Hodge-Tate weight of $V$ equals $s$, i.e.\ the $\pqheight$ of $V$.
Since $t/\mu$ is a unit in $\Bcrys^+(\OLinfty)$ and $\BrigLbreve^+ \subset \BrigLtilde^+ \subset \Bcrys^+(\OLinfty)$, therefore, extension of scalars of \eqref{eq:wach_crys_rigcomp} gives a $\Bcrys^+(\OLinfty)\textrm{-linear}$ and $(\varphi, \GammaLbreve)\equivariant$ isomorphism,
\begin{equation}\label{eq:wac_crys_comp}
	\Bcrys^+(\OLinfty) \otimes_{\Lbreve} \DcrysLbreve(V) \isomorphic \Bcrys^+(\OLinfty) \otimes_{\BLbreve^+} \NLbreve(V).
\end{equation}

\begin{lem}\label{lem:wac_crys_rep_comp}
	The following diagram is commutative and $(\varphi, G_{\Lbreve})\equivariant$:
	\begin{center}
		\begin{tikzcd}
			\Bcrys(\OLbar) \otimes_{\Lbreve} \DcrysLbreve(V) \arrow[r, "\sim"] \arrow[d, "\wr"] & \Bcrys(\OLbar) \otimes_{\BLbreve^+} \NLbreve(V) \arrow[d, "\wr"]\\
			\Bcrys(\OLbar) \otimes_{\QQ_p} V \arrow[r, equal] & \Bcrys(\OLbar) \otimes_{\QQ_p} V,
		\end{tikzcd}
	\end{center}
	where the top vertical arrow is the extension of scalars of \eqref{eq:wac_crys_comp} along $\Bcrys^+(\OLinfty) \rightarrow \Bcrys(\OLbar)$, and the left vertical arrow is the natural isomorphism as $V$ is a crystalline representation of $G_{\Lbreve}$.
\end{lem}
\begin{proof}
	From Lemma \ref{lem:wach_props} (2), note that we have a $(\varphi, G_{\Lbreve})\equivariant$ isomorphism $\Ainf(\OLbar)[1/\mu] \otimes_{\ALbreve^+} \NLbreve(T) \isomorphic \Ainf(\OLbar)[1/\mu] \otimes_{\ZZ_p} T$, and extending this isomorphism along $\Ainf(\OLbar)[1/\mu] \rightarrow \Bcrys(\OLbar)$ gives the isomorphism in the right vertical arrow.
	The commutativity of the diagram follows because the top horizontal arrow is also the $\Bcrys(\OLbar)\linear$ extension of the natural inclusion $\DcrysLbreve(V) \subset \BrigLbreve^+ \otimes_{\BLbreve^+} \NLbreve(V) \subset \Bcrys(\OLbar) \otimes_{\BLbreve^+} \NLbreve(V)$ (see \cite[\S II.2]{berger-limites}).
\end{proof}

\subsection{Kisin's construction}\label{subsec:kisin_module}

Our goal is to construct a Wach module $\NL(T)$ over $\AL^+$.
To this end, we will adapt some ideas from \cite{brinon-trihan} and \cite{kisin-ren}, generalising the results of Kisin in \cite{kisin-modules}, to first construct a finite $\pqheight$ module over $\BrigL^+$.

Let $E(X) := \frac{(1+X)^p-1}{X}$ in $\ZZ_p\llbracket X \rrbracket$ denote the cyclotomic polynomial.
We equip $\ZZ_p\llbracket X \rrbracket$ with the cyclotomic Frobenius operator $\varphi$ given by identity on $\ZZ_p$ and setting $\varphi(X) = (1+X)^p-1$, and for $n \in \NN$ we set $E_n(X) := \varphi^n(E(X))$.
In particular, $\zeta_{p^{n+1}}-1$ is a simple zero of $E_n(X)$, where $\zeta_{p^{n+1}}$ is a primitive $p^{n+1}\textrm{-th}$ root of unity.
For $X = \mu$, we will write $E_n(X) = \xitilde_n$, for $n \in \NN$, and $E(\mu) = \varphi(\mu)/\mu = \xitilde = \xitilde_0 = [p]_q$.

\begin{rem}\label{rem:frob_coeff}
	Define $\phi_{L} : \BrigL^+ \rightarrow \BrigL^+$ to be the homomorphism given by the Frobenius $\varphi_L$ on $L$ and set $\phi_L(\mu) = \mu$, i.e.\ $\sum_{k \in \NN} \iota(a_k)\mu^k \mapsto \sum_{k \in \NN} \iota(\varphi_L(a_k))\mu^k$, where we used Lemma \ref{lem:brigl+_explicit} to represent an element of $\BrigL^+$.
	Then, $\BrigL^+$ is finite free of rank $p^d$ over $\BrigL^+$, via the map $\phi_L$, in particular, flat.
	Similarly, let $\phi_{\Lbreve} : \BrigLbreve^+ \rightarrow \BrigLbreve^+$ denote the homomorphism given by the Frobenius $\varphi_{\Lbreve}$ on $\Lbreve$ and set $\phi_{\Lbreve}(\mu) = \mu$.
	Moreover, note that we have $\varphi_{\Lbreve} : \Lbreve \isomorphic \Lbreve$, since the residue field of $\Lbreve$ is perfect, and therefore, we see that $\phi_{\Lbreve}$ is bijective on $\BrigLbreve^+$, with its inverse given as $\phi_{\Lbreve}^{-1} : \BrigLbreve^+ \rightarrow \BrigLbreve^+$, sending $\sum_{k \in \NN} \iota(a_k)\mu^k \mapsto \sum_{k \in \NN} \iota(\varphi_{\Lbreve}^{-1}(a_k))\mu^k$.
	Furthermore, from \S \ref{subsubsec:analytic_rings}, recall that we have an injective homomorphism $\BrigL^+ \rightarrow \BrigLbreve^+$, which is evidently compatible with $\phi_L$ on the left and $\phi_{\Lbreve}$ on the right.
\end{rem}

\begin{rem}\label{rem:zeros_tovermu}
	We have that $t/\mu$ is in $\BrigL^+ \hookrightarrow \BrigLbreve^+$ and we can write $t/\mu = \prod_{n \in \NN} (\xitilde_n/p)$ (see \cite[Exemple I.3.3]{berger-limites} and \cite[Remarque 4.12]{lazard}).
	The zeros of $t/\mu$ are given as $\zeta_{p^{n+1}} - 1$, for all $n \in \NN$.
	Moreover, we have that $\phi_{\Lbreve}^{-n}(t/\mu) = t/\mu$, therefore, the zeros of $\phi_{\Lbreve}^{-n}(t/\mu)$ are given by $\zeta_{p^{n+1}}-1$ as well.
\end{rem}

Now, let $\BLbrevenhat$ denote the completion of $\Lbreve(\zeta_{p^{n+1}}) \otimes_{\Lbreve} \BLbreve^+$ with respect to the maximal ideal generated by $\mu-(\zeta_{p^{n+1}}-1)$.
Since $\zeta_{p^{n+1}}-1$ is a simple root of $\xitilde_n$, therefore, we see that $(\mu-(\zeta_{p^{n+1}}-1)) = (\xitilde_n) \subset \BLbrevenhat$.
The local ring $\BLbrevenhat$ naturally admits an action of $\GammaLbreve$ induced by the diagonal action of $\GammaLbreve$ on the tensor product $\Lbreve(\zeta_{p^{n+1}}) \otimes_{\Lbreve} \BLbreve^+$.
We put a filtration on $\BLbrevenhat[1/\xitilde_n]$ by setting $\Fil^r \BLbrevenhat\big[1/\xitilde_n\big] := \xitilde_n^r\BLbrevenhat$, for $r \in \ZZ$.
We have inclusions $\BLbreve^+ \subset \BrigLbreve^+ \subset \BLbrevenhat[1/\xitilde_n\big]$.

Let $D_L := \ODcrysL(V)$ and $D_{\Lbreve} := \DcrysLbreve(V)$, and recall that using the $\varphi\equivariant$ injection $L \rightarrow \Lbreve$, we have an isomorphism of filtered $\varphi\modules$ $\Lbreve \otimes_L D_L \isomorphic D_{\Lbreve}$ from \eqref{eq:dcrys_llbreve_comp}.
Note that $D_L$ (resp.\ $D_{\Lbreve}$) is an effective filtered $\varphi\module$ over $L$ (resp.\ over $\Lbreve$), i.e.\ $\Fil^0 D_L = D_L$ (resp. $\Fil^0 D_{\Lbreve} = D_{\Lbreve}$), and we have a $\varphi\equivariant$ inclusion $D_L \subset D_{\Lbreve}$.
Now, consider a map,
\begin{equation}\label{eq:in_briglbreve}
	i_n : \BrigLbreve^+ \otimes_{\Lbreve} D_{\Lbreve} \xrightarrow[\sim]{\phi_{\Lbreve}^{-n} \otimes \varphi_{D_{\Lbreve}}^{-n}} \BrigLbreve^+ \otimes_{\Lbreve} D_{\Lbreve} \longrightarrow \BLbrevenhat \otimes_{\Lbreve} D_{\Lbreve},
\end{equation}
where $\phi_{\Lbreve}^{-1} : \BrigLbreve^+ \rightarrow \BrigLbreve^+$ is well defined by Remark \ref{rem:frob_coeff}, and $\varphi_{D_{\Lbreve}}$ is the (bijective) Frobenius-semilinear operator on $D_{\Lbreve}$.
The map $i_n$ is evidently well defined, and it extends to a map,
\begin{equation*}
	i_n : \BrigLbreve^+[\mu/t] \otimes_{\Lbreve} D_{\Lbreve} \longrightarrow \BLbrevenhat[\mu/t] \otimes_{\Lbreve} D_{\Lbreve}.
\end{equation*}
Define a $\BrigLbreve^+\module$ as follows:
\begin{equation*}
	\MLbreve(D_{\Lbreve}) := \big\{x \in \BrigLbreve^+[\mu/t] \otimes_{\Lbreve} D_{\Lbreve}, \textrm{ such that } \forall n \in \NN, i_n(x) \in \Fil^0(\BLbrevenhat[1/\xitilde_n] \otimes_{\Lbreve} D_{\Lbreve})\big\},
\end{equation*}
where $\BrigLbreve^+[\mu/t] \otimes_{\Lbreve} D_{\Lbreve}$ is equipped with the tensor product Frobenius and $\BLbrevenhat\big[1/\xitilde_n\big] \otimes_{\Lbreve} D_{\Lbreve}$ is equipped with the tensor product filtration.
By \cite[Lemma 1.2.2]{kisin-modules} and \cite[Lemma 2.2.1]{kisin-ren}, the $\BrigLbreve^+\module$ $\MLbreve(D_{\Lbreve})$ is finite free of rank $=\dim_{\Lbreve} D_{\Lbreve}$, stable under $\varphi$ and $\GammaLbreve$, and such that the cokernel of the injective map $1 \otimes \varphi : \varphi^*(\MLbreve(D_{\Lbreve})) \rightarrow \MLbreve(D_{\Lbreve})$ is killed by $\xitilde^s$ (where $s = \textrm{height of } T = \textrm{height of } V$), and the action of $\GammaLbreve$ is trivial modulo $\mu$.
Moreover, from \cite[Lemma 2.2.2]{kisin-ren}, there exists a unique $\Lbreve\textrm{-linear}$ section $\alpha : \MLbreve(D_{\Lbreve})/\mu \MLbreve(D_{\Lbreve}) \rightarrow \MLbreve(D_{\Lbreve})[\mu/t]$ such that the image $\alpha(\MLbreve(D_{\Lbreve})/\mu \MLbreve(D_{\Lbreve}))$ is $\GammaLbreve\textrm{-invariant}$.
Furthermore, the section $\alpha$ is $\varphi\equivariant$ and it induces an isomorphism,
\begin{equation}\label{eq:kisinren_crys_rigcomp}
	1 \otimes \alpha : \BrigLbreve^+[\mu/t] \otimes_{\Lbreve} (\MLbreve(D_{\Lbreve})/\mu \MLbreve(D_{\Lbreve})) \isomorphic \MLbreve(D_{\Lbreve})[\mu/t].
\end{equation}
Finally, from \cite[Proposition 2.2.6]{kisin-ren} we have a natural isomorphism $D_{\Lbreve} \isomorphic \MLbreve(D_{\Lbreve})/\mu \MLbreve(D_{\Lbreve})$ compatible with the respective Frobenii and filtrations, and under the isomorphism \eqref{eq:kisinren_crys_rigcomp}, the image of $D_{\Lbreve}$ coincides with $\alpha(\MLbreve(D_{\Lbreve})/\mu \MLbreve(D_{\Lbreve}))$.

Next, we note that the $\BrigLbrevedag\module$ $\BrigLbrevedag \otimes_{\BrigLbreve^+} \MLbreve(D_{\Lbreve})$ is pure of slope 0 using \cite[Theorem 1.3.8]{kisin-modules} and \cite[Proposition 2.3.3]{kisin-ren}.
Then, from \cite[Corollay 2.4.2]{kisin-ren} one obtains an $\ALbreve^+\module$ $N_{\Lbreve}$ finite free of rank $=\dim_{\Lbreve} D_{\Lbreve}$, equipped with a Frobenius-semilinear endomorphism $\varphi$ and semilinear and continuous action of $\GammaLbreve$, and such that cokernel of the injective map $1 \otimes \varphi : \varphi^*(N_{\Lbreve}) \rightarrow N_{\Lbreve}$ is killed by $\xitilde^s$, the action of $\GammaLbreve$ is trivial modulo $\mu$ and $\BrigLbreve^+ \otimes_{\ALbreve^+} N_{\Lbreve} \isomorphic \MLbreve(D_{\Lbreve})$ compatible with $(\varphi, \GammaLbreve)\action$.

\begin{lem}\label{lem:nlbreve_wach}
	There is a natural $\BrigLbreve^+\linear$ and $(\varphi, \GammaLbreve)\equivariant$ isomorphism $\beta: \MLbreve(D_{\Lbreve}) \isomorphic \NrigLbreve(V)$.
	Moreover, it restricts to a $\BLbreve^+\linear$ and $(\varphi, \GammaLbreve)\equivariant$ isomorphism $\beta: N_{\Lbreve}[1/p] \isomorphic \NLbreve(V)$.
\end{lem}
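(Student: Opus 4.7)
The plan is to first construct the desired isomorphism after inverting $\mu/t$, then descend integrally using uniqueness properties of the two modules, and finally deduce the second isomorphism by slope-zero descent.

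\emph{Step 1: Isomorphism after inverting $\mu/t$.} Both $\MLbreve(D_{\Lbreve})[\mu/t]$ and $\NrigLbreve(V)[\mu/t]$ are canonically identified with $\BrigLbreve^+[\mu/t] \otimes_{\Lbreve} D_{\Lbreve}$ in a $(\varphi, \GammaLbreve)$-equivariant way. Indeed, Berger's inclusion $D_{\Lbreve} \subset \NrigLbreve(V)$ extends $\BrigLbreve^+[\mu/t]$-linearly to \eqref{eq:wach_crys_rigcomp}, while Kisin--Ren's section $\alpha$ identifies $D_{\Lbreve}$ with the canonical $\GammaLbreve$-invariant lift of $\MLbreve(D_{\Lbreve})/\mu\MLbreve(D_{\Lbreve})$, yielding \eqref{eq:kisinren_crys_rigcomp}. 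Composing the two identifications gives a $\BrigLbreve^+[\mu/t]$-linear, $(\varphi, \GammaLbreve)$-equivariant isomorphism $\beta[\mu/t] : \MLbreve(D_{\Lbreve})[\mu/t] \isomorphic \NrigLbreve(V)[\mu/t]$. Compatibility with Frobenius is automatic because both identifications send $D_{\Lbreve}$ to the respective $\varphi$-stable copy; compatibility with $\GammaLbreve$ follows because the action on $D_{\Lbreve}$ is trivial on both sides.

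\emph{Step 2: Integral restriction.} The main task is to show that $\beta[\mu/t]$ carries $\MLbreve(D_{\Lbreve})$ onto $\NrigLbreve(V)$. Both modules are finite free $\BrigLbreve^+$-modules of rank $\dim_{\Lbreve} D_{\Lbreve}$, stable under $(\varphi, \GammaLbreve)$, with Frobenius cokernel killed by $\xitilde^s$ and $\GammaLbreve$ acting trivially modulo $\mu$. Arguing as in Lemma \ref{lem:wach_muinverse_ff}, if $\beta[\mu/t](\MLbreve(D_{\Lbreve})) \not\subset \NrigLbreve(V)$, choose the smallest $k \geq 1$ with $\beta[\mu/t](\MLbreve(D_{\Lbreve})) \subset (\mu/t)^{-k}\NrigLbreve(V)$; reducing modulo $\mu$ via the factor $\prod_{n\geq 0}(\xitilde_n/p)^{-k}$, the induced $\GammaLbreve$-map lands in a Tate twist $\NrigLbreve(V)/\mu \NrigLbreve(V)(-k)$ on which $\GammaLbreve$ acts by $\chi^{-k}$, forcing the map to vanish and contradicting minimality of $k$. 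By symmetry, $\beta[\mu/t]^{-1}(\NrigLbreve(V)) \subset \MLbreve(D_{\Lbreve})$, yielding the desired isomorphism $\beta : \MLbreve(D_{\Lbreve}) \isomorphic \NrigLbreve(V)$.

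\emph{Step 3: Descent to $\ALbreve^+$.} By construction $\BrigLbreve^+ \otimes_{\ALbreve^+} N_{\Lbreve} \isomorphic \MLbreve(D_{\Lbreve})$ and $\BrigLbreve^+ \otimes_{\BLbreve^+} \NLbreve(V) = \NrigLbreve(V)$, and both $N_{\Lbreve}[1/p]$ and $\NLbreve(V)$ lie in the category $\Eff\varphi\Mod_{\BLbreve^+}^{[p]_q}$ (after appropriate Tate twist) with slope-zero base change to $\BrigLbreve^+$. Applying the faithful descent from $\BrigLbreve^+$ to $\BLbreve^+$ (the $\Lbreve$-analogue of Lemma \ref{lem:finiteheightslope0_equiv}(2)) to the isomorphism $\beta$ of Step 2 yields the promised $\BLbreve^+$-linear, $(\varphi, \GammaLbreve)$-equivariant isomorphism $N_{\Lbreve}[1/p] \isomorphic \NLbreve(V)$.

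The technical heart is Step 2: the $\mu/t$-inverted isomorphism is formal, but the integral comparison requires the rigidity of the $\GammaLbreve$-action modulo $\mu$, exactly as in the classical uniqueness of Wach modules inside their $\mu$-inverted version.
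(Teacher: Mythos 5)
Your Step 1 matches the paper (compose \eqref{eq:wach_crys_rigcomp} with \eqref{eq:kisinren_crys_rigcomp} inside $\BrigLbreve^+[\mu/t]\otimes_{\Lbreve}D_{\Lbreve}$), and Step 3 is essentially the paper's use of slope-$0$ descent (the paper quotes Kisin--Ren, Cor.\ 2.4.2, which is the $\Lbreve$-analogue of Lemma \ref{lem:finiteheightslope0_equiv}). The problem is Step 2, which you rightly call the technical heart: the twisting argument modelled on Lemma \ref{lem:wach_muinverse_ff} does not work here. The two lattices $\MLbreve(D_{\Lbreve})$ and $\NrigLbreve(V)$ agree only after inverting $t/\mu$, not after inverting $\mu$: by \eqref{eq:wach_crys_rigcomp} and the sandwich $\BrigLbreve^+\otimes_{\Lbreve}D_{\Lbreve}\subset \MLbreve(D_{\Lbreve}),\NrigLbreve(V)\subset (\mu/t)^s\,\BrigLbreve^+\otimes_{\Lbreve}D_{\Lbreve}$, the discrepancy is measured by powers of $t/\mu$ (whose zeros are the points $\zeta_{p^{n+1}}-1$), not by powers of $\mu$. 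Since $t/\mu\equiv 1 \bmod \mu$ and $g(t/\mu)/(t/\mu)\equiv 1 \bmod \mu$ for $g\in\GammaLbreve$, the reduction modulo $\mu$ of $(\mu/t)^k\NrigLbreve(V)$ is isomorphic to $\NrigLbreve(V)/\mu\NrigLbreve(V)$ with \emph{no} Tate twist; the target of your reduced map therefore has plenty of $\GammaLbreve$-invariants and no contradiction with minimality of $k$ arises. (There is also a sign slip: you want containment in $(\mu/t)^{k}\NrigLbreve(V)$, since $(\mu/t)^{-k}\NrigLbreve(V)=(t/\mu)^k\NrigLbreve(V)\subset\NrigLbreve(V)$.) The triviality of the $\GammaLbreve$-action modulo $\mu$ simply cannot detect the behaviour of a lattice at the points $\zeta_{p^{n+1}}-1$, which is exactly where the two modules could differ.

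What is actually needed, and what the paper uses, is a uniqueness/equivalence statement that encodes the filtration conditions at each $\zeta_{p^{n+1}}-1$: by Kisin--Ren, Lemma 2.1.2, the $\GammaLbreve$-action on $\NrigLbreve(V)$ is $\ZZ_p$-analytic with $\NrigLbreve(V)/\mu\isomorphic D_{\Lbreve}$, and the equivalence of categories in Kisin--Ren, Prop.\ 2.2.6 then yields $\MLbreve(D_{\Lbreve})\isomorphic\MLbreve(\NrigLbreve(V)/\mu)\isomorphic\NrigLbreve(V)$ as submodules of $\BrigLbreve^+[\mu/t]\otimes_{\Lbreve}D_{\Lbreve}$, compatibly with $(\varphi,\GammaLbreve)$; equivalently one could invoke Berger's reconstruction of $\NrigLbreve(V)$ from the filtered module (\cite[Prop.\ II.3.1, III.2.1]{berger-limites}). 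Your argument as written leaves this key identification unproved, so Step 2 has a genuine gap; replacing it by the Kisin--Ren (or Berger) input restores the proof and brings it in line with the paper's.
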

\begin{proof}
	Recall that by definition $\NrigLbreve(V) = \BrigLbreve^+ \otimes_{\BL^+} \NLbreve(V)$, and consider the following diagram:
	\begin{equation}\label{eq:alpha_beta_commute}
		\begin{tikzcd}[column sep=large]
			\BrigLbreve^+[\mu/t] \otimes_{\Lbreve} D_{\Lbreve} \arrow[r, "\sim"] \arrow[d, "\wr"'] & \NrigLbreve(V)[\mu/t] \\
			\BrigLbreve^+[\mu/t] \otimes_{\Lbreve} (\MLbreve(D_{\Lbreve})/\mu\MLbreve(D_{\Lbreve})) \arrow[r, "\sim", "1 \otimes \alpha"'] & \MLbreve(D_{\Lbreve})[\mu/t] \arrow[u, "\wr", "\beta"'],
		\end{tikzcd}
	\end{equation}
	where the top horizontal arrow is \eqref{eq:wach_crys_rigcomp}, the bottom horizontal arrow is \eqref{eq:kisinren_crys_rigcomp} and the left vertical arrow is the extension of scalars of the isomorphism $D_{\Lbreve} \isomorphic \MLbreve(D_{\Lbreve})/\mu\MLbreve(D_{\Lbreve})$ along the natural $(\varphi, \Gamma_{\Lbreve})\equivariant$ map $\Lbreve \rightarrow \BrigLbreve^+$.
	For the right vertical arrow $\beta$, we consider $\NrigLbreve(V)$ and $\MLbreve(D_{\Lbreve})$ as submodules of $\BrigLbreve^+[\mu/t] \otimes_{\Lbreve} \DcrysLbreve(V)$ and construct the map as follows:
	note that from the discussion after \eqref{eq:wach_crys_rigcomp}, we have a natural isomorphism $D_{\Lbreve} \isomorphic \NrigLbreve(V)/\mu\NrigLbreve(V)$ of filtered $\varphi\modules$ over $\Lbreve$.
	Moreover, from \cite[Lemma 2.1.2]{kisin-ren}, note that the action of $\GammaLbreve$ on $\NrigLbreve(V)$ is ``$\mathcal{O}\textrm{-analytic}$'' in the sense of \cite[\S 2.1.3]{kisin-ren}, where $\mathcal{O} = \ZZ_p$ in our case (this is true because in the language of op.\ cit., we see that the Lubin-Tate group law over $\OLbreve$ that we consider here is given by the Frobenius power series $(1+X)^p-1$ for the uniformiser $p$).
	Therefore, from the equivalence of categories in \cite[Proposition 2.2.6]{kisin-ren} and its proof, it follows that we have a natural isomorphism 
	\begin{equation*}
		\beta: \MLbreve(D_{\Lbreve}) \isomorphic \MLbreve(\NrigLbreve(V)/\mu\NrigLbreve(V)) \isomorphic \NrigLbreve(V),
	\end{equation*}
	as $\BrigLbreve^+\textrm{-submodules}$ of $\BrigLbreve^+[\mu/t] \otimes_{\Lbreve} D_{\Lbreve}$, compatible with $(\varphi, \GammaLbreve)\action$, and such that the reduction modulo $\mu$ of $\beta$ induces natural isomorphisms,
	\begin{equation*}
		\beta \textrm{ mod } \mu : \MLbreve(D_{\Lbreve})/\mu\MLbreve(D_{\Lbreve}) \isomorphic D_{\Lbreve} \isomorphic \NrigLbreve(V)/\mu\NrigLbreve(V),
	\end{equation*}
	of filtered $\varphi\modules$ over $\Lbreve$, where the latter isomorphism coincides with the one mentioned above (coming from the discussion after \eqref{eq:wach_crys_rigcomp}).
	Now, by composing the natural $\Lbreve\linear$ inclusion 
	\begin{equation*}
		(\MLbreve(D_{\Lbreve})/\mu\MLbreve(D_{\Lbreve})) \subset \BrigLbreve^+[\mu/t] \otimes_{\Lbreve} (\MLbreve(D_{\Lbreve})/\mu\MLbreve(D_{\Lbreve})),
	\end{equation*}
	with the inverse of the left vertical arrow, the top horizontal arrow and the inverse of the right vertical arrow of the diagram \eqref{eq:alpha_beta_commute}, provides an $\Lbreve\linear$ section $\MLbreve(D_{\Lbreve})/\mu\MLbreve(D_{\Lbreve}) \rightarrow \MLbreve(D_{\Lbreve})[\mu/t]$ satisfying the same properties as $\alpha$ (see the discussion before \eqref{eq:kisinren_crys_rigcomp}).
	Therefore, from the uniqueness of $\alpha$, it follows that the diagram commutes, thus proving our first claim.
	For the second claim, note that $\BrigLbrevedag \otimes_{\BrigLbreve^+} \MLbreve(D_{\Lbreve}) \isomorphic \BrigLbrevedag \otimes_{\BrigLbreve^+} \NrigLbreve(V)$ is pure of slope 0, so from \cite[Corollary 2.4.2]{kisin-ren} we conclude that the isomorphism $\beta$ induces an isomorphism $\beta: N_{\Lbreve}[1/p] \isomorphic \NLbreve(V)$ compatible with $(\varphi, \GammaLbreve)\action$.
	This allows us to conclude.
\end{proof}

Recall that from \eqref{eq:dcrys_llbreve_comp}, we have an isomorphism of filtered $\varphi\modules$ $\Lbreve \otimes_L D_L \isomorphic D_{\Lbreve}$.
\begin{defi}\label{defi:mldl}
	Define the following $\BrigL^+\module$:
	\begin{align*}
		\ML(D_L) &:= \big\{x \in \BrigL^+[\mu/t] \otimes_{L} D_L, \textrm{ such that } \forall n \in \NN, i_n(x) \in \Fil^0\big(\BLbrevenhat[1/\xitilde_n] \otimes_{\Lbreve} D_{\Lbreve}\big)\big\}\\
		&= \big(\BrigL^+[\mu/t] \otimes_{L} D_L\big) \cap \MLbreve(D_{\Lbreve}) \subset \BrigLbreve^+[\mu/t] \otimes_{\Lbreve} D_{\Lbreve}.
	\end{align*}
\end{defi}
From \S \ref{subsubsec:periodrings_Lbreve}, recall that we have a $\varphi\equivariant$ injective homomorphism $\BrigL^+ \rightarrow \BrigLbreve^+$, therefore, by definition $\ML(D_L)$ is stable under the induced tensor product Frobenius semilinear-operator $\varphi$ on $\BrigLbreve^+[\mu/t] \otimes_{\Lbreve} D_{\Lbreve}$.
Then, by using Lemma \ref{lem:nlbreve_wach} and the discussion preceding \eqref{eq:wach_crys_rigcomp}, we have $\varphi\equivariant$ inclusions 
\begin{equation*}
	\BrigLbreve^+ \otimes_{\Lbreve} D_{\Lbreve} \subset \MLbreve(D_{\Lbreve}) \subset (\mu/t)^s \BrigLbreve^+ \otimes_{\Lbreve} D_{\Lbreve}.
\end{equation*}
Moreover, from Lemma \ref{lem:brigl_lbreve_flat}, recall that $\BrigL^+ \rightarrow \BrigLbreve^+$ is flat, and from Lemma \ref{lem:brigl_lbreve_intersect} we have that $\BrigL^+ \cap (t/\mu)\BrigLbreve^+ = (t/\mu)\BrigLbreve^+$, or equivalently, $\BrigLbreve^+ \cap \BrigL^+ [\mu/t] = \BrigL^+$.
So, it follows that we have $\varphi\equivariant$ inclusions
\begin{equation}\label{eq:mldl_in_mutsdl}
	\BrigL^+ \otimes_L D_L \subset \ML(D_L) \subset (\mu/t)^s \BrigL^+ \otimes_L D_L.
\end{equation}
Therefore, similar to \eqref{eq:wach_crys_rigcomp}, we obtain a $\varphi\equivariant$ isomorphism,
\begin{equation}\label{eq:mldl_dcrys_comp}
	\ML(D_L)[\mu/t] \isomorphic \BrigL^+[\mu/t] \otimes_L D_L.
\end{equation}

\begin{lem}\label{rem:hat_flat}
	For each $n \in \NN$, the natural morphism $\BrigLbreve^+ \rightarrow \BLbrevenhat$ is flat, and therefore, the composition $\BrigL^+ \rightarrow \BrigLbreve^+ \rightarrow \BLbrevenhat$ is flat.
\end{lem}
\begin{proof}
	Note that we have a natural isomorphism $\Lbreve(\zeta_{p^n+1}) \isomorphic (\Lbreve(\zeta_{p^{n+1}}) \otimes_{\Lbreve} \BrigLbreve^+)/I$, where $I \subset \Lbreve(\zeta_{p^{n+1}}) \otimes_{\Lbreve} \BrigLbreve^+$ denotes the maximal ideal generated by $\mu-(\zeta_{p^{n+1}}-1)$, and let $(\Lbreve(\zeta_{p^{n+1}}) \otimes_{\Lbreve} \BrigLbreve^+)_{I}$ denote its localisation at $I$.
	Then, the natural map $(\Lbreve(\zeta_{p^{n+1}}) \otimes_{\Lbreve} \BrigLbreve^+)_{I} \rightarrow \BLbrevenhat$, realises the target as the $I\textrm{-adic}$ completion of the source which is a discrete valuation ring, in particular, the preceding map is flat.
	It is easy to see that the first map in the claim factors as the composition $\BrigLbreve^+ \rightarrow \Lbreve(\zeta_{p^{n+1}}) \otimes_{\Lbreve} \BrigLbreve^+ \rightarrow (\Lbreve(\zeta_{p^{n+1}}) \otimes_{\Lbreve} \BrigLbreve^+)_{I} \rightarrow \BLbrevenhat$, where each map is flat, hence, the composition is flat.
	Furthermore, recall that the natural map $\BrigL^+ \rightarrow \BrigLbreve^+$ is flat (see Lemma \ref{lem:brigl_lbreve_flat}), therefore, the composition $\BrigL^+ \rightarrow \BrigLbreve^+ \rightarrow \BLbrevenhat$ is flat as well.
\end{proof}

\begin{lem}\label{lem:descent_brigl+}
	Let us consider $\BLbrevenhat$ as a $\BrigL^+\algebra$ via the composition $i_{L,n} : \BrigL^+ \rightarrow \BrigLbreve^+ \xrightarrow[\sim]{\phi_{\Lbreve}^{-n}} \BrigLbreve^+ \rightarrow \BLbrevenhat$.
	\begin{enumarabicup}
	\item The homomorphism
		\begin{equation*}
			\BLbrevenhat \otimes_{i_{L,n}, \BrigL^+} (\BrigL^+ \otimes_L D_L) \longrightarrow \BLbrevenhat \otimes_{\Lbreve} D_{\Lbreve} \lisomorphic \BLbrevenhat \otimes_L D_L,
		\end{equation*}
		induced by $i_n$ in \eqref{eq:in_briglbreve}, is an isomorphism.

	\item The isomorphism in \textup{(1)} induces an isomorphism,
		\begin{equation*}
			\BLbrevenhat \otimes_{i_{L,n}, \BrigL^+} \ML(D_L) \isomorphic \sum_{i \in \NN} \xitilde_n^{-i}\BLbrevenhat \otimes_L \Fil^i D_L.
		\end{equation*}
		
	\item The $\varphi\equivariant$ homomorphism $\BrigLbreve^+ \otimes_{\BrigL^+} \ML(D_L) \rightarrow \MLbreve(D_{\Lbreve})$, obtained as the $\BrigLbreve^+\textrm{-linear}$ extension of the $\varphi\equivariant$ inclusion $\ML(D_L) \subset \MLbreve(D_{\Lbreve})$, is an isomorphism. 
		Moreover, $\ML(D_L)$ is a finite free $\BrigL^+\module$ of rank $=\dim_L D_L$.
	\end{enumarabicup}
\end{lem}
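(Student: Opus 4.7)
The strategy is to establish (1) by a direct basis computation, deduce (2) by combining (1) with flatness and the Kisin--Ren analog over $\BrigLbreve^+$, and finally prove (3) by induction on the length of the Hodge filtration of $D_L$.

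For (1), choose an $L$-basis $\{e_1, \ldots, e_h\}$ of $D_L$; under \eqref{eq:dcrys_llbreve_comp} this is also an $\Lbreve$-basis of $D_{\Lbreve}$. Both source and target are free $\BLbrevenhat$-modules of rank $h$, and the map sends the generator $1 \otimes (1 \otimes e_j)$ of the source to $1 \otimes \varphi^{-n}(e_j)$ in the target. Since $\Lbreve$ has perfect residue field, $\varphi$ is bijective on $D_{\Lbreve}$, so $\{\varphi^{-n}(e_j)\}$ is again an $\Lbreve$-basis of $D_{\Lbreve}$, hence a $\BLbrevenhat$-basis of $\BLbrevenhat \otimes_L D_L$. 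This gives the isomorphism in (1).

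For (2), set $F_n := \sum_i \xitilde_n^{-i} \BLbrevenhat \otimes_L \Fil^i D_L$. Injectivity follows from flatness of $\BrigL^+ \to \BLbrevenhat$ (Lemma \ref{rem:hat_flat}) applied to the inclusion $\ML(D_L) \hookrightarrow \BrigL^+[\mu/t] \otimes_L D_L$, combined with (1); the image lies in $F_n$ by the very definition of $\ML(D_L)$. For surjectivity, the Kisin--Ren analog of (2) applied to $\MLbreve(D_{\Lbreve})$ (see the proof of \cite[Lemma 1.2.2]{kisin-modules} and \cite[Lemma 2.2.1]{kisin-ren}) identifies $\BLbrevenhat \otimes_{\BrigLbreve^+} \MLbreve(D_{\Lbreve}) \isomorphic F_n$. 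Using the intersection description $\ML(D_L) = (\BrigL^+[\mu/t] \otimes_L D_L) \cap \MLbreve(D_{\Lbreve})$ (Definition \ref{defi:mldl}) and the fact that flat base change commutes with intersections of submodules (applied to $\BrigL^+ \to \BLbrevenhat$), we obtain that $\BLbrevenhat \otimes_{\BrigL^+} \ML(D_L)$ equals the intersection of (the image of) $F_n$ with $\BLbrevenhat[1/\xitilde_n] \otimes_L D_L$, which is $F_n$ itself.

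For (3), injectivity is immediate from flatness of $\BrigL^+ \to \BrigLbreve^+$ (Lemma \ref{lem:brigl_lbreve_flat}). For surjectivity and finite freeness of $\ML(D_L)$, we induct on the top Hodge--Tate weight $s$ of $D_L$. In the base case $s = 0$, the defining condition of $\ML(D_L)$ forces every element to have no pole at any $\xitilde_n$, so a Hartogs-type argument (using that $t/\mu$ vanishes precisely at the $\zeta_{p^{n+1}}-1$ in the open unit disk over $L$) yields $\ML(D_L) = \BrigL^+ \otimes_L D_L$, which is finite free and satisfies the base change statement trivially. For the inductive step, the short exact sequence of filtered $\varphi$-modules $0 \to \Fil^s D_L \to D_L \to D_L/\Fil^s D_L \to 0$ (with $\Fil^s D_L$ pure of weight $s$ and $D_L/\Fil^s D_L$ of filtration length $\leq s-1$) induces a short exact sequence $0 \to \ML(\Fil^s D_L) \to \ML(D_L) \to \ML(D_L/\Fil^s D_L) \to 0$. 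A direct computation gives $\ML(\Fil^s D_L) = (\mu/t)^s \BrigL^+ \otimes_L \Fil^s D_L$, finite free of rank $\dim_L \Fil^s D_L$, and by induction $\ML(D_L/\Fil^s D_L)$ is finite free; splitting the sequence over the B\'ezout domain $\BrigL^+$ yields that $\ML(D_L)$ is finite free of rank $h$. The isomorphism of (3) then follows by applying the flat base change $-\otimes_{\BrigL^+} \BrigLbreve^+$ to this sequence and invoking Kisin--Ren at the outer terms, together with the five lemma.

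\textbf{Main obstacle.} The technical crux is establishing exactness of the sequence of $\ML$'s in the inductive step, specifically the surjectivity of $\ML(D_L) \to \ML(D_L/\Fil^s D_L)$. Lifting a given element requires producing a preimage in $\BrigL^+[\mu/t] \otimes_L D_L$ whose pole orders at each $\xitilde_n$ respect the full filtration on $D_L$, not just on the quotient. This is expected to follow by taking an arbitrary lift and correcting it by an element of $\ML(\Fil^s D_L) = (\mu/t)^s \BrigL^+ \otimes_L \Fil^s D_L$ to absorb the discrepancy, using the explicit rigid-analytic structure of $\BrigL^+$.
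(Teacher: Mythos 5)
Part (1) of your proposal is fine and coincides with the paper's argument (bijectivity of $\varphi$ on $D_{\Lbreve}$ because the residue field of $\Lbreve$ is perfect). The problems begin with (2). Your surjectivity argument conflates $\BLbrevenhat \otimes_{\BrigL^+} \MLbreve(D_{\Lbreve})$ with $\BLbrevenhat \otimes_{\BrigLbreve^+} \MLbreve(D_{\Lbreve})$: the Kisin--Ren computation identifies only the latter with $F_n = \sum_i \xitilde_n^{-i}\BLbrevenhat \otimes_L \Fil^i D_L$, whereas ``flat base change commutes with intersections'' applied to $\BrigL^+ \to \BLbrevenhat$ yields a statement about the former, taking place inside the ambient module $\BLbrevenhat \otimes_{\BrigL^+} (\BrigLbreve^+[\mu/t] \otimes_{\Lbreve} D_{\Lbreve})$. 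That ambient module does not map injectively to $\BLbrevenhat[1/\xitilde_n] \otimes_L D_L$ (already $\BLbrevenhat \otimes_{\BrigL^+} \BrigLbreve^+ \to \BLbrevenhat$ is far from injective), so you cannot pass to images in the target and conclude that the intersection equals $F_n$. The genuine content of (2), which your argument never supplies, is the production of elements of $\ML(D_L)$ \emph{defined over $L$} with prescribed pole order at $\xitilde_n$, together with the check that the conditions at all $k \neq n$ become vacuous after completing at $\zeta_{p^{n+1}}-1$. The paper does this by writing $\ML(D_L)$ as the intersection over $k$ of the modules cut out by the single condition at $k$, exhibiting for $d \in \Fil^r D_L$ the explicit element $\xitilde_n^{-r} \otimes \varphi^n(d)$ (which lies in the $L$-object since $\varphi$ preserves $D_L$, and satisfies $i_n(\xitilde_n^{-r} \otimes \varphi^n(d)) = \xitilde_n^{-r} \otimes d$), and using that $\xitilde_k$ is a unit in $\BLbrevenhat$ for $k \neq n$.

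Part (3) has a more serious flaw: your induction rests on the ``short exact sequence of filtered $\varphi$-modules'' $0 \to \Fil^s D_L \to D_L \to D_L/\Fil^s D_L \to 0$, but $\Fil^s D_L$ is not stable under $\varphi$ in general (filtration and Frobenius on a weakly admissible module do not interact), and the construction $\ML(-)$ requires a $\varphi$-module structure because the maps $i_n$ involve $\varphi_{D_{\Lbreve}}^{-n}$. Thus $\ML(\Fil^s D_L)$ is not defined, the claimed exact sequence of $\ML$'s does not exist, and the inductive step collapses; you also flag the surjectivity onto $\ML(D_L/\Fil^s D_L)$ as unproven, so even granting $\varphi$-stability the argument would be incomplete. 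The paper proceeds quite differently: finite freeness of $\ML(D_L)$ follows from a closedness argument ($\MLbreve(D_{\Lbreve})$ is closed in $(\mu/t)^s \BrigLbreve^+ \otimes_{\Lbreve} D_{\Lbreve}$, $\BrigL^+ \subset \BrigLbreve^+$ is a closed subring, and closed submodules of finite free $\BrigL^+$-modules are finite free as in Remark \ref{rem:closedsubmod_free}), while the base-change isomorphism $\BrigLbreve^+ \otimes_{\BrigL^+} \ML(D_L) \isomorphic \MLbreve(D_{\Lbreve})$ is checked disk by disk: the cokernel over $\pazo(D(\Lbreve, \rho))$ is a finitely generated torsion module over a principal ideal domain killed by $(t/\mu)^s$, so it suffices to show it vanishes at the zeros of $t/\mu$, which is exactly what part (2) provides. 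You would need to either adopt this localization strategy or find a genuinely new mechanism replacing the filtration induction.
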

\begin{proof}
	The proof follows in a manner similar to \cite[Lemma 1.2.1]{kisin-modules}.
	Let us first note that the linearisation of $i_n$ along the morphism $i_{\Lbreve,n} : \BrigLbreve^+ \xrightarrow[\sim]{\phi_{\Lbreve}^{-n}} \BrigLbreve^+ \rightarrow \BLbrevenhat$, yields an isomorphism,
	\begin{equation*}
		\BLbrevenhat \otimes_{i_{\Lbreve,n}, \BrigLbreve^+} (\BrigLbreve^+ \otimes_{\Lbreve} D_{\Lbreve}) \isomorphic \BLbrevenhat \otimes_{\Lbreve} D_{\Lbreve}.
	\end{equation*}
	Moreover, from \eqref{eq:dcrys_llbreve_comp}, we have that $D_{\Lbreve} \isomorphic \Lbreve \otimes_L D_L$, so we can write,
	\begin{equation*}
		\BrigLbreve^+ \otimes_{\BrigL^+} (\BrigL^+ \otimes_L D_L) \isomorphic \BrigLbreve^+ \otimes_L D_L \isomorphic \BrigLbreve^+ \otimes_{\Lbreve} D_{\Lbreve}.
	\end{equation*}
	Then, by extending the composition above along $i_{\Lbreve,n}: \BrigLbreve^+ \rightarrow \BLbrevenhat$, we obtain that $\BLbrevenhat \otimes_{i_{L,n}, \BrigL^+} (\BrigL^+ \otimes_L D_L) \isomorphic \BLbrevenhat \otimes_{\Lbreve} D_{\Lbreve}$, as claimed in (1).

	To show (2), let us write for $k \in \NN$,
	\begin{equation*}
		\MLk(D_L) := \big\{x \in \BrigL^+[\mu/t] \otimes_L D_L \textrm{ such that } i_k(x) \in \Fil^0\big(\BLbrevekhat[1/\xitilde_k] \otimes_{\Lbreve} D_{\Lbreve}\big)\big\}.
	\end{equation*}
	Then, we have that $\ML(D_L) = \cap_{k \in \NN} \MLk(D_L) \subset \BrigL^+[\mu/t] \otimes_L D_L$.
	Moreover, using Lemma \ref{lem:brigl_lbreve_flat} and Lemma \ref{rem:hat_flat}, we see that the morphism $i_{L,n} : \BrigL^+ \rightarrow \BLbrevenhat$ is flat.
	So, we get that 
	\begin{equation*}
		\BLbrevenhat \otimes_{i_{L,n}, \BrigL^+} \ML(D_L) = \cap_{k \in \NN} \big(\BLbrevenhat \otimes_{i_{L,n}, \BrigL^+} \MLk(D_L)\big) \subset \BrigL^+[\mu/t] \otimes_L D_L.
	\end{equation*}
	To prove the claim, it suffices to show that the isomorphism in (1) induces the following two bijections:
	\begin{align*}
		\BLbrevenhat \otimes_{i_{L,n}, \BrigL^+} \MLn(D_L) &\isomorphic \sum_{r \in \NN} \xitilde_n^{-r}\BLbrevenhat \otimes_L \Fil^r D_L,\\
		\BLbrevenhat \otimes_{i_{L,n}, \BrigL^+} \MLk(D_L) &\isomorphic \BLbrevenhat[1/\xitilde_n] \otimes_L D_L, \hspace{2mm} \textrm{ for } k \neq n.
	\end{align*}
	For the first claim, note that by definition, we have a natural inclusion $\BLbrevenhat \otimes_{i_{L,n}, \BrigL^+} \MLn(D_L) \hookrightarrow \sum_{r \in \NN} \xitilde_n^{-r}\BLbrevenhat \otimes_L \Fil^r D_L$.
	For the converse, note that we have $\xitilde_n^{-1} = \frac{1}{p}\varphi^n(\mu/t)\varphi^{n+1}(t/\mu)$ in $\BrigL[\mu/t]$ and $\phi_{\Lbreve}^{-n}\big(\xitilde_n^{-1}\big) = \xitilde_n^{-1}$.
	So, for any $r \in \NN$ and $\xitilde_n^{-r} a \otimes d$ in $\xitilde_n^{-r} \BLbrevenhat \otimes_L \Fil^r D_L$, we have that $\xitilde_n^{-r} \otimes \varphi^n(d)$ is in $\MLn(D_L)$, since $i_n(\xitilde_n^{-r} \otimes \varphi^n(d)) = \xitilde_n^{-r} \otimes d$ is in $\Fil^0\big(\BLbrevenhat[1/\xitilde_n] \otimes_{\Lbreve} D_{\Lbreve}\big)$.
	Therefore, $\xitilde_n^{-r} a \otimes d = a \otimes i_n(\xitilde_n^{-r} \otimes \varphi^n(d))$ is in the image of $\BLbrevenhat \otimes_{i_{L,n}, \BrigL^+} \MLn(D_L)$.
	For the second claim, again note that by definition, we have a natural inclusion $\BLbrevenhat \otimes_{i_{L,n}, \BrigL^+} \MLk(D_L) \hookrightarrow \BLbrevenhat[1/\xitilde_n] \otimes_L D_L$.
	For the converse, take $\xitilde_n^{-r} a \otimes d$ in $\BLbrevenhat[1/\xitilde_n] \otimes_L D_L$, for some $r \in \NN$.
	Then, note that $\xitilde_n$ is a unit in $\BLbrevekhat$, since $\zeta_{p^{n+1}}-1$ is not a root of $\xitilde_k$ as $k \neq n$.
	So, we get that $i_k(\xitilde_n^{-r} \otimes \varphi^n(d)) = \xitilde_n^{-r} \otimes \varphi^{n-k}(d)$ is in $\BLbrevekhat \otimes_{\Lbreve} D_{\Lbreve} \subset \Fil^0\big(\BLbrevekhat[1/\xitilde_k] \otimes_{\Lbreve} D_{\Lbreve}\big)$, since $\Fil^0 D_L = D_L$.
	In particular, we have that $\xitilde_n^{-r} \otimes \varphi^n(d)$ is in $\MLk(D_L)$.
	Therefore, $\xitilde_n^{-r} a \otimes d = a \otimes i_n(\xitilde_n^{-r} \otimes \varphi^n(d))$ is in the image of $\BLbrevenhat \otimes_{i_{L,n}, \BrigL^+} \MLk(D_L)$.

	For (3), note that we have natural inclusions $\BrigLbreve^+ \otimes_L D_L \subset \BrigLbreve^+ \otimes_{\BrigL^+} \ML(D_L) \subset \MLbreve(D_{\Lbreve}) \subset (\mu/t)^s\BrigLbreve \otimes_L D_L$, where the first two inclusions follow since the map $\BrigL^+ \rightarrow \BrigLbreve^+$ is flat (see Lemma \ref{lem:brigl_lbreve_flat}) and $\ML(D_L) \subset (\mu/t)^s \BrigL^+ \otimes_L D_L$ from \eqref{eq:mldl_in_mutsdl}.
	So, we get that $(t/\mu)^s$ kills the cokernel of the map $\BrigLbreve^+ \otimes_{\BrigL^+} \ML(D_L) \rightarrow \MLbreve(D_{\Lbreve})$.
	Moreover, note that $\MLbreve(D_{\Lbreve}) \subset (\mu/t)^s \BrigLbreve^+ \otimes_{\Lbreve} D_{\Lbreve}$ is a closed submodule by \cite[Lemma 1.1.5, Lemma 1.2.2]{kisin-modules}.
	Now, since $\BrigL^+ \subset \BrigLbreve^+$ is a closed subring, therefore, we get that $\ML(D_L) \subset (\mu/t)^s \BrigL^+ \otimes_L D_L$ is closed and hence finite free over $\BrigL^+$ by Remark \ref{rem:closedsubmod_free}, and of rank $=\dim_L D_L$ by the isomorphism shown below.

	Let us write $\BrigL^+ = \lim_{\rho} \pazo(D(L, \rho))$ as the limit of rings of analytic functions on closed disks $D(L, \rho)$ of radius $0 < \rho < 1$ (see Remark \ref{rem:brigl_topology}); similarly write $\BrigLbreve^+ = \lim_{\rho} \pazo(D(\Lbreve, \rho))$.
	Since $\ML(D_L)$ and $\MLbreve(D_{\Lbreve})$ are free modules over their respective base rings, therefore, we have that $\ML(D_L) \isomorphic \lim_{\rho} (\pazo(D(L, \rho)) \otimes_{\BrigL^+} \ML(D_L))$ and $\MLbreve(D_{\Lbreve}) \isomorphic \lim_{\rho} (\pazo(D(\Lbreve, \rho)) \otimes_{\BrigLbreve^+} \MLbreve(D_{\Lbreve}))$.
	Then, to show our claim, it is enough to show that the map,
	\begin{equation}\label{eq:odlbreverho_map}
		\pazo(D(\Lbreve, \rho)) \otimes_{\BrigL^+} \ML(D_L) \longrightarrow \pazo(D(\Lbreve, \rho)) \otimes_{\BrigLbreve^+} \MLbreve(D_{\Lbreve}),
	\end{equation}
	is a bijection.
	Note that $\pazo(D(\Lbreve, \rho))$ is a domain, so injectivity of \eqref{eq:odlbreverho_map} can be checked after passing to the fraction field of $\pazo(D(\Lbreve, \rho))$.
	To check that \eqref{eq:odlbreverho_map} is surjective, let $Q$ denote the cokernel of \eqref{eq:odlbreverho_map} and we will show that $Q = 0$.
	Note that $Q$ is a finitely generated $S := \pazo(D(\Lbreve, \rho))\module$ killed by $(t/\mu)^s$ and $S$ is a principal ideal domain (see \cite[Chapter 2, Corollary 10]{bosch}).
	So, by the structure theorem of finitely generated modules over $S$, we can write $Q = \oplus S/\fraka_i$, where $\fraka_i = (a_i)$ for some nonzero primary elements $a_i \in S$ and such that $a_i$ divides $(t/\mu)^s$, for each $i$.
	Note that $\sqrt{\fraka_i}$ is a maximal ideal of $S$ and $Q_{\sqrt{\fraka_i}} = S/\fraka_i$, so to obtain that $Q = 0$, it is enough to show that $Q_{\sqrt{\fraka_i}} = 0$.
	From \cite[Chapter 2, Corollary 13]{bosch} note that each maximal ideal $\sqrt{\fraka_i}$ corresponds to a zero of $(t/\mu)^s$, in particular, we are reduced to showing that $Q$ vanishes at zeros of $t/\mu$.
	This follows from (2).
	Hence, we get that \eqref{eq:odlbreverho_map} is an isomorphism and passing to the limit over $\rho$ we obtain that $\BrigLbreve^+ \otimes_{\BrigL^+} \ML(D_L) \isomorphic \MLbreve(D_{\Lbreve})$.
\end{proof}

\begin{lem}\label{lem:mldl_props}
	We have following properties for the $\BrigL^+\module$ $\ML(D_L)$:
	\begin{enumarabicup}
	\item The cokernel of the injective map $1 \otimes \varphi : \varphi^*(\ML(D_L)) \rightarrow \ML(D_L)$ is killed by $[p]_q^s$.

	\item $\ML(D_L)$ is pure of slope 0, i.e.\ the $\BrigLdag\module$ $\BrigLdag \otimes_{\BrigL^+} \ML(D_L)$ is pure of slope 0 in the sense of \cite[\S 6.3]{kedlaya-monodromy}.
	\end{enumarabicup}
\end{lem}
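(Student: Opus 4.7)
The plan is to deduce both statements from the corresponding facts for $\MLbreve(D_{\Lbreve})$ (furnished by the Kisin--Ren construction recalled in the paragraph preceding Lemma \ref{lem:nlbreve_wach}) via descent along the Frobenius-equivariant base-change isomorphism $\BrigLbreve^+ \otimes_{\BrigL^+} \ML(D_L) \isomorphic \MLbreve(D_{\Lbreve})$ established in Lemma \ref{lem:descent_brigl+}(3).

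For (1), first I would observe that the natural map $\BrigLbreve^+ \otimes_{\BrigL^+} \varphi^*(\ML(D_L)) \to \varphi^*(\MLbreve(D_{\Lbreve}))$ is an isomorphism, using that $\varphi$ commutes with the embedding $\BrigL^+ \hookrightarrow \BrigLbreve^+$ and that the latter is flat by Lemma \ref{lem:brigl_lbreve_flat}. Under this identification, $1 \otimes \varphi$ on $\ML(D_L)$ base-changes to $1 \otimes \varphi$ on $\MLbreve(D_{\Lbreve})$, which is known to be injective with cokernel killed by $[p]_q^s = \xitilde^s$. Injectivity of $1 \otimes \varphi$ on $\ML(D_L)$ then follows from the injectivity of $\ML(D_L) \hookrightarrow \MLbreve(D_{\Lbreve})$. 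For the cokernel, I would establish faithful flatness of $\BrigL^+ \hookrightarrow \BrigLbreve^+$ (by showing that every maximal ideal of the B\'ezout ring $\BrigL^+$ remains proper after extension to $\BrigLbreve^+$, using the correspondence of maximal ideals with zeros in the open unit disk over $L$ which remain zeros over $\Lbreve$, in analogy with Lemma \ref{lem:bl_brigl_ff}) and then descend the $[p]_q^s$-annihilation of the cokernel.

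For (2), set $M_L^\dagger := \BrigLdag \otimes_{\BrigL^+} \ML(D_L)$, a finite free $\varphi$-module over $\BrigLdag$. Arguing as in (1) but with the Frobenius-compatible flat inclusion $\BrigLdag \hookrightarrow \BrigLbrevedag$, we obtain a Frobenius-equivariant isomorphism $\BrigLbrevedag \otimes_{\BrigLdag} M_L^\dagger \isomorphic \BrigLbrevedag \otimes_{\BrigLbreve^+} \MLbreve(D_{\Lbreve})$, whose right-hand side is pure of slope $0$ by the discussion preceding Lemma \ref{lem:nlbreve_wach}. Purity then descends along the flat embedding $\BrigLdag \hookrightarrow \BrigLbrevedag$ by Kedlaya's slope theory (\cite[\S 6]{kedlaya-monodromy}), which computes Newton slopes via spectral valuations that are compatible with this base change; this is independent of the residue field structure, as already remarked in the proof of Lemma \ref{lem:finiteheightslope0_equiv}.

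The hard part is the descent step in (1): concluding that the cokernel of $1 \otimes \varphi$ on $\ML(D_L)$ is itself killed by $[p]_q^s$, not merely after base change to $\BrigLbreve^+$. Faithful flatness of $\BrigL^+ \hookrightarrow \BrigLbreve^+$ is the cleanest route but is not explicitly proved in the paper; an alternative, more hands-on argument uses the local description $\BLbrevenhat \otimes_{\BrigL^+} \ML(D_L) \isomorphic \sum_i \xitilde_n^{-i} \BLbrevenhat \otimes_L \Fil^i D_L$ from Lemma \ref{lem:descent_brigl+}(2), together with the observation that $\varphi$ maps the zero of $\xitilde_n$ to that of $\xitilde_{n-1}$, to verify the $[p]_q^s$-annihilation locally at the unique relevant point $\mu = \zeta_p - 1$.
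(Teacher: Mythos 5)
Your treatment of (2) is essentially the paper's own argument: purity of $\BrigLdag \otimes_{\BrigL^+} \ML(D_L)$ is deduced from purity of its base change to $\BrigLbrevedag$ via Kedlaya's slope theory (the paper phrases this through the base-change compatibility and uniqueness of the slope filtration of \cite[Theorem 6.10]{kedlaya-monodromy}), so nothing to add there. For (1), however, you take a genuinely different descent mechanism. The paper never proves, nor uses, faithful flatness of $\BrigL^+ \rightarrow \BrigLbreve^+$; instead it exploits the intersection description $\ML(D_L) = (\BrigL^+[\mu/t] \otimes_L D_L) \cap \MLbreve(D_{\Lbreve})$ together with flatness (Lemma \ref{lem:brigl_lbreve_flat}) and the key identity $\BrigLbreve^+ \cap \BrigL^+[\mu/t] = \BrigL^+$ (Lemma \ref{lem:brigl_lbreve_intersect}): given $x \in \ML(D_L)$, the element $\xitilde^s x$ admits a $\varphi$-preimage in $\varphi^*(\MLbreve(D_{\Lbreve}))$ and also one in $\varphi^*((\mu/t)^s\BrigL^+ \otimes_L D_L)$, and injectivity of $1 \otimes \varphi$ over $\Lbreve$ forces these to coincide, hence to lie in $\varphi^*(\ML(D_L))$. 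This sidesteps faithful flatness entirely.

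Your route can be made to work, but the justification you sketch for faithful flatness is not correct as stated: unlike $\BL^+$, which is a principal ideal domain where Weierstrass preparation identifies maximal ideals with (zeros of) distinguished polynomials (this is what drives Lemma \ref{lem:bl_brigl_ff}), the ring $\BrigL^+$ is a non-noetherian B\'ezout domain whose maximal ideals need not be finitely generated and need not correspond to points of the open unit disk. The repair is standard but should be said: if $\frakm \BrigLbreve^+ = \BrigLbreve^+$, then already finitely many elements of $\frakm$ generate the unit ideal in $\BrigLbreve^+$; by the B\'ezout property they generate a principal ideal $(g) \subset \frakm$ in $\BrigL^+$ with $g$ a non-unit, hence $g$ has a zero in the open unit disk over $\CC_L$ (Lemma \ref{lem:brigl+_explicit} and Lazard's theory), and this zero persists for $g$ viewed in $\BrigLbreve^+$, contradicting $g \in (\BrigLbreve^+)^{\times}$. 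With faithful flatness in hand, your base-change identification of $\varphi$-pullbacks (via Lemma \ref{lem:descent_brigl+}(3)) and the descent of the $[p]_q^s$-annihilation of the cokernel are fine. By contrast, your fallback ``local at $\mu = \zeta_p - 1$'' argument is too thin to evaluate: the cokernel is only known a priori to be a finitely presented $\BrigL^+$-module, and annihilation cannot be checked at a single classical point without first controlling its support, which is essentially the global input you are trying to avoid. In short: part (2) matches the paper, part (1) is a valid alternative provided you prove faithful flatness by the B\'ezout/gcd reduction rather than by an alleged point--maximal-ideal correspondence, whereas the paper's own proof buys the same conclusion from the weaker inputs it has already established.
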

\begin{proof}
	For (1), let us first note the following commutative diagram with exact rows:
	\begin{center}
		\begin{tikzcd}
			0 \arrow[r] & \ML(D_L) \arrow[r] \arrow[d] & \BrigL^+[\mu/t] \otimes_L D_L \arrow[r] \arrow[d] & Q \arrow[r] \arrow[d] & 0\\
			0 \arrow[r] & \MLbreve(D_{\Lbreve}) \arrow[r] & \BrigLbreve^+[\mu/t] \otimes_{\Lbreve} D_{\Lbreve}\arrow[r] & \BrigLbreve^+ \otimes_{\BrigL^+} Q \arrow[r] & 0,
		\end{tikzcd}
	\end{center}
	where $Q$ is the cokernel of the left horizontal arrow in the first row.
	All the maps above are $\varphi\equivariant$ and the vertical maps are injective (see \eqref{eq:mldl_in_mutsdl}, Lemma \ref{lem:nlbreve_wach}, Defnition \ref{defi:mldl} and Lemma \ref{lem:descent_brigl+} (3)).
	From Remark \ref{lem:brigl+_frob_ff}, Remark \ref{rem:briglbreve+}, Lemma \ref{lem:brigl_lbreve_intersect} and Lemma \ref{lem:brigl_lbreve_flat} recall that the maps $\varphi_L : \BrigL^+ \rightarrow \BrigL^+$ and $\varphi_{\Lbreve} : \BrigLbreve^+ \rightarrow \BrigLbreve^+$ are faithfully flat (we write $\varphi$ with subscripts to avoid confusion), the natural map $\BrigL^+ \rightarrow \BrigLbreve^+$ is flat and $\BrigLbreve^+ \cap \BrigL^+[\mu/t] = \BrigL^+$.
	Then, using Lemma \ref{lem:descent_brigl+} (3) and that $D_{\Lbreve} \isomorphic \Lbreve \otimes_L D_L$ from \eqref{eq:dcrys_llbreve_comp}, we get that $\varphi_{\Lbreve}^*(\MLbreve(D_{\Lbreve})) \isomorphic \BrigLbreve^+ \otimes_{\BrigL^+} \varphi_L^*(\ML(D_L))$ and $\varphi_L^*(\BrigL^+[\mu/t] \otimes_L D_L) \isomorphic \BrigL^+[\mu/t] \otimes_{\BrigL^+} \varphi_L^*(\BrigL^+ \otimes_L D_L) \subset \BrigLbreve^+[\mu/t] \otimes_{\BrigL^+[\mu/t]} \varphi_L^*(\BrigL^+[\mu/t] \otimes_L D_L) \isomorphic \varphi_{\Lbreve}^*(\BrigLbreve^+[\mu/t] \otimes_{\Lbreve} D_{\Lbreve})$.
	So, from the preceding discussion and the exactness of both rows in the diagram above, it follows that,
	\begin{align*}
		\varphi_L^*(\ML(D_L)) &= \big(\BrigL^+[\mu/t] \otimes_{\BrigL^+} \varphi_L^*(\ML(D_L))\big) \cap \big(\BrigLbreve^+ \otimes_{\BrigL^+} \varphi_L^*(\ML(D_L))\big)\\
		&\isomorphic \varphi_L^*\big(\BrigL^+[\mu/t] \otimes_L D_L\big) \cap \varphi_{\Lbreve}^*(\MLbreve(D_{\Lbreve})) \subset \varphi_{\Lbreve}^*(\BrigLbreve^+[\mu/t] \otimes_{\Lbreve} D_{\Lbreve}).
	\end{align*}
	Now, let $x$ in $\ML(D_L) \subset \MLbreve(D_{\Lbreve})$, then there exists some $y$ in $\varphi_{\Lbreve}^*(\MLbreve(D_{\Lbreve}))$ such that $(1 \otimes \varphi)y = \xitilde^s x$.
	Recall that $1 \otimes \varphi : \varphi_L^*(D_L) \isomorphic D_L$ and $\varphi(\mu/t) = (\xitilde\mu)/(pt)$, therefore, the cokernel of the induced map $1 \otimes \varphi : \varphi_L^*((\mu/t)^s \BrigL^+ \otimes_L D_L) \rightarrow (\mu/t)^s \BrigL^+ \otimes_L D_L$ is killed by $\xitilde^s$, in particular, $\xitilde^s x$ is in $(1 \otimes \varphi)\varphi_L^*((\mu/t)^s \BrigL^+ \otimes_L D_L)$.
	Since $1 \otimes \varphi$ is injective on $\varphi_{\Lbreve}^*((\mu/t)^s \BrigLbreve^+ \otimes_{\Lbreve} D_{\Lbreve})$, therefore, we get that $y$ is in $\varphi_L^*((\mu/t)^s \BrigL^+ \otimes_L D_L) \cap \varphi_{\Lbreve}^*(\MLbreve(D_{\Lbreve})) = \varphi_L^*(\ML(D_L))$.
	In particular, the cokernel of the natural map $1 \otimes \varphi : \varphi_L^*(\ML(D_L)) \rightarrow \ML(D_L)$ is killed by $\xitilde^s$.

	For (2), note that from Lemma \ref{lem:descent_brigl+} (3), we have that $\BrigLbreve^+ \otimes_{\BrigL^+} \ML(D_L) \isomorphic \MLbreve(D_{\Lbreve})$.
	Moreover, from \cite[Theorem 6.10]{kedlaya-monodromy} we obtain a slope filtration on $\BrigLdag \otimes_{\BrigL^+} \ML(D_L)$ such that base changing this slope filtration along $\BrigLdag \rightarrow \BrigLbrevedag$ gives a slope filtration on $\BrigLbrevedag \otimes_{\BrigLbreve^+} \MLbreve(D_{\Lbreve})$.
	However, from \cite[Theorem 1.3.8]{kisin-modules} and \cite[Proposition 2.3.3]{kisin-ren}, we know that $\BrigLbrevedag \otimes_{\BrigLbreve^+} \MLbreve(D_{\Lbreve})$ is pure of slope 0.
	Therefore, we must have that $\ML(D_L)$ is pure of slope 0.
\end{proof}

\subsection{Stability under Galois action}\label{subsec:nriglv_galois_action}

In this subsection, we will define and study a finite free $(\varphi, \Gamma_L)\module$ $\NrigL(V)$ over $\BrigL^+$, of slope 0, and obtained from the $\BrigL^+\module$ in Definition \ref{defi:mldl}.
From \S \ref{subsubsec:analytic_rings}, recall that we have identifications $\BrigLtilde^+ = (\Brigtilde^+)^{H_L} = \cap_{n \in \NN} \varphi^n(\Bcrys^+(\OLinfty))$, where the last equality follows since $\Bcrys^+(\OLinfty) = \Bcrys^+(\OLbar)^{H_L}$ (see \S \ref{subsubsec:crystalline_rings}).
Moreover, using the isomorphism in Lemma \ref{lem:faltings_isomorphism_dcrys} and Remark \ref{rem:b+dcrys_hlinv}, we see that $\Bcrys(\OLinfty) \otimes_{L} \ODcrysL(V)$ is equipped with an action of $\Gamma_L$.
We have that $\BrigLtilde^+ \otimes_L \ODcrysL(V) \subset \Bcrys(\OLinfty) \otimes_{L} \ODcrysL(V)$ and we claim the following:
\begin{lem}\label{lem:brigtilde_odcrys_glact}
	The $\BrigLtilde^+\module$ $\BrigLtilde^+ \otimes_L \ODcrysL(V)$ is stable under the action of $\Gamma_L$.
	For $a \otimes x$ in $\BrigLtilde^+ \otimes_L \ODcrysL(V)$, this action can be explicitly described by the foloowing formula:
	\begin{equation*}
		g(a \otimes x) = g(a) \otimes \textstyle \sum_{\smbfk \in \NN^d} \prod_{i=1}^d \partial_i^{k_i}(x) \prod_{i=1}^d (g([X_i^{\flat}]) - [X_i^{\flat}])^{[k_i]}, \hspace{2mm} \textrm{for} \hspace{1mm} g \in \Gamma_L.
	\end{equation*}
\end{lem}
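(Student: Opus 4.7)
The plan is to reduce the stability claim directly to Lemma~\ref{lem:odcrys_gammal_action}, exploiting the inclusion $\BrigL^+ \subset \BrigLtilde^+$ coming from the very definition $\BrigL^+ := \BrigLdag \cap \BrigLtilde^+$. First I would observe that the explicit formula stated in the lemma is simply the restriction to $\BrigLtilde^+ \otimes_L \ODcrysL(V) \subset \Bcrys^+(\OLinfty) \otimes_L \ODcrysL(V)$ of the $G_L$-action formula given in Remark~\ref{rem:faltings_isomorphism_dcrys}; the passage to $H_L$-invariants provided by Remark~\ref{rem:b+dcrys_hlinv} makes sense of this formula for $g \in \Gamma_L$, so only the stability assertion actually requires proof.

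By $\ZZ_p$-linearity of the Galois action, it suffices to treat pure tensors. For $a \in \BrigLtilde^+$, $x \in \ODcrysL(V)$ and $g \in \Gamma_L$, I would rewrite the formula of Remark~\ref{rem:faltings_isomorphism_dcrys} as
\begin{equation*}
g(a \otimes x) \;=\; g(a) \cdot S_{g,x}, \qquad S_{g,x} \;:=\; g(1 \otimes x) \;=\; \sum_{\mathbf{k} \in \NN^d} \Bigl(\prod_{i=1}^d (g([X_i^\flat]) - [X_i^\flat])^{[k_i]}\Bigr) \otimes \prod_{i=1}^d \partial_i^{k_i}(x),
\end{equation*}
where the product $g(a) \cdot S_{g,x}$ is formed via the ambient $\Bcrys^+(\OLinfty)$-module structure on $\Bcrys^+(\OLinfty) \otimes_L \ODcrysL(V)$. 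The argument then splits into two clean pieces. On the one hand, $g(a) \in \BrigLtilde^+$ because $\BrigLtilde^+ = (\Brigtilde^+)^{H_L}$ inherits a continuous $\Gamma_L$-action from the $G_L$-action on the $\Gamma_L$-stable ring $\Brigtilde^+$. On the other hand, the series $S_{g,x}$ is precisely the one whose convergence in $\BrigL^+ \otimes_L \ODcrysL(V)$ is the content of Lemma~\ref{lem:odcrys_gammal_action}; combined with the inclusion $\BrigL^+ \subset \BrigLtilde^+$, this yields $S_{g,x} \in \BrigLtilde^+ \otimes_L \ODcrysL(V)$. Multiplying, $g(a) \cdot S_{g,x} \in \BrigLtilde^+ \otimes_L \ODcrysL(V)$ by $\BrigLtilde^+$-linearity of the module structure, which is the desired stability.

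The essential difficulty, namely the convergence of the series defining the $\Gamma_L$-action inside a genuine ring of rigid-analytic functions, has already been surmounted in Lemma~\ref{lem:odcrys_gammal_action} at the level of the smaller ring $\BrigL^+ \otimes_L \ODcrysL(V)$; I therefore do not anticipate any substantial obstacle. The only subtlety worth being careful about is the $\Bcrys^+(\OLinfty)$-linear reparsing of the Remark~\ref{rem:faltings_isomorphism_dcrys} formula so as to cleanly separate the ``coefficient'' contribution $g(a) \in \BrigLtilde^+$ from the ``series'' contribution $S_{g,x} \in \BrigL^+ \otimes_L \ODcrysL(V)$ that is controlled by the earlier lemma.
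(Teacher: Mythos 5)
Your argument is correct, but it follows a genuinely different route from the paper's. You prove stability directly: using the semilinearity visible in the formula of Remark \ref{rem:faltings_isomorphism_dcrys}, you split $g(a \otimes x) = g(a)\cdot g(1 \otimes x)$, note that $g(a) \in \BrigLtilde^+$ because $\BrigLtilde^+ = (\Brigtilde^+)^{H_L}$ is a $\Gamma_L$-stable subring of $\Bcrys^+(\OLinfty)$ (it is $G_L$-stable, not ``$\Gamma_L$-stable'', at the level of $\Brigtilde^+$, but the content is right), and invoke Lemma \ref{lem:odcrys_gammal_action} to place $g(1 \otimes x)$ in $\BrigL^+ \otimes_L \ODcrysL(V) \subset \BrigLtilde^+ \otimes_L \ODcrysL(V)$; multiplying inside the ambient $\Bcrys^+(\OLinfty) \otimes_L \ODcrysL(V)$ then gives stability, and additivity reduces everything to pure tensors since $\ODcrysL(V)$ is finite dimensional. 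The paper instead never touches the series for the stability claim: it uses the identification $\BrigLtilde^+ = \cap_{n} \varphi^n(\Bcrys^+(\OLinfty))$ together with the fact that Frobenius is bijective on $\ODcrysL(V)$ (via $\Lbreve \otimes_L \ODcrysL(V) \isomorphic \DcrysLbreve(V)$ and $L \otimes_{\varphi^n, L} \ODcrysL(V) \isomorphic \ODcrysL(V)$) to get $\varphi^n(\Bcrys^+(\OLinfty) \otimes_L \ODcrysL(V)) = \varphi^n(\Bcrys^+(\OLinfty)) \otimes_L \ODcrysL(V)$, and concludes by commutation of $\varphi$ with $\Gamma_L$, so that the intersection over $n$ is automatically $\Gamma_L$-stable; Lemma \ref{lem:odcrys_gammal_action} is only cited for the explicit formula. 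Your approach buys a more elementary and self-contained argument once Lemma \ref{lem:odcrys_gammal_action} is granted (the stability and the explicit formula come out simultaneously), at the cost of leaning entirely on that convergence result, including the implicit point that the limit of the series in the Fr\'echet topology of $\BrigL^+$ agrees with the element computed in $\Bcrys^+(\OLinfty) \otimes_L \ODcrysL(V)$ — which is exactly what the ``in particular'' clause of Lemma \ref{lem:odcrys_gammal_action} asserts, so no gap results. The paper's Frobenius-intersection argument has the mild advantage of isolating the stability statement from any convergence analysis.
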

\begin{proof}
	The non-canonical $(\varphi, G_{\Lbreve})\equivariant$ $L\algebra$ structure on $\OBcrys^+(\OLinfty)$ from \S \ref{subsubsec:crystalline_rings}, extends to a $(\varphi, G_{\Lbreve})\equivariant$ $\Lbreve\algebra$ structure, and it provides $(\varphi, G_{\Lbreve})\equivariant$ $L\algebra$ and $\Lbreve\algebra$ structures on $\Bcrys^+(\OLinfty)$, via the composition $L \rightarrow \Lbreve \rightarrow \OBcrys^+(\OLinfty) \twoheadrightarrow \Bcrys^+(\OLinfty)$, where the last map is the projection map described before Lemma \ref{lem:faltings_isomorphism_dcrys}.
	Moreover, recall that we have $L \otimes_{\varphi^n, L} \ODcrysL(V) \isomorphic \ODcrysL(V)$, for all $n \in \NN$.
	So, we can write,
	\begin{align*}
		\Bcrys^+(\OLinfty) \otimes_{L} \ODcrysL(V) &\isomorphic \Bcrys^+(\OLinfty) \otimes_{\Lbreve} \DcrysLbreve(V)\\
		&\isomorphic \Bcrys^+(\OLinfty) \otimes_{\varphi_{\Lbreve}^{-n}, \Lbreve} (\Lbreve \otimes_{\varphi_{\Lbreve}^n, \Lbreve} \DcrysLbreve(V)).
	\end{align*}
	Applying $\varphi^n$ to the isomorphism above gives that $\varphi^n(\Bcrys^+(\OLinfty) \otimes_{L} \ODcrysL(V)) \isomorphic \varphi^n(\Bcrys^+(\OLinfty)) \otimes_{L} \ODcrysL(V)$.
	Note that the Frobenius endomorphism $\varphi$ on $\Bcrys^+(\OLinfty) \otimes_{L} \ODcrys(V)$ commutes with the action of $\Gamma_L$ described in Remark \ref{rem:b+dcrys_hlinv}.
	Therefore, the following is stable under the $\Gamma_L\action$:
	\begin{align*}
		\cap_{n \in \NN} \varphi^n(\Bcrys^+(\OLinfty) \otimes_L \ODcrysL(V)) &\isomorphic (\cap_{n \in \NN} \varphi^n(\Bcrys^+(\OLinfty))) \otimes_L \ODcrysL(V)\\
			&= \BrigLtilde^+ \otimes_L \ODcrysL(V).
	\end{align*}
	The second claim follows from Lemma \ref{lem:odcrys_gammal_action}.
\end{proof}

Extending the isomorphism in \eqref{eq:wach_crys_rigcomp} along the map $\BrigLbreve^+[\mu/t] \rightarrow \BrigLtilde^+[\mu/t]$ (see \S \ref{subsubsec:periodrings_Lbreve}), we obtain a $\varphi\equivariant$ isomorphism $\BrigLtilde^+[\mu/t] \otimes_{\Lbreve} \DcrysLbreve(V) \isomorphic \BrigLtilde^+[\mu/t] \otimes_{\BLbreve^+} \NLbreve(V)$.
Recall that for any $g \in \Gamma_L$, we have that $g(t) = \chi(g) t$ and $g(\mu) = (1+\mu)^{\chi(g)}-1$, where $\chi$ is the $\padic$ cyclotomic character.
Now, using that $\Lbreve \otimes_L \ODcrysL(V) \isomorphic \DcrysLbreve(V)$, we get $\varphi\equivariant$ isomorphisms $\BrigLtilde^+[\mu/t] \otimes_L \ODcrysL(V) \isomorphic \BrigLtilde^+[\mu/t] \otimes_{\Lbreve} \DcrysLbreve(V) \isomorphic \BrigLtilde^+[\mu/t] \otimes_{\BLbreve^+} \NLbreve(V)$, and we equip the last term with a $\Gamma_L\action$ by transport of structure via this isomorphism.
In particular, the preceding discussion induces an action of $\Gamma_L$ over $\BrigLtilde^+[\mu/t] \otimes_{\BrigLbreve^+} \NrigLbreve(V) = \BrigLtilde^+[\mu/t] \otimes_{\BLbreve^+} \NLbreve(V)$.
Our first objective is to show that $\BrigLtilde^+ \otimes_{\BrigL^+} \NrigLbreve(V) \subset \BrigLtilde^+[\mu/t] \otimes_{\BrigL^+} \NrigLbreve(V)$ is stable under the action of $\Gamma_L$ on the latter.
We will do this by embedding everything into $\Bcrys(\OLbar) \otimes_{\QQ_p} V$.

Let us fix some elements in $\Acrys(\OLinfty)$.
For $n \in \NN$, let $n = (p-1)f(n) + r(n)$ with $r(n), f(n) \in \NN$ and $0 \leqslant r(n) < p-1$.
Set $t^{\{n\}} := \frac{t^n}{f(n)!p^{f(n)}}$ and $\Lambda := \big\{\sum_{n \in \NN} a_n t^{\{n\}}, \hspace{1mm} \textrm{with} \hspace{1mm} a_n \in O_F \hspace{1mm} \textrm{such that} \hspace{1mm} a_n \rightarrow 0 \hspace{1mm} \textrm{as} \hspace{1mm} n \rightarrow +\infty\big\} = O_F[t, (t^{p-1}/p)^{[k]}, k \in \NN\big]^{\wedge}$, where ${}^{\wedge}$ denotes the $\padic$ completion.
Then, we have an isomorphism of rings,
\begin{equation*}
	O_F[\mu, (\mu^{p-1}/p)^{[k]}, k \in \NN]^{\wedge} \isomorphic \Lambda,
\end{equation*}
via the map $\mu \mapsto \exp(t)-1$ with the inverse map given as $t \mapsto \log(1+\mu)$ (see \cite[Lemme 6.2.13]{brinon-relatif}).
Furthermore, for $r \in \NN$ and $A := \Ainf(\OLinfty)$, $\Ainf(\OLbar)$, $\Acrys(\OLinfty)$ or $\Acrys(\OLbar)$, set
\begin{equation}\label{eq:ira}
	I^{(r)} A := \{a \in A \textrm{ such that } \varphi^n(a) \in \Fil^r A \textrm{ for all } n \in \NN\}.
\end{equation}
\begin{lem}\label{lem:ira_gen}
	We note the following facts:
	\begin{enumarabicup}
	\item $t^{p-1} \in p\Acrys(\OLinfty)$, $t^{\{n\}} \in \Acrys(\OLinfty)$ and $t/\mu$ is a unit in $\Lambda \subset \Acrys(\OLinfty)$.

	\item For any $r \in \NN$, we have that $I^{(r)}\Ainf(\OLinfty) = \mu^r\Ainf(\OLinfty)$ and $I^{(p-1)}\Ainf(\OLinfty) = \mu^{p-1}\Ainf(\OLinfty)$.

	\item Let $S = O_F\llbracket \mu \rrbracket$, then the natural map $\Ainf(\OLinfty) \widehat{\otimes}_S \Lambda \rightarrow \Acrys(\OLinfty)$, defined via $\sum_{k \in \NN} a_k \otimes (\mu^{p-1}/p)^{[k]} \mapsto \sum_{k \in \NN} a_k (\mu^{p-1}/p)^{[k]}$, is continuous for the $\padic$ topology and an isomorphism of $\Ainf(\OLinfty)\textrm{-algebras}$.

	\item The ideal $I^{(r)} \Acrys(\OLinfty)$ is topologically generated by $t^{\{s\}}$, for $s \geqslant r$.

	\item The natural map $\Ainf(\OLinfty)/I^{(r)}\Ainf(\OLinfty) \rightarrow \Acrys(\OLinfty)/I^{(r)}\Acrys(\OLinfty)$ is injective and the cokernel is killed by $m!p^{m}$, where $m = \lfloor\frac{r}{p-1}\rfloor$.
	\end{enumarabicup}
	Similar statements are true for $\Ainf(\OLbar)$ and $\Acrys(\OLbar)$.
\end{lem}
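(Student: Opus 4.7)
The plan is to verify each of the five items separately, largely by reducing to standard computations in the crystalline period rings of Fontaine/Brinon and observing that the imperfect residue field plays no essential role once one has the identifications $\Acrys(\OLinfty) = \Acrys(\OLbar)^{H_L}$ and $\Ainf(\OLinfty) = \Ainf(\OLbar)^{H_L}$ recalled in \S \ref{subsubsec:crystalline_rings}. So, for each assertion I would first verify it inside $\Acrys(\OLbar)$ (where standard arguments apply verbatim) and then intersect with $H_L$-invariants.

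For (1), I would expand $t = \log(1+\mu) = \sum_{n \geq 1} (-1)^{n-1} \mu^n/n$ and group terms to observe that each summand of $t^{p-1}$, after collecting powers of $\mu$, is of the form (unit) $\cdot\, p \cdot (\mu^{p-1}/p)^{[k]}$ for some $k \geq 1$; since $(\mu^{p-1}/p)^{[k]} \in \Acrys(\OLinfty)$ by construction, this gives $t^{p-1} \in p \Acrys(\OLinfty)$. The membership $t^{\{n\}} \in \Acrys(\OLinfty)$ follows inductively from $t^{p-1}/p \in \Acrys(\OLinfty)$ and the divided power structure. For the unit assertion on $t/\mu$: its expansion starts with $1$ modulo $\mu$, so it is already a unit in $O_F\llbracket \mu \rrbracket$, hence in $\Lambda$.

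For (2), the inclusion $\mu^r \Ainf(\OLinfty) \subset I^{(r)}\Ainf(\OLinfty)$ follows from $\varphi^n(\mu) = \mu \cdot \prod_{k=0}^{n-1} \varphi^k([p]_q)$, since $[p]_q = \xi \cdot$ (unit in $\Ainf$) generates $\Fil^1$. Conversely, if $a \in I^{(r)}\Ainf(\OLinfty)$, then $a \in \Fil^r \Ainf = \xi^r \Ainf$, and applying $\varphi^{-1}$ (working in $W(O_{\overline{L}}^{\flat})[1/p]$) together with the identity $\xi = \mu/\varphi^{-1}(\mu)$ and an inductive use of the Frobenius condition forces divisibility by $\mu^r$. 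For (3), the map is continuous because the $(p,\mu)$-adic and $p$-adic topologies on both sides are compatible under the isomorphism $O_F\llbracket\mu\rrbracket[(\mu^{p-1}/p)^{[k]}]^{\wedge} \isomorphic \Lambda$; bijectivity is then the standard presentation of $\Acrys$ (cf.\ \cite[Lemme 6.2.13]{brinon-relatif}), which only uses that $\mu^{p-1}/p$ generates the PD-envelope of $(\mu) \subset S$ modulo the kernel of $\theta$.

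For (4) and (5), one uses (3) to write $\Acrys(\OLinfty) = \Ainf(\OLinfty) \widehat{\otimes}_S \Lambda$ and observes that $I^{(r)} \cap \Lambda$ is topologically generated by $\{t^{\{s\}}\}_{s \geq r}$ by a direct computation in $\Lambda \subset O_F\llbracket t \rrbracket[1/p]$ using that $t^n = n!\,t^{\{n\}} \cdot (\text{unit})$ modulo a suitable power of $p$; tensoring with $\Ainf(\OLinfty)$ gives (4). Injectivity in (5) is immediate from this tensor description since $\Lambda/(I^{(r)}\cap\Lambda)$ is free over $O_F$ in the relevant range, and the cokernel is computed by comparing the two filtrations $\mu^r\Ainf$ and $I^{(r)}\Acrys$: an element of $\Acrys/\mu^r$ lies in $\Ainf/\mu^r$ after multiplying by the denominators appearing in the divided powers $(\mu^{p-1}/p)^{[k]}$ with $k(p-1) < r$, the worst case being $k = m = \lfloor r/(p-1) \rfloor$, yielding the bound $m!\,p^m$. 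The same arguments apply verbatim to $\Ainf(\OLbar)$ and $\Acrys(\OLbar)$.

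The main technical point is (3): one must ensure that the divided power envelope construction commutes with the $(p,\mu)$-adic completion used to define $\Ainf(\OLinfty)$. This is not automatic because $O_{L,\infty}^{\flat}$ is not perfect in the same sense as in the arithmetic case, but it follows from applying the classical statement over $\Ainf(\OLbar)$ (which is $W$ of a perfect ring) and then taking $H_L$-invariants, using that $H_L$-invariance commutes with the relevant completed tensor products by \cite[Lemma 4.32]{morrow-tsuji}.
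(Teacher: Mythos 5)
Your overall plan — redo the standard computations of Fontaine and Tsuji and descend from $\OLbar$ to $\OLinfty$ by taking $H_L\textrm{-invariants}$ — is in the same spirit as the paper, whose proof is essentially a citation: items (1), (2), (4), (5) are taken from \cite[\S 5.2]{fontaine-corps-periodes} and \cite[\S A3]{tsuji-cst} (whose arguments apply verbatim to $W(R)$ for any perfect ring $R$ containing $\varepsilon$, so no descent is even needed), and (3) is proved by mimicking \cite[Proposition 6.2.14]{brinon-relatif}. However, two of your concrete justifications are wrong. First, $t/\mu=\sum_{n\geq 0}(-1)^n\mu^n/(n+1)$ is \emph{not} an element of $O_F\llbracket \mu \rrbracket$ (already the coefficient of $\mu^{p-1}$ is $\pm 1/p$), so ``it is already a unit in $O_F\llbracket \mu \rrbracket$'' is not an argument; the unit claim has to be proved inside $\Lambda$ (or $\Acrys$), e.g.\ by showing $t/\mu\equiv 1$ modulo a topologically nilpotent ideal and using $p\textrm{-adic}$ completeness, which is exactly the content of the cited classical statements. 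Second, $[p]_q$ does \emph{not} generate $\Fil^1\Ainf$ and is not $\xi$ times a unit: since $\theta([\varepsilon])=1$ one has $\theta([p]_q)=p\neq 0$ (indeed $[p]_q=\varphi(\xi)$ generates $\kert(\theta\circ\varphi^{-1})$, not $\kert\theta$). The easy inclusion $\mu^r\Ainf\subset I^{(r)}\Ainf$ instead follows simply from $\theta(\varphi^n(\mu))=\theta([\varepsilon^{p^n}]-1)=0$ for all $n\in\NN$, so $\varphi^n(\mu^r)\in\Fil^r$.

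Beyond these slips, the genuinely hard parts are only gestured at. The reverse inclusion $I^{(r)}\Ainf\subset\mu^r\Ainf$ in (2), the topological generation in (4) and the cokernel bound in (5) are precisely the nontrivial content of \cite[\S 5.2]{fontaine-corps-periodes} and \cite[\S A3]{tsuji-cst}; ``an inductive use of the Frobenius condition forces divisibility by $\mu^r$'' is not yet a proof. Moreover, your descent step for the surjectivity-type statements needs more than \cite[Lemma 4.32]{morrow-tsuji}: for (3) one must know that taking $H_L\textrm{-invariants}$ commutes with the completed tensor product $\Ainf\widehat{\otimes}_S\Lambda$ (or argue via uniqueness of the expansion $\sum_k a_k(\mu^{p-1}/p)^{[k]}$ over $\OLbar$ together with $H_L\textrm{-equivariance}$, which forces $a_k\in\Ainf(\OLinfty)$), and for (5) one must check that the $H_L\textrm{-invariant}$ lift of $m!p^m x$ provided by the $\OLbar\textrm{-statement}$ actually lies in $\Ainf(\OLinfty)/I^{(r)}$ — this does follow from the injectivity of $\Ainf(\OLbar)/I^{(r)}\rightarrow\Acrys(\OLbar)/I^{(r)}$ and $I^{(r)}\Ainf(\OLbar)\cap\Ainf(\OLinfty)=I^{(r)}\Ainf(\OLinfty)$, but it is exactly the kind of point that must be spelled out rather than asserted. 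The simpler and cleaner route, which is the one the paper takes, is to observe that the classical proofs never use more than that the base ring is the Witt ring of a perfect ring containing $\varepsilon$, so they apply directly to $\Ainf(\OLinfty)$ and $\Acrys(\OLinfty)$ without any descent.
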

\begin{proof}
	All claims except (3) follow from \cite[\S 5.2]{fontaine-corps-periodes} and \cite[\S A3]{tsuji-cst}.
	The proof of the claim in (3) follows in a manner similar to the proof of \cite[Proposition 6.2.14]{brinon-relatif}.
\end{proof}

\begin{rem}\label{rem:binf_bcrys_modis}
	Note that the $\QQ_p\textrm{-algebras}$ $\Binf(\OLinfty) := \Ainf(\OLinfty)[1/p]$, $\Bcrys^+(\OLinfty) := \Acrys(\OLinfty)[1/p]$ and $\BrigLtilde^+$ naturally embed into the $\QQ_p\textrm{-algebra}$ $\Bcrys(\OLinfty)$, and we equip the former rings with a filtration induced from the natural filtration on $\Bcrys(\OLinfty)$ (see \S \ref{subsubsec:crystalline_rings}).
	Then, one can define ideals similar to \eqref{eq:ira} for these rings and from Lemma \ref{lem:ira_gen} (5), we obtain isomorphisms $\Binf(\OLinfty)/I^{(r)}\Binf(\OLinfty) \isomorphic \Bcrys^+(\OLinfty)/I^{(r)}\Bcrys^+(\OLinfty)$ and $\Binf(\OLbar)/I^{(r)}\Binf(\OLbar) \isomorphic \Bcrys^+(\OLbar)/I^{(r)}\Bcrys^+(\OLbar)$.
\end{rem}

\begin{prop}\label{prop:nlinf_gammastab}
	The $\Binf(\OLinfty)\module$ 
	\begin{equation*}
		\NLbreveinfty(V) := \Binf(\OLinfty) \otimes_{\BLbreve^+} \NLbreve(V) \subset (\Binf(\OLbar) \otimes_{\QQ_p} V)^{H_L} = \Dtilde_L^+(V),
	\end{equation*}
	is stable under the residual action of $\Gamma_L$ on $\Dtilde_L^+(V)$ and we equip $\NLbreveinfty(V)$ with this action.
	Then, we have a natural $\Gamma_L\equivariant$ embedding $\NLbreveinfty(V) \subset \Bcrys(\OLinfty) \otimes_L \ODcrysL(V)$ (see Remark \ref{rem:glaction_odcrys} and \eqref{eq:b+dcrys_hlinv} for the $\Gamma_L\action$ on the latter).
\end{prop}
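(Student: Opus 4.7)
The plan is to exploit the crystalline comparison (Lemma \ref{lem:wac_crys_rep_comp} combined with Lemma \ref{lem:faltings_isomorphism_dcrys}) to realize both $\NLbreveinfty(V)$ and the target $\Bcrys(\OLinfty)\otimes_L\ODcrysL(V)$ inside the common ambient module $\Bcrys(\OLbar)\otimes_{\QQ_p} V$, and then read off $\Gamma_L$-stability from the explicit action formula recorded in Remark \ref{rem:faltings_isomorphism_dcrys}.

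First I would compose the isomorphisms of Lemma \ref{lem:wac_crys_rep_comp} and Lemma \ref{lem:faltings_isomorphism_dcrys} to produce a $G_L$-equivariant chain $\Bcrys(\OLbar)\otimes_{\BLbreve^+}\NLbreve(V) \isomorphic \Bcrys(\OLbar)\otimes_{\QQ_p} V \isomorphic \Bcrys(\OLbar)\otimes_L\ODcrysL(V)$. Restricting the source along $\Binf(\OLinfty)\otimes_{\BLbreve^+}\NLbreve(V) = \NLbreveinfty(V) \subset \Binf(\OLbar)\otimes_{\BLbreve^+}\NLbreve(V)$, and then taking $H_L$-invariants of the target using $(\Bcrys(\OLbar)\otimes_L\ODcrysL(V))^{H_L} = \Bcrys(\OLinfty)\otimes_L\ODcrysL(V)$ from Remark \ref{rem:b+dcrys_hlinv} (together with $\Binf(\OLbar)^{H_L}=\Binf(\OLinfty)$ by \cite[Proposition 7.2]{andreatta-phigamma}), yields a natural $\Gamma_L$-equivariant embedding $\NLbreveinfty(V)\hookrightarrow \Bcrys(\OLinfty)\otimes_L\ODcrysL(V)$. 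This already establishes the second assertion and reduces $\Gamma_L$-stability of $\NLbreveinfty(V)\subset\Dtilde_L^+(V)$ to stability of its image inside $\Bcrys(\OLinfty)\otimes_L\ODcrysL(V)$, where I may use the explicit formula of Remark \ref{rem:faltings_isomorphism_dcrys}.

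Next I would test stability on the topological generators $\{\gamma_0,\gamma_1,\ldots,\gamma_d\}$ of $\Gamma_L$ from \S\ref{subsubsec:crystalline_rings}, splitting into the cyclotomic and geometric parts. For $\gamma_0$, topologically generating $\GammaLbreve$: the ring embedding $\BLbreve^+\hookrightarrow\Binf(\OLinfty)$ sending $X_i\mapsto[X_i^\flat]$ is $\gamma_0$-equivariant (both $X_i\in\Lbreve$ and $[X_i^\flat]$ are fixed by $\gamma_0$), so the built-in $\GammaLbreve$-action on $\NLbreve(V)$ upgrades diagonally to a $\gamma_0$-action preserving $\Binf(\OLinfty)\otimes_{\BLbreve^+}\NLbreve(V)$, and this agrees with the residual action inherited from $\Dtilde_L^+(V)$ (check on tensors $a\otimes n$). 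For a geometric generator $\gamma_j\in\Gamma_L'$, I apply the Taylor-type formula of Remark \ref{rem:faltings_isomorphism_dcrys}: for $a\otimes x\in\Binf(\OLinfty)\otimes_L\ODcrysL(V)$,
\begin{equation*}
\gamma_j(a\otimes x) = \gamma_j(a)\otimes\sum_{\smbfk\in\NN^d}\prod_{i=1}^d\partial_i^{k_i}(x)\prod_{i=1}^d(\gamma_j[X_i^\flat]-[X_i^\flat])^{[k_i]},
\end{equation*}
where $\gamma_j[X_i^\flat]-[X_i^\flat]=((1+\mu)^{\delta_{ij}}-1)[X_i^\flat]\in\mu\Ainf(\OLinfty)$, giving convergence controlled by the $\mu$-adic topology on $\Binf(\OLinfty)$ combined with $p$-adic quasi-nilpotence of the connection $\partial$ on $\ODcrysL(V)$.

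The main obstacle is verifying that this explicit expansion produces elements of $\Binf(\OLinfty)\otimes_{\BLbreve^+}\NLbreve(V)$ and not merely of the larger module $\Bcrys(\OLinfty)\otimes_L\ODcrysL(V)$. The derivatives $\prod_i\partial_i^{k_i}(x)$ live a priori in $\ODcrysL(V)$, which is not inside $\NLbreve(V)$, so one has to convert them back via Kisin's canonical section $\alpha$ of Lemma \ref{lem:nlbreve_wach}, identifying $\DcrysLbreve(V)$ with a distinguished $\Lbreve$-subspace of $\NrigLbreve(V)[\mu/t]$. Using \eqref{eq:wach_crys_rigcomp}, the $\mu$-powers appearing in $(\gamma_j[X_i^\flat]-[X_i^\flat])^{[k_i]}$ must cancel the $(t/\mu)^s$-denominators coming from the finite $\pqheight$ structure, so that the sum stabilizes inside $\Binf(\OLinfty)\otimes_{\BLbreve^+}\NLbreve(V)$ rather than drifting into a strictly larger divisible module. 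The bookkeeping of this cancellation, together with a careful verification that the infinite sum converges in the $(p,\mu)$-adic topology on $\NLbreveinfty(V)$, is the technical heart of the argument and is what genuinely requires crystallinity of $V$.
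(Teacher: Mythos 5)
Your first step (composing Lemma \ref{lem:wac_crys_rep_comp} with Lemma \ref{lem:faltings_isomorphism_dcrys} and taking $H_L$-invariants to get the $\Gamma_L$-equivariant embedding $\NLbreveinfty(V) \subset \Bcrys^+(\OLinfty) \otimes_L \ODcrysL(V)$) is sound and is essentially the content of the diagram \eqref{eq:galois_action} in the paper. But the actual assertion of the proposition --- that $\NLbreveinfty(V)$, an $\Binf(\OLinfty)$-lattice-type object, is carried into itself by $\Gamma_L$ --- is exactly the part you defer as ``the technical heart,'' and the mechanism you sketch for it would not work. The obstruction is not a cancellation of $(t/\mu)^s$-denominators against $\mu$-powers, and the series cannot be made to converge in the $(p,\mu)$-adic topology of $\Binf(\OLinfty) \otimes_{\BLbreve^+} \NLbreve(V)$: the terms $(g([X_i^{\flat}])-[X_i^{\flat}])^{[k_i]}$ carry divided-power denominators $k_i!$, so the partial sums are $p$-adically unbounded in $\Binf$ and the series only converges after passing to $\Bcrys^+$-coefficients (compare $t=\log(1+\mu)$, which lies in $\Acrys$ but not in $\Ainf$). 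A termwise/Taylor argument therefore cannot by itself place $g(x)$ back inside $\NLbreveinfty(V)$; it only reproves what you already know, namely $g(x) \in \Bcrys^+(\OLinfty)\otimes_L\ODcrysL(V)$, respectively $g(x)\in\Dtilde_L^+(V)$.

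The paper circumvents any such convergence claim by a d\'evissage modulo $\mu^s$: by Lemma \ref{lem:wach_props}(2) one has $\mu^s\Dtilde_L^+(V) \subset \NLbreveinfty(V) \subset \Dtilde_L^+(V)$ with the submodule visibly $\Gamma_L$-stable, so stability reduces to stability of the image of $\NLbreveinfty(V)/\mu^s\Dtilde_L^+(V)$ in $\Binf(\OLbar)/\mu^s \otimes_{\QQ_p} V$. The key step is then a sub-lemma identifying this quotient with $\big(\Bcrys^+(\OLinfty)\otimes_{\BLbreve^+}\NLbreve(V)\big)/M$, where $M = \big(I^{(s)}\Bcrys^+(\OLbar)\otimes_{\QQ_p}V\big)\cap\big(\Bcrys^+(\OLinfty)\otimes_{\BLbreve^+}\NLbreve(V)\big)$; its proof uses the presentation $\Acrys(\OLinfty) \isomorphic \Ainf(\OLinfty)\widehat{\otimes}_S\Lambda$ and the structure of $I^{(s)}$ from Lemma \ref{lem:ira_gen} to split any element into a finite polynomial part lying in $\NLbreveinfty(V)$ and a high divided-power tail lying in $M$. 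Since $\Bcrys^+(\OLinfty)\otimes_{\BLbreve^+}\NLbreve(V)$ is identified via \eqref{eq:galois_action} with $\Bcrys^+(\OLinfty)\otimes_L\ODcrysL(V)$, which is $G_L$-stable by Remark \ref{rem:faltings_isomorphism_dcrys}, stability of the quotient (and hence of $\NLbreveinfty(V)$) follows. This mod-$I^{(s)}$ reconciliation of the $\Ainf$-structure with the divided-power structure is the missing ingredient in your proposal; without it, or some substitute for it, your argument does not close.
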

\begin{proof}
	From Lemma \ref{lem:wach_props} (2), consider the following exact sequence:
	\begin{equation}\label{eq:nlbreveinf_mus}
		0 \longrightarrow \mu^s \Dtilde^+_L(V) \longrightarrow \NLbreveinfty(V) \longrightarrow \NLbreveinfty(V)/\mu^s\Dtilde_L^+(V) \longrightarrow 0,
	\end{equation}
	where we know that $\mu^s \Dtilde_L^+(V) \subset \Dtilde_L^+(V)$ is stable under the action of $\Gamma_L$.
	Therefore, to show that the middle term above is stable under the action of $\Gamma_L$, it is enough to show that for the inclusion,
	\begin{align*}
		\NLbreveinfty(V)/\mu^s\Dtilde_L^+(V) \subset \Dtilde_L^+(V)/\mu^s \Dtilde_L^+(V) &\subset (\Binf(\OLbar)/\mu^s\Binf(\OLbar) \otimes_{\QQ_p} V)^{H_L}\\
			&\subset \Binf(\OLbar)/\mu^s\Binf(\OLbar) \otimes_{\QQ_p} V,
	\end{align*}
	the image of the first term in the last term is stable under the action of $G_L$.

	Note that from Lemma \ref{lem:wac_crys_rep_comp}, we have a natural $\Bcrys(\OLbar)\linear$ and $(\varphi, G_{\Lbreve})\equivariant$ isomorphism $\Bcrys(\OLbar) \otimes_{\BLbreve^+} \NLbreve(V) \isomorphic \Bcrys(\OLbar) \otimes_{\QQ_p} V$.
	In view of Remark \ref{rem:binf_bcrys_modis}, let us set,
	\begin{equation*}
		M := (I^{(s)} \Bcrys^+(\OLbar) \otimes_{\QQ_p} V) \cap (\Bcrys^+(\OLinfty) \otimes_{\BLbreve^+} \NLbreve(V)) \subset \Bcrys(\OLbar) \otimes_{\QQ_p} V.
	\end{equation*}
	Then we obtain a diagram with exact rows
	\begin{center}
		\begin{tikzcd}
			0 \arrow[r] & \mu^s \Dtilde_L^+(V) \arrow[r] \arrow[d] & \NLbreveinfty(V) \arrow[r] \arrow[d] & \NLbreveinfty(V)/\mu^s\Dtilde_L^+(V) \arrow[r] \arrow[d] & 0\\
			0 \arrow[r] & M \arrow[r] & \Bcrys^+(\OLinfty) \otimes_{\BLbreve^+} \NLbreve(V) \arrow[r] & (\Bcrys^+(\OLinfty) \otimes_{\BLbreve^+} \NLbreve(V))/M \arrow[r] & 0.
		\end{tikzcd}
	\end{center}
	The left vertical arrow is injective by Lemma \ref{lem:wach_props} (2) and the middle arrow is obviously injective.
	
\begin{lem}\label{lem:nlinf_mod_mus}
	The inclusion $\NLbreveinfty(V) \subset \Bcrys^+(\OLinfty) \otimes_{\BLbreve^+} \NLbreve(V)$ induces a $\GammaLbreve\textrm{-equivariant}$ isomorphism of $\Binf(\OLinfty)\modules$ $\NLbreveinfty(V)/\mu^s\Dtilde_L^+(V) \isomorphic (\Bcrys^+(\OLinfty) \otimes_{\BLbreve^+} \NLbreve(V))/M$.
\end{lem}
\begin{proof}
	First, we observe that by Lemma \ref{lem:wach_props} (2) we have that,
	\begin{align*}
		M \cap \NLbreveinfty(V) &= (I^{(s)} \Bcrys^+(\OLbar) \otimes_{\QQ_p} V) \cap \NLbreveinfty(V) \\
		&\subset (I^{(s)} \Bcrys^+(\OLbar) \otimes_{\QQ_p} V) \cap \Dtilde_L^+(V) \subset \mu^s \Dtilde_L^+(V).
	\end{align*}
	Therefore, we get that the rightmost vertical map in the diagram above is injective.
	Next, we need to show that $\NLbreveinfty(V) + M = \Bcrys^+(\OLinfty) \otimes_{\BLbreve^+} \NLbreve(V)$.
	It is clear that the left expression is contained in the right.
	To show the converse, let $x$ in $\Bcrys^+(\OLinfty) \otimes_{\BLbreve^+} \NLbreve(V)$.
	Then, for $m \in \NN$ large enough, we have that $p^m x$ is in $\Acrys(\OLinfty) \otimes_{\ALbreve^+} \NLbreve(T)$.
	By the isomorphism in Lemma \ref{lem:ira_gen} (3), for $r = \lceil \frac{s}{p-1}\rceil$, $k \in \NN$ and $x_k$ in $\NLbreve(T)$ such that $x_k \rightarrow 0$ as $k \rightarrow +\infty$, we can write
	\begin{equation*}
		p^m x = \sum_{k \in \NN} x_k (\mu^{p-1}/p)^{[k]} = \sum_{0 \leqslant k \leqslant r-1} x_k (\mu^{p-1}/p)^{[k]} + \sum_{k \geqslant r} x_k (\mu/p)^{[k]}.
	\end{equation*}
	Clearly, the first summation in the rightmost expression is in $\NLbreveinfty(V)$.
	Moreover, from Lemma \ref{lem:ira_gen} (1) there exists some $v \in \Lambda^{\times}$, such that $\mu^{p-1}/p = v t^{p-1}/p$.
	Therefore, we obtain that the second summation is in $(I^{(s)}\Acrys(\OLbar) \otimes_{\ZZ_p} T) \cap (\Acrys(\OLinfty) \otimes_{\ALbreve^+} \NLbreve(T)) \subset M$.
	Hence, $x$ is in $\NLbreveinfty(V) + M$.
\end{proof}

	Let us now consider the diagram \eqref{eq:galois_action} below with the following description:
	in \eqref{eq:galois_action} the bottom horizontal arrow is a $(\varphi, G_L)\equivariant$ isomorphism since $V$ is a crystalline representation of $G_L$.
	The left vertical arrow from the fourth to the third row is induced by the projection $\OBcrys(\OLbar) \twoheadrightarrow \Bcrys(\OLbar)$, via $X_i \mapsto [X_i^{\flat}]$, it admits a section as in \eqref{eq:bcryslin_section}, it is evidently $\varphi\equivariant$ and it is $G_L\equivariant$ since the codomain is equipped with a $G_L\action$ by transport of structure from the domain (see Remark \ref{rem:glaction_odcrys}).
	The right vertical arrow from the fourth to the third row is also induced by the projection $\OBcrys(\OLbar) \twoheadrightarrow \Bcrys(\OLbar)$, it admits a natural section $\Bcrys(\OLbar) \otimes_{\QQ_p} V \rightarrow (\OBcrys(\OLbar) \otimes V)^{\partial=0}$ and it is naturally $(\varphi, G_L)\equivariant$.
	The horizontal arrow in the third row is the inverse of the isomorphism in Lemma \ref{lem:faltings_isomorphism_dcrys}, which is given as the composition of the inverse of the bottom left vertical arrow, the bottom horizontal arrow and the bottom right vertical arrow, and it is $(\varphi, G_L)\equivariant$ by the preceding discussion and Remark \ref{rem:glaction_odcrys}.
	In particular, we get that the lower square is commutative and $(\varphi, G_L)\equivariant$.
	Next, the left vertical arrow from the third to the second row is an isomorphism since $\Lbreve \otimes_L \ODcrysL(V) \isomorphic \DcrysLbreve(V)$ by \eqref{eq:dcrys_llbreve_comp} and its $(\varphi, G_{\Lbreve})\textrm{-equivariance}$ can either be checked by the explicit formula in Remark \ref{rem:glaction_odcrys} or by observing that the non-canonical map $L \rightarrow \Lbreve \rightarrow \Bcrys(\OLbar)$ is $(\varphi, G_{\Lbreve})\equivariant$ (see the proof of Lemma \ref{lem:brigtilde_odcrys_glact}).
	The horizontal arrow in the second row is a $(\varphi, G_{\Lbreve})\equivariant$ isomorphism since $V$ is a crystalline representation of $G_{\Lbreve}$.
	Commutativity of the middle square follows since the outer square between the second and the fourth row as well as the lower square are commutative.
	Commutativity and $(\varphi, G_{\Lbreve})\textrm{-equivariance}$ of the top square follows from Lemma \ref{lem:wac_crys_rep_comp}.
	\begin{equation}\label{eq:galois_action}
		\begin{tikzcd}
			\Bcrys^+(\OLinfty) \otimes_{\Lbreve} \DcrysLbreve(V) \arrow[r, "\sim"] \arrow[d, hookrightarrow] & \Bcrys^+(\OLinfty) \otimes_{\BLbreve^+} \NLbreve(V) \arrow[d, hookrightarrow] \\
			\Bcrys(\OLbar) \otimes_{\Lbreve} \DcrysLbreve(V) \arrow[r, "\sim"] & \Bcrys(\OLbar) \otimes_{\QQ_p} V \arrow[d, equal] \\
			\Bcrys(\OLbar) \otimes_{L} \ODcrysL(V) \arrow[r, "\sim"] \arrow[u, "\wr"] & \Bcrys(\OLbar) \otimes_{\QQ_p} V \\
			(\OBcrys(\OLbar) \otimes_L \ODcrysL(V))^{\partial=0} \arrow[r, "\sim"] \arrow[u, "\wr"] & (\OBcrys(\OLbar) \otimes_{\QQ_p} V)^{\partial=0} \arrow[u, "\wr"].
		\end{tikzcd}
	\end{equation}

	Furthermore, in the diagram \eqref{eq:galois_action}, the image of composition of the top two left vertical maps inside $\Bcrys(\OLbar) \otimes_L \ODcrysL(V)$ is stable under the action of $G_L$ by Remark \ref{rem:glaction_odcrys}.
	So the image of composition of the top two right vertical maps inside $\Bcrys(\OLbar) \otimes_{\QQ_p} V$ is stable under the action of $G_L$, and it follows that its image $(\Bcrys^+(\OLinfty) \otimes_{\BLbreve^+} \NLbreve(V))/M \subset \Bcrys^+(\OLbar)/I^{(s)}\Bcrys^+(\OLbar) \otimes_{\QQ_p} V \isomorphic \Binf(\OLbar)/\mu^s\Binf(\OLbar) \otimes_{\QQ_p} V$, is stable under the action of $G_L$.
	Therefore, from Lemma \ref{lem:nlinf_mod_mus}, we obtain that the image of $\NLbreveinfty(V)/\mu^s\Dtilde_L^+(V) \subset \Binf(\OLbar)/\mu^s \otimes_{\QQ_p} V$ is stable under the action of $G_L$.
	Hence, from \eqref{eq:nlbreveinf_mus} we conclude that $\NLbreveinfty(V)$ is stable under the action of $\Gamma_L$ and the following natural composition is $\Gamma_L\equivariant$:
	\begin{equation}\label{eq:ninf_ncris_compatible}
		\Binf(\OLinfty) \otimes_{\BLbreve^+} \NLbreve(V) \subset \Bcrys^+(\OLinfty) \otimes_{\BLbreve^+} \NLbreve(V) \isomorphic \Bcrys^+(\OLinfty) \otimes_{L} \ODcrysL(V).
	\end{equation}
	This concludes our proof.
\end{proof}

Recall that $\NrigLbreve(V) = \BrigLbreve^+ \otimes_{\BLbreve^+} \NLbreve(V)$ and we note the following:
\begin{cor}\label{cor:nlrig_gammastab}
	Extending $\BrigLtilde^+\textrm{-linearly}$ the embedding $\NLbreveinfty(V) \subset \Bcrys(\OLinfty) \otimes_L \ODcrysL(V)$ from Proposition \ref{prop:nlinf_gammastab}, gives an identification of $\BrigLtilde^+\textrm{-submodules}$ of $\Bcrys(\OLinfty) \otimes_L \ODcrysL(V)$ as,
	\begin{equation*}
		\BrigLtilde^+ \otimes_{\Binf(\OLinfty)} \NLbreveinfty(V) = \BrigLtilde^+ \otimes_{\BLbreve^+} \NLbreve(V) = \BrigLtilde^+ \otimes_{\BrigLbreve^+} \NrigLbreve(V),
	\end{equation*}
	stable under the $\Gamma_L\action$ on $\Bcrys(\OLinfty) \otimes_L \ODcrysL(V)$.
\end{cor}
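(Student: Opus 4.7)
The plan is to reduce this corollary to Proposition~\ref{prop:nlinf_gammastab} and Lemma~\ref{lem:wac_crys_rep_comp} via formal scalar-extension arguments; the substantive work on Galois stability has already been done in Proposition~\ref{prop:nlinf_gammastab}, so what remains is bookkeeping.

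First, I would observe that the three identifications of $\BrigLtilde^+$-modules are immediate from the definitions. By construction, $\NLbreveinfty(V) = \Binf(\OLinfty) \otimes_{\BLbreve^+} \NLbreve(V)$ and $\NrigLbreve(V) = \BrigLbreve^+ \otimes_{\BLbreve^+} \NLbreve(V)$, and both $\Binf(\OLinfty)$ and $\BrigLbreve^+$ embed into $\BrigLtilde^+$ compatibly with the inclusion of $\BLbreve^+$: the former because $\Binf = \Ainf[1/p]$ is $\varphi$-stable, so $\Binf(\OLinfty) \subset \cap_n \varphi^n(\Bcrys^+(\OLinfty)) = \BrigLtilde^+$; the latter comes from the diagram at the end of \S\ref{subsubsec:periodrings_Lbreve}. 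Tensoring with $\BrigLtilde^+$ along either intermediate ring therefore collapses all three expressions to the common module $\BrigLtilde^+ \otimes_{\BLbreve^+} \NLbreve(V)$.

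Next, I would extend the $\Gamma_L$-equivariant embedding furnished by Proposition~\ref{prop:nlinf_gammastab} $\BrigLtilde^+$-linearly to obtain a map $\BrigLtilde^+ \otimes_{\BLbreve^+} \NLbreve(V) \to \Bcrys(\OLinfty) \otimes_L \ODcrysL(V)$, and check that this map is injective so that its image is the desired $\BrigLtilde^+$-submodule. Since $\NLbreve(V)$ is finite free over $\BLbreve^+$, the source is finite free over $\BrigLtilde^+$ of the same rank, and injectivity follows by composing with the inclusion $\Bcrys(\OLinfty) \hookrightarrow \Bcrys(\OLbar)$ and invoking the isomorphism $\Bcrys(\OLbar) \otimes_{\BLbreve^+} \NLbreve(V) \isomorphic \Bcrys(\OLbar) \otimes_{\QQ_p} V$ from Lemma~\ref{lem:wac_crys_rep_comp}, using that $\BrigLtilde^+ \hookrightarrow \Bcrys(\OLbar)$.

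Finally, for the $\Gamma_L$-stability of this submodule: Proposition~\ref{prop:nlinf_gammastab} provides the stability of $\NLbreveinfty(V)$ inside $\Bcrys(\OLinfty) \otimes_L \ODcrysL(V)$, while $\BrigLtilde^+ \subset \Bcrys(\OLinfty)$ is $\Gamma_L$-stable because $\Bcrys^+(\OLinfty)$ is $\Gamma_L$-stable and the Frobenius commutes with the $\Gamma_L$-action. Hence the $\BrigLtilde^+$-span of the image of $\NLbreveinfty(V)$ is $\Gamma_L$-stable, giving the required stability. The only potential subtlety is the injectivity check in the second step, and this is a direct consequence of the finite freeness of $\NLbreve(V)$ combined with Lemma~\ref{lem:wac_crys_rep_comp}; no further delicate analysis is required.
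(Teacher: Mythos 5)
Your proposal is correct and takes essentially the same route as the paper: the equality of the three modules is definitional, the identification as a $\BrigLtilde^+$-submodule rests on extending the comparison isomorphism (the paper stays inside $\Bcrys(\OLinfty)$ via \eqref{eq:wach_crys_rigcomp}, whereas you pass to $\Bcrys(\OLbar)$ and invoke Lemma \ref{lem:wac_crys_rep_comp}, which is the same injectivity check), and the $\Gamma_L$-stability is exactly Proposition \ref{prop:nlinf_gammastab} combined with the $\Gamma_L$-stability of $\BrigLtilde^+ = \cap_{n}\varphi^n(\Bcrys^+(\OLinfty))$. The only cosmetic imprecision is the justification of $\Binf(\OLinfty) \subset \BrigLtilde^+$, which uses that $\varphi$ is bijective (not merely stable) on $\Ainf(\OLinfty)$.
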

\begin{proof}
	The equality in the claim follows from definitions and the compatibility of $\Gamma_L\textrm{-actions}$ follows from \eqref{eq:ninf_ncris_compatible}.
	Then, by using \eqref{eq:wach_crys_rigcomp}, we have that $\BrigLtilde^+[\mu/t] \otimes_{\BLbreve^+} \NLbreve(V) \isomorphic \BrigLtilde^+[\mu/t] \otimes_{\Lbreve} \DcrysLbreve(V) \subset \Bcrys(\OLinfty) \otimes_{\Lbreve} \DcrysLbreve(V) = \Bcrys(\OLinfty) \otimes_L \ODcrysL(V)$.
	Therefore, we can view $\BrigLtilde^+ \otimes_{\BLbreve^+} \NLbreve(V)$ as a $\BrigLtilde^+\submodule$ of $\Bcrys(\OLinfty) \otimes_L \ODcrysL(V)$.
	Now, the stability of $\BrigLtilde^+ \otimes_{\BLbreve^+} \NLbreve(V)$ under the $\Gamma_L\action$ follows from Proposition \ref{prop:nlinf_gammastab}.
\end{proof}

Recall that from Definition \ref{defi:mldl}, we have a $\BrigL^+\submodule$ $\ML(\ODcrysL(V)) \subset \MLbreve(\DcrysLbreve(V))$ stable under the action of $(\varphi, \GammaLbreve)$, and from Lemma \ref{lem:nlbreve_wach}, we have a $\BrigLbreve^+\linear$ and $(\varphi, \GammaLbreve)\equivariant$ isomorphism $\beta: \MLbreve(\DcrysLbreve(V)) \isomorphic \NrigLbreve(V)$.
Define a $\BrigL^+\submodule$ of $\NrigLbreve(V)$ as,
\begin{equation}\label{eq:nriglv}
	\NrigL(V) := \beta(\ML(\ODcrysL(V))) \subset \NrigLbreve(V).
\end{equation}
Since the map $\BrigL^+ \rightarrow \BrigLbreve^+$ constructed in \S \ref{subsubsec:periodrings_Lbreve} is $(\varphi, \GammaLbreve)\equivariant$, therefore, from \eqref{eq:nriglv} we obtain a $\BrigL^+\linear$ and $(\varphi, \GammaLbreve)\equivariant$ isomorphism $\beta: \ML(\ODcrysL(V)) \isomorphic \NrigL(V)$.
In particular, from Lemma \ref{lem:descent_brigl+} (3), we get that $\NrigL(V)$ is a finite free $\BrigL^+\module$ of rank $=\dim_{\QQ_p} V$, and the natural map $\BrigLbreve^+ \otimes_{\BrigL^+} \NrigL(V) \rightarrow \NrigLbreve(V)$ is a $(\varphi, \GammaLbreve)\equivariant$ isomorphism, since $\beta$ is $(\varphi, \GammaLbreve)\equivariant$.
Moreover, from Lemma \ref{lem:mldl_props}, it follows that $\NrigL(V)$ is of finite $\pqheight$ and pure of slope 0.
Now, consider the following diagram:
\begin{equation}\label{eq:nrigl_dcrys_comp}
	\begin{tikzcd}
		\ML(\ODcrysL(V))[\mu/t] \arrow[r, "\sim"] \arrow[rr, bend left=12, "\sim"', "\beta"] \arrow[d] & \BrigL^+[\mu/t] \otimes_L \ODcrysL(V) \arrow[d] & \NrigL(V)[\mu/t] \arrow[l, "\sim"'] \arrow[d]\\
		\MLbreve(\DcrysLbreve(V))[\mu/t] \arrow[r, "\sim"] \arrow[rr, bend right=12, "\sim", "\beta"'] & \BrigLbreve^+[\mu/t] \otimes_{\Lbreve} \DcrysLbreve(V) & \NrigLbreve(V)[\mu/t] \arrow[l, "\sim"'].
	\end{tikzcd}
\end{equation}
In the diagram \eqref{eq:nrigl_dcrys_comp}, all vertical arrows are natural inclusions.
In the bottom row, the left to the right horizontal arrow is the inverse of the composition of the lower horizontal and the left vertical arrow of diagram \eqref{eq:alpha_beta_commute}, the right to the left horizontal arrow is the inverse of \eqref{eq:wach_crys_rigcomp}, the curved arrow is the map $\beta$ in Lemma \ref{lem:nlbreve_wach} and the resulting triangle commutes by diagram \eqref{eq:alpha_beta_commute}.
In the top row, the left to the right horizontal arrow is the isomorphism in \eqref{eq:mldl_dcrys_comp}, the curved arrow is from \eqref{eq:nriglv}, the right to the left horizontal arrow is the composition of the inverse of $\beta$ with the inverse of \eqref{eq:mldl_dcrys_comp} and the resulting triangle commutes by definition.
Moreover, the two inner squares commute by definition and all maps are $(\varphi, \GammaLbreve)\equivariant$.

Using the diagram \eqref{eq:nrigl_dcrys_comp} and Defintion \ref{defi:mldl}, we can write 
\begin{equation*}
	\NrigL(V) = (\BrigL^+[\mu/t] \otimes_L \ODcrysL(V)) \cap \NrigLbreve(V) \subset \BrigLbreve^+[\mu/t] \otimes_{\Lbreve} \DcrysLbreve(V),
\end{equation*}
in particular, we will now consider $\NrigL(V)$ as a $\BrigL^+\submodule$ of $\BrigL^+[\mu/t] \otimes_L \ODcrysL(V)$.
Furthermore, from Lemma \ref{lem:odcrys_gammal_action}, recall that $\BrigL^+ \otimes_L \ODcrysL(V) \subset \Bcrys^+(\OLinfty) \otimes_L \ODcrysL(V)$ is stable under the action of $\Gamma_L$ and we equip the former with a $\Gamma_L\action$ induced from the latter.
Since we have that $g(t) = \chi(g) t$ and $g(\mu) = (1+\mu)^{\chi(g)} - 1$, for any $g \in \Gamma_L$ and $\chi$ the $\padic$ cyclotomic character, therefore, the preceding $\Gamma_L\action$ extends to $\BrigL^+[\mu/t] \otimes_L \ODcrysL(V)$.
\begin{prop}\label{prop:nrigl_gammastab}
	The $\BrigL^+\textrm{-submodule}$ $\NrigL(V) \subset \BrigL^+[\mu/t] \otimes_L \ODcrysL(V)$ is stable under the action of $\Gamma_L$.
	Moreover, the preceding inclusion extends to a $\BrigL^+[\mu/t]\linear$ and $(\varphi, \Gamma_L)\textrm{-compatible}$ isomorphism,
	\begin{equation}\label{eq:brigl_dcrys_wach_comp}
		\BrigL^+[\mu/t] \otimes_{\BrigL^+} \NrigL(V) \isomorphic \BrigL^+[\mu/t] \otimes_L \ODcrysL(V).
	\end{equation}
\end{prop}
\begin{proof}
	From Corollary \ref{cor:nlrig_gammastab} and the discussion after \eqref{eq:nriglv}, we have that $\BrigLtilde^+ \otimes_{\BrigLbreve^+} \NrigLbreve(V) \isomorphic \BrigLtilde^+ \otimes_{\BrigL^+} \NrigL(V)$, is stable under the action of $\Gamma_L$ on $\Bcrys(\OLinfty) \otimes_L \ODcrysL(V)$.
	Moreover, using Lemma \ref{lem:odcrys_gammal_action} and the discussion after \eqref{eq:nrigl_dcrys_comp}, we have a $\Gamma_L\equivariant$ embedding $\BrigL^+[\mu/t] \otimes_L \ODcrysL(V) \subset \Bcrys(\OLinfty) \otimes_L \ODcrysL(V)$.
	Therefore, inside $\Bcrys(\OLinfty) \otimes_L \ODcrysL(V)$, the following intersection is stable under the $\Gamma_L\action$:
	\begin{align*}
		\big(\BrigLtilde^+ \otimes_{\BrigL^+} \NrigL(V)\big) &\cap \big(\BrigL^+[\mu/t] \otimes_L \ODcrysL(V)\big)\\
		&= \big(\BrigLtilde^+ \otimes_{\BrigL^+} \NrigL(V)\big) \cap \big(\BrigL^+[\mu/t] \otimes_{\BrigL^+} \NrigL(V)\big)\\
		&= (\BrigLtilde^+ \cap \BrigL^+[\mu/t]) \otimes_{\BrigL^+} \NrigL(V) = \NrigL(V).
	\end{align*}
	The first equality follows from \eqref{eq:nrigl_dcrys_comp} and the second equality follows from Lemma \ref{lem:brigl_tilde_t} and the fact that $\NrigL(V)$ is finite free over $\BrigL^+$.
	This proves the first claim.
	For the second claim, note that by definition, the $\BrigL^+[\mu/t]\linear$ extension of the $(\varphi, \Gamma_L)\equivariant$ inclusion $\NrigL(V) \subset \BrigL^+[\mu/t] \otimes_L \ODcrysL(V)$, coincides with the top right horizontal arrow of the diagram \eqref{eq:nrigl_dcrys_comp}.
	Hence, the isomorphism in \eqref{eq:brigl_dcrys_wach_comp} follows.
\end{proof}

\begin{cor}\label{cor:nrigl_gamma_modmu}
	The action of $\Gamma_L$ on $\NrigL(V)$ is trivial modulo $\mu$.
\end{cor}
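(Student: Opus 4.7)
The plan is to reduce to Lemma \ref{lem:brigodcris_modmu}, which already gives mod-$\mu$ triviality of the $\Gamma_L$-action on $\BrigL^+ \otimes_L \ODcrysL(V)$, by extending it to the twist $(\mu/t)^s \BrigL^+ \otimes_L \ODcrysL(V)$ containing $\NrigL(V)$, and then to show a saturation property ensuring that an element of $\NrigL(V)$ divisible by $\mu$ in the ambient module is already divisible by $\mu$ inside $\NrigL(V)$.

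First I would observe that for every $g \in \Gamma_L$ one has $g(\mu) = (1+\mu)^{\chi(g)} - 1 = \chi(g)\mu(1 + a\mu)$ for some $a \in \BrigL^+$ (the binomial series converges in $\BrigL^+$), and $g(t) = \chi(g)t$, whence $g(\mu/t) = (\mu/t)(1+a\mu)$ and so $g((\mu/t)^s) - (\mu/t)^s \in \mu(\mu/t)^s \BrigL^+$. Writing any $x \in (\mu/t)^s \BrigL^+ \otimes_L \ODcrysL(V)$ as $x = (\mu/t)^s z$ with $z \in \BrigL^+ \otimes_L \ODcrysL(V)$, the identity
\begin{equation*}
(g-1)x = g((\mu/t)^s)\,(g-1)z + \bigl(g((\mu/t)^s) - (\mu/t)^s\bigr)z
\end{equation*}
combined with Lemma \ref{lem:brigodcris_modmu} yields $(g-1)x \in \mu\,(\mu/t)^s \BrigL^+ \otimes_L \ODcrysL(V)$. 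Since $\NrigL(V) \subset (\mu/t)^s \BrigL^+ \otimes_L \ODcrysL(V)$ by \eqref{eq:mldl_in_mutsdl} (via the isomorphism $\beta$), and since $(g-1)\NrigL(V) \subset \NrigL(V)$ by Proposition \ref{prop:nrigl_gammastab}, I obtain $(g-1)\NrigL(V) \subset \NrigL(V) \cap \mu\,(\mu/t)^s \BrigL^+ \otimes_L \ODcrysL(V)$.

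The key remaining step, and the one I expect to be the main obstacle, is to show the saturation identity
\begin{equation*}
\NrigL(V) \cap \mu\,(\mu/t)^s \BrigL^+ \otimes_L \ODcrysL(V) = \mu\,\NrigL(V).
\end{equation*}
For this, suppose $w = \mu w' \in \NrigL(V)$ with $w' \in (\mu/t)^s \BrigL^+ \otimes_L \ODcrysL(V) \subset \BrigL^+[\mu/t] \otimes_L \ODcrysL(V)$; I need to check that $w' \in \ML(\ODcrysL(V)) = \NrigL(V)$, i.e., that $i_n(w') \in \Fil^0(\BLbrevenhat[1/\xitilde_n] \otimes_{\Lbreve} \DcrysLbreve(V))$ for every $n \in \NN$ (using Definition \ref{defi:mldl}). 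Since $\phi_{\Lbreve}$ fixes $\mu$ (Remark \ref{rem:frob_coeff}), the map $i_n$ is $\BrigL^+\textrm{-linear}$ in $\mu$ and hence $i_n(w) = \mu \cdot i_n(w')$. Because $\mu = (\mu - (\zeta_{p^{n+1}}-1)) + (\zeta_{p^{n+1}}-1)$ and $\zeta_{p^{n+1}} - 1 \neq 0$, the element $\mu$ lies outside the maximal ideal of the local ring $\BLbrevenhat$, so it is a unit there. Dividing the filtered containment $i_n(w) \in \Fil^0$ by $\mu$ gives $i_n(w') \in \Fil^0$, as required.

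Combining the two steps, for $x \in \NrigL(V)$ and $g \in \Gamma_L$ we conclude that $(g-1)x \in \mu\,\NrigL(V)$, which proves the corollary.
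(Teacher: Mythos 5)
Your argument is correct, and while its first half runs parallel to the paper's, the decisive step is handled by a genuinely different mechanism. Like the paper, you start from Lemma \ref{lem:brigodcris_modmu} and the identity $g(\mu/t)=(\mu/t)(1+a\mu)$ (the paper phrases this as $(g-1)(\mu/t)=\mu u_g(\mu/t)$) to get mod-$\mu$ triviality on an ambient module built from $\ODcrysL(V)$; you work on the finite twist $(\mu/t)^s\BrigL^+\otimes_L\ODcrysL(V)$, the paper on the localization $\BrigL^+[\mu/t]\otimes_L\ODcrysL(V)$, which is immaterial. The difference lies in the saturation step. The paper argues purely module-theoretically: it intersects inside $\NrigLbreve(V)[\mu/t]$, invoking the base-change isomorphism $\BrigLbreve^+\otimes_{\BrigL^+}\NrigL(V)\isomorphic\NrigLbreve(V)$, freeness of $\NrigL(V)$ over $\BrigL^+$, and the ring-level intersection of Lemma \ref{lem:brigl_lbreve_intersect}, to conclude that the intersection equals $\mu\NrigL(V)$. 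You instead return to Definition \ref{defi:mldl} and verify the membership criterion for $\ML(\ODcrysL(V))$ directly: since $\phi_{\Lbreve}^{-n}(\mu)=\mu$ one has $i_n(\mu w')=\mu\, i_n(w')$, and $\mu$ is a unit in the local ring $\BLbrevenhat$ (its image in the residue field is $\zeta_{p^{n+1}}-1\neq 0$), so $i_n(w)\in\Fil^0$ forces $i_n(w')\in\Fil^0$; the filtered piece is a $\BLbrevenhat$-submodule, so dividing by the unit is harmless. Your route is more elementary in that it bypasses Lemma \ref{lem:brigl_lbreve_intersect} and the $\Lbreve$-descent input, at the cost of being tied to the explicit Kisin--Ren description of $\NrigL(V)$ as $\ML(\ODcrysL(V))$, whereas the paper's argument uses only formal properties (freeness, base change, and the intersection lemma) and so survives once the construction is forgotten. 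Both proofs are complete; only note that your phrase ``$i_n$ is $\BrigL^+$-linear in $\mu$'' should more precisely say that $i_n$ is semilinear over $\phi_{\Lbreve}^{-n}$, which fixes $\mu$.
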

\begin{proof}
	Note that we have $g(\mu) = (1+\mu)^{\chi(g)}-1$ and $g(t) = \chi(g) t$, for any $g \in \Gamma_L$ and $\chi$ the $\padic$ cyclotomic character, in particular, $(g-1)(\mu/t) = \mu u_g(\mu/t)$, for some $u_g \in \BL^+$.
	Therefore, using Lemma \ref{lem:brigodcris_modmu} it follows that the action of $\Gamma_L$ is trivial modulo $\mu$ on $\BrigL^+[\mu/t] \otimes_L \ODcrysL(V) \lisomorphic \BrigL^+[\mu/t] \otimes_{\BrigL^+} \NrigL(V)$ (see \eqref{eq:brigl_dcrys_wach_comp}).
	Next, from Proposition \ref{prop:nrigl_gammastab}, note that we have a $(\varphi, \Gamma_L)\equivariant$ inclusion $\NrigL(V) \subset \BrigL^+[\mu/t] \otimes_{\BrigL^+} \NrigL(V) \isomorphic \BrigL^+[\mu/t] \otimes_L \ODcrysL(V)$.
	Let $x$ be in $\NrigL(V)$, then for any $g \in \Gamma_L$, we have that $(g-1)x$ is in $\NrigL(V) \subset \NrigLbreve(V)$ and $(g-1)x$ is also in $\mu \BrigL^+[\mu/t] \otimes_{\BrigL^+} \NrigL(V)$.
	Now, inside $\NrigLbreve(V)[\mu/t]$, we have that,
	\begin{align*}
		\NrigLbreve(V) &\cap \big(\mu \BrigL^+[\mu/t] \otimes_{\BrigL^+} \NrigL(V)\big) \\
		&= \big(\BrigLbreve^+ \otimes_{\BrigL^+} \NrigL(V)\big) \cap \big(\mu \BrigL^+[\mu/t] \otimes_{\BrigL^+} \NrigL(V)\big) \\
		&= (\BrigLbreve^+ \cap \mu \BrigL^+[\mu/t]) \otimes_{\BrigL^+} \NrigL(V) = \mu \NrigL(V),
	\end{align*}
	where the first equality follows from the isomorphism $\BrigLbreve^+ \otimes_{\BrigL^+} \NrigL(V) \isomorphic \NrigLbreve(V)$ (see the discussion after \eqref{eq:nriglv}), the second equality follows since $\NrigL(V)$ is free over $\BrigL^+$ and the last equality follows from Lemma \ref{lem:brigl_lbreve_intersect}.
	Hence, we conclude that $(g-1)x$ is in $\mu \NrigL(V)$, for any $x$ in $\NrigL(V)$ and $g \in \Gamma_L$.
\end{proof}

\subsection{Compatibility with \texorpdfstring{$(\varphi, \Gamma_L)\modules$}{-}}\label{subsec:compatibility_phigammmod}

From \S \ref{subsec:phigamma_mod_imperfect}, recall that $\DrigLdag(V)$ is a pure of slope 0 finite free $(\varphi, \Gamma_L)\module$ over $\BrigLdag$, functorially attached to $V$.
The following result is a generalisation of \cite[Proposition 3.5 \& Th\'eor\`eme 3.6]{berger-differentielles} from the perfect residue field case to $L$:
\begin{prop}\label{prop:dcrys_brigtilde_comp}
	There are natural $(\varphi, G_L)\equivariant$ isomorphisms,
	\begin{enumarabicup}
	\item $\Brigtilde^+[1/t] \otimes_L \ODcrysL(V) \isomorphic \Brigtilde^+[1/t] \otimes_{\QQ_p} V$.

	\item $\Brigtildedag[1/t] \otimes_L \ODcrysL(V) \isomorphic \Brigtildedag[1/t] \otimes_{\BrigLdag} \DrigLdag(V)$.
	\end{enumarabicup}
\end{prop}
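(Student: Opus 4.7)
The plan is to first prove (1) by factoring the desired isomorphism through the classical Wach module $\NLbreve(V)$ associated with $V$ viewed as a $G_{\Lbreve}$-representation, and then deduce (2) by extending scalars along $\Brigtilde^+ \subset \Brigtildedag$ together with \eqref{eq:brigbatdag_iso}. Throughout, I will use that $t/\mu = \prod_{n \in \NN}(\varphi^n([p]_q)/p)$ converges in $\Brigtilde^+$ (since $[p]_q \in \Ainf(\OLbar) \subset \Brigtilde^+$), that $t/\mu$ becomes a unit after inverting $t$ so that $\Brigtilde^+[1/t] = \Brigtilde^+[1/\mu]$, and that $\Ainf(\OLbar) \subset \Brigtilde^+$ (as $\varphi$ is bijective on $\Ainf(\OLbar)$), so that the various period rings inject compatibly into $\Brigtilde^+[1/t]$. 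The non-canonical $\Lbreve$-algebra structure on $\Brigtilde^+$ obtained via $\Lbreve \to \OBcrys \twoheadrightarrow \Bcrys^+(\OLinfty)$ with $X_i^{1/p^n} \mapsto [(X_i^{1/p^n})^{\flat}] \in \Ainf(\OLinfty) \subset \Brigtilde^+$ is $(\varphi, G_{\Lbreve})$-equivariant but not $G_L$-equivariant.

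For (1), I would combine three known isomorphisms. Firstly, extending scalars of \eqref{eq:wach_crys_rigcomp} along $\BrigLbreve^+[\mu/t] \hookrightarrow \Brigtilde^+[1/t]$ yields a $(\varphi, \GammaLbreve)$-equivariant isomorphism $\Brigtilde^+[1/t] \otimes_{\Lbreve} \DcrysLbreve(V) \isomorphic \Brigtilde^+[1/t] \otimes_{\BLbreve^+} \NLbreve(V)$. Secondly, inverting $\mu$ in Lemma \ref{lem:wach_props} (2) gives $\Ainf(\OLbar)[1/\mu] \otimes_{\ALbreve^+} \NLbreve(T) \isomorphic \Ainf(\OLbar)[1/\mu] \otimes_{\ZZ_p} T$; extending scalars to $\Brigtilde^+[1/t]$ and inverting $p$ produces a $(\varphi, G_{\Lbreve})$-equivariant isomorphism $\Brigtilde^+[1/t] \otimes_{\BLbreve^+} \NLbreve(V) \isomorphic \Brigtilde^+[1/t] \otimes_{\QQ_p} V$. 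Thirdly, the isomorphism $\DcrysLbreve(V) \isomorphic \Lbreve \otimes_L \ODcrysL(V)$ from \eqref{eq:dcrys_llbreve_comp} identifies $\Brigtilde^+[1/t] \otimes_{\Lbreve} \DcrysLbreve(V)$ with $\Brigtilde^+[1/t] \otimes_L \ODcrysL(V)$. Composing these produces an \emph{a priori} $(\varphi, G_{\Lbreve})$-equivariant isomorphism $\Brigtilde^+[1/t] \otimes_L \ODcrysL(V) \isomorphic \Brigtilde^+[1/t] \otimes_{\QQ_p} V$, establishing (1) at the level of $(\varphi, G_{\Lbreve})$-modules. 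To upgrade to $G_L$-equivariance, one observes that this composite, after further scalar extension to $\Bcrys(\OLbar)$, recovers via Lemma \ref{lem:wac_crys_rep_comp} the $(\varphi, G_L)$-equivariant isomorphism of Lemma \ref{lem:faltings_isomorphism_dcrys}. Since both sides of (1) are $G_L$-stable $\Brigtilde^+[1/t]$-submodules of the respective $\Bcrys(\OLbar)$-modules, restricting the $G_L$-equivariant isomorphism from Lemma \ref{lem:faltings_isomorphism_dcrys} recovers the composite and hence establishes the required $G_L$-equivariance.

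For (2), since $\Brigtilde^+ \subset \Brigtildedag$, extending scalars of (1) along $\Brigtilde^+[1/t] \to \Brigtildedag[1/t]$ yields a $(\varphi, G_L)$-equivariant isomorphism $\Brigtildedag[1/t] \otimes_L \ODcrysL(V) \isomorphic \Brigtildedag[1/t] \otimes_{\QQ_p} V$, while inverting $t$ in \eqref{eq:brigbatdag_iso} gives $\Brigtildedag[1/t] \otimes_{\QQ_p} V \isomorphic \Brigtildedag[1/t] \otimes_{\BrigLdag} \DrigLdag(V)$; composing the two delivers (2). The main obstacle will be the transition from $G_{\Lbreve}$- to $G_L$-equivariance in (1): every intermediate object in the three-step factorisation ($\NLbreve(V)$, $\DcrysLbreve(V)$, and the non-canonical $\Lbreve$-algebra structure on $\Brigtilde^+$) carries only a $G_{\Lbreve}$-action, and the $G_L$-equivariance of the composite must be deduced indirectly by identifying it at the level of $\Bcrys(\OLbar)$ with the $G_L$-equivariant isomorphism of Lemma \ref{lem:faltings_isomorphism_dcrys}.
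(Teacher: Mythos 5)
Your proposal is correct and follows essentially the same route as the paper: build the isomorphism by extending \eqref{eq:wach_crys_rigcomp}, \eqref{eq:dcrys_llbreve_comp} and Lemma \ref{lem:wach_props} (2) to $\Brigtilde^+[1/t]$, then upgrade $G_{\Lbreve}$- to $G_L$-equivariance by fitting the composite into a commutative diagram over $\Bcrys(\OLbar)$ (via Lemma \ref{lem:wac_crys_rep_comp} and \eqref{eq:galois_action}) where it restricts the $(\varphi, G_L)$-equivariant isomorphism of Lemma \ref{lem:faltings_isomorphism_dcrys}, exactly as in \eqref{eq:odcrys_wach_rep_comp}. Part (2) by scalar extension along $\Brigtilde^+[1/t] \rightarrow \Brigtildedag[1/t]$ combined with \eqref{eq:brigbatdag_iso} is also precisely the paper's argument.
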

\begin{proof}
	For (1), recall that from Lemma \ref{lem:brigtilde_odcrys_glact}, there is a $\Brigtilde^+\textrm{-linear}$ and $(\varphi, G_L)\equivariant$ map,
	\begin{equation*}
		\Brigtilde^+ \otimes_L \ODcrysL(V) \longrightarrow \Bcrys(\OLbar) \otimes_L \ODcrysL(V) \isomorphic \Bcrys(\OLbar) \otimes_{\QQ_p} V,
	\end{equation*}
	where the isomorphism is from Lemma \ref{lem:faltings_isomorphism_dcrys}.
	Extending the isomorphism in \eqref{eq:wach_crys_rigcomp} along $\BrigLtilde^+[\mu/t] \rightarrow \Brigtilde^+[1/t]$ and using \eqref{eq:dcrys_llbreve_comp}, we obtain a $\varphi\equivariant$ isomorphism,
	\begin{equation*}
		\Brigtilde^+[1/t] \otimes_L \ODcrysL(V) \isomorphic \Brigtilde^+[1/t] \otimes_{\Lbreve} \DcrysLbreve(V) \isomorphic \Brigtilde^+[1/t] \otimes_{\BLbreve^+} \NLbreve(V).
	\end{equation*}
	The preceding isomorphism fits into a commutative diagram compatibly with \eqref{eq:galois_action},
	\begin{equation}\label{eq:odcrys_wach_rep_comp}
		\begin{tikzcd}
			\Brigtilde^+[1/t] \otimes_L \ODcrysL(V) \arrow[d, "\wr"] \arrow[r] & \Bcrys(\OLbar) \otimes_L \ODcrysL(V) \arrow[rd, "\sim"]\\
			\Brigtilde^+[1/t] \otimes_{\BLbreve^+} \NLbreve(V) \arrow[r, "\sim"] & \Brigtilde^+[1/t] \otimes_{\QQ_p} V \arrow[r] & \Bcrys(\OLbar) \otimes_{\QQ_p} V,
		\end{tikzcd}
	\end{equation}
	where the left horizontal arrow in the bottom row is induced from the isomorphism $\Ainf(\OLbar)[1/\mu] \otimes_{\ALbreve^+} \NLbreve(T) \isomorphic \Ainf(\OLbar)[1/\mu] \otimes_{\ZZ_p} T$ (see Lemma \ref{lem:wach_props} (2)), the slanted isomorphism is the isomorphism in the third row of \eqref{eq:galois_action} and the rest are natural injective maps.
	Since the slanted isomorphism is $(\varphi, G_L)\equivariant$, therefore, we obtain that the isomorphism $\Brigtilde^+[1/t] \otimes_L \ODcrysL(V) \isomorphic \Brigtilde^+[1/t] \otimes_{\QQ_p} V$ is $(\varphi, G_L)\equivariant$, showing (1).
	For (2), by extending the isomorphism in (1) along $\Brigtilde^+[1/t] \rightarrow \Brigtildedag[1/t]$ and using \eqref{eq:brigbatdag_iso}, we obtain $(\varphi, G_L)\equivariant$ isomorphisms,
	\begin{equation*}
		\Brigtildedag[1/t] \otimes_L \ODcrysL(V) \isomorphic\Brigtildedag[1/t] \otimes_{\QQ_p} V \isomorphic \Brigtildedag[1/t] \otimes_{\BrigLdag} \DrigLdag(V).\qedhere
	\end{equation*}
\end{proof}

From the discussion after \eqref{eq:nriglv} and Proposition \ref{prop:nrigl_gammastab}, we have that $\BrigLdag \otimes_{\BrigL^+} \NrigL(V)$ is a pure of slope 0 finite free $(\varphi, \Gamma_L)\module$ over $\BrigLdag$ of rank $=\dim_{\QQ_p} V$.
Therefore, by the equivalence of categories in \cite[Lemma 4.5.7]{ohkubo-differential}, there exists a unique finite free \'etale $(\varphi, \Gamma_L)\module$ $\dldag$ over $\BLdag$ of rank $= \dim_{\QQ_p} V$ such that $\BrigLdag \otimes_{\BrigL^+} \NrigL(V) \isomorphic \BrigLdag \otimes_{\BLdag} \dldag$ compatible with $(\varphi, \Gamma_L)\action$.
\begin{cor}\label{cor:nrig_dag_comp}
	There exists a natural $(\varphi, G_L)\equivariant$ isomorphism $\Brigtildedag \otimes_{\BrigL^+} \NrigL(V) \isomorphic \Brigtildedag \otimes_{\BLdag} V$ inducing natural $(\varphi, \Gamma_L)\equivariant$ isomorphisms $\dldag \isomorphic \DLdag(V)$ and $\BrigLdag \otimes_{\BrigL^+} \NrigL(V) \isomorphic \BrigLdag \otimes_{\BLdag} \DLdag(V)$.
\end{cor}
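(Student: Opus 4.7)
The plan is to establish claim (a) first, from which the two induced isomorphisms will follow via the slope-$0$ formalism and the equivalence \cite[Lemma 4.5.7]{ohkubo-differential}. For (a), I would combine Proposition \ref{prop:dcrys_brigtilde_comp}, which gives the isomorphism after inverting $t$, with the classical perfect-residue-field theory over $\Lbreve$ to descend to the integral $\Brigtildedag$-level.

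From the upper row of \eqref{eq:nrigl_dcrys_comp} there is a $(\varphi, \Gamma_L)$-equivariant isomorphism $\NrigL(V)[\mu/t] \isomorphic \BrigL^+[\mu/t] \otimes_L \ODcrysL(V)$. As $t/\mu$ is a unit in $\Brigtildedag[1/t]$, tensoring with $\Brigtildedag[1/t]$ over $\BrigL^+$ gives a $(\varphi, G_L)$-equivariant isomorphism
$$\Brigtildedag[1/t] \otimes_{\BrigL^+} \NrigL(V) \isomorphic \Brigtildedag[1/t] \otimes_L \ODcrysL(V) \isomorphic \Brigtildedag[1/t] \otimes_{\QQ_p} V,$$
the second arrow being Proposition \ref{prop:dcrys_brigtilde_comp}(1). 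To clear the $1/t$, I would invoke the classical theory over $\Lbreve$: from \eqref{eq:nlbreve_phigamma_comp} and the $\Lbreve$-analogue of \eqref{eq:brigbatdag_iso}, there is a chain of $(\varphi, G_{\Lbreve})$-equivariant isomorphisms
$$\Brigtildedag \otimes_{\BLbreve^+} \NLbreve(V) \isomorphic \Brigtildedag \otimes_{\BrigLbrevedag} \DrigLbrevedag(V) \isomorphic \Brigtildedag \otimes_{\QQ_p} V,$$
and combining with the identification $\BrigLbreve^+ \otimes_{\BrigL^+} \NrigL(V) \isomorphic \BrigLbreve^+ \otimes_{\BLbreve^+} \NLbreve(V)$ from the discussion after \eqref{eq:nriglv}, base change along $\BrigLbreve^+ \rightarrow \Brigtildedag$ yields a $(\varphi, G_{\Lbreve})$-equivariant integral isomorphism $\Brigtildedag \otimes_{\BrigL^+} \NrigL(V) \isomorphic \Brigtildedag \otimes_{\QQ_p} V$. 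Tracing through the diagrams \eqref{eq:alpha_beta_commute}, \eqref{eq:nrigl_dcrys_comp} and \eqref{eq:odcrys_wach_rep_comp} shows that this isomorphism coincides with the rational one after inverting $t$; in particular it is automatically $(\varphi, G_L)$-equivariant, settling (a).

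For the induced isomorphisms, Lemma \ref{lem:mldl_props}(2) provides that $\BrigLdag \otimes_{\BrigL^+} \NrigL(V)$ is a pure-slope-$0$ $(\varphi, \Gamma_L)$-module over $\BrigLdag$, hence corresponds under \cite[Lemma 4.5.7]{ohkubo-differential} to a $\padic$ representation of $G_L$. Applying the functor $(- \otimes_{\BrigLdag} \Brigtildedag)^{\varphi=1}$ and using (a) together with Lemma \ref{lem:brigtilde_frobfixed}, one computes this representation to be $V$, which is the same representation to which $\DrigLdag(V) = \BrigLdag \otimes_{\BLdag} \DLdag(V)$ corresponds. Hence $\BrigLdag \otimes_{\BrigL^+} \NrigL(V) \isomorphic \BrigLdag \otimes_{\BLdag} \DLdag(V)$ naturally, and the isomorphism $\dldag \isomorphic \DLdag(V)$ then follows from Lemma \ref{lem:finiteheightslope0_equiv}(1). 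The main technical difficulty is the compatibility check in the previous paragraph: verifying that the integral isomorphism produced via the $\Lbreve$-case really matches the rational one built from $\ODcrysL(V)$ requires careful tracking of the non-canonical $L$- and $\Lbreve$-algebra structures on the period rings (see \S \ref{subsubsec:periodrings_Lbreve}) through the various Frobenius and Galois actions.
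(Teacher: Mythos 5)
Your proposal is correct and takes essentially the same route as the paper: the integral isomorphism is produced from the classical comparison over $\Lbreve$ and its $(\varphi, G_L)$-equivariance is obtained by matching it, through the commutative diagrams \eqref{eq:nrigl_dcrys_comp} and \eqref{eq:odcrys_wach_rep_comp}, with the $t$-inverted isomorphism of Proposition \ref{prop:dcrys_brigtilde_comp}, after which the induced identifications follow from the slope-$0$ formalism and the $\varphi$-invariants computation via Lemma \ref{lem:brigtilde_frobfixed}. The only cosmetic differences are that the paper extends Lemma \ref{lem:wach_props}~(2) along $\Ainf(\OLbar)[1/\mu] \rightarrow \Brigtildedag$ rather than citing the $\Lbreve$-analogue of \eqref{eq:brigbatdag_iso}, and it identifies $\dldag$ directly by showing $V' := (\Btildedag \otimes_{\BLdag} \dldag)^{\varphi=1} \isomorphic V$ instead of first identifying $\BrigLdag \otimes_{\BrigL^+} \NrigL(V)$ with $\DrigLdag(V)$ and then descending.
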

\begin{proof}
	Consider the following diagram:
	\begin{center}
		\begin{tikzcd}
			\Brigtildedag \otimes_{\BrigL^+} \NrigL(V) \arrow[r, "\sim"] \arrow[d] & \Brigtildedag \otimes_{\BrigLbreve^+} \NrigLbreve(V) \arrow[r, "\sim"] \arrow[d] & \Brigtildedag \otimes_{\QQ_p} V \arrow[d]\\
			\Brigtildedag[1/t] \otimes_{\BrigL^+} \NrigL(V) \arrow[r, "\sim"] & \Brigtildedag[1/t] \otimes_{L} \ODcrysL(V) \arrow[r, "\sim"] & \Brigtildedag[1/t] \otimes_{\QQ_p} V.
		\end{tikzcd}
	\end{center}
	In the top row, the left horizontal arrow is induced by the isomorphism $\BrigLbreve^+ \otimes \NrigL(V) \isomorphic \NrigLbreve(V)$ (see the discussion after \eqref{eq:nriglv}) and the right horizontal arrow is induced by the isomorphism $\Ainf(\OLbar)[1/\mu] \otimes_{\ALbreve^+} \NLbreve(T) \isomorphic \Ainf(\OLbar)[1/\mu] \otimes_{\ZZ_p} T$ (see Lemma \ref{lem:wach_props} (2)).
	In the bottom row, the left horizontal arrow is induced by the $(\varphi, \Gamma_L)\equivariant$ isomorphism $\BrigL^+[\mu/t] \otimes_{\BrigL^+} \NrigL(V) \isomorphic \BrigL^+[\mu/t] \otimes_L \ODcrysL(V)$ (see \eqref{eq:brigl_dcrys_wach_comp} in Proposition \ref{prop:nrigl_gammastab}) and the right horizontal arrow is induced from Propositon \ref{prop:dcrys_brigtilde_comp} (1).
	The left and the right vertical arrows are natural maps and the middle vertical arrow is induced from \eqref{eq:wach_crys_rigcomp} and \eqref{eq:dcrys_llbreve_comp}.
	Commutativity of the left square follows from \eqref{eq:nrigl_dcrys_comp} and commutativity of the right square follows from \eqref{eq:odcrys_wach_rep_comp}.
	This shows the first claim.

	For the second claim, set $V' := (\Btildedag \otimes_{\BLdag} \dldag)^{\varphi=1}$, and note that it is a $\padic$ representation of $G_L$ with $\dim_{\QQ_p} V' = \dim_{\QQ_p} V$ (see \cite[Th\'eor\`eme 4.35]{andreatta-brinon}).
	Moreover, we have that $V' \subset (\Brigtildedag \otimes_{\BLdag} \dldag)^{\varphi=1} \isomorphic (\Brigtildedag \otimes_{\BrigL^+} \NrigL(V))^{\varphi=1} \isomorphic (\Brigtildedag \otimes_{\QQ_p} V)^{\varphi=1} = V$, where the first isomorphism follows from the discussion before the statement of the claim above, the second isomorphism follows from the first claim proven in the previous papragraph, and the last equality follows from Lemma \ref{lem:brigtilde_frobfixed}.
	Therefore, $V' \isomorphic V$ as $G_L\textrm{-representations}$ and it implies that $\dldag = \DLdag(V') \isomorphic \DLdag(V)$ as \'etale $(\varphi, \Gamma_L)\modules$ over $\BLdag$.
	It is straightforward to verify that this isomorphism is compatible with the commutative diagram above.
	This concludes our proof.
\end{proof}

\begin{rem}\label{rem:brigldag_odcrys_dldag_comp}
	As indicated before Corollary \ref{cor:nrig_dag_comp}, for a $\padic$ crystalline representation of $V$, combining the $(\varphi, \Gamma_L)\equivariant$ isomorphism $\BrigLdag \otimes_{\BrigL^+} \NrigL(V) \isomorphic \BrigLdag \otimes_{\BLdag} \DLdag(V)$ together with the inverse of the isomorphism \eqref{eq:brigl_dcrys_wach_comp}, gives a $\BrigLdag\linear$ $(\varphi, \Gamma_L)\equivariant$ isomorphism,
	\begin{equation}\label{eq:brigldag_odcrys_dldag_comp}
		\BrigLdag \otimes_{L} \ODcrysL(V) \isomorphic \BrigLdag \otimes_{\BLdag} \DLdag(V).
	\end{equation}
	The isomorphism \eqref{eq:brigldag_odcrys_dldag_comp} generalises \cite[Proposition 3.7]{berger-differentielles} from the perfect residue field case to $L$.
\end{rem}

\subsection{Obtaining Wach module}\label{subsec:obtaining_wachmod}

The finite free $\BrigL^+\module$ $\NrigL(V)$ is of finite $\pqheight$ $s$ and pure of slope 0 (see Lemma \eqref{lem:mldl_props}), therefore, from Lemma \ref{lem:finiteheightslope0_equiv} (2) there exists a unique finite free $\BL^+\module$ of rank $= \dim_{\QQ_p} V$ and finite $\pqheight$ $s$, whose extension of scalars along $\BL^+ \rightarrow \BrigL^+$ gives $\NrigL(V)$.
In particular, from the proof of Lemma \ref{lem:finiteheightslope0_equiv} we note the following:
\begin{defi}\label{defi:nlv}
	Define $\NL(V) := \NrigL(V) \cap \DLdag(V) \subset \DrigLdag(V)$.
\end{defi}
The $\BL^+\module$ $\NL(V)$ is finite free of rank $=\dim_{\QQ_p} V$ and it is equipped with an induced Frobenius-semilinear endomorphism $\varphi$ such that the cokernel of the injective map $(1 \otimes \varphi) : \varphi^*(\NL(V)) \rightarrow \NL(V)$ is killed by $[p]_q^s$, since $\NrigL(V)$ is of finite $\pqheight$ $s$ and $1 \otimes \varphi : \varphi^*(\DLdag(V)) \isomorphic \DLdag(V)$.
Moreover, we have that $\NL(V) \subset \DL^+(V)$ because inside $\DrigLdag(V)$ we have,
\begin{align*}
	\NL(V) = \NrigL(V) \cap \DLdag(V) &\subset (\Brigtilde^+ \otimes_{\QQ_p} V)^{H_L} \cap (\Bdag \otimes_{\QQ_p} V)^{H_L}\\
	&\subset ((\Brigtilde^+ \otimes_{\QQ_p} V) \cap (\Bdag \otimes_{\QQ_p} V))^{H_L}\\
	&\subset ((\Brigtilde^+ \cap \Bdag) \otimes_{\QQ_p} V)^{H_L} = (\mbfb^+ \otimes_{\QQ_p} V)^{H_L} = \DL^+(V).
\end{align*}
Furthermore, since $\NrigL(V)$ and $\DLdag(V)$ are stable under the compatible action of $\Gamma_L$ (see Proposition \ref{prop:nrigl_gammastab} and Corollary \ref{cor:nrig_dag_comp}), we conclude that $\NL(V)$ is stable under the induced $\Gamma_L\action$.
In particular, from the preceding discussion and Lemma \ref{lem:finiteheightslope0_equiv}, we obtain $(\varphi, \Gamma_L)\equivariant$ isomorphisms,
\begin{equation}\label{eq:nlv_large}
	\BrigL^+ \otimes_{\BL^+} \NL(V) \isomorphic \NrigL(V) \hspace{2mm} \textrm{and} \hspace{2mm} \BLdag \otimes_{\BL^+} \NL(V) \isomorphic \DLdag(V).
\end{equation}
\begin{lem}\label{lem:nlv_gamma_modmu}
	The action of $\Gamma_L$ on $\NL(V)$ is trivial modulo $\mu$.
\end{lem}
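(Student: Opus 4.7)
The plan is to transfer the triviality of the $\Gamma_L$-action modulo $\mu$ from $\NrigL(V)$ down to $\NL(V)$, using the fact that $\NL(V)$ is obtained from $\NrigL(V)$ by faithfully flat descent along $\BL^+ \to \BrigL^+$.

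First, I would recall that from Corollary \ref{cor:nrigl_gamma_modmu}, for any $g \in \Gamma_L$ we have $(g-1)\NrigL(V) \subset \mu \NrigL(V)$. Since $\NL(V)$ is stable under the $\Gamma_L$-action (as noted in the discussion preceding the lemma), for any $x \in \NL(V)$ and $g \in \Gamma_L$, the element $(g-1)x$ lies in $\NL(V) \cap \mu \NrigL(V)$, the intersection being taken inside $\NrigL(V)$. So the lemma reduces to verifying the equality
\begin{equation*}
  \NL(V) \cap \mu \NrigL(V) = \mu \NL(V),
\end{equation*}
inside $\NrigL(V) \isomorphic \BrigL^+ \otimes_{\BL^+} \NL(V)$ (see \eqref{eq:nlv_large}).

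For this, I would use that $\NL(V)$ is finite free over $\BL^+$, so after choosing a basis the claim reduces to the scalar statement $\mu \BrigL^+ \cap \BL^+ = \mu \BL^+$. This in turn follows from the faithful flatness of $\BL^+ \to \BrigL^+$ established in Lemma \ref{lem:bl_brigl_ff}: for a faithfully flat ring extension $A \to B$ and any ideal $I \subset A$, one has $IB \cap A = I$. Applying this to $A = \BL^+$, $B = \BrigL^+$, $I = \mu\BL^+$ gives the required equality. Combining these steps yields $(g-1)\NL(V) \subset \mu \NL(V)$ for all $g \in \Gamma_L$, which is the desired conclusion.

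I do not anticipate any serious obstacle here; the main content of the lemma has already been established in Corollary \ref{cor:nrigl_gamma_modmu}, and the only remaining technical point is the descent along the faithfully flat map $\BL^+ \to \BrigL^+$, which is essentially automatic given freeness of $\NL(V)$.
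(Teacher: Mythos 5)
Your argument is correct, and it is the same strategy as the paper's proof: both reduce the statement to Corollary \ref{cor:nrigl_gamma_modmu} together with an intersection computation that uses freeness and the isomorphism \eqref{eq:nlv_large}. The only difference is in where the intersection is taken and which scalar-level identity is invoked. The paper notes $(g-1)x \in \DLdag(V)$ and computes, inside $\DrigLdag(V)$, that $\DLdag(V) \cap \mu\NrigL(V) = (\BLdag \cap \mu\BrigL^+) \otimes_{\BL^+} \NL(V) = \mu\NL(V)$, which rests on $\BL^+ = \BLdag \cap \BrigL^+$ (and the fact that $\mu$ is invertible in $\BLdag$). You instead use the $\Gamma_L$-stability of $\NL(V)$ (already established before the lemma) to place $(g-1)x$ in $\NL(V)$ itself, and then compute $\NL(V) \cap \mu\NrigL(V) = \mu\NL(V)$ inside $\NrigL(V) \isomorphic \BrigL^+ \otimes_{\BL^+} \NL(V)$, reducing by a choice of basis to $\mu\BrigL^+ \cap \BL^+ = \mu\BL^+$, which you obtain from the faithful flatness of $\BL^+ \rightarrow \BrigL^+$ (Lemma \ref{lem:bl_brigl_ff}). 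Both routes are valid; yours avoids passing through $\DLdag(V)$ and $\DrigLdag(V)$ altogether and uses a standard descent fact, while the paper's version only needs the weaker containment $(g-1)x \in \DLdag(V)$ and stays closer to the definition $\NL(V) = \NrigL(V) \cap \DLdag(V)$.
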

\begin{proof}
	Let $g \in \Gamma_L$ and $x \in \NL(V)$.
	Then, $(g-1)x$ is in $\NL(V) \subset \DLdag(V)$.
	Moreover, from Corollary \ref{cor:nrigl_gamma_modmu}, we have that $(g-1)x$ is in $\mu\NrigL(V)$.
	Therefore, inside $\DrigLdag(V)$, by using \eqref{eq:nlv_large} we get that,
	\begin{equation*}
		(g-1)x \in \DLdag(V) \cap \mu\NrigL(V) = (\BLdag \cap \mu\BrigL^+) \otimes_{\BL^+} \NL(V) = \mu\NL(V),
	\end{equation*}
	as claimed.
\end{proof}

\begin{defi}\label{defi:nlt}
	Define the Wach module over $\AL^+ = \BL^+ \cap \AL \subset \BL$ as,
	\begin{equation*}
		\NL(T) := \NL(V) \cap \DL(T) \subset \DL(V).
	\end{equation*}
\end{defi}

\begin{proof}[Proof of Theorem \ref{thm:crys_fh_imperfect}]
	We will show that $\NL(T)$ from Definition \ref{defi:nlt} satisfies all axioms of Definition \ref{defi:finite_pqheight_imperfect}.
	From the definition, note that $\NL(T)$ is a finitely generated torsion-free $\AL^+\module$ and an elementary computation shows that $\NL(T) \cap \mu^n \NL(V) = \mu^n \NL(T)$, for all $n \in \NN$, in particular, $\NL(T)/\mu\NL(T)$ is $p\textrm{-torsion}$ free.
	Moreover, we have that $\NL(T)[1/p] = \NL(V)$, and a simple diagram chase shows that $(\NL(T)/p\NL(T))[\mu] = (\NL(T)/\mu\NL(T))[p] = 0$ and $(\AL \otimes_{\AL^+} \NL(T))/p(\AL \otimes_{\AL^+} \NL(T)) = (\NL(T)/p\NL(T))[1/\mu]$.
	So, we have that $\NL(T)/p^n\NL(T) \subset (\NL(T)/p^n\NL(T))[1/\mu] = (\AL \otimes_{\AL^+} \NL(T))/p^n(\AL \otimes_{\AL^+} \NL(T))$, for all $n \in \NN$, and therefore, $\NL(T) \cap p^n (\AL \otimes_{\AL^+} \NL(T)) = p^n \NL(T)$, in particular, $\NL(V) \cap (\AL \otimes_{\AL^+} \NL(T)) = \NL(T)$.
	Now, by using Remark \ref{rem:intersect_finitefree}, it follows that $\NL(T)$ is a finite free $\AL^+\module$ of rank $= \rank_{\BL^+} \NL(V) = \dim_{\QQ_p} V$.
	Alternatively, to get the preceding statement, one can also use \cite[Lemme II.1.3]{berger-limites} (the proof of loc.\ cit.\ does not require the residue field of discrete valuation base field, $L$ in our case, to be perfect).

	From the definition, it also follows that $\NL(T) \cap p^n\DL(T) = p^n \NL(T)$, in particular, we have that $\NL(T)/p^n\NL(T) \subset \DL(T)/p^n\DL(T)$, and therefore, $(\NL(T)/p^n\NL(T))[1/\mu] \subset \DL(T)/p^n\DL(T)$.
	So, we get that $(\AL \otimes_{\AL^+} \NL(T))/p^n(\AL \otimes_{\AL^+} \NL(T)) \subset \DL(T)/p^n\DL(T)$, or equivalently, $(\AL \otimes_{\AL^+} \NL(T)) \cap p^n \DL(T) = p^n(\AL \otimes_{\AL^+} \NL(T))$.
	Note that we have $(\AL \otimes_{\AL^+} \NL(T))[1/p] = \BL \otimes_{\BL^+} \NL(V) \isomorphic \DL(V)$, where the last isomorphism follows from \eqref{eq:nlv_large}.
	Therefore, we get that $\AL \otimes_{\AL^+} \NL(T) = \DL(T) \cap (\AL \otimes_{\AL^+} \NL(T))[1/p] \isomorphic \DL(T) \cap \DL(V) = \DL(T)$.

	Next, $\NL(T)$ is equipped with an induced Frobenius-semilinear endomorphism $\varphi$.
	We have that $\varphi : \AL^+ \rightarrow \AL^+$ is finite and faithfully flat of degree $p^{d+1}$ and $\varphi^*(\AL) \isomorphic \AL^+ \otimes_{\varphi, \AL^+} \AL$ and similarly $\varphi^*(\BL^+) \isomorphic \AL^+ \otimes_{\varphi, \AL^+} \BL^+$ (see \S \ref{subsubsec:phigammamod_rings_imperfect}).
	Therefore, we get that $\varphi^*(\NL(V)) = \BL^+ \otimes_{\varphi, \BL^+} \NL(V) \isomorphic \AL^+ \otimes_{\varphi, \AL^+} \NL(V)$ and $\varphi^*(\DL(T)) = \AL \otimes_{\varphi, \AL} \DL(T) \isomorphic \AL^+ \otimes_{\varphi, \AL^+} \DL(T)$.
	Then, it easily follows that $\varphi^*(\NL(T)) = \varphi^*(\NL(V)) \cap \varphi^*(\DL(T)) \subset \varphi^*(\DL(V))$.
	Now, since $1 \otimes \varphi$ is injective on $\varphi^*(\DL(V))$, $1 \otimes \varphi : \varphi^*(\DL(T)) \isomorphic \DL(T)$ and the cokernel of $1 \otimes \varphi : \varphi^*(\NL(V)) \rightarrow \NL(V)$ is killed by $[p]_q^s$, therefore, we get that the cokernel of the injective map $1 \otimes \varphi : \varphi^*(\NL(T)) \rightarrow \NL(T)$ is killed by $[p]_q^s$.
	Finally, note that $\NL(T)$ is equipped with an induced $\Gamma_L\action$ such that $\Gamma_L$ acts trivially on $\NL(T)/\mu \NL(T)$ (follows easily from Lemma \ref{lem:nlv_gamma_modmu}), and we have that $\AL \otimes_{\AL^+} \NL(T) \isomorphic \DL(T)$.
	Hence, we conclude that $T$ is of finite $\pqheight$.
\end{proof}

\begin{cor}\label{cor:wachmod_scalarext}
	There exists a natural isomorphism of \'etale $(\varphi, \GammaLbreve)\modules$ $\ALbreve \otimes_{\AL} \DL(T) \isomorphic \DLbreve(T)$ and a natural isomorphism of Wach modules $\ALbreve^+ \otimes_{\AL^+} \NL(T) \isomorphic \NLbreve(T)$.
\end{cor}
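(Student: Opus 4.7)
The first isomorphism $\ALbreve \otimes_{\AL} \DL(T) \isomorphic \DLbreve(T)$ has already been observed at the end of \S \ref{subsec:phigamma_mod_imperfect} as a direct consequence of the $\GammaLbreve\equivariant$ embedding $\AL \subset \ALbreve$, so it requires no further argument. My focus will be the second isomorphism, for which the plan is to set $N' := \ALbreve^+ \otimes_{\AL^+} \NL(T)$, verify that $N'$ is a Wach module over $\ALbreve^+$ in the classical (perfect residue field) sense, identify its associated $G_{\Lbreve}\representation$ with $T$, and appeal to the classical Wach module equivalence (Theorem \ref{intro_thm:wach_crys_arith}) to conclude $N' \isomorphic \NLbreve(T)$.

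The verification that $N'$ is a Wach module over $\ALbreve^+$ proceeds in steps. Finite freeness of $N'$ over $\ALbreve^+$ of rank $= \rank_{\AL^+} \NL(T)$ follows by base change along the faithfully flat embedding $\AL^+ \hookrightarrow \ALbreve^+$ recorded in \S \ref{subsubsec:periodrings_Lbreve}. Since this embedding is Frobenius-equivariant, $N'$ inherits a tensor product Frobenius $\varphi$, and the finite $\pqheight$ condition (cokernel of $1 \otimes \varphi : \varphi^*(N') \to N'$ killed by $[p]_q^{b-a}$) is preserved by base change. For the $\GammaLbreve\action$, the critical input is that the embedding $\AL^+ \hookrightarrow \ALbreve^+$ is $\GammaLbreve\equivariant$ when $\GammaLbreve \isomorphic \ZZ_p^{\times}$ is identified with the subgroup $\langle \gamma_0 \rangle \subset \Gamma_L$ arising from the canonical splitting of the quotient $\Gamma_L \twoheadrightarrow \GammaLbreve$. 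With this identification, restricting the $\Gamma_L\action$ on $\NL(T)$ to $\langle \gamma_0 \rangle$ is compatible with the $\GammaLbreve\action$ on $\ALbreve^+$, so the diagonal $\GammaLbreve\action$ on $N'$ is well-defined, continuous, semilinear, and commutes with $\varphi$. Finally, $\GammaLbreve$ acts trivially on $N'/\mu N' \isomorphic \OLbreve \otimes_{O_L} (\NL(T)/\mu \NL(T))$, since $\GammaLbreve$ fixes $\OLbreve \subset \Lbreve$ and acts trivially on $\NL(T)/\mu \NL(T)$ by the Wach module property of $\NL(T)$.

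With $N'$ established as a Wach module over $\ALbreve^+$, I will compute its associated $G_{\Lbreve}\representation$ via the classical functor $M \mapsto \big(W(\CC_L^{\flat}) \otimes_{\ALbreve^+} M\big)^{\varphi=1}$, which is well-defined because $G_{\Lbreve} \subset G_L$ acts on $\CC_L = \widehat{\Lbar}$. The natural identification
\[
W(\CC_L^{\flat}) \otimes_{\ALbreve^+} N' = W(\CC_L^{\flat}) \otimes_{\AL^+} \NL(T),
\]
combined with $\TL(\NL(T)) = T$ (from Theorem \ref{thm:crys_fh_imperfect}), yields that the $G_{\Lbreve}\representation$ associated to $N'$ is $T$ with its natural action by restriction from $G_L$. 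Applying Theorem \ref{intro_thm:wach_crys_arith} to the absolutely unramified field $\Lbreve$ with perfect residue field then identifies $N' \isomorphic \NLbreve(T)$ uniquely and compatibly with $(\varphi, \GammaLbreve)\action$, completing the proof.

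The main technical obstacle will be the careful bookkeeping of $\GammaLbreve\textrm{-actions}$ in the first step. Specifically, one must verify that the subgroup identification $\GammaLbreve \isomorphic \langle \gamma_0 \rangle \subset \Gamma_L$ used to induce the action on $\NL(T)$ really is compatible with the $\GammaLbreve\action$ on $\ALbreve^+$ inherited from the classical construction $\ALbreve^+ \isomorphic \OLbreve \llbracket \mu \rrbracket$. Once this compatibility is in hand, every subsequent step is essentially formal.
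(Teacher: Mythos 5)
Your argument is correct, and it ends in the same place as the paper's proof --- the classical uniqueness of the Wach module over $\ALbreve^+$ attached to $T$ --- but it gets there by a genuinely different mechanism for the second isomorphism. The paper uses the $(\varphi, \GammaLbreve)\equivariant$ injection $\ALbreve^+ \otimes_{\AL^+} \NL(T) \subset \NLbreve(T)$, which is available because $\NL(T)$ was constructed in \S \ref{subsec:obtaining_wachmod} inside objects coming from $\Lbreve$ (via $\NrigL(V) \subset \NrigLbreve(V)$ and $\DLdag(V) \subset \DLbrevedag(V)$), and then concludes by uniqueness of the Wach module attached to $T$; you never use that containment. Instead you check directly that $N' := \ALbreve^+ \otimes_{\AL^+} \NL(T)$ with the diagonal action satisfies the classical Wach axioms, identify its associated representation as $T|_{G_{\Lbreve}}$ through $W(\CC_L^{\flat}) \otimes_{\ALbreve^+} N' = W(\CC_L^{\flat}) \otimes_{\AL^+} \NL(T)$ together with $\TL(\NL(T)) \isomorphic T$ (Theorem \ref{thm:crys_fh_imperfect}), and then invoke the classical equivalence of Theorem \ref{intro_thm:wach_crys_arith} over $\Lbreve$. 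What your route buys is independence from the particular construction of $\NL(T)$: it uses only the axioms of Definition \ref{defi:finite_pqheight_imperfect}, at the small cost of re-verifying those axioms after base change; the paper's route is shorter because the containment in $\NLbreve(T)$ comes for free from the construction. Two minor points: the $\GammaLbreve\textrm{-equivariance}$ of the faithfully flat embedding $\AL^+ \rightarrow \ALbreve^+$, which you flag as your main technical obstacle, is already recorded in \S \ref{subsubsec:periodrings_Lbreve} via the identification $\GammaLbreve \isomorphic \Gal(\Linfty/\cup_{i=1}^d L(X_i^{1/p^{\infty}})) \subset \Gamma_L$, so nothing remains to verify there; and it is better to phrase the identification this way rather than as the subgroup generated by $\gamma_0$, since $\ZZ_p^{\times}$ is not procyclic for $p=2$.
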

\begin{proof}
	Note that we have an injection of \'etale $(\varphi, \GammaLbreve)\modules$ $\ALbreve \otimes_{\AL} \DL(T) \subset \DLbreve(T)$ and isomorphisms $(W(\CC_L^{\flat}) \otimes_{\AL} \DL(T))^{\varphi=1} \isomorphic T \lisomorphic (W(\CC_L^{\flat}) \otimes_{\ALbreve} \DLbreve(T))^{\varphi=1}$ as $G_{\Lbreve}\textrm{-representations}$.
	So, we get that $\ALbreve \otimes_{\AL} \DL(T) \isomorphic \DLbreve(T)$.
	Furthermore, we have a $(\varphi, \GammaLbreve)\equivariant$ injection of Wach modules $\ALbreve^+ \otimes_{\AL^+} \NL(T) \subset \NLbreve(T)$.
	So, by the unniquess of a Wach module attached to $T$ (see Lemma \ref{lem:finiteheight_props_imperfect}), it follows that $\ALbreve^+ \otimes_{\AL^+} \NL(T) \isomorphic \NLbreve(T)$.
\end{proof}

\begin{proof}[Proof of Corollary \ref{cor:crystalline_wach_rat_equivalence_imperfect}]
	The equivalence of $\otimes\textrm{-categories}$ follows from Theorem \ref{thm:crys_fh_imperfect} and we are left to show the exactness of the functor $\NL$ since the exactness of the quasi-inverse functor follows from Proposition \ref{prop:wach_etale_ff_imperfect} and the exact equivalence in \eqref{eq:rep_phigamma_imperfect}.
	From \S \ref{subsubsec:periodrings_Lbreve}, recall that $\AL^+ \rightarrow \ALbreve^+$ is faithfully flat, therefore, $\BL^+ \rightarrow \BLbreve^+$ is faithfully flat.
	Moreover, for a $\padic$ crystalline representation $V$ of $G_L$, from Corollary \ref{cor:wachmod_scalarext}, note that we have $\BLbreve^+ \otimes_{\BL^+} \NL(V) \isomorphic \NLbreve(V)$.
	So, given an exact sequence,
	\begin{equation}\label{eq:crysrep_exactseq}
		0 \rightarrow V_1 \rightarrow V_2 \rightarrow V_3 \rightarrow 0, 
	\end{equation}
	 of $\padic$ crystalline representations of $G_L$, it is enough to show that the following sequence is exact:
	\begin{equation}\label{eq:nlbrevev_exact}
		0 \rightarrow \NLbreve(V_1) \rightarrow \NLbreve(V_2) \rightarrow \NLbreve(V_3) \rightarrow 0.
	\end{equation}
	Furthermore, note that \eqref{eq:crysrep_exactseq} is exact if and only if it is exact after tensoring with $\QQ_p(r)$, for any $r \in \ZZ$.
	Similarly, \eqref{eq:nlbrevev_exact} is exact if and only if it is exact after tensoring with $\mu^{-r} \BLbreve^+(r)$.
	So we may assume that \eqref{eq:crysrep_exactseq} is an exact sequence of positive crystalline representations, i.e.\ the Wach modules in \eqref{eq:nlbrevev_exact} are effective.
	Moreover, the map $\BLbreve^+ \rightarrow \BrigLbreve^+$ is faithfully flat (by an argument similar to Lemma \ref{lem:bl_brigl_ff}), so it is enough to show that the following sequence is exact:
	\begin{equation*}
		0 \rightarrow \NrigLbreve(V_1) \rightarrow \NrigLbreve(V_2) \rightarrow \NrigLbreve(V_3) \rightarrow 0.
	\end{equation*}
	Exactness of the preceding sequence follows from Lemma \ref{lem:nlbreve_wach}, \cite[Theorem 1.2.15]{kisin-modules}, \cite[Proposition 2.2.6]{kisin-ren} and the exactness of the functor $\DcrysLbreve$.
	This allows us to conclude.
\end{proof}


\phantomsection
\printbibliography[heading=bibintoc, title={References}]

\Addresses

\end{document}